\documentclass[10pt, reqno]{amsart}
\usepackage{hhline}
% \usepackage[utf8]{inputenc}

% \setlength{\hoffset}{-1cm}
% \setlength{\voffset}{-0.5cm}
% \addtolength{\textwidth}{1,5cm}
% \addtolength{\textheight}{0.5cm}
\usepackage{tikz}
\usepackage{verbatim}
\usepackage{amsmath}
\usepackage{amsxtra}
\usepackage{amscd}
\usepackage{amsthm}
\usepackage{amsfonts}
\usepackage{amssymb}
\usepackage{eucal}
\usetikzlibrary{arrows}
\usepackage{array}

\usepackage[all, cmtip]{xy}
\usepackage{stmaryrd}
\usepackage{color}
%\font \srm=cmr5

%\input prepictex
%\input pictex
%\input postpictex
%\usepackage{mathptm}

\usepackage{amsthm,amsfonts,amssymb,amsmath,amsxtra}
\usepackage[all]{xy}
% force xy arrow tips in computer modern font:
\SelectTips{cm}{}
\usepackage{xr-hyper}
\usepackage[colorlinks=
   citecolor=Black,
   linkcolor=Red,
   urlcolor=Blue]{hyperref}
\usepackage{verbatim}

\textheight=21cm
\textwidth=13cm

\usepackage{mathrsfs}

% load xspace package, for smart spacing after certain commands
\RequirePackage{xspace}
% load etoolbox package, for programming features
\RequirePackage{etoolbox}
% load varwidth package, for text environments which are automatically the natural width of the text they contain
\RequirePackage{varwidth}
% load enumitem package, for easy margin adjustment in enumerate and itemize environments
\RequirePackage{enumitem}
% load tensor package, for good placement of super/subscripts to the left of symbols
\RequirePackage{tensor}
% load mathtools package, for various extensions of amsmath
\RequirePackage{mathtools}
% load longtable package, which allows tables to (if needed) split over multiple pages
\RequirePackage{longtable}
% load multirow package, which allows cells spanning multiple rows in tables
\RequirePackage{multirow}

% put sections only (as opposed to subsections) in the table of contents
\setcounter{tocdepth}{1}

\def\fl{\flat}
\def\l{\lambda}
\def\tW{\tilde W}
\def\o{\omega}
\def\a{\alpha}
\def\<{\langle}
\def\>{\rangle}
\def\e{\epsilon}
\def\a{\alpha}

\newcommand{\BA}{\ensuremath{\mathbb {A}}\xspace}

\newcommand{\BC}{\ensuremath{\mathbb {C}}\xspace}

\newcommand{{\BG}}{\ensuremath{\mathbb {G}}\xspace}

\newcommand{{\BK}}{\ensuremath{\mathbb {K}}\xspace}
\newcommand{\BL}{\ensuremath{\mathbb {L}}\xspace}
\newcommand{\BM}{\ensuremath{\mathbb {M}}\xspace}

\newcommand{\BP}{\ensuremath{\mathbb {P}}\xspace}
\newcommand{\BQ}{\ensuremath{\mathbb {Q}}\xspace}
\newcommand{\BR}{\ensuremath{\mathbb {R}}\xspace}

\newcommand{\BV}{\ensuremath{\mathbb {V}}\xspace}

\newcommand{\BX}{\ensuremath{\mathbb {X}}\xspace}

\newcommand{\BZ}{\ensuremath{\mathbb {Z}}\xspace}

\newcommand{\CA}{\ensuremath{\mathcal {A}}\xspace}
\newcommand{\CB}{\ensuremath{\mathcal {B}}\xspace}

\newcommand{\CF}{\ensuremath{\mathcal {F}}\xspace}
\newcommand{\CG}{\ensuremath{\mathcal {G}}\xspace}
\newcommand{\CH}{\ensuremath{\mathcal {H}}\xspace}

\newcommand{\CL}{\ensuremath{\mathcal {L}}\xspace}
\newcommand{\CM}{\ensuremath{\mathcal {M}}\xspace}

\newcommand{\CO}{\ensuremath{\mathcal {O}}\xspace}

\newcommand{\CS}{\ensuremath{\mathcal {S}}\xspace}
\newcommand{\CT}{\ensuremath{\mathcal {T}}\xspace}

\newcommand{\ad}{{\mathrm{ad}}}

\DeclareMathOperator{\diag}{diag}

\DeclareMathOperator{\Adm}{Adm}

\newcommand{\GL}{\mathrm{GL}}

\newcommand{\GU}{\mathrm{GU}}

\newcommand{\Int}{\ensuremath{\mathrm{Int}}\xspace}

\newcommand{\loc}{\ensuremath{\mathrm{loc}}\xspace}

\newcommand{\naive}{\ensuremath{\mathrm{naive}}\xspace}

\newcommand{\PGL}{{\mathrm{PGL}}}

\DeclareMathOperator{\Res}{Res}

\newcommand{\SL}{{\mathrm{SL}}}
\DeclareMathOperator{\Spec}{Spec}
\DeclareMathOperator{\Spf}{Spf}
\newcommand{\SO}{{\mathrm{SO}}}
\newcommand{\Sp}{{\mathrm{Sp}}}

\DeclareMathOperator{\sgn}{sgn}

\newcommand{\wt}{\widetilde}

\newcommand{\ov}{\overline}

%%% some additional macros

%\newcommand{\un}{\underline}
 \newcommand{\br}{\breve}

% Equation  \AMSname
% Theorem   \theoremname

% Theorem environments.
%
\newtheorem{theorem}{Theorem}
\newtheorem{proposition}[theorem]{Proposition}
\newtheorem{lemma}[theorem]{Lemma}
\newtheorem {conjecture}[theorem]{Conjecture}
\newtheorem{corollary}[theorem]{Corollary}

\theoremstyle{definition}
\newtheorem{definition}[theorem]{Definition}
\newtheorem{example}[theorem]{Example}

\newtheorem{remark}[theorem]{Remark}
\newtheorem{remarks}[theorem]{Remarks}

\newtheorem{Notation}[theorem]{Notation}

\newenvironment{altenumerate}
   {\begin{list}
      {\textup{(\theenumi)} }
      {\usecounter{enumi}
       \setlength{\labelwidth}{0pt}
       \setlength{\labelsep}{0pt}
       \setlength{\leftmargin}{0pt}
       \setlength{\itemsep}{\the\smallskipamount}
       \renewcommand{\theenumi}{\roman{enumi}}
      }}
   {\end{list}}
\newenvironment{altitemize}
   {\begin{list}
      {$\bullet$}
      {\setlength{\labelwidth}{0pt}
	   \setlength{\itemindent}{5pt}
       \setlength{\labelsep}{5pt}
       \setlength{\leftmargin}{0pt}
       \setlength{\itemsep}{\the\smallskipamount}
      }}
   {\end{list}}

\numberwithin{equation}{section}
\numberwithin{theorem}{section}

%%%% macros added by Brian
%%%% many of these require the etoolbox package, which should be loaded above

% gets rid of indentation in itemize and enumerate enivronments, and adds
% a small space between list items:
\setitemize[0]{leftmargin=*,itemsep=\the\smallskipamount}
\setenumerate[0]{leftmargin=*,itemsep=\the\smallskipamount}

% basic right arrow, short in inlines and long in displays
\renewcommand{\to}{%
   \ifbool{@display}{\longrightarrow}{\rightarrow}%
   }
% redefine \mapsto to be short in inlines and long in displays
\let\shortmapsto\mapsto
\renewcommand{\mapsto}{%
   \ifbool{@display}{\longmapsto}{\shortmapsto}%
   }
% stretchable labeled right (2nd is xy-style) & left arrows, well-behaved inline or displayed
\newlength{\olen}
\newlength{\ulen}
\newlength{\xlen}
\newcommand{\xra}[2][]{%
   \ifbool{@display}%
      {\settowidth{\olen}{$\overset{#2}{\longrightarrow}$}%
       \settowidth{\ulen}{$\underset{#1}{\longrightarrow}$}%
       \settowidth{\xlen}{$\xrightarrow[#1]{#2}$}%
       \ifdimgreater{\olen}{\xlen}%
          {\underset{#1}{\overset{#2}{\longrightarrow}}}%
          {\ifdimgreater{\ulen}{\xlen}%
             {\underset{#1}{\overset{#2}{\longrightarrow}}}
             {\xrightarrow[#1]{#2}}}}%
      {\xrightarrow[#1]{#2}}
   }
\makeatother
\newcommand{\xyra}[2][]{%
   \settowidth{\xlen}{$\xrightarrow[#1]{#2}$}%
   \ifbool{@display}%
      {\settowidth{\olen}{$\overset{#2}{\longrightarrow}$}%
       \settowidth{\ulen}{$\underset{#1}{\longrightarrow}$}%
       \ifdimgreater{\olen}{\xlen}%
          {\mathrel{\xymatrix@M=.12ex@C=3.2ex{\ar[r]^-{#2}_-{#1} &}}}%
          {\ifdimgreater{\ulen}{\xlen}%
             {\mathrel{\xymatrix@M=.12ex@C=3.2ex{\ar[r]^-{#2}_-{#1} &}}}
             {\mathrel{\xymatrix@M=.12ex@C=\the\xlen{\ar[r]^-{#2}_-{#1} &}}}}}%
      {\mathrel{\xymatrix@M=.12ex@C=\the\xlen{\ar[r]^-{#2}_-{#1} &}}}%
   }
\makeatletter
\newcommand{\xla}[2][]{%
   \ifbool{@display}%
      {\settowidth{\olen}{$\overset{#2}{\longleftarrow}$}%
       \settowidth{\ulen}{$\underset{#1}{\longleftarrow}$}%
       \settowidth{\xlen}{$\xleftarrow[#1]{#2}$}%
       \ifdimgreater{\olen}{\xlen}%
          {\underset{#1}{\overset{#2}{\longleftarrow}}}%
          {\ifdimgreater{\ulen}{\xlen}%
             {\underset{#1}{\overset{#2}{\longleftarrow}}}
             {\xleftarrow[#1]{#2}}}}%
      {\xleftarrow[#1]{#2}}
   }
% isomorphism arrow, short in inlines and long in displays
\newcommand{\isoarrow}{%
   \ifbool{@display}{\overset{\sim}{\longrightarrow}}{\xrightarrow\sim}%
   }

\newcommand{\Mloc}{\BM^{\rm loc}_K(G,\{\mu\})}

\begin{document}

\title{Good and semi-stable reductions of Shimura varieties}

\author{X. He}
\address{Department of Mathematics, University of Maryland, College Park, MD 20742, USA}
\curraddr{The Institute of Mathematical Sciences and Department of Mathematics, The Chinese University of Hong Kong, Shatin, N.T., Hong Kong.}
\email{xuhuahe@gmail.com}
\author{G. Pappas}
\address{Dept. of Mathematics, Michigan State University, E. Lansing, MI 48824, USA}
\email{pappasg@msu.edu}

\author{M. Rapoport}
\address{Mathematisches Institut der Universit\"at Bonn, Endenicher Allee 60, 53115 Bonn, Germany, and Department of Mathematics, University of Maryland, College Park, MD 20742, USA}
\email{rapoport@math.uni-bonn.de}

\date{\today}

\begin{abstract}
We study variants of the local models constructed by the second author and Zhu and consider corresponding integral models of Shimura varieties of abelian type. We determine all cases of good, resp. of semi-stable, reduction under  tame ramification hypotheses.  
\end{abstract}

\date{\today}
\maketitle

\tableofcontents

\section{Introduction}
 The problem of the reduction modulo $p$ of a Shimura variety has a long and complicated history, perhaps beginning with Kronecker. The case of the modular curve (the Shimura variety associated to $\GL_2$) is essentially solved after the work of Igusa, Deligne, Drinfeld and Katz-Mazur. In particular, it is known that the modular curve has good reduction at $p$ if the level structure is prime to $p$. If the level structure is of $\Gamma_0(p)$-type (in addition to some level structure prime to $p$), then the modular curve has semi-stable reduction (one even has a global understanding of the reduction modulo $p$, as the union of two copies of the modular curve with level structure prime to $p$, crossing transversally at the set of supersingular points). Are there other level structures such that the reduction modulo $p$ is good, resp. is semi-stable? 

This is the question addressed in the present paper, in the context of general Shimura varieties. The question can be interpreted in two different ways. One can ask whether there exists \emph{some} model over $\Spec \BZ_{(p)}$ which has good, resp. semi-stable reduction. In the case of the modular curve, one can prove that, indeed, the two examples above exhaust all possibilities (this statement has to be interpreted correctly, by considering the natural compactification of the modular curve). This comes down to a statement about the spectral decomposition under the action of the Hecke algebra of the $\ell$-adic cohomology of modular curves. Unfortunately, the generalization of this statement to other Shimura varieties seems out of reach at the moment.

The other possible interpretation of the question is to ask for good, resp. semi-stable, reduction of a specific class of $p$-integral models of Shimura varieties. Such a specific class has been established in recent years for Shimura varieties with level structure which is \emph{parahoric} at $p$, the most general result being due to M.~Kisin and the second author \cite{KP}. The main point of these models is that their  singularities are modeled by their associated \emph{local models}, cf. \cite{Pappasicm}. These are projective varieties which are defined in a certain sense by linear algebra, cf. \cite{H, R}. More precisely, for every closed point of the reduction modulo $p$ of the $p$-integral model of the Shimura variety, there is  an isomorphism between the strict henselization of its local ring and the strict henselization of the local ring of a corresponding closed point in the reduction modulo $p$ of the local model. Very often every closed point of the local model is attained in this way. In this case, the model of the Shimura variety has good, resp. semi-stable, reduction if and only if the local model has this property. Even when this attainment statement is not known, we deduce that if the local model has good, resp. semi-stable, reduction, then so does the model of the Shimura variety. Therefore, the emphasis of the present paper is on the structure of the singularities of the local models and our results  determine   local models which have good, resp. semi-stable reduction.

Let us state now the main results of the paper, as they pertain to local models. See Section \ref{s:shimvar} for corresponding results for Shimura varieties, and Section \ref{s:RZspaces} for results on Rapoport-Zink spaces.  Local models are associated to \emph{local model triples}. Here a LM triple over a finite extension $F$ of $\BQ_p$ is a triple $(G, \{\mu\}, K)$ consisting of a reductive group $G$ over $F$, a conjugacy class of cocharacters $\{\mu\}$ of $G$ over an algebraic closure of $F$, and a parahoric group $K$ of $G$. We sometimes write $\CG$ for the affine smooth group scheme over $O_F$ corresponding to $K$.  It is assumed that the cocharacter $\{\mu\}$ is minuscule (i.e., any root takes values in $\{0, \pm 1\}$ on $\{\mu\}$). The \emph{reflex field} of the LM triple $(G, \{\mu\}, K)$ is the field of definition of the conjugacy class $\{\mu\}$. One would like to associate to  $(G, \{\mu\}, K)$ a \emph{local model} $\Mloc$, a flat projective  scheme over the ring $O_E$ of integers in the corresponding reflex field $E$, with action of $\CG_{O_E}$. Also, one would like to characterize uniquely this local model.

At this point a restrictive hypothesis enters. Namely, we have to impose throughout most of the paper that the group $G$ splits over a tamely ramified extension. Indeed, only under this hypothesis, X.~Zhu and the second author define local models \cite{PZ}  which generalize the local models defined earlier in the concrete situations considered by Arzdorf, de~Jong,  G\"ortz, Pappas, Rapoport-Zink, Smithling, comp. \cite{PRS}. 
Our first main result is that the result of the construction in \cite{PZ} is unique, i.e., is independent of all auxiliary choices. This independence issue  was left 
unexamined in \emph{loc. cit.}. In fact, we slightly modify here the construction in \cite{PZ} and define  $\Mloc$ in such a way  that it always has reduced special fiber, a property that is stable under base change. In \cite{PZ} this reducedness property for the local models of \cite{PZ} was established  only when $\pi_1(G_{\rm der})$ has order prime to $p$ (in which case the local model of \cite{PZ} coincides with $\Mloc$). This then also implies that the definition of $\Mloc$ is well-posed. Here, we need uniqueness after base-changing to an unramified extension to even make unambiguous sense of our classification of local models which are smooth or semi-stable. We show:

\begin{theorem}\label{indepIntro}
Let $(G, \{\mu\}, K)$ be an LM triple such that $G$ splits over a tamely ramified extension of $F$. 
The local model $\Mloc$  is independent of all choices made in its construction. Its generic fiber is $G_E$-equivariantly isomorphic to the projective homogeneous space $X_{\{\mu\}}$, and its 
 geometric special fiber $\Mloc\otimes_{O_E}k $ is reduced  and is
$\CG\otimes_{O_F}k$-equivariantly  isomorphic to the \emph{$\{\mu\}$-admissible locus} 
$\frak A_K(  G, \{  \mu\})$ in an affine partial flag variety over $k$.
\end{theorem}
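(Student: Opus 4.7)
The plan is to prove Theorem~\ref{indepIntro} by first giving a slight modification of the Pappas--Zhu construction of \cite{PZ} inside a Beilinson--Drinfeld-type global affine Grassmannian, then identifying both its generic and special fibers intrinsically, and finally deducing independence of all auxiliary choices from the intrinsic nature of these fibers.

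First, I would revisit the \cite{PZ} construction: given a tame extension $\tilde F/F$ splitting $G$, a pinning of a Chevalley form, and a smooth affine extension $\underline{\CG}$ of $\CG$ to $\BA^1_{O_F}$, one forms the mixed-characteristic global affine Grassmannian $\mathrm{Gr}_{\underline{\CG}, O_F}$ and takes an initial version of $\Mloc$ to be the scheme-theoretic closure of the $\{\mu\}$-orbit $X_{\{\mu\}}$ inside its generic fiber. The modification is to replace this by the maximal $\CG_{O_E}$-stable flat closed subscheme whose geometric special fiber is reduced; equivalently, one takes the scheme-theoretic image from the disjoint union of Schubert varieties over the $\{\mu\}$-admissible set. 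This change is a no-op in the case $p\nmid |\pi_1(G_{\mathrm{der}})|$ handled in \cite{PZ}, but is crucial in general.

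Second, the generic fiber identification $\Mloc_E \cong X_{\{\mu\}}$ is immediate, since the restriction of $\mathrm{Gr}_{\underline{\CG}, O_F}$ to the generic fiber is the usual affine Grassmannian of $G$, inside which the $\{\mu\}$-orbit is $X_{\{\mu\}}$. For the special fiber, the underlying topological space of $\Mloc\otimes_{O_E}k$ coincides with the admissible locus $\frak A_K(G,\{\mu\})=\bigcup_{w\in\Adm_K(\{\mu\})}\mathrm{Schub}(w)$, by the set-theoretic analysis of \cite{PZ} building on the work of Kottwitz--Rapoport, Haines--Rapoport, and Richarz: flatness plus the classification of $\CG\otimes k$-orbits in the special fiber pick out exactly the admissible set. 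Combined with reducedness, this yields the desired scheme-theoretic identification, since the affine Schubert varieties making up $\frak A_K$ are known to be reduced and Frobenius split after Faltings, Pappas--Rapoport, and Zhu's proof of the coherence conjecture.

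The main obstacle is proving reducedness of the geometric special fiber in the cases not covered by \cite{PZ}, i.e.\ when $p\mid |\pi_1(G_{\mathrm{der}})|$. I would approach this via a $z$-extension: choose a central extension $1\to Z\to \tilde G\to G\to 1$ with $\tilde G_{\mathrm{der}}$ simply connected and $Z$ an induced torus, lift $(G,\{\mu\},K)$ to $(\tilde G,\{\tilde\mu\},\tilde K)$, apply the reducedness theorem of \cite{PZ} to $\tilde G$, and descend reducedness along the induced smooth surjective morphism on the corresponding global affine Grassmannians (reducedness is preserved under smooth quotients by a connected smooth group scheme). Once this is established, independence of auxiliary choices follows by a dominance argument: any two choices $(\underline{\CG}_i,\dots)$ can be compared through a third choice dominating both, and the resulting flat projective $\CG_{O_E}$-stable subschemes of the larger global Grassmannian share the same generic fiber $X_{\{\mu\}}$ and the same reduced special fiber $\frak A_K(G,\{\mu\})$; being flat over $O_E$ with given reduced special fiber and fixed generic fiber, each is the scheme-theoretic closure of $X_{\{\mu\}}$, hence they coincide.
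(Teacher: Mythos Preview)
Your proposal has genuine gaps in two places, and the overall strategy differs from the paper's.

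First, your modified definition and the reducedness argument are problematic. You propose to take ``the maximal $\CG_{O_E}$-stable flat closed subscheme whose geometric special fiber is reduced'' and then, to prove reducedness, to pass to a $z$-extension $\tilde G$ and ``descend reducedness along the induced smooth surjective morphism'' on global affine Grassmannians. But this morphism is \emph{not} smooth when $p\mid |\pi_1(G_{\mathrm{der}})|$: indeed, it is known (see \cite{HR}, \cite{HR2}) that the original Pappas--Zhu local model $M_{\CG,\mu}$ can have non-reduced special fiber in exactly these cases, which would be impossible if reducedness descended along a smooth surjection from $M_{\tilde\CG,\tilde\mu}$. The paper avoids this entirely by \emph{defining} $\Mloc:=M_{\tilde\CG,\tilde\mu}\otimes_{O_{E_{\rm ad}}}O_E$ for a $z$-extension $\tilde G\to G_{\rm ad}$ with $\tilde G_{\rm der}$ simply connected, and then invoking \cite[Thm.~9.1]{PZ} for $\tilde G$ directly; reducedness is built in, not deduced by descent. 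Independence of the choice of $z$-extension is handled by a fibered-product argument (or, alternatively, by identifying $\Mloc$ with the normalization of $M_{\CG_{\rm ad},\mu_{\rm ad}}\otimes O_E$).

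Second, your independence argument for the remaining choices does not work as stated. You assert that two flat projective $O_E$-schemes with the same generic fiber $X_{\{\mu\}}$ and the same reduced special fiber $\frak A_K(G,\{\mu\})$ must coincide because each is ``the scheme-theoretic closure of $X_{\{\mu\}}$.'' But scheme-theoretic closure is taken \emph{inside an ambient scheme}, and the ambient Beilinson--Drinfeld Grassmannian ${\rm Gr}_{\underline\CG,O}$ itself depends on the auxiliary choices (the group scheme $\underline\CG$ over $O[u]$, the uniformizer $\pi$, etc.). Your ``third choice dominating both'' is not defined, and there is no obvious common ambient space in which to take closures. The paper instead proves directly that the base change $\underline\CG_x\otimes_{O[u]}O[[u]]$ is, up to isomorphism, independent of the choices: over $\breve O[[u]]$ this is relatively elementary, but over $O[[u]]$ it requires a delicate Weil-\'etale descent argument, analyzing the automorphism group scheme of $\underline\CG^*_{x^*}\otimes\breve O[[u]]$ and showing that the relevant $H^1(\hat\BZ,-)$ obstruction vanishes because the kernel is pro-unipotent. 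Only after the ambient Grassmannian is shown to be independent does the flat-closure description of $M_{\CG,\mu}$ yield independence of the local model.
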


We refer to the body of the text for undefined items. We conjecture that the properties in Theorem \ref{indepIntro} uniquely characterize the local model $\BM^\loc_K(G, \{\mu\})$, cf.  Conjecture \ref{uniqLM}. 

Local models should exist even without the tameness hypothesis.  Levin \cite{Levin} has achieved some progress on this front by extending the Pappas-Zhu construction to some wild cases. Scholze \cite{Schber} considers  the general case and defines a \emph{diamond local model}  over $O_E$ attached to the LM triple $(G, \{\mu\}, K)$. Furthermore, he proves that there is at most one local model whose associated  ``$v$-sheaf''  is the diamond local model. Unfortunately, the existence question is still open. Hence in the general situation, Scholze does not have a construction of a local model but has a characterization; under our tameness hypothesis, we have a construction but no characterization. In the case of \emph{classical groups}, the situation is somewhat better: under some additional hypothesis, we then show that the local models of \cite{PZ} satisfy Scholze's characterizing property, cf. Corollary \ref{Schmodclass}. 

Our second main result gives a characterization of all cases when Pappas-Zhu  local models have good reduction. In its statement, $\breve F$ denotes the completion of the maximal unramified extension of $F$. 

\begin{theorem}\label{main1Intro}
Let $(G, \{\mu\}, K)$ be a LM triple over $F$ such that $G$ splits over a tamely ramified extension of $F$.  Assume that $p\neq 2$. 
Assume that $G_\ad$ is $F$-simple, $\mu_\ad$ is not the trivial cocharacter, and that in the product decomposition over $\breve F$, 
$
G_\ad\otimes_F\breve F= \breve G_{\ad, 1}\times\cdots \times \breve G_{\ad, m},
$
each factor $\breve G_{\ad, i}$ is  \emph{absolutely simple}. 
Then the  local model $\Mloc$ is smooth over $\Spec O_E$ if and only if $K$ is hyperspecial or $(G, \mu, K)$ is an LM triple of exotic good reduction type. 
\end{theorem}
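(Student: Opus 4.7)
The plan is to reduce the smoothness of $\Mloc$ over $\Spec O_E$ to a combinatorial question about the $\{\mu\}$-admissible locus in the affine partial flag variety, and then resolve that question by a case-by-case analysis.

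First, smoothness over $\Spec O_E$ is preserved and detected by the unramified base change $O_E \to O_{\breve E}$. The scheme $\Mloc$ is flat and projective over $O_E$ with smooth generic fiber $X_{\{\mu\}}$, so it is smooth over $\Spec O_E$ if and only if its geometric special fiber is smooth and equidimensional of the dimension of $X_{\{\mu\}}$. By Theorem \ref{indepIntro}, this geometric special fiber is $\CG\otimes_{O_F}k$-equivariantly isomorphic to the $\{\mu\}$-admissible locus $\mathfrak A_K(G,\{\mu\})$. The question therefore becomes: for which triples is $\mathfrak A_K(G,\{\mu\})$ a smooth $k$-variety? Since after base change to $\breve F$ the triple decomposes into the absolutely simple factors $\breve G_{\ad,i}$ and $\mathfrak A_K$ decomposes correspondingly as a product, smoothness can be checked one factor at a time, so we may assume $G$ is absolutely simple over $\breve F$ and appeal to the classification of tame affine Dynkin diagrams and their minuscule coweights.

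Next, the forward implication. When $K$ is hyperspecial, $W_K$ equals the finite Weyl group $W_0$, and for minuscule $\{\mu\}$ every admissible element $w \le t_{w'\mu}$ lies in the double coset $W_0 t_\mu W_0$, so $\Adm(\{\mu\})$ inside $W_K\backslash\widetilde W/W_K$ reduces to a single point. The corresponding closed Schubert variety is $\CG$-equivariantly isomorphic to $X_{\{\mu\}}$ and is smooth. For the exotic good reduction types, which form a finite explicit list, smoothness of $\mathfrak A_K(G,\{\mu\})$ is verified by direct inspection of the admissible set in each case, drawing on the explicit descriptions of local models worked out in the earlier literature on unitary and orthogonal local models.

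The harder converse, and the main obstacle, is to show that outside the hyperspecial and exotic cases $\mathfrak A_K(G,\{\mu\})$ fails to be smooth. The plan is an exhaustive case analysis organized by the tame affine Dynkin type of $G$ over $\breve F$, by the $W_0$-orbit of the minuscule cocharacter $\{\mu\}$ (the minuscule hypothesis severely restricts this, excluding types $F_4$, $G_2$, $E_8$ outright), and by the parahoric subgroup $K$ up to conjugacy. For each surviving combination, compute $\Adm(\{\mu\})$ in $W_K\backslash\widetilde W/W_K$ and deduce non-smoothness in one of two ways: either $\Adm(\{\mu\})$ has at least two Bruhat-maximal elements, so $\mathfrak A_K$ has several top-dimensional components and cannot be irreducible, let alone smooth; or $\Adm(\{\mu\})$ is a single Bruhat interval but the associated Schubert variety is singular at a torus-fixed point, detected by a Carrell--Peterson style count of the edges of the Bruhat graph at the minimum element and comparison with the length of the maximum. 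The bulk of the work is this bookkeeping, especially for the ramified classical types at non-hyperspecial maximal parahorics and for $E_6, E_7$; it is this combinatorial classification that constitutes the principal difficulty of the theorem, and its output leaves precisely the hyperspecial triples and the triples of exotic good reduction type.
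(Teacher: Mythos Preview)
Your outline follows the same broad strategy as the paper --- reduce to the absolutely simple case, verify the forward implication directly, and for the converse run a case analysis on the admissible locus --- but there are two genuine gaps.

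First, the assertion that the minuscule hypothesis excludes types $F_4$, $G_2$, $E_8$ ``outright'' is misleading. It is true that a \emph{split} group of type $F_4$, $G_2$, or $E_8$ has no nontrivial minuscule cocharacter. But the classification is in terms of the \emph{local Dynkin diagram} of $G$ over $\breve F$, not the absolute type. The tamely ramified triality form ${}^3D_4$ has local Dynkin type $\tilde G_2$ (with the image $\lambda = \omega_2^\vee$ of a minuscule $\mu$), and the tamely ramified form ${}^2E_6$ has local Dynkin type $\tilde F_4$ (with $\lambda=\omega_1^\vee$). Both carry special, non-hyperspecial maximal parahorics, and in both cases the admissible set is a single Schubert cell. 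These cases cannot be skipped; the paper spends real effort on them.

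Second, your dichotomy ``either multiple maximal elements, or a single Schubert variety whose singularity is detected by a Carrell--Peterson count'' is insufficient. A Carrell--Peterson edge count (equivalently, symmetry of the Poincar\'e polynomial) detects \emph{rational} smoothness, not smoothness. In the twisted setting there exist irreducible admissible loci that are rationally smooth but singular: this happens for certain ramified unitary groups at the self-dual lattice parahoric (odd dimension, $\mu=(1,0,\ldots,0)$), and most notably for the ramified triality ${}^3D_4$ at its special parahoric. For such cases your toolkit yields nothing, and one must invoke either Kumar's criterion (computing $e_1X(w)$ and checking it against the product of inverse positive roots), explicit lattice-theoretic descriptions of the local model, or the smoothness classification of Haines--Richarz for Schubert varieties in twisted affine Grassmannians. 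The paper's proof of the converse therefore has an additional third layer beyond irreducibility and rational smoothness, and this layer is where the ${}^3D_4$ case in particular is settled.
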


Here the first alternative, that $K$ be hyperspecial, is the natural generalization of the case of the modular curve with level structure prime to $p$.  There are two cases of the second (``exotic'') alternative: The first is a striking discovery of T.~Richarz, cf.  \cite[Prop. 4.16]{Arz}. He proved that the local model associated to an even, resp. odd, ramified unitary group $G$, the cocharacter $\{\mu\}=(1, 0, \ldots, 0)$, and the parahoric subgroup which is the stabilizer of a $\pi$-modular, resp. almost $\pi$-modular, lattice has good reduction (the case of a $\pi$-modular lattice is much easier and was known earlier, cf. \cite[5.3]{PR}). The second case, which is a new observation of the current paper, is that of the local model associated to an even ramified quasi-split orthogonal group $G$, the cocharacter $\{\mu\}$ that corresponds to the orthogonal Grassmannian of isotropic subspaces of maximal dimension, and the parahoric $K$ given by the stabilizer of an almost  selfdual lattice. We therefore see that in the statement of the theorem both implications are interesting and non-trivial. 

Let us comment on the hypotheses in this theorem. The hypothesis that $G_\ad$ be $F$-simple is just for convenience. However, the hypothesis that each factor $ \breve G_{\ad, i}$ be absolutely simple is essential to our method. It implies that the translation element associated to $\{\mu\}$ in the extended affine Weyl group for $ \breve G_{\ad, i}$ is not too large and this limits drastically the number of possibilities of LM triples with associated local models of good reduction.  Note that the tameness assumption on $G$ is automatically satisfied for $p\geq 5$ under these hypotheses. We refer to the passage after the statement of Theorem \ref{maingoodred} for a description of the structure of the proof of Theorem \ref{main1Intro}. Roughly speaking, we eliminate most possibilities by various combinatorial considerations and calculations of Poincare polynomials.
Ultimately, we reduce to a few cases that can be examined explicitly, and a single exceptional case (for the quasi-split ramified triality ${}^3D_4$) which is handled by work of Haines-Richarz \cite{HR}.

Our third main result gives a characterization of all cases when Pappas-Zhu local models have semi-stable reduction. 

\begin{theorem}\label{main2Intro}
Let $(G, \{\mu\}, K)$ be a LM triple over $F$ such that $G$ splits over a tamely ramified extension of $F$.  Assume $p\neq 2$. 
Assume that $G_\ad$  is  absolutely simple. Then the  local model $\Mloc$ has semi-stable, but non-smooth, reduction  over $\Spec O_E$ if and only if its enhanced Tits datum appears in the table of Theorem \ref{mainssred}. 
\end{theorem}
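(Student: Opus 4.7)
The plan is to parallel the proof of Theorem \ref{main1Intro}: first translate semi-stability into a combinatorial condition on the $\mu$-admissible set, then use the absolute simplicity of $G_\ad$ to enumerate candidates, and finally match the surviving list against the table in Theorem \ref{mainssred}. By Theorem \ref{indepIntro}, the geometric special fiber of $\Mloc$ is the admissible locus $\mathfrak{A}_K(G,\mu)$ in an affine partial flag variety, stratified by the $W_K$-double cosets in $\Adm(\mu)$. Semi-stable reduction requires (i) every irreducible component (the closure of a top-length stratum) is smooth; (ii) the special fiber is a strict normal crossings divisor; and (iii) $\Mloc$ is regular, with the uniformizer $\pi_E$ factoring locally as the product of the defining equations of the components through a given point. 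The worst points are the zero-dimensional strata; at such a minimal element $w\in\Adm(\mu)$, the number of top-dimensional components through $w$ equals the number of maximal elements of $\Adm(\mu)$ above $w$ in the Bruhat order on $W_K$-cosets, and these components must intersect transversally, with the product of their equations producing $\pi_E$ up to a unit.

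Translating (i)--(iii) into affine-Weyl-theoretic data, I would use smoothness criteria for affine Schubert varieties (the Carrell--Kumar--Peterson criterion, pattern avoidance, or direct inspection in low rank); explicit tangent-space computations at $T$-fixed points of the partial flag variety; and the behavior of the translation element $t^\mu$ under the Kottwitz map, which governs how $\pi_E$ enters the local equations. Absolute simplicity of $G_\ad$ restricts the local Dynkin diagram of $G$ to the finite list of Tits, and for each type the minuscule conjugacy classes $\{\mu\}$ and parahorics $K$ are also bounded. Proceeding type-by-type, I would first eliminate candidates in which a top-dimensional Schubert variety in $\mathfrak{A}_K(G,\mu)$ is singular (typically by Poincar\'e polynomial computations), and then eliminate those in which the number of components through a minimal stratum exceeds the embedding dimension, or in which the valuation of $\pi_E$ is inconsistent with an NCD equation. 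What remains should coincide with the entries of the table in Theorem \ref{mainssred}.

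The main obstacle is the positive verification in the surviving cases: one must exhibit an explicit semi-stable local chart, not merely verify the combinatorial necessary conditions above. For the classical-group entries, the lattice-chain moduli descriptions in \cite{PRS}, suitably modified to match our $\Mloc$, typically furnish local coordinates in which the special fiber is cut out by $x_1\cdots x_r=0$ and $\Mloc$ by $x_1\cdots x_r-\pi_E=0$, closing the argument. For the entries involving triality ${}^3D_4$ or other exceptional behaviour, I would adapt the method of \cite{HR} (used for the good-reduction exotic case) to produce the chart directly from the Pappas--Zhu construction and the Bruhat order on the affine Weyl group. The absolute simplicity assumption keeps this final list short, so although the case analysis is intricate, it terminates.
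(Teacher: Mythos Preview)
Your overall architecture---extract combinatorial necessary conditions from the special fiber, enumerate the finitely many enhanced Tits data, eliminate, then verify the survivors by explicit charts---matches the paper's. But the paper organizes the elimination through a precise hierarchy that you have not identified, and without it your enumeration step is not tractable as written.

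The paper's key device is the chain of implications
\[
\text{semi-stable} \Longrightarrow \text{strictly pseudo semi-stable (SPSS)} \Longrightarrow \text{rationally SPSS} \Longrightarrow \text{CCP},
\]
where CCP is the purely combinatorial condition
\[
\#\{\text{extreme elements of }\Adm_{\tilde K}(\{\lambda\})\}\ \le\ \#\tilde K.
\]
The proof that rationally SPSS implies CCP is a short but clever Poincar\'e-polynomial argument: in $W_{\le\lambda',K}$ there are at most $\#\tilde K$ length-one elements (they are of the form $\tau s$ with $s\in\tilde K$), hence by symmetry at most $\#\tilde K$ colength-one elements; but each of the other irreducible components meets the chosen one in a distinct colength-one Schubert variety, and one more colength-one element arises from a simple reflection $s\notin\tilde K$ with $sw_{\lambda',K}<w_{\lambda',K}$. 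This yields the bound. Your substitute condition---``number of components through a minimal stratum exceeds the embedding dimension''---is a different inequality, harder to verify, and does not obviously yield a manageable case list. The CCP count, by contrast, is checked in the paper type-by-type with elementary combinatorics (Proposition~\ref{par-count} for type $\tilde A$, orbit counts for the others), producing a short finite list. Subsequent elimination then uses (i) non-symmetric Poincar\'e polynomials to kill cases failing rational SPSS, and (ii) Kumar's criterion and pattern avoidance in a finite flag variety to kill the remaining cases failing SPSS.

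Two specific misconceptions: first, the Kottwitz map plays no role in the argument---nothing about how $\pi_E$ enters local equations is used in the elimination; that only appears in the final verification charts. Second, there are no exceptional or triality entries in the semi-stable table: every surviving case is a split classical group (types $A$, $B$, $C$, $D$) with a small lattice chain, and the positive verification in Section~\ref{s:converse1} is entirely via standard lattice-chain moduli descriptions (G\"ortz's open charts, spin conditions, and in the orthogonal $r=1$ cases a direct analysis of linked quadrics). The Haines--Richarz method you invoke is used only in the good-reduction classification, for the single case of ramified ${}^3D_4$.
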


Again, let us comment on the hypothesis in this theorem. We are limited in the hypotheses of this theorem by the same constraints as in the criterion for good reduction--but we have to avoid the product of semi-stable  varieties since these are no longer semi-stable: this explains why we make the assumption that $G_\ad$ be  absolutely simple. The enhanced Tits datum of an LM triple is defined in Definition \ref{def:enhanced}. In the situation of Theorem \ref{main2Intro}, the enhanced Tits datum determines the LM triple over $F$ up to central isogeny and up to a scalar extension to an unramified extension of $F$. 

Again, as with Theorem \ref{main1Intro}, both implications in Theorem \ref{main2Intro} are interesting and non-trivial. The semi-stability in the case of the LM triple $(\PGL_n, (1,0,\ldots,0), K)$, where $K$ is an arbitrary parahoric subgroup has been known for a long time, due to the work of Drinfeld \cite{Dr}. The case of the LM triple $(\PGL_n, (1^{(r)},0^{(n-r)}), K)$, where $r$ is arbitrary and where $K$ is the parahoric subgroup stabilizing two adjacent vertex lattices appears in the work of G\"ortz \cite{Goertz}, although the significance of this case went unnoticed.  Related calculations also appear in work of Harris and Taylor \cite{HaT}. Another interesting case 
is when $G$ is the adjoint group of a symplectic group with its natural Siegel cocharacter and $K$ is the simultaneous stabilizer of a selfdual vertex lattice and an adjacent almost selfdual vertex lattice. This subgroup $K$ is the so-called ``Klingen parahoric" and the semi-stability in this case has been shown by Genestier and Tilouine \cite[6.3]{GenTil}. The case that triggered our interest in the classification of semi-stable local models is the case recently discovered by Faltings \cite{F}. Here $G$ is the adjoint group of the split orthogonal group of even size $2n$, the minuscule coweight is the one which leads to the hermitian-symmetric space given by a quadric, and $K$ is the parahoric subgroup simultaneously stabilizing the selfdual and the selfdual up to a scalar vertex lattices. Faltings' language is different from ours, and it could take the reader some effort to make the connection between our result and his. However, our point of view allows us to view Faltings' result as a corollary of the general results of  \cite{KP}; see Example \ref{Faltings}.  The list of Theorem \ref{mainssred} contains two more cases of LM triples with semi-stable associated local models, both for orthogonal groups, which seem to be new. Let us note here that the corresponding integral models of Shimura varieties are ``canonical'' in the sense of \cite{PCan}. In almost all of these cases of smooth or semi-stable reduction, these integral models can also be uniquely characterized more directly using an idea of Milne \cite{Milne} and results of Vasiu-Zink \cite{VasiuZink}, see Theorem \ref{3.6}.

We refer to the end of Section \ref{s:mainresults}  for  a description of the proof of Theorem \ref{main2Intro}. As a consequence of the proof, we obtain the following remarkable fact.
\begin{corollary}\label{corIntro}
Let $(G, \{\mu\}, K)$ be a LM triple over $F$  such that $G$ splits over a tamely ramified extension of $F$.  Assume $p\neq 2$. 
Assume that  $G$ is adjoint and absolutely simple. Then the  local model $\Mloc$ has semi-stable reduction if and only if $\Mloc$ has strictly pseudo semi-stable reduction. 
\end{corollary}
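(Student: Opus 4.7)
One direction is immediate: semi-stable reduction is by definition a strengthening of strictly pseudo semi-stable reduction, so that implication holds without any restriction on $(G,\{\mu\},K)$. The content of the corollary lies in the converse.

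My plan for the converse is to revisit the proof of Theorem \ref{main2Intro} (that is, of Theorem \ref{mainssred}) and to observe that its combinatorial backbone in fact classifies the a priori weaker condition of strict pseudo semi-stability. The key point is that both notions require the geometric special fiber $\Mloc\otimes_{O_E} k$ to be a reduced strict normal crossings divisor in an equidimensional scheme; the only distinction is whether a local equation of the total space has the form $x_1\cdots x_r = \pi_E$ or the more general form $x_1\cdots x_r = \pi_E^{e}$ for some $e \geq 1$. In particular, the two notions impose the same condition on the underlying geometric special fiber.

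Now the proof of Theorem \ref{mainssred} eliminates candidate LM triples $(G,\{\mu\},K)$, with $G$ adjoint absolutely simple, by a combinatorial analysis of the $\{\mu\}$-admissible locus $\frak A_K(G,\{\mu\})$ inside the affine partial flag variety (via Theorem \ref{indepIntro}). The invariants invoked there---dimensions and numbers of top-dimensional irreducible components, local structure along codimension-one Iwahori-orbit strata, and Poincar\'e polynomial computations in the extended affine Weyl group---depend only on the reduced geometric special fiber, and hence equally obstruct strict pseudo semi-stability. Consequently, any LM triple whose local model is strictly pseudo semi-stable must appear among the entries listed in the table of Theorem \ref{mainssred}.

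It then remains to check, case by case, that each entry in that table is in fact strictly semi-stable, i.e.\ that $e=1$ in the local equation above. For the $\PGL_n$ and symplectic cases the reflex field coincides with $F$ and the explicit constructions of Drinfeld, G\"ortz, and Genestier--Tilouine directly yield $e=1$; for the orthogonal and unitary cases this follows from the explicit descriptions of the corresponding local models in \cite{PR}, \cite{Arz}, \cite{F} and Example \ref{Faltings}. I expect the main obstacle to be the careful bookkeeping required to confirm that every combinatorial obstruction used in the proof of Theorem \ref{mainssred} is truly an obstruction to the SNC property of the geometric special fiber, and not to a finer regularity or uniformizer-factorization condition on the total space; once this verification is made, no additional cases can appear and the corollary follows.
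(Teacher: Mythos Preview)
Your approach to the nontrivial direction (SPSS $\Rightarrow$ semi-stable) is essentially the paper's approach: the elimination steps in Sections~\ref{s:pssredCCP}--\ref{s:psssred} depend only on the reduced geometric special fiber and hence rule out the same triples under the SPSS hypothesis; the survivors are exactly the list of Theorem~\ref{mainssred}, and Section~\ref{s:converse1} then verifies semi-stability case by case. So that half of your argument is fine.

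The gap is in the direction you call ``immediate''. It is \emph{not} true that semi-stable reduction is, by definition, a strengthening of strictly pseudo semi-stable reduction. Semi-stable (even strictly semi-stable) reduction only says the geometric special fiber is a (strict) normal crossings divisor: components are smooth and meet transversally. But SPSS, as defined in Definition~\ref{def:pseudorat}, additionally demands that the intersection of any collection of components be \emph{irreducible}. A strict normal crossings divisor need not satisfy this: already two smooth curves meeting transversally in two points (a ``banana'' configuration) is a strict NCD that fails SPSS. Your description of the difference between the two notions as merely ``$x_1\cdots x_r=\pi_E$ versus $x_1\cdots x_r=\pi_E^e$'' is likewise inaccurate: SPSS imposes no condition on the total space or on a local equation at all---it is purely a combinatorial/geometric condition on the reduced special fiber, and it includes a global irreducibility constraint that is invisible \'etale-locally.

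In the paper, the implication semi-stable $\Rightarrow$ SPSS is therefore not free; it also comes out of the classification. Once one knows (Theorem~\ref{mainssred}) that the semi-stable triples are exactly those on the list, the explicit computations in Section~\ref{s:converse1} show in each case that the special fiber has at most two irreducible components (except in the Drinfeld case, where the lattice-chain description makes all pairwise and higher intersections visibly irreducible), and that the intersections are irreducible. So both directions of the corollary are obtained as by-products of the full classification, not one of them ``by definition''.
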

 We refer to Definition \ref{def:pseudorat} for what it means that a scheme over the  spectrum of a discrete valuation ring has strictly pseudo semi-stable reduction. It is a condition that only involves the reduced special fiber; the above corollary shows that in the case at hand it implies that the total scheme $\Mloc$ is regular.

\smallskip

Let us now explain the lay-out of the paper. In Section \ref{s:LM}, we recall the local models constructed in \cite{PZ} and show that they are independent (in a sense to be made precise) of the auxiliary data used in their construction; we also introduce the modification of this construction that has reduced special fiber, and compare it with the hypothetical construction of Scholze \cite{Schber}. In Section \ref{s:shimvar} we explain the relation between Shimura varieties and local models. Section \ref{s:RZspaces} does the same for Rapoport-Zink spaces.  Section \ref{s:mainresults} contains the statements of the main results on local models. In Section \ref{s:pssredCCP} we introduce the concepts of (rationally) strictly pseudo semi-stable reduction and the component count property  (CCP condition), and prove that the former condition implies the latter. In Section \ref{s:CCP}, we give a complete list of all enhanced Coxeter data for which the CCP condition is satisfied. In Section \ref{s:ratpsss}, we exclude from this list the cases that do not have rationally strictly pseudo semi-stable reduction. At this point, we have all tools available to prove Theorem \ref{main1Intro}, and this is the content of Section \ref{s:maingood}. In Section \ref{s:psssred}, we use Kumar's criterion to eliminate all cases that do not have strictly pseudo semi-stable reduction. At this point, we have all tools available to prove one implication of Theorem \ref{main2Intro}, and this is the content of Section \ref{s:oneimpl}, where we also prove Corollary \ref{corIntro}. In the final long Section \ref{s:converse1}, we prove the other implication of Theorem \ref{main2Intro}.

\smallskip
\smallskip

\noindent \ \ {\bf Notation:} For a local field $F$, we denote by $\breve F$ the completion of its maximal unramified extension (in a fixed algebraic closure). We denote by $\kappa_F$ the residue field of $F$ and by $k$ the algebraic closure of $\kappa_F$ which is the residue field of $\breve F$. We always denote by $p$ the characteristic of $\kappa_F$.  

For a reductive group $G$, we denote by $G_{\rm der}$ its derived group, by $G_{\rm sc}$ the simply-connected covering of $G_{\rm der}$, and by $G_\ad$ its adjoint group. If $G$ is defined over the local field $F$, we denote by $\CB(G, F)$ the extended Bruhat-Tits building of $G(F)$; if $S\subset G$ is a maximal $F$-split torus of $G$, we denote by ${\bf A}(G,S, F)\subset \CB(G, F)$ the corresponding apartment. A parahoric subgroup $K$ of $G(F)$ is, by definition, the \emph{connected stabilizer} of a point $x\in \CB(G, F)$; by \cite{BTII}, there is a smooth affine group scheme $\CG_x$ over $O_F$ with generic fiber $G$ and connected special fiber such that $K=\CG_x(O_F)$.

We often write the base change $X\times_{\Spec R}\Spec R'$ as $X\otimes_RR'$, or simply as $X_{R'}$.

\smallskip
\smallskip

\noindent \ \ {\bf Acknowledgements:} We thank P.~Deligne, G.~Faltings, U.~G\"ortz, T.~Haines, V.~Pilloni, T.~Richarz, P.~Scholze and B.~Smithling
 for helpful discussions, and W.~M.~McGovern for interesting e-mail exchanges. We also thank the referee for his work. X. H. was partially supported by NSF grant DMS-1463852. G.P was partially supported by NSF grant DMS-1701619. M.R. was supported by the grant SFB/TR 45  from the Deutsche Forschungsgemeinschaft and by funds connected with the Brin E-Nnovate Chair  at the University of Maryland.
 
\smallskip

\vfill\eject

\section{Local models}\label{s:LM}

In this section, we discuss the theory of local models, as used in the paper. 

\subsection{Local  model triples}
 Let $F$ be a finite extension  of $\BQ_p$, with algebraic closure $\ov F$. A \emph{local model triple } (LM triple) over $F$ is a triple $(G, \{\mu\}, K)$ consisting of a connected reductive group $G$ over $F$, a conjugacy class $\{\mu\}$ of cocharacters of $G_{\ov F}$, and a parahoric subgroup $K$ of $G(F)$. It is assumed that $\{\mu\}$ is a minuscule cocharacter. 
 We denote by $\CG=\CG_K$ the extension of $G$ to a smooth group scheme  over $O_F$ corresponding to $K$. Then $\CG$  has connected fibers  and satisfies $K=\CG(O_F)$. We set
 $\br K=\CG(O_{\breve F})$. Sometimes we also write $(G, \{\mu\}, \CG)$ for  the LM triple.

 Two LM triples $(G, \{\mu\}, K)$ and $(G', \{\mu'\}, K')$ are isomorphic if there exists an isomorphism $G\to G'$ which takes $\{\mu\}$ to $\{\mu'\}$ and $\breve K$ to a conjugate of $\breve K'$. More generally, a morphism 
   \[
   \phi: (G, \{\mu\}, K)\to (G', \{\mu'\}, K')
   \] of LM triples is a group
 scheme homomorphism $\phi: G\to G'$ such that $\{\mu'\}=\{\phi\circ \mu\}$ and $\phi(\breve K)\subset g'  \breve K' g'^{-1}$, for some $g'\in G'(\breve F)$.

 Let $E$ be the field of definition of $\{\mu\}$ inside the fixed algebraic closure $\ov F$ of $F$, with its ring of integers $O_E$. We denote by $k$ the algebraic closure of its residue field $\kappa_E$. 
 Denote by $X_{\{\mu\}}$ the partial flag variety over $E$ of $G_E$ associated to $\{\mu\}$.
 
 \subsection{Group schemes} Let $G$ be a reductive group over $F$ that splits over a tame extension of $F$. Choose a uniformizer $\pi$ of $F$. The theory of \cite{PZ} starts with the construction of a reductive group scheme $\underline G $ over $O_F[u^\pm]:=O_F[u, u^{-1}]$ which induces by specialization  $(O_F[u^\pm]\to F, u\mapsto \pi)$ the group $G$ over $F$. Let $G'$ be the reductive group induced by $\underline G $ by specialization along $(O_F[u^\pm]\to \kappa_F((u)), \pi\mapsto 0)$.
 
 By \cite[Thm 4.1]{PZ}, there exists a smooth affine group scheme $\underline \CG $ over $O_F[u]$ with connected fibers which restricts to $\underline G $ over $O_F[u^\pm]$ and which induces the parahoric group scheme $\CG$ under the specialization $(O_F[u]\to O_F, u\mapsto \pi)$. It also induces a parahoric group scheme $\CG'$ under the specialization $(O_F[u]\to \kappa_F[[u]], \pi\mapsto 0)$, with an identification
 \begin{equation}\label{identpara}
 \CG\otimes_{O_F, \pi\mapsto 0} k=\CG'\otimes_{\kappa_F[[u]], u\mapsto 0}k .
 \end{equation}
  We denote by $\breve{ \underline G} $, resp.  $\breve{ \underline \CG} $, the group schemes over $O_{\breve F}[u^\pm]$, resp. $O_{\breve F}[u]$, obtained by base change $O_F\to O_{\breve F}$. 
  
 Let us recall some aspects of the construction of these group schemes. The reader is referred to \cite{PZ} for more details. For simplicity we abbreviate $O=O_F$, $\breve O=O_{\breve F}$.
 
 Denote by $H$ (resp. $G^*$) the corresponding split (resp. quasi-split) form of $G$ over $O$ (resp. $F$). These forms are each unique up to isomorphism. 

Fix, once and for all, a pinning  $(H, T_H, B_H, e_O)$ defined over $O$. As in \cite{PZ}, we denote by $\Xi_H$ the group of automorphisms of the based root datum corresponding to  $(H, T_H, B_H)$.

Pick a maximal $F$-split torus $A\subset G$. By \cite[5.1.12]{BTII}, we can choose an $F$-rational maximal $\breve F$-split torus $S$ in $G$ that contains $A$ and a minimal $F$-rational parabolic subgroup $P$ which contains $Z_G(A)$.
In \cite{PZ}, a triple $(A, S, P)$ as above, is called a \emph{rigidification} of $G$. Since by Steinberg's theorem, the group $\breve G=G\otimes_F \breve F$ is quasi-split, $T=Z_G(S)$ is a maximal torus of $G$ which is defined over $F$.  

As in \cite[2.4.2]{PZ}, the indexed root datum of the group $G$ over $F$ gives a $\Xi_H$-torsor $\tau$ over $\Spec(F)$.
 Then, by \cite[Prop. 2.3]{PZ}, we obtain a pinned quasi-split group $(G^*, T^*, B^*, e^*)$  over $F$ and, by the identification of tame finite extensions of $ F$ with \'etale finite covers of $ O[u^\pm]$ given by $u\mapsto \pi$, a pinned quasi-split group $(\underline  G^*, \underline T^*, \underline B^*, \underline e^*)$ over $O[u^\pm]$
 (see \emph{loc. cit.}, 3.3). As in \cite{PZ}, we denote by 
$\underline S^*$ the maximal split subtorus of $\underline T^*$. 
We have an identification 
\begin{equation}\label{ident1}
(\underline  G^*, \underline T^*, \underline B^*, \underline e^*)\otimes_{O[u^\pm],u\mapsto \pi} F=(G^*, T^*, B^*, e^*).
\end{equation}

\begin{remark}\label{indep} a) The base change 
 $(\underline  G^*, \underline T^*, \underline B^*, \underline e^*)\otimes_{O[u^\pm]} \breve O[u^\pm]$ is independent of the choice of uniformizer $\pi$ of $F$. This follows by the above, since the identification of the tame Galois  group of $\breve F$ with $\BZ'(1)=\prod_{l\neq p} \BZ_l(1)$, given by $\gamma\mapsto \gamma(\pi^{1/m})/\pi^{1/m}$, does not depend on the choice of the uniformizer $\pi$.

b) It is not hard to see, using \cite[3.3.2]{PZ}, that the Picard group of every finite \'etale cover of $ O[u^\pm]$ is trivial. The argument in the proof of \cite[Prop. 7.2.12]{Conrad} , then shows that,  up to isomorphism, a quasi-split reductive group scheme over $O[u^\pm]$ is uniquely determined by a corresponding $\Xi_H$-torsor over $O[u^\pm]$ and therefore obtained by the above construction.  In fact, any quasi-split reductive group scheme over $O[u^\pm]$ is determined, up to isomorphism, by its base change along $O[u^\pm]\to F$, given by $u\mapsto \pi$.
\end{remark}

As in \cite{PZ}, we obtain from (\ref{ident1}) identifications of apartments
\begin{equation}\label{identApt}
{\bf A}(G^*, S^*, \breve F)={\bf A}(\underline  G^*_{\kappa((u))}, \underline S^*_{\kappa((u))}, \kappa((u))),
\end{equation}
for both $\kappa=\breve F$, $k$. Given $x^*\in {\bf A}(G^*, S^*, \breve F)\subset \CB(G^*, \breve F)$, Theorem 4.1 of \cite{PZ}, produces a smooth connected affine group scheme 
\[
 \underline{\CG}^*:= {\underline\CG}^*_{x^*}
 \]
  over $\breve O[u]$ which extends $\underline G^*\otimes_{O[u^\pm]}\breve O[u^\pm]$. 
 Using Remark \ref{indep} we see that ${\underline\CG}^*_{x^*}$ does not depend on 
 the choice of the uniformizer. (Notice that ${\underline\CG}^*_{x^*}$ might not descend
 over $O[u]$ since $x^*$ is not necessarily $F$-rational.)

Now, given $x\in \CB(G, F)$ which corresponds to $K$, choose a rigidification $(A, S, P)$
of $G$ over $F$, such that $x\in {\bf A}(G, S, F)$.

Since $\breve G=G\otimes_F\breve F$ and $G^*\otimes_F{\breve F}$ are both quasi-split and inner forms of each other, we can choose an inner twist, i.e., a ${\rm Gal}(\breve F/F)$-stable $G^*_{\rm ad}(\breve F)$-conjugacy class of an isomorphism
\[
\psi: G\otimes_F \breve F\xrightarrow{\sim} G^*\otimes_F{\breve F}.
\]
Then the class $[g_\sigma]$ of the $1$-cocycle $\sigma\mapsto \Int(g_\sigma)=\psi\sigma\psi^{-1}\sigma^{-1}$ in ${\rm H}^1(\hat \BZ, G^*_{\rm ad}(\breve F))$ maps to the class in 
${\rm H}^1(\hat \BZ, {\rm Aut}(G^*)(\breve F))$ that gives the twist $G$ of $G^*$. The orbit of $[g_\sigma]$
under the natural action of ${\rm Out}(G^*)(F)$ on ${\rm H}^1(\hat \BZ, G^*_{\rm ad}(\breve F))$ only depends on the
isomorphism class of $G$. In \cite{PZ}, it shown that there is a choice of $\psi$ that depends on the rigidification $(A, S, P)$ such that the inclusion  
\[
 \CB(G, F)\subset \CB(G, \breve F)\xrightarrow{\psi_*} \CB(G^*, \breve F)
 \]
identifies ${\bf A}(G, S, \breve F)$ with ${\bf A}(G^*, S^*, \breve F)$; set $x^*:=\psi_*(x)$. In \emph{loc. cit.} the group scheme $\underline \CG_x$ over $O[u]$ is then constructed such that 
$\psi$ extends to   isomorphisms
\[
\underline \psi: \breve{\underline G}= \underline\CG_x\otimes_{O[u]}\breve O[u^\pm]\xrightarrow {\sim}  \breve{\underline G^*},
 \] 
\[
\underline \psi: \breve{\underline\CG_x}\xrightarrow {\sim} {\underline \CG}^*_{x^*}.
\]
A priori, the group scheme $\underline \CG_x$  depends on several choices, in particular of $\underline G$ and of the uniformizer $\pi$. However, we now show:

\begin{proposition}\label{ind2} a) Up to isomorphism,  the   group scheme $\breve{ \underline G}=\underline G\otimes_{O[u^\pm]}\breve O[u^\pm]$    depends only on $\breve G=G\otimes_F\breve F$.

b) Up to isomorphism,  the   group scheme $\breve{\underline\CG}_x=\underline\CG_x\otimes_{O[u]}\breve O[u]$ depends only on $G\otimes_F\breve F$ and the $G_{\rm ad}(\breve F)$-orbit of $x\in \CB(G, \breve F)$.

c) For any $a\in O^\times$, the group scheme $\underline\CG_{x}\otimes_{ O[u]}\breve O[u]$ supports an isomorphism
\[
R_a: a^*(\underline\CG _{x }\otimes_{ O[u]}\breve O[u])\xrightarrow{\sim} \underline\CG _{x }\otimes_{O[u]}\breve O[u].
\]
that lifts the isomorphism given by  $u\mapsto a\cdot u$. 
\end{proposition}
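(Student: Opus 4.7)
The plan is to leverage Remark \ref{indep} together with the isomorphism $\underline\psi$ that identifies the base change $\breve{\underline G}$ with $\breve{\underline G^*}$ and $\breve{\underline \CG}_x$ with $\underline\CG^*_{x^*}$, thereby reducing all three assertions to statements about the pinned quasi-split objects over $\breve O[u^\pm]$ and $\breve O[u]$.

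For (a), the isomorphism $\underline\psi$ reduces the question to showing that $\breve{\underline G^*}$ depends only on $\breve G$. By Steinberg's theorem $\breve G$ is quasi-split, hence isomorphic to $G^*\otimes_F\breve F$. The proof of Remark \ref{indep}(b), which uses triviality of the Picard group of finite \'etale covers, goes through verbatim over $\breve O[u^\pm]$ since $\breve O$ is a (strictly henselian) DVR. This yields uniqueness, up to isomorphism, of any quasi-split reductive group scheme over $\breve O[u^\pm]$ whose base change at $u\mapsto\pi$ equals $\breve G$, and thus shows that $\breve{\underline G^*}$, and therefore $\breve{\underline G}$, is determined by $\breve G$.

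For (b), granted (a), what remains is to check that $\underline\CG^*_{x^*}$ depends only on the $G^*_{\rm ad}(\breve F)$-orbit of $x^*$ in $\CB(G^*,\breve F)$. This is built into the construction of \cite[Thm.~4.1]{PZ}: for $g\in G^*_{\rm ad}(\breve F)$, inner conjugation by $g$ induces an isomorphism between the resulting group schemes, since both satisfy the same smooth affine $\breve O[u]$-model condition at the corresponding points. Two admissible choices of inner twist $\psi$ differ by $G^*_{\rm ad}(\breve F)$-conjugation, which translates the $G_{\rm ad}(\breve F)$-orbit of $x$ to a single $G^*_{\rm ad}(\breve F)$-orbit of $x^*$. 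It then follows that $\breve{\underline\CG}_x \cong \underline\CG^*_{x^*}$ depends only on $\breve G$ and the $G_{\rm ad}(\breve F)$-orbit of $x$, giving (b).

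For (c), changing the uniformizer from $\pi$ to $a\pi$ (with $a\in O^\times$) is tautologically implemented by pulling back along the $\breve O[u^\pm]$-automorphism $u\mapsto au$. Remark \ref{indep}(a) already supplies the desired isomorphism $R_a$ at the level of $\breve{\underline G^*}$. To promote this to $\underline\CG^*_{x^*}$, the plan is to observe that the construction of \cite[\S 4]{PZ} glues $\breve{\underline G^*}$ with the Bruhat--Tits parahoric $\CG^*_{x^*}$ over $\breve O$ attached to $x^*$ via a specialization at $u=\pi$; the object $\CG^*_{x^*}$ is intrinsic to the point $x^*\in\CB(G^*,\breve F)$ and independent of the uniformizer, and the specialization isomorphism transforms covariantly under $u\mapsto au$. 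The resulting $R_a$ on $\underline\CG^*_{x^*}$ then pulls back via $\underline\psi$ to the desired $R_a$ on $\underline\CG_x\otimes\breve O[u]$. I expect the main obstacle to lie here: one must verify that all auxiliary ingredients of the Pappas--Zhu construction --- in particular the \'etale cover of $O[u^\pm]$ trivializing the $\Xi_H$-torsor --- transform naturally under $u\mapsto au$. This amounts to careful bookkeeping based on the fact that the tame Galois identification $\gamma\mapsto\gamma(\pi^{1/m})/\pi^{1/m}$ is uniformizer-independent, so that reparametrization of $u$ intertwines the constructions coming from $\pi$ and $a\pi$ step by step.
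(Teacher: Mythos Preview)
Your reduction via $\underline\psi$ to the quasi-split side is exactly the paper's first move, and your treatment of (a) is essentially the paper's argument.

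The genuine gap is in (b). You write that for $g\in G^*_{\rm ad}(\breve F)$, ``inner conjugation by $g$ induces an isomorphism between the resulting group schemes.'' But $g$ lives over $\breve F$, not over $\breve O[u^\pm]$, so it does not a priori act on the $\breve O[u]$-scheme $\underline\CG^*_{x^*}$. Appealing to a uniqueness property of \cite[Thm.~4.1]{PZ} does not help either: that theorem produces a group scheme from a \emph{point} $x^*$ of the apartment, not from the isomorphism class of the parahoric over $\breve O$, so knowing $\CG^*_{x^*}\cong\CG^*_{g\cdot x^*}$ over $\breve O$ does not force the $\breve O[u]$-extensions to agree. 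The paper closes this gap with a specific mechanism: since both $x^*$ and the new point $x'^*$ lie in the \emph{same apartment} ${\bf A}(G^*,S^*,\breve F)$ and in the same $G^*_{\rm ad}(\breve F)$-orbit, they lie in the same orbit of the adjoint Iwahori--Weyl group $\tilde W_{G^*_{\rm ad}}$; and each $w\in\tilde W_{G^*_{\rm ad}}$ lifts to an element $\underline n\in\underline G^*_{\rm ad}(\breve O[u^\pm])$ normalizing $\underline S^*$, whose adjoint action \emph{does} give an isomorphism $\underline\CG^*_{x^*}\xrightarrow{\sim}\underline\CG^*_{x'^*}$ over $\breve O[u]$. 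This Iwahori--Weyl lifting is the missing ingredient in your argument.

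For (c), your plan to trace the PZ construction through $u\mapsto au$ is workable in principle but laborious, and you correctly flag it as the likely obstacle. The paper takes a shorter route: Remark~\ref{indep} already furnishes $R_a$ over $\breve O[u^\pm]$, and to extend it across $u=0$ one only needs to check the corresponding statement for the parahoric group scheme over $\breve F[[u]]$; this follows by an argument as in \cite[Lemma~5.4]{ZhuPRConj}. That reduces the bookkeeping you anticipate to a known lemma.
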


\begin{proof} 
By the construction, as briefly recalled above, there are   isomorphisms
\[
\underline \psi:  \breve{ \underline G}\xrightarrow {\sim} \breve {\underline G}^* ,\quad 
\underline \psi: \breve {\underline\CG}_x \xrightarrow {\sim} {\underline \CG}^*_{x^*}.
\]
Hence, it is enough to show corresponding independence statements for 
$\breve{\underline G}^*$ and $ {\underline \CG}^*_{x^*}$. First we notice that by Remark \ref{indep}, $ \breve {\underline G}^*$ only depends on $G\otimes_F \breve F$ and so part (a) follows.
Now using the argument in \cite[4.3.1]{PZ}, we see that changing the rigidification $(A, S, P)$ of $G$, changes the point $x^*$
to another point $x'^*$ of ${\bf A}(G^*, S^*, \breve F)$ in the same $G^*_{\rm ad}(\breve F)$-orbit, hence in the same orbit under the adjoint Iwahori-Weyl group $\tilde W_{G^*_{\rm ad}}$. However, each element $w$ of $\tilde W_{G^*_{\rm ad}}$ lifts to an element $\underline n$ of $\underline G^*_{\rm ad}(\breve O[u^{\pm}])$ that normalizes $\underline S^*$. Acting by $\Int(\underline n)$ gives an isomorphism between the group schemes $ {\underline \CG}^*_{x^*}$ and $ {\underline \CG}^*_{x'^*}$. This implies   statement (b). To see (c), we first observe that Remark \ref{indep} implies that there is an isomorphism  over $\br O[u^\pm ]$
\[
R_a: a^*(\br{\underline G}^*) \xrightarrow{\sim} \br{\underline G}^*
\]
that lifts $u\mapsto a\cdot u$. To check that this extends to an isomorphism
over $\br O[u]$ it is enough to check the statement for the corresponding parahoric group scheme over $\br F[[u]]$. This follows
by an argument as in the proof of \cite[Lemma 5.4]{ZhuPRConj}.
\end{proof}

\begin{remark}\label{depends}
Suppose that $G=G^*$ is quasi-split over $F$. Then, by Remark \ref{indep} (b), the extension $\underline G=\underline G^*$ over $O[u^{\pm }]$ is determined by $G$ as the unique, up to isomorphism, quasi-split group scheme that restricts to $G$ after $u\mapsto \pi$. However,
the restriction $\underline G^*\otimes_{O[u^\pm]}F$, by $u\mapsto \pi'$, where $\pi'=a\cdot \pi$ is another choice of uniformizer, is not necessarily isomorphic to $G$. 
For example, suppose $G=\Res_{L/\BQ_p}(\BG_m)$, with $L=\BQ_p(p^{1/2})$, $p$ odd. Suppose $\pi=p$. Then, 
\[
\underline G=\Res_{\BZ_p [u^\pm][X]/(X^2-u)/\BZ_p[u^\pm]}(\BG_m).
\]
Specializing this by $u\mapsto \pi'=-p$, gives $ \Res_{L'/\BQ_p}(\BG_m)$, with $L'=\BQ_p((-p)^{1/2})$
which is a different torus than $G$ if $p\equiv 3\, {\rm mod}\, 4$.

 Therefore, the extension $\underline G^*$ depends on both $G$ and $\pi$. When we need to be more precise, we will denote it by $\underline G^*_\pi$. By the above,  we have an isomorphism
\[
R_a^\natural: a^*(\underline G^*_\pi)\xrightarrow{ \sim } \underline G^*_{\pi'},
\]
where $a: \Spec(O[u^\pm])\to \Spec(O[u^\pm])$ is given by $u\mapsto a\cdot u$, which descends
$R_a$ above.
\end{remark}

 \subsection{Weyl groups and the admissible locus}\label{ss:WeylAdm}
 We continue with the set-up of the last subsection. 
  The group scheme $\breve{ \underline G}  $ admits a chain of tori by closed subgroup schemes  $\underline {\breve S}\subset \underline {\breve T}$ which extend $S$ and $T$ and correspond to $\breve {\underline S}^*$, $\breve{\underline T}^*$ via $\underline \psi$.
  These define maximal split, resp. maximal, tori in the fibers $\breve G=G\otimes_F\breve F$ and $\breve G'=G'\otimes_{\kappa_F((u))}k((u))$ of $\breve{ \underline G} $. 
 By the above constructions, we obtain  identifications of \emph{relative Weyl groups}, resp. \emph{Iwahori Weyl groups}, 
 \begin{equation}\label{indentweyl}
 W_0(\breve G, \breve T)=W_0(\breve G', \breve T'), \quad \tilde W(\breve G, \breve T)=\tilde W(\breve G', \breve T'), \end{equation}
\emph{cf.} \cite[\S 2]{R}.
Assume now that we have a  conjugacy class $\{\mu\}$ of a minuscule geometric cocharacter  of $G$, so
that $(G, \{\mu\}, K)$ is a local model triple over $F$. Then the above give identifications of $\{\mu\}$-\emph{admissible sets} in the Iwahori Weyl groups 
 \begin{equation}\label{indentweyl}
  \Adm(\{\mu\})=\Adm'(\{\mu\}) ,
 \end{equation}
\emph{cf.} \cite[\S 3]{R}.
 Denoting by $\breve K'$ the parahoric subgroup of $G'\big(k((u))\big)$ defined by $\CG'$, with corresponding group scheme $\breve \CG'$, we also obtain an identification of $\{\mu\}$-admissible subsets in the double coset spaces (\emph{cf.} \cite[\S 3]{R}), 
 \begin{equation}\label{indentadm}
 \Adm_{\breve K}(\{\mu\})=\Adm'_{\breve K'}(\{\mu\})\subset W_{\breve K}\backslash \tilde W/W_{\breve K}= W_{\breve K'}\backslash \tilde W'/W_{\breve K'} .
 \end{equation}
 We define a closed reduced subset inside the loop group flag variety $\CF'=L\breve G'/L^+\breve \CG'$ over $k$, as the reduced union 
 \begin{equation}\label{Aschubert}
 \CA_K(\underline G, \{\mu\} )=\bigcup _{w\in\Adm'_{\breve K'}(\{\mu\})} S_w . 
 \end{equation}
 Here $S_w$ denotes the $L^+\breve \CG'$-orbit corresponding to $w\in  W_{\breve K'}\backslash \tilde W'/W_{\breve K'}$. We note that, since $\{\mu\}$ is minuscule, the action of $L^+\breve \CG'$
 on $\CA_K(\underline G, \{\mu\} )$ factors through $\CG'\otimes_{\kappa_F[[u]]}k$. Via \eqref{identpara}, we obtain an action of $\CG\otimes_{O_F} k$ on $\CA_K(\underline G, \{\mu\} )$. 
 \begin{corollary}\label{unique}
  Up to isomorphism, the group $\breve G'$ over $k((u))$ and its parahoric subgroup $\breve K'$ are  independent of the choice of the uniformizer $\pi$ and of $\underline G$. The isomorphism can be chosen compatibly with the identification  \eqref{identpara}, and the identifications \eqref{indentweyl} of Weyl groups and \eqref{indentadm} of admissible sets.
 As a consequence, the affine partial flag variety $\CF'$ over $k$ and its subscheme $\CA_K(\underline G, \{\mu\} )$ with action of $\CG\otimes_{O_F} k$ is independent of the choice of the uniformizer $\pi$ and of $\underline G$.
 \end{corollary}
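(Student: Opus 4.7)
The plan is to deduce the independence of $(\breve G', \breve K')$ from the independence statement for $(\breve{\underline G}, \breve{\underline \CG}_x)$ established in Proposition \ref{ind2}, and then check that the resulting isomorphisms are compatible with the identifications in question. By construction,
\[
\breve G' = \breve{\underline G} \otimes_{\breve O[u^{\pm}], \, \pi \mapsto 0} k((u)), \qquad \breve \CG' = \breve{\underline \CG}_x \otimes_{\breve O[u], \, \pi \mapsto 0} k[[u]],
\]
so any isomorphism of $\breve{\underline G}$ (resp.\ $\breve{\underline \CG}_x$) over $\breve O[u^\pm]$ (resp.\ $\breve O[u]$) descends to an isomorphism of $\breve G'$ (resp.\ $\breve \CG'$) after base change. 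Proposition \ref{ind2}(a), (b) then yields independence from the choice of the extension $\underline G$, since the isomorphism class of $\breve{\underline G}$ depends only on $\breve G$ and that of $\breve{\underline \CG}_x$ only on $\breve G$ and the $G_{\rm ad}(\breve F)$-orbit of $x$ (which in turn is determined by $K$).

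For the change of uniformizer, suppose $\pi' = a \pi$ with $a \in O^\times$. Proposition \ref{ind2}(c) provides
\[
R_a \colon a^*\!\bigl(\underline\CG_x \otimes_{O[u]} \breve O[u]\bigr) \xrightarrow{\sim} \underline\CG_x \otimes_{O[u]} \breve O[u]
\]
lifting $u \mapsto a \cdot u$. Specializing via $\pi \mapsto 0$ is compatible with this substitution since $a$ is a unit, so $R_a$ descends to an isomorphism between the two realizations of $\breve \CG'$ arising from the choices $\pi$ and $\pi'$ (up to a pullback by $u \mapsto \bar a u$, where $\bar a\in k^\times$ is the reduction of $a$, which itself defines a self-isomorphism of $\breve \CG'$). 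Combined with the previous paragraph, this proves the independence of $\breve G'$ and $\breve K'$.

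It remains to see that one can choose the isomorphisms compatibly with \eqref{identpara}, \eqref{indentweyl} and \eqref{indentadm}. The compatibility with \eqref{identpara} holds because the iterated specializations $(\pi\mapsto 0, u\mapsto 0)$ and $(u \mapsto \pi, \pi \mapsto 0)$ both land in the common residue field $k$, and the construction of $\underline \CG_x$ in \cite{PZ} is built so that the two reductions agree on the pinned quasi-split model before twisting. For \eqref{indentweyl} and \eqref{indentadm} one tracks the torus chain $\underline {\breve S} \subset \underline {\breve T}$: the isomorphisms produced in Proposition \ref{ind2}(b) and (c) either preserve this chain or differ from such a preservation by the action of an element of the Iwahori–Weyl group (as in the proof of \ref{ind2}(b), using a lift $\underline n$ normalizing $\underline S^*$), and inner automorphisms act trivially on the Iwahori–Weyl group and its admissible subset. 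The main obstacle is this bookkeeping, but since all identifications \eqref{indentweyl}, \eqref{indentadm} are canonical up to inner automorphism, the compatibility follows.

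Finally, the consequence for $\CF'$ and $\CA_K(\underline G, \{\mu\})$ is formal. An isomorphism $\breve G' \cong \breve G''$ intertwining $\breve K' \cong \breve K''$ induces an isomorphism $\CF' \cong \CF''$ of loop-group affine partial flag varieties; compatibility with \eqref{indentadm} ensures this isomorphism carries the union of Schubert varieties indexed by $\Adm'_{\breve K'}(\{\mu\})$ onto its counterpart, hence gives an isomorphism of the subschemes $\CA_K(\underline G, \{\mu\})$; and compatibility with \eqref{identpara} makes this $\CG \otimes_{O_F} k$-equivariant.
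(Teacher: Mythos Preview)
Your proposal is correct and follows exactly the approach the paper intends: the paper's proof is the single sentence ``Follows from Proposition \ref{ind2}, its proof and the definition of the $\{\mu\}$-admissible set,'' and you have simply unpacked that sentence by specializing the isomorphisms of Proposition \ref{ind2} along $\pi\mapsto 0$ and tracking the torus chain and Iwahori--Weyl group through the construction.
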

 \begin{proof}
 Follows from Proposition \ref{ind2}, its proof and the definition of the $\{\mu\}$-admissible set.
 \end{proof}

 \subsection{Descent}

We continue with the set-up of the previous subsection; we will   apply a form of  Weil-\'etale descent from $\breve O$ to $O$. The following result is not needed for the proof of Theorems \ref{main1Intro} and \ref{main2Intro} about local models with smooth or semi-stable reduction, see Remark \ref{simplerInd}. However, it is an important part of the argument for the independence result  of Theorem \ref{indepIntro}.

 \begin{proposition}\label{ind3}
 a) The group scheme $\underline \CG_x\otimes_{O[u]}O[[u]]$ depends, up to isomorphism, 
 only on $G$, the uniformizer $\pi$ and the $G_{\rm ad}(F)$-orbit of $x\in \CB(G, F)$. We denote it by $\underline\CG_{x, \pi}\otimes_{ O[u]}  O[[u]]$.
 
 b) If $\pi'=a\cdot \pi$ is another choice of a uniformizer with $a\in O^\times$, then there is an isomorphism
 of group schemes
 \[
R^\natural_a: a^*(\underline\CG_{x, \pi}\otimes_{ O[u]}  O[[u]])\xrightarrow{\sim} \underline\CG^*_{x, \pi'}\otimes_{ O[u]}  O[[u]]
 \]
 where $a: \Spec O[[u]]\to \Spec O[[u]]$ is given by $u\mapsto a\cdot u$.
 \end{proposition}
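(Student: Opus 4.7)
The plan is to apply Weil-\'etale descent from $\breve O[[u]]$ down to $O[[u]]$, using the uniqueness over $\breve O[u]$ established in Proposition~\ref{ind2} as the starting point. The crucial new ingredient compared with working over $O[u]$ itself is the $u$-adic completeness of $O[[u]]$: it allows us to invoke Lang--Steinberg style vanishing statements on automorphism groups by reducing to the fiber at $u=0$, where only $\breve O$-points of smooth connected group schemes appear.

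For part (a), let $\underline\CG^{(1)}$ and $\underline\CG^{(2)}$ be two realizations of $\underline\CG_{x,\pi}$ over $O[u]$ arising from different auxiliary choices (of $\underline G$, rigidification, inner twist $\psi$, etc.) but yielding the same data $(G,\pi,G_\ad(F)\cdot x)$. By Proposition~\ref{ind2}(b), there is an isomorphism $\breve\Phi$ between their base changes to $\breve O[u]$, hence to $\breve O[[u]]$. The $O[u]$-structures on the $\underline\CG^{(i)}$ equip their base changes with canonical Frobenius descent data $\sigma_i$; the question is whether $\breve\Phi$ may be modified by composition with an automorphism so as to intertwine $\sigma_1$ and $\sigma_2$. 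The obstruction is a class in $H^1(\hat\BZ,\Aut(\underline\CG^{(1)}\otimes\breve O[[u]]))$.

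To kill this obstruction one first exploits the freedom in the choice of inner twist $\psi$ in the constructions of the $\underline\CG^{(i)}$: varying $\psi$ alters the outer component of $\breve\Phi$, and since both constructions specialize at $u=\pi$ to the same parahoric $\CG$ over $O_F$, the outer-automorphism classes can be matched on the special fiber, reducing the obstruction to its inner part in $H^1(\hat\BZ,\underline\CG^{(1)}_\ad(\breve O[[u]]))$. This inner part vanishes by Lang--Steinberg: $\underline\CG^{(1)}_\ad\otimes O[[u]]$ is smooth affine with connected fibers, so the $u$-adic filtration on $\breve O[[u]]$-points successively reduces the $H^1$ to the one at $u=0$, which is the cohomology of a smooth connected affine group over $\breve O$ and handled by the usual Lang argument. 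Composing $\breve\Phi$ with the resulting coboundary gives a $\sigma$-equivariant isomorphism, which descends to the desired isomorphism over $O[[u]]$.

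For part (b), I would combine Proposition~\ref{ind2}(c), which provides an isomorphism $R_a$ over $\breve O[u]$ lifting $u\mapsto au$, with Remark~\ref{depends} (treating the quasi-split case $R_a^\natural$ over $O[u^\pm]$) and part (a). Together these build an isomorphism between $a^*(\underline\CG_{x,\pi}\otimes\breve O[[u]])$ and $\underline\CG_{x,\pi'}\otimes\breve O[[u]]$, which descends to $O[[u]]$ by the same Frobenius-equivariance argument as in part (a). The main obstacle throughout is the cohomological vanishing in part (a)---especially the matching of outer-automorphism classes, which is where the $F$-rationality of the orbit of $x$ and the specialization at $u=\pi$ are jointly used; once this is in hand, part (b) follows essentially formally.
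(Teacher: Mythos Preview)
Your overall strategy---Weil descent from $\breve O[[u]]$ to $O[[u]]$ with the obstruction living in $H^1(\hat\BZ,\Aut(\underline\CG\otimes\breve O[[u]]))$---is exactly right, and matches the paper's. But there is a genuine gap in how you decompose and kill that obstruction.

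First, the ``outer'' step is a red herring. In the paper's construction, both descent data are of the form $\Int(\underline g)\cdot\sigma$ with $\underline g\in\underline G^*_{\rm ad}(\breve O[u^{\pm}])$, so the comparison cocycle $\underline c=\underline g'\cdot\underline g^{-1}$ is already inner; there is no outer component to match, and your proposal to ``vary $\psi$'' (an \emph{inner} twist) would not alter an outer component anyway. The real issue is the structure of the inner part. The inner automorphisms of $\underline\CG^*_{x^*}\otimes\breve O[[u]]$ are not given by the connected adjoint parahoric $\underline\CG^*_{{\rm ad},x^*}(\breve O[[u]])$, but by the normalizer $\mathscr N^*$ of $\underline\CG^*_{x^*}(\breve O[[u]])$ inside $\underline G^*_{\rm ad}(\breve O((u)))$; this sits in an exact sequence
\[
1\to \underline\CG^*_{{\rm ad},x^*}(\breve O[[u]])\to \mathscr N^*\to \Delta_{x^*}\to 1,
\]
where $\Delta_{x^*}$ is the (possibly nontrivial) component group of the Bruhat--Tits stabilizer scheme. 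Your Lang--Steinberg argument at $u=0$ only addresses the connected term on the left; it says nothing about the image in $H^1(\hat\BZ,\Delta_{x^*})$, which need not vanish a priori.

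The paper sidesteps this decomposition entirely: it specializes at $u=\pi$ rather than $u=0$. At $u=\pi$ both constructions yield the same parahoric $\CG_x$ over $O_F$, so the cocycle becomes trivial in $H^1(\hat\BZ,\Aut(\CG_{x^*}))$; and the kernel of the specialization map $\mathscr A^*\to\Aut(\CG_{x^*})$ is shown (Lemma~\ref{P}) to be pro-unipotent, hence has vanishing $H^1$. This simultaneously handles the $\Delta_{x^*}$ piece and the connected piece, without separating them. For part (b) the paper argues analogously, but now at the closed point $(u,\pi)$, using that $R_a$ is the identity on the maximal reductive quotient of the fiber there; your sketch for (b) is in the right spirit but inherits the same gap from (a).
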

 
 \begin{proof}
 
 We first show (a). For this we fix the uniformizer $\pi$.
By Proposition \ref{ind2}, the base change $\underline \CG_{x}\otimes_{O[u]}\breve O[[u]]$  
depends only on $G$ and the $G_{\rm ad}(F)$-orbit of $x\in \CB(G, F)$. We will now use descent.
 By the construction, the group  $\underline \CG_x$ in \cite{PZ} is given by a ($\sigma$-semilinear) Weil descent datum
\[
\Int(\underline g)\cdot \sigma: \underline\CG^*_{x^*}\xrightarrow{\ \ } 
\underline\CG^*_{x^*}.
\]
Here $\underline  g\in \underline G^*_{\rm ad}(\breve O[u^\pm])$; this depends on various choices made
in \cite{PZ}. The action of $\sigma$ is with respect to the rational structure 
given by the $O[u^\pm]$-group $\underline G^*_\pi$; this depends on our fixed choice of $\pi$, see
Remark \ref{depends}. We start the proof by giving:

 \begin{lemma}\label{P}
 The  automorphism group  $\mathscr A^*={\rm Aut}(\underline \CG^*_{x^*}\otimes_{\breve O[u]}\breve O[[u]]) $ of the group scheme $\underline \CG^*_{x^*}\otimes_{\breve O[u]}\breve O[[u]]$  has the following properties:
 \smallskip
 
 i) It contains the normalizer $\mathscr N^*$ of $ \underline \CG^*_{x^*}(\breve O[[u]])$ in
 $\underline G^*_{\rm ad}(\breve O((u)))$.
 \smallskip
 
 ii) The homomorphism ${\mathscr A}^* \to {\rm Aut}( \CG^*_{x^*})$
given by $u\mapsto \pi$ is surjective. We have   
\[
\ker({\mathscr A}^* \to {\rm Aut}( \CG^*_{x^*}))=\ker(\underline \CG^*_{{\rm ad}, x^*}(\breve O[[u]])\xrightarrow{u\to\pi} \CG^*_{{\rm ad}, x^*}(\breve O))
\]
and this kernel is pro-unipotent.
 \end{lemma}

\begin{proof} Let us first study ${\rm Aut}( \CG^*_{x^*})  $: 
Passing to the generic fiber gives an injection
\[
{\rm Aut}( \CG^*_{x^*}) \subset {\rm Aut}(\breve G^*).
\]
There is also (\cite[Prop. 7.2.11]{Conrad}) a (split) exact sequence 
\[
1\to G^*_{\rm ad}(\breve F)\to {\rm Aut}(\breve G^*) \to {\rm Out}(\breve G^*)\to 1.
\]
This gives
\[
1\to G^*_{\rm ad}(\breve F)_{x^*}\to {\rm Aut}(\breve G^*)_{ x^*}={\rm Aut}( \CG^*_{x^*})\to {\rm Out}(\breve G^*)_{x^*}\to 1
\]
where the subscript $x^*$ denotes the subgroup that fixes $x^*\in \CB(G^*, \breve F)$. 

Notice here that $G^*_{\rm ad}(\breve F)_{x^*}$ is the normalizer  in $G^*_{\rm ad}(\breve F)$ of the parahoric subgroup  $\CG^*_{x^*}(\br O)=G^*(\breve F)^0_{x^*}$. (Indeed, by \cite[5.1.39]{BTII}, the normalizer of the stabilizer of any facet 
in the Bruhat-Tits building has to also stabilize the facet; this last statement easily follows from that.) 
We also have
\[
1\to G^{*}_{\rm ad}(\breve F)^0_{x^*}\to G^*_{\rm ad}(\breve F)_{x^*}\ \to \Delta_{x^*}\to 1
\]
where $\Delta_{x^*}$ is the finite abelian group given as the group of connected components of the ``stabilizer of $x^*$" Bruhat-Tits group scheme for $G^*_{\rm ad}$ over $\breve O$.

Similarly, we have an injection ${\mathscr A}^*\subset {\rm Aut}(\underline {\br G}^*)$. The quasi-split $\underline G^*$
carries the pinning 
$(\underline T^*, \underline B^*, \underline e^*)$ and we can use this to identify ${\rm Out}(\breve {\underline G}^*)=
{\rm Out}(\breve G^*)$ with a subgroup of the group $\Xi_H$ of ``graph" automorphisms. By \cite[Prop. 7.2.11]{Conrad}, we have
\begin{equation}\label{autUn}
{\rm Aut}(\underline {\br G}^*)=\underline {G}^*_{\rm ad}(\br O((u)))\rtimes {\rm Out}(\breve {\underline G}^*).
\end{equation}

 We first show (i), i.e., that every $g\in \mathscr N^*\subset \underline {G}^*_{\rm ad}(\br O((u)))$ naturally induces an automorphism $\Int(g)$ of $\underline \CG^*\otimes_{\breve O[u]}\breve O[[u]]$. (For simplicity, we omit the subscript $x^*$ below.)  The adjoint action of $g\in {\mathscr N}^*$ gives an ind-group scheme homomorphism $\Int(g): L\underline G^* \to L\underline G^*$ which preserves $L^+\underline \CG^*(\breve O)$.
Using the fact $L^+\underline \CG^*$ is pro-algebraic and formally smooth over $\breve O$, we can easily see that the set of points $L^+\underline \CG^*(\breve F)$ with $\breve F$ as residue 
field is dense in $L^+\underline \CG^*$. Since $L^+\underline \CG^*$ is a reduced closed subscheme 
of the ind-scheme $L\underline \CG^*=L\underline G^*$ over $\breve O$, it follows that $g$ induces a group scheme homomorphism
\[
\Int(g): L^+\underline \CG^*\to L^+\underline \CG^*.
\]
In particular,  $g$ also normalizes $L^+\underline \CG^*(\breve F)=\CG^*(\breve F[[u]])$.
Since $\CG^*\otimes_{\br O[u]}\breve F((u)) $ is quasi-split and residually split, the $\br F$-valued points are dense in the fiber $\CG^*\otimes_{\br O[u]}\breve F $
over $u=0$. Hence, we obtain by \cite[1.7.2]{BTII} that $\Int(g)$ induces an  automorphism of the group scheme 
$\CG^*\otimes_{\br O[u]}\breve F[[u]]$. Since $\CG^*$ is smooth over $\br O[[u]]$ and $\Int(g)$ gives an automorphism
of  $\CG^*\otimes_{\br O[u]}\breve O((u))$, we see that $\Int(g)$ extends to an automorphism of $\CG^*\otimes_{\br O[u]}\breve O[[u]]$ as desired. This proves (i).

Let us show that ${\mathscr A}^*$ satisfies (ii). Sending $u\mapsto \pi$ gives a homomorphism
\[
{\mathscr A}^*\to  {\rm Aut}( \CG^*_{x^*}).
\]
This restricts to ${\mathscr N}^*\to G^*_{\rm ad}(\breve F)_{x^*}$: To see   this   we
use that $L^+\underline \CG^*(\breve O)\to \CG^*(\breve O)=G^* (\breve F)^0_{x^*}$ given by $u\mapsto \pi$ is surjective (by smoothness and Hensel's lemma) and that $G^*_{\rm ad}(\breve F)_{x^*}$ is the normalizer of $G^*(\breve F)^0_{x^*}$ in $G^*_{\rm ad}(\breve F)$. 
We obtain a commutative diagram with exact rows
\begin{equation}
\begin{matrix}
 1& \to &{\mathscr N}^* \to &{\mathscr A}^*&\to  {\rm Out}(\underline {\br G}^*)_{x^*}\to 1\cr
&&\downarrow\ \ \  \ \ \  & \downarrow &\downarrow\cr
1& \to &G^*_{\rm ad}(\breve F)_{x^*}\to  &{\rm Aut}( \CG^*_{x^*})&\to {\rm Out}(\breve G^*)_{x^*}\to 1.
\end{matrix}
\end{equation}
We will show that the left vertical arrow is a surjection with kernel equal to ${\mathscr K}^*:=\ker(\underline \CG^*_{{\rm ad}, x^*}(\breve O[[u]])\xrightarrow{u\to\pi} \CG^*_{{\rm ad}, x^*}(\breve O))$ and that the right vertical arrow is an isomorphism.
This would imply part (ii).

The subgroup $\underline \CG^*_{{\rm ad}, x^*}(\breve O[[u]])\subset 
\underline G_{\rm ad}^*(\breve O((u)))$  is contained in ${\mathscr N}^*$.
Mapping $u\mapsto \pi$ followed by taking connected component gives a homomorphism 
\[
\delta: {\mathscr N}^*\to   G^*_{\rm ad}(\breve F)_{x^*}\xrightarrow{\ } \Delta_{  x^*}.
\]
 We will   show that the sequence
\begin{equation}\label{exact1}
1\to  \underline \CG^*_{{\rm ad}, x^*}(\breve O[[u]])\to {\mathscr N}^*\xrightarrow{\ \delta\ } \Delta_{x^*}\to 1
\end{equation}
is exact.  Since  $\underline \CG^*_{{\rm ad}, x^*}(\breve O[[u]])\xrightarrow{u\mapsto \pi} \CG^*_{{\rm ad}, x^*}(\breve O)=G^*_{\rm ad}(\breve F)^0_{x^*}$ is surjective (by smoothness and Hensel's lemma) this would show that $u\mapsto \pi$ gives a surjective
\[
{\mathscr N}^*\to G^*_{\rm ad}(\breve F)_{x^*}\to 1
\]
 with kernel equal to ${\mathscr K}^*:=\ker(\underline \CG^*_{{\rm ad}, x^*}(\breve O[[u]])\xrightarrow{u\to\pi} \CG^*_{{\rm ad}, x^*}(\breve O))$.

Let us show the exactness of (\ref{exact1}). The subgroup $ \underline \CG^*_{{\rm ad}, x^*}(\breve O[[u]])$  lies in the kernel of $\delta$ and we can see that it is actually equal to that kernel:  Let $g\in {\mathscr N}^*$ with $\delta(g)=1$. Since $g$ also normalizes $ \CG^*(\breve F[[u]])$, we see
  as above, that $g$ lies in $\underline G^*_{\rm ad}(\breve F ((u)))_{x^*}$.
Using the identification of apartments (\ref{ident1}) we now see 
that since $\delta(g)=1$, $g$ is actually in the connected stabilizer $\underline G^*_{\rm ad}(\breve F((u)))_{x^*}^0=\underline \CG^*_{{\rm ad}, x^*}(\breve F[[u]])$. Since $g$ is also in $\underline G^*_{\rm ad}(\breve O((u)))$, we have
\[
g\in \underline \CG^*_{{\rm ad}, x^*}(\breve F[[u]])\cap \underline \CG^*_{{\rm ad}, x^*}(\breve O((u)))=\underline \CG^*_{{\rm ad}, x^*}(\breve O[[u]]).
\]
Therefore, ${\rm ker}(\delta)=  \underline \CG^*_{{\rm ad}, x^*}(\breve O[[u]])$. 
It remains to show that $\delta$ is surjective. By \cite[Proposition 4.6.28 (ii)]{BTII}, for each $y\in \Delta_{x^*}$, there is an element $n\in N_{\rm ad}(\breve F)$ that fixes $x^*$ in the building so that
$\delta (n)=y$. By the identification of the apartments (\ref{identApt}), we can lift $n$ to $\underline n\in \underline N_{\rm ad}(\breve O((u)))$ which fixes the point $x^*$ considered in the building over $\breve F((u))$. Then $\underline n$ normalizes $L^+\underline \CG^*(\breve F)\cap \underline G^* (\breve O((u)))=L^+\underline \CG^*(\breve O)$ so $\underline n$ is in ${\mathscr N}^*$. 

It remains to show that ${\rm Out}(\underline {\br G}^*)_{x^*}\to {\rm Out}( {\br G}^*)_{x^*}$ given by $u\mapsto \pi$ is an isomorphism. The corresponding map
${\rm Out}(\underline {\br G}^*) \to {\rm Out}( {\br G}^*) $ is an isomorphism 
by the construction of $\underline {\br G}^*$ from ${\br G}^*$. Hence, it is enough to show that
${\rm Out}(\underline {\br G}^*)_{x^*}\to {\rm Out}( {\br G}^*)_{x^*}$ is surjective.
By definition, $\gamma\in {\rm Out}( {\br G}^*)_{x^*}$ is given by an automorphism of $\br G^*$ preserving the pinning $(\br T^*, \br B^*, \br e^*)$, 
such that $\gamma(x^*)=\Int(g)( x^*)$, for some $g\in G^*_{\rm ad}(\br F)$. 
Since $\gamma(x^*)$ and $x^*$ both lie in the apartment for $\br S^*\subset \br T^*$,
this implies that $\gamma(x^*)=\Int(n)( x^*)$, for some $N^*_{\rm ad}(\br F)$.
As above, we can lift $n$ to $\underline n\in N^*_{\rm ad}(\br O((u)))$. Using the identification of apartments (\ref{identApt}) we see that $\gamma$ is in ${\rm Out}(\underline {\br G}^*)_{x^*}$.
\end{proof}

We can now resume the proof of Proposition \ref{ind3}.
We will show that $\underline \CG_x\otimes_{O[u]}{O[[u]]}$ 
 is independent, up to isomorphism, of additional choices. Suppose as above that 
 $\underline  g'\in \underline G^*_{\rm ad}(\breve O[u^\pm])$ is a second cocycle giving a 
 group scheme $\underline \CG'_x$; then $\underline \CG'_x$ is a form of $\underline \CG_x$.
The twisting is obtained by the image of the cocycle given by 
\[
\underline c=\underline g'\cdot \underline g^{-1} \in \underline G^*_{\rm ad}(\breve O[u^\pm]).
\]
(This is a cocycle for the twisted $\sigma$-action on $\underline G^*_{\rm ad}(\breve O[u^\pm])$ given by $\Int(\underline g)$.) Notice that the restriction of  $\underline c$ along $u=\pi$ preserves $x^*$. Hence, $\underline c$ also preserves $x^*$ considered as a point in the building over $\br F((u))$.  It follows that $\underline c$ lies in the
 normalizer of the parahoric $\underline \CG^*_{x^* }(\br F[[u]])$. Using $\breve O((u))\cap \breve F[[u]]=\breve O[[u]]$,
 we see that $\underline c$ lies in the normalizer $\mathscr N^*$ of $  \underline \CG^*_{x^*}(\breve O[[u]])$ and it gives a cocycle for the twisted $\sigma$-action. 
 The isomorphism class of  the form $\underline \CG'_x\otimes_{O[u]}O[[u]]$ is determined by the   class $ [\underline c]$ in
$
  {\rm H}^1(\hat\BZ, 
{\mathscr A}).
 $
Here ${\mathscr A}={\rm Aut}(\underline \CG_{x}\otimes_{\breve O[u]}\breve O[[u]])$ which is ${\mathscr A}^*$ but with the twisted $\sigma$-action.
By  Lemma \ref{P} (b), ${\mathscr K}^*$ and therefore also the kernel ${\mathscr K}=\ker( {\mathscr A}\to
{\rm Aut}( \CG_{x} ))$ is pro-unipotent. Using this, 
 a standard argument as in the proof of Lemmas 1 and 2, p. 690, of \cite{BTIII},
gives that ${\rm H}^1( \hat\BZ, {\mathscr K})=0$. 
Since the specialization of the form  $\underline \CG'_x$ at $u=\pi$ is isomorphic to $\CG_x$, the image of the class
$\underline c$ in $ {\rm H}^1(\hat\BZ, 
{\rm Aut}( \CG_{x} ))$ is trivial. Hence, by the exact sequence for cohomology, the class $ [\underline c]$ in 
${\rm H}^1(\hat\BZ, {\mathscr A})$ is trivial.  Therefore, we obtain
$\underline \CG'_x\otimes_{ O[u]} O[[u]]  \simeq \underline \CG_x\otimes_{O[u]}O[[u]] $, where in both,
the choice of $\pi$ remains the same. This proves part (a).

To prove part (b), suppose that $\pi'=a\cdot\pi$, $a\in O^\times$, is another choice of uniformizer.
By Proposition \ref{ind2} (c), the group scheme $\underline\CG^*_{x^*}\otimes_{\breve O[u]}\breve O[[u]]$ supports an isomorphism
\[
R_a: a^*(\underline\CG^*_{x^*}\otimes_{\breve O[u]}\breve O[[u]])\xrightarrow{\sim} \underline\CG^*_{x^*}\otimes_{\breve O[u]}\breve O[[u]].
\]
We would like to show that $R_a$ descends to an isomorphism
$
R^\natural_a: a^*(\underline\CG_{x, \pi}\otimes_{ O[u]}O[[u]])\xrightarrow{\sim} \underline\CG_{x, \pi'}\otimes_{O[u]} O[[u]]$. Consider the descent datum $\Phi:=\Int(\underline g)\cdot\sigma$ for $\underline\CG_{x, \pi}$ and its ``rotation"
given as $R_a(\Phi):=R_a (a^*\Int(\underline g))\sigma(R_a)^{-1}\cdot \sigma$ for $\underline\CG_{x, \pi'}$. 
Consider also a descent datum $\Phi':=\Int(\underline g')\cdot\sigma$ for $\underline\CG_{x, \pi'}$. It is enough to show that $\Phi'$ and $R_a(\Phi)$ are
cohomologous, i.e., that there is an automorphism $h$ of  $\underline\CG^*_{x^*}\otimes_{\br O[u]}\br O[[u]]$
such that $h^{-1}R_a(\Phi) = \Phi'\cdot \sigma(h)^{-1} $. Then we can set $R_a^\natural=h^{-1}R_a$ which descends.
To show the existence of $h$, note that $R_a$ is the identity on the maximal reductive quotient 
of the fiber of $\underline\CG^*_{x^*}\otimes_{\breve O[u]}\breve O[[u]]$ over the point $(u,\pi)$.
We have $\underline\CG_{x, \pi}\simeq \underline\CG_{x, \pi'}$ modulo $(u,\pi)$ since they both
are isomorphic to $\CG_x$ modulo $\pi$.  Hence,
$\Phi'$ and $R_a(\Phi)$ are cohomologous when considered modulo a (connected) pro-unipotent group. An argument similar to the one in the proof of part (a) above then shows  the result.
 \end{proof}

 \subsection{Pappas-Zhu local models}
 
Let $(G, \{\mu\}, K)$ be a local model triple over $F$ such that $G$ splits over a tamely ramified extension of $F$. Again we set $O=O_F$.

In \cite{PZ}, there is a construction of a ``local model" $M_{\CG, \mu}$.  The Pappas-Zhu local model $M_{\CG, \mu}$ is a flat projective $O_E$-scheme equipped with an action of $\CG_{O_E}$ such that its generic fiber is $G_E$-equivariantly isomorphic to $X_{\{\mu\}}$. By definition, $M_{\CG, \mu}$ is
the Zariski closure of $X_{\{\mu\}}\subset  {\rm Gr}_{\underline \CG}\otimes_{O[u]}E$
in ${\rm Gr}_{\underline \CG, O}\otimes_{O}O_E$, where ${\rm Gr}_{\underline \CG}$
is the Drinfeld-Beilinson (global) Grassmannian over $ O[u]$ for $\underline \CG$ and  $
{\rm Gr}_{\underline \CG, O}={\rm Gr}_{\underline \CG}\otimes_{O[u]}O$ is its base change to $O$  by $u\mapsto \pi$. A priori, $M_{\CG, \mu}$ depends on the group scheme $\underline \CG$ over $O[u]$ and the choice of the uniformizer $\pi$.

\begin{theorem}\label{indLM}
The $\CG_{O_E}$-scheme $M_{\CG, \mu}$ over $O_E$, depends, up to equivariant isomorphism, only on the local model triple $(G,\{\mu\}, K)$.
\end{theorem}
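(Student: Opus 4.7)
The plan is to exploit the independence of the group scheme $\underline\CG$ established in Propositions \ref{ind2} and \ref{ind3}, combined with functoriality of the Beilinson-Drinfeld global affine Grassmannian and the stability of Zariski closure under equivariant isomorphism. The argument separates the auxiliary data into two kinds: for a fixed uniformizer $\pi$, the choice of smooth extension $\underline\CG$ over $O[u]$ (itself depending on a rigidification and a cocycle); and the choice of $\pi$ itself.

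First, fixing $\pi$: any two smooth extensions $\underline\CG_x,\underline\CG_x'$ of the same parahoric become isomorphic after base change to $\br O[u]$ by Proposition \ref{ind2}(b). Functoriality of the Beilinson-Drinfeld Grassmannian then yields an isomorphism
\[
{\rm Gr}_{\br{\underline\CG}_x}\xrightarrow{\sim}{\rm Gr}_{\br{\underline\CG}_x'}
\]
of $\br O[u]$-ind-schemes, compatible with the natural group actions. Base changing along $u\mapsto\pi$ produces an isomorphism between the ambient ind-$\br O_E$-schemes housing the two candidates for the local model; on the generic fiber it is an automorphism of the affine Grassmannian of $G_{\br E}$ fixing $X_{\{\mu\}}$, which is intrinsic to $(G,\{\mu\})$. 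Hence the Zariski closures are identified, producing a $\CG_{O_{\br E}}$-equivariant isomorphism between the two candidates for $\br M_{\CG,\mu}$.

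Next, to handle a change of uniformizer $\pi\mapsto\pi'=a\pi$ with $a\in O^\times$, Proposition \ref{ind2}(c) supplies the rotation isomorphism $R_a\colon a^*(\underline\CG_x\otimes\br O[u])\xrightarrow{\sim}\underline\CG_x\otimes\br O[u]$ lifting the automorphism $u\mapsto au$ of $\Spec\br O[u]$. Functoriality yields $a^*{\rm Gr}_{\br{\underline\CG}_x}\xrightarrow{\sim}{\rm Gr}_{\br{\underline\CG}_x}$, and base changing its source along $u\mapsto\pi$ coincides with base changing its target along $u\mapsto a\pi=\pi'$. Passing to Zariski closures of the intrinsic flag variety $X_{\{\mu\}}$ on both sides, one obtains an equivariant isomorphism $\br M_{\CG,\mu,\pi}\cong\br M_{\CG,\mu,\pi'}$.

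Finally, one must descend these identifications from $O_{\br E}$ to $O_E$, verifying Galois-equivariance. The isomorphisms produced by Proposition \ref{ind2} are not canonical, but their ambiguity, after completion at $u=0$, lies in a pro-unipotent group with vanishing non-abelian Galois cohomology, as in Lemma \ref{P} and the proof of Proposition \ref{ind3}. This rigidity allows a Galois-equivariant choice of identification, which descends. The main obstacle I foresee is precisely this descent: rigidifying the isomorphisms of Proposition \ref{ind2} enough to make them compatible with the twisted $\sigma$-actions on both sides and with the $\CG_{O_E}$-action, and then tracking the compatibility of the two kinds of identifications (varying $\underline\CG$ and varying $\pi$ simultaneously) through the Beilinson-Drinfeld construction and the passage to Zariski closure.
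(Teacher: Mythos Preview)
Your overall strategy is sound, but the paper organizes the argument differently and thereby avoids precisely the obstacle you flag at the end. The key observation you are missing is a Beauville--Laszlo reduction: the paper first shows that the fiber Grassmannian ${\rm Gr}_{\underline\CG,O}$ (the base change along $u\mapsto\pi$) already depends only on the \emph{completion} $\underline\CG\otimes_{O[u]}O[[u]]$ together with the ideal $(u-\pi)$, because torsors over $R[t]$ with trivialization over $R[t,1/t]$ are the same as torsors over $R[[t]]$ with trivialization over $R((t))$. Once this is established, Proposition~\ref{ind3} applies directly over $O[[u]]$---not over $\br O[[u]]$---and yields an isomorphism of the completed group schemes already defined over $O$. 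Functoriality of the Grassmannian then gives the desired isomorphism of local models over $O_E$ with no separate descent step required.

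In your approach, you work over $\br O[u]$ via Proposition~\ref{ind2} and postpone descent to the end, at the level of the local model itself. To carry this out you would need to rigidify the group-scheme isomorphism so that it is Galois-equivariant; the only tool available for that is the pro-unipotent-kernel argument of Lemma~\ref{P}, which lives over the completion $\br O[[u]]$. So to make your descent step precise you would in effect have to pass to the completion anyway, and then you still need the Beauville--Laszlo lemma to connect the completed group scheme back to the fiber Grassmannian. In other words, the paper's route---complete first (Beauville--Laszlo), then descend at the group-scheme level (Proposition~\ref{ind3}), then pass to Grassmannians---uses the same ingredients but sequences them so that the descent is handled once, at the group-scheme level, where the cohomological vanishing argument is cleanest. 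Your route is not wrong, but the ``main obstacle'' you anticipate dissolves once you insert the Beauville--Laszlo step and invoke Proposition~\ref{ind3} rather than Proposition~\ref{ind2}.
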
  

\begin{proof}
We first observe that $M_{\CG, \mu}$ can be constructed starting only with $\{\mu\}$, the base change $\underline \CG\otimes_{O[u]}O[[u]]$, and the ideal $(u-\pi)$ in $O[[u]]$. Indeed, we first see that ${\rm Gr}_{\underline \CG, O}$
only depends on $\underline \CG\otimes_{O[u]}O[[u]]$, and the ideal $(u-\pi)$ in $O[[u]]$.
Set $t=u-\pi$. The base change ${\rm Gr}_{\underline \CG, O}={\rm Gr}_{\underline \CG}\otimes_{O[u]}O$ by $u\mapsto \pi$  has $R$-valued points for an $O$-algebra $R$ given by the set of isomorphism classes of $\underline \CG$-torsors over $R[t]$ with a trivialization over $R[t, 1/t]$. By the Beauville-Laszlo lemma (in the more general form given for example in \cite[Lemma 6.1, Prop. 6.2]{PZ}), this set is in bijection with the set of isomorphism classes of
$\underline \CG\otimes_{O[u]}{R[[t]]}$-torsors over $R[[t]]=R[[u]]$ together with a trivialization over $R((t))=R[[u]][(u-\pi)^{-1}]$.  To complete the proof we use Proposition \ref{ind3}. It gives that $\underline \CG\otimes_{O[u]}{O[[u]]}$ only depends on the local model triple and $\pi$, hence ${\rm Gr}_{\underline \CG, O}$ only depends on the local model triple and $\pi$; for clarity, denote it by ${\rm Gr}_{\underline \CG, O, \pi}$. Part (b) of Proposition \ref{ind3} with the above then gives
that pulling back of torsors along $a: \Spec R[[u]]\to \Spec R[[u]]$, given by $u\mapsto a\cdot u$, gives an isomorphism
\[
{\rm Gr}_{\underline \CG, O, \pi}\xrightarrow{\ \sim\ } {\rm Gr}_{\underline \CG, O, \pi'}.
\]
Hence, by the above  ${\rm Gr}_{\underline \CG, O}$ depends, up to equivariant isomorphism,  only on $G$ and $K$. The result then follows from the definition of $M_{\CG,\mu}$.
\end{proof}

\begin{remark}\label{simplerInd}  We can obtain directly the independence of the base change $M_{\CG, \mu}\otimes_{O_E}\br O_E$  
via the same argument as above, by using the simpler  Proposition \ref{ind2} in place of Proposition \ref{ind3}.
\end{remark}

 \subsection{Local models: A variant of the Pappas-Zhu local models}\label{ss:PZmodif}
 It appears that the Pappas-Zhu local models $M_{\CG, \mu}$ are  not  well behaved when the characteristic $p$ divides the order of $\pi_1(G_{\rm der})$. For example, in this case, their special fiber is 
sometimes not reduced (see \cite{HR}, \cite{HR2}). Motivated by an insight of Scholze, we employ $z$-extensions to slightly modify the definition of  \cite{PZ}. Suppose that $(G,\{\mu\}, K)$ is an LM triple over $F$ such that $G$ splits over a tame extension of $F$. Choose a $z$-extension over $F$
\begin{equation}\label{zExt}
1\to T\to \tilde G\to G_{\rm ad}\to 1.
\end{equation}
In other words, $\tilde  G$ is a central extension of $G_{\rm ad}$ by a strictly  induced torus $T$ and the reductive group $\tilde G$ has simply connected derived group,
$\tilde G_{\rm der}=G_{\rm sc}$
(see for example, \cite[Prop. 3.1]{MS}). (Here, we say that the torus $T$ over $F$ is strictly induced 
if it splits over a finite Galois extension $F'/F$ and the cocharacter group $X_*(T)$ is a free $\BZ[{\rm Gal}(F'/F)]$-module.) We can assume that $\tilde  G$, and then also $T$, split over 
a tamely ramified extension of $F$.
By  \cite[Applic. 3.4]{MS}, we can choose a cocharacter $\tilde\mu$ of $\tilde G$ which lifts $\mu_{\rm ad}$ and which is such that the reflex field $\tilde  E$ of $\{\tilde \mu\}$ 
is equal to the reflex field $E_{\rm ad}$ of $\{\mu_{\rm ad}\}$. Let $\tilde K$ be the unique parahoric subgroup  of $\tilde G$ which lifts $K_{\rm ad}$. Then the corresponding group scheme
$\tilde \CG$ fits in a fppf exact sequence   of group schemes
over $O_F$,
\[
1\to \CT\to \tilde\CG\to\CG_{\rm ad}\to 1,
 \]
  which extends the $z$-extension above, comp. \cite[Prop. 1.1.4]{KP}. We  set
\[
\Mloc:=M_{\tilde\CG, \tilde\mu}\otimes_{O_{E_{\rm ad}}}O_E
\]
which is, again,  a
  flat projective $O_E$-scheme equipped with an action of $\CG_{O_E}$ (factoring through $\CG_{{\rm ad}, {O_E}}$)  with generic fiber  $G_E$-equivariantly isomorphic to $X_{\{\mu\}}$. Indeed, the action of $\tilde\CG_{O_E}$ on $M_{\tilde\CG, \tilde\mu}$ factors through the quotient $\CG_{{\rm ad}, O_E}=\tilde\CG_{O_E}/\CT_{O_E}$ (because it does so on the generic fiber). Since $G\to G_{\rm ad}$ extends to a group scheme homomorphism $\CG\to \CG_{\rm ad}$, we also obtain an action of $\CG_{O_E}$ on $\Mloc$.

\begin{remark}\label{compPZ}
1) By \cite[Thm. 9.1]{PZ}, $M_{\tilde\CG, \tilde\mu}$  has reduced special fiber. Therefore, the same is true 
for the base change $\Mloc=M_{\tilde\CG, \tilde\mu}\otimes_{O_{E_{\rm ad}}}O_E$. 
By \cite[Prop. 9.2]{PZ}, it follows that $\Mloc$ is a normal scheme.

 2) If $p$ does not divide the order of $\pi_1(G_{\rm der})$ then we have an equivariant isomorphism $M_{\tilde \CG,\tilde \mu}\otimes_{O_{E_{\rm ad}}}O_E\simeq M_{\CG, \mu}$, cf. \cite[Prop. 2.2.7]{KP}\footnote{In loc.~cit.  $F=\BQ_p$, but the result holds for general $F$.}.
Therefore, in this case 
\[
\Mloc\simeq M_{ \CG,  \mu}.
\]
 
3) Suppose that $\tilde   G'\to G_{\rm ad}$ is another choice of a $z$-extension as in (\ref{zExt}) and let  $\tilde \mu'$ be a cocharacter that also lifts $\mu_{\rm ad}$ with reflex field $E=E_{\rm ad}$. Then the fibered product $H=\tilde  G\times_{G_{\rm ad}} \tilde  G'\to G$ is also a similar $z$-extension
with kernel the direct product $T\times T'$ of the kernels of $\tilde   G\to G_{\rm ad}$  and $\tilde   G'\to G_{\rm ad}$.  We have a cocharacter $\mu_H=(\tilde  \mu, \tilde \mu')$ which also has reflex field 
$E$. The  parahoric group scheme for $ H$ corresponding to $\CG$ is  ${\CH}={\tilde  \CG}\times_{\CG_{\rm ad}}  \tilde  \CG'$. We obtain $M_{\CH, \{\mu_H \}}$ as in \cite{PZ}.  By  construction, we obtain
\[
M_{\CH, \{ \mu_H\} }\xrightarrow{\sim} M_{\tilde \CG, \{\tilde \mu\}}, \quad M_{\CH, \{\mu_H\}}\xrightarrow{\sim} M_{\tilde \CG', \{\tilde \mu\}},
 \] 
  both $\CH_{O_E}$-equivariant isomorphisms. Hence, we obtain an isomorphism $M_{\tilde \CG, \{\tilde \mu\}}\xrightarrow{\sim} M_{\tilde \CG', \tilde \{\mu\}}$
which is $\CG_{{\rm ad}, O_E}$-equivariant. As a result, $\Mloc$ is independent of the choice of the $z$-extension.  We can now easily deduce from Theorem \ref{ind3}, that  $\Mloc$ also only depends on 
the local model triple $(G, \{\mu\}, K)$.

 4) (Suggested by the referee) In fact, one can give an alternative proof that $\Mloc$ is independent 
(up to isomorphism) of the choice of $z$-extension, by noting 
that it can be identified with the normalization of $M_{\CG_{\rm ad},  \{\mu_{\rm ad}\}}\otimes_{O_{E_{\rm ad}}}O_E$. Indeed, by (1) above, $\Mloc$ is normal and, by its construction, it affords a map to $M_{\CG_{\rm ad},  \{\mu_{\rm ad}\}}\otimes_{O_{E_{\rm ad}}}O_E$ which is finite and birational. 
\end{remark}

\begin{definition}\label{deftrueLM}
The projective flat $O_E$-scheme  $\Mloc$ with its  $\CG_{O_E}$-action  is called the \emph{local model} of the LM triple $(G, \{\mu\}, K)$. 
\end{definition}
\begin{theorem}\label{2.9}
The geometric special fiber $\Mloc\otimes_{O_E}k $ is reduced  and is
$\CG\otimes_{O_F}k$-equivariantly  isomorphic to 
$\CA_{\tilde K}(\tilde G, \{\tilde \mu\} )$.
\end{theorem}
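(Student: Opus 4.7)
The plan is to reduce both assertions to the analogous statements for the Pappas--Zhu local model $M_{\tilde\CG,\tilde\mu}$ attached to the $z$-extension $\tilde G$, for which the derived group $\tilde G_{\rm der}=G_{\rm sc}$ is simply connected and hence $\pi_1(\tilde G_{\rm der})=1$ trivially has order prime to $p$. The starting observation is the tautological base-change identification
\[
\Mloc\otimes_{O_E}k \;\cong\; M_{\tilde\CG,\tilde\mu}\otimes_{O_{E_{\rm ad}}}k,
\]
which is immediate from Definition \ref{deftrueLM}, since $k$ is simultaneously the algebraic closure of the residue fields of $O_E$ and $O_{E_{\rm ad}}$. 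Hence it suffices to prove both claims for $M_{\tilde\CG,\tilde\mu}\otimes_{O_{E_{\rm ad}}}k$ and the LM triple $(\tilde G,\{\tilde\mu\},\tilde K)$.

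The reducedness assertion then follows at once from Remark \ref{compPZ}(1), which records the Pappas--Zhu theorem \cite[Thm.~9.1]{PZ} for $M_{\tilde\CG,\tilde\mu}$; the simply-connectedness of $\tilde G_{\rm der}$ is precisely the hypothesis under which that theorem applies. For the identification with the admissible locus, I would again invoke the structural results of \cite{PZ}: by flatness of $M_{\tilde\CG,\tilde\mu}$ over $O_{E_{\rm ad}}$ its geometric special fiber is a closed subscheme of the affine partial flag variety $\CF'$ associated to $\tilde\CG'$, and by \cite[Thm.~9.1, 9.3]{PZ} (or equivalently by the coherence-type arguments in loc.~cit.~\S 9) this closed subscheme agrees set-theoretically with the reduced union of Schubert cells $S_w$ as $w$ ranges over the $\{\tilde\mu\}$-admissible set $\Adm'_{\tilde{\breve K}'}(\{\tilde\mu\})$ for $\tilde G'$. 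Via the identifications \eqref{indentweyl}--\eqref{indentadm} of Weyl groups and admissible sets between the mixed- and equal-characteristic situations applied to $\tilde G$, this reduced union is by the defining formula \eqref{Aschubert} the locus $\CA_{\tilde K}(\tilde G,\{\tilde\mu\})$; reducedness promotes the set-theoretic identity to a scheme-theoretic one.

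It remains to check $\CG\otimes_{O_F}k$-equivariance. By construction in Section \ref{ss:PZmodif}, the action of $\CG_{O_E}$ on $\Mloc$ factors through the adjoint quotient $\CG_{{\rm ad},O_E}$; on the other side, the natural action of $\tilde\CG\otimes_{O_F}k$ on $\CA_{\tilde K}(\tilde G,\{\tilde\mu\})$ likewise factors through $\tilde\CG_{\rm ad}\otimes_{O_F}k = \CG_{\rm ad}\otimes_{O_F}k$, since the minuscule coweight $\{\tilde\mu_{\rm ad}\}=\{\mu_{\rm ad}\}$ already lives on the adjoint quotient and the central torus $\CT$ acts trivially on the corresponding Schubert locus. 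The match between these two $\CG_{\rm ad}\otimes_{O_F}k$-actions under the identification \eqref{identpara} applied to $\tilde\CG$ is essentially built into the Pappas--Zhu construction of the Beilinson--Drinfeld Grassmannian and its specialization isomorphism.

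The only nontrivial ingredient in the entire argument is the Pappas--Zhu reducedness and admissibility theorem \cite[Thm.~9.1]{PZ}, which is being imported as a black box. The substantive conceptual point, and the whole raison d'\^etre of passing from $G$ to the $z$-extension $\tilde G$ in Definition \ref{deftrueLM}, is that this detour arranges the hypothesis $p\nmid|\pi_1(\tilde G_{\rm der})|$ under which the Pappas--Zhu theorem is available \emph{without any restriction on $p$ and $\pi_1(G_{\rm der})$}; the main ``obstacle'' is therefore external to this proof and has already been handled in \cite{PZ}.
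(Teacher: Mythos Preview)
Your proposal is correct and follows essentially the same approach as the paper: the paper's proof is the one-line ``This follows from the construction and \cite[Thm.~9.1, Thm.~9.3]{PZ},'' and your argument simply unpacks what that sentence means, namely that the definition via the $z$-extension $\tilde G$ (with $\tilde G_{\rm der}$ simply connected) places one squarely in the range where the Pappas--Zhu reducedness and admissibility theorems apply. Your additional remarks on equivariance and on promoting the set-theoretic identification to a scheme-theoretic one via reducedness are reasonable elaborations not spelled out in the paper's terse proof.
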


\begin{proof}
This follows from the construction and \cite[Thm. 9.1, Thm. 9.3]{PZ}.  
\end{proof}

Note that this implies that the reduced $k$-scheme $\CA_{\tilde K}(\tilde G, \{\tilde\mu\})$ is independent of the choice of $z$-extension and only depends on $(G,\{\mu\}, K)$. 
(This fact can be also seen more directly using Corollary \ref{unique}
and \cite[\S 6]{PRTwisted}.) We call this \emph{  the $\mu$-admissible locus} of the local model triple $(G, \{\mu\}, K)$ and denote it by ${\mathfrak A}_K(G,\{\mu\})$. 

\begin{remark} It follows from \cite[6.a, 6.b]{PRTwisted}  that $\breve {G}'\to \breve G'_{\rm ad}$ and $\breve {\tilde G}'\to \breve G'_{\rm ad}$ induce equivariant
morphisms
\[
\CA_K(G, \{ \mu\} )\to \CA_{K_{\rm ad}}(G_{\rm ad}, \{\mu_{\rm ad}\} ),\quad \CA_{\tilde K}(\tilde G, \{\tilde \mu\} )\to \CA_{K_{\rm ad}}(G_{\rm ad}, \{\mu_{\rm ad}\})
\]
which both induce bijections on $k$-points. As a result, we have  equivariant bijections
\[
{\mathfrak A}_K(G,\{\mu\})(k)=\CA_K(G, \{ \mu\} )(k)=\CA_{K_{\rm ad}}(G_{\rm ad}, \{\mu_{\rm ad}\} )(k).
\]
\end{remark}
 
The following conjecture would characterize the local model $\Mloc$ uniquely. 

\begin{conjecture}\label{uniqLM}
Up to equivariant isomorphism, there exists a unique flat projective $O_E$-scheme $\BM$ equipped with an action of $\CG_{O_E}$ and the following properties. 

\begin{altenumerate}
\item[(a)]\ Its generic fiber is $G_E$-equivariantly isomorphic to $X_{\{\mu\}}$.

\item[(b)]\ Its special fiber is reduced and there is a $\CG\otimes_{O_F}k$-equivariant isomorphism of $k$-schemes
\begin{equation*}
\BM\otimes_{O_E}k\simeq  {\mathfrak A}_K( G, \{\mu\}).
\end{equation*}
\end{altenumerate}

\end{conjecture}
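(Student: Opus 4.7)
Existence follows from taking $\BM = \Mloc$ as in Definition \ref{deftrueLM}: condition (a) holds by construction (since $\Mloc$ is defined as a flat closure of $X_{\{\mu\}}$ in an affine Grassmannian), and condition (b) is exactly Theorem \ref{2.9} combined with the identification $\CA_{\tilde K}(\tilde G,\{\tilde\mu\}) \simeq {\mathfrak A}_K(G,\{\mu\})$. So the substance of the conjecture is uniqueness, and I would attack it by the graph-closure method.

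Given any $\BM$ satisfying (a) and (b), the isomorphisms $(\Mloc)_E \simeq X_{\{\mu\}} \simeq \BM_E$ of (a) produce a canonical $\CG_E$-equivariant identification of generic fibers; let $\Gamma$ denote the closure of the graph of this identification inside $\Mloc \times_{O_E} \BM$. Then $\Gamma$ is $\CG_{O_E}$-stable, proper over $O_E$, and the two projections $p_1, p_2$ are proper and birational. The first step is to establish that $\BM$ is normal: the generic fiber is smooth, the special fiber is reduced by (b), and so $\BM$ satisfies $R_1$ (since top-dimensional Schubert varieties in ${\mathfrak A}_K(G,\{\mu\})$ are generically smooth at their generic points) and $S_2$ (by flatness together with Cohen--Macaulayness of Schubert strata). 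Combined with the normality of $\Mloc$ (Remark \ref{compPZ}(1)), Zariski's main theorem then reduces the task to showing that both $p_1$ and $p_2$ are bijective on geometric points, and by $\CG_{O_E}$-equivariance this can be tested one orbit at a time.

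The main obstacle, and the reason the conjecture is not already proved in the paper, is ruling out that $\Gamma$ spreads within a Schubert stratum closure on the special fiber: conditions (a) and (b) identify the two special fibers as $\CG\otimes k$-schemes but do not, by themselves, pin down how the orbit closures deform transversally across the special point. My best strategy to circumvent this is to bypass the direct argument by invoking Scholze's v-sheaf characterization \cite{Schber}: show that (a) and (b) force the associated v-sheaf $\BM^\diamond$ to coincide with Scholze's diamond local model, whence $\BM \simeq \Mloc$ by Scholze's uniqueness theorem. Under the tameness hypothesis this comparison is established for $\Mloc$ itself in the classical case via Corollary \ref{Schmodclass}; extending it to an arbitrary $\BM$ satisfying only (a) and (b) is the genuine open problem, since it requires showing that the two axioms determine $\BM$ already at the level of v-sheaves—essentially a rigidity statement about diamond neighborhoods of the special fiber that neither (a) nor (b) directly encodes.
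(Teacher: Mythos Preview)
This statement is a \emph{conjecture} in the paper, not a theorem; there is no proof in the paper to compare against. You correctly recognize this and are honest that your outline does not close the gap. A few comments on the specifics.

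Your existence argument is fine and matches the paper's intent: $\Mloc$ is the witness, via Definition~\ref{deftrueLM} and Theorem~\ref{2.9}.

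In the uniqueness sketch, the normality argument for an arbitrary $\BM$ has a technical hole. Reducedness of the special fiber gives $S_1$, not $S_2$; to conclude $S_2$ for $\BM$ via flatness over a DVR you need the special fiber itself to be $S_2$. For $\Mloc$ this comes from the Cohen--Macaulayness of the admissible locus (a nontrivial input from \cite{PZ}, ultimately Frobenius-splitting), and since condition~(b) identifies the special fiber of $\BM$ with ${\mathfrak A}_K(G,\{\mu\})$ as a scheme, you do inherit CM-ness of the special fiber of $\BM$. So the conclusion is salvageable, but the justification you wrote (``Cohen--Macaulayness of Schubert strata'') is not the right one: the admissible locus is a \emph{union} of Schubert varieties, and a union of CM schemes is not CM in general; you must invoke the actual CM result for the whole admissible locus.

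On the core obstruction you name---that (a) and (b) constrain only the two fibers and not the way they glue across $\Spec O_E$---this is exactly right, and it is the reason the statement is left as a conjecture. Your proposed detour through Scholze's $v$-sheaf characterization is the same alternative the paper points to (see Conjecture~\ref{conjsch} and the surrounding discussion), but note that Scholze's uniqueness statement \cite[Prop.~18.3.1]{Schber} presupposes a closed immersion of the associated diamond into ${\rm Gr}_{\CG,\Spd O_E}$, which is extra structure not supplied by (a) and (b). So even granting Conjecture~\ref{conjsch}, you would still need to manufacture such an embedding for the unknown $\BM$ from axioms (a) and (b) alone, and nothing in your outline indicates how. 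Your final sentence acknowledges precisely this; it is the honest place to stop.
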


The local models constructed above have the following properties. 

 \begin{proposition}\label{proofLM}  The following  hold.
\begin{altenumerate}
\item If $K$ is hyperspecial, then $\Mloc$ is smooth over $O_E$. 
\smallskip

\item If $F'/F$ is a finite unramified extension, then 
\begin{equation}
\Mloc\otimes_{O_E}O_{E'}\xrightarrow{\sim }\BM^\loc_{K'}(G\otimes_FF', \{\mu\otimes_FF'\}) .
\end{equation}
Note that  here the reflex field $E'$ of $(G\otimes_FF', \{\mu\otimes_FF'\})$ is the join of $E$ and $F'$. 
\smallskip

\item If $(G, \{\mu\}, K)=(G_1, \{\mu_1\}, K_1)\times (G_2, \{\mu_2\}, K_2)$, then
\begin{equation}
\Mloc=\big(\BM^\loc_{K_1}(G_1, \{\mu_1\})\otimes_{O_{E_1}}O_E\big)\times\big(\BM^\loc_{K_2}(G_2, \{\mu_2\})\otimes_{O_{E_2}}O_E\big) .
\end{equation}
Note that  here the reflex field $E$ of $(G, \{\mu\})$ is the join of the reflex fields $E_1$ and $E_2$.
\smallskip

\item If  $\phi: (G, \{\mu\}, K)\to (G', \{\mu'\}, K')$ is a morphism of local model triples such that $\phi: G\to G'$ gives a central extension of $G'$, there is a $\CG_{O_E}$-equivariant isomorphism
\begin{equation}
\Mloc\xrightarrow{\sim }\BM^\loc_{K'}(G', \{\mu'\})\otimes_{O_{E'}}O_{E} .
\end{equation}
 
\end{altenumerate}

\end{proposition}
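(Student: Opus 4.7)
The plan is to treat each of the four parts by unwinding the definition $\Mloc = M_{\tilde\CG,\tilde\mu} \otimes_{O_{E_{\rm ad}}} O_E$ from Section \ref{ss:PZmodif} and reducing to a corresponding functoriality or smoothness property of the Pappas-Zhu construction, of the Beilinson-Drinfeld Grassmannian, and of the chosen $z$-extension. The independence of $\Mloc$ from auxiliary choices (Proposition \ref{ind2}, Proposition \ref{ind3}, and Remark \ref{compPZ}) will be invoked freely so that in each case I may tailor the $z$-extension, the uniformizer, and the extension $\underline{\tilde\CG}$ to the situation at hand.

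For (1), I will argue that if $K$ is hyperspecial then so is $K_\ad$, and one can arrange the $z$-extension $\tilde G \to G_\ad$ so that $\tilde K$ is hyperspecial as well: the kernel $T$ of $\tilde G \to G_\ad$ is an induced torus, so a hyperspecial vertex of $\tilde G$ projects to one of $G_\ad$. Then $\tilde \CG$ is reductive over $O_F$, and the auxiliary group scheme $\underline{\tilde\CG}$ of \cite{PZ} can be chosen reductive over $O_F[u]$. For a reductive group scheme, the minuscule Schubert variety in the Beilinson-Drinfeld Grassmannian is smooth, since it is already closed and coincides with the partial flag variety $X_{\{\tilde\mu\}}$. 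Hence $M_{\tilde\CG, \tilde\mu}$ is smooth over $O_{E_\ad}$, and (1) will follow from the preservation of smoothness under arbitrary base change to $O_E$.

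Parts (2) and (3) are direct functoriality statements. For (2), both $\underline{\tilde\CG}$ (by Proposition \ref{ind2}(a)) and the Beilinson-Drinfeld Grassmannian commute with unramified base change, and the Zariski closure defining $M_{\tilde\CG, \tilde\mu}$ descends through such a base change since the ambient Grassmannian is flat; the $z$-extension is preserved as well, yielding the formula. For (3), a product $(G_1,\{\mu_1\}, K_1) \times (G_2, \{\mu_2\}, K_2)$ yields a product $z$-extension $\tilde G^\sharp = \tilde G_1 \times \tilde G_2$ of $G_\ad = G_{1,\ad} \times G_{2,\ad}$, the associated $\underline{\tilde\CG^\sharp}$ is the corresponding product, the Beilinson-Drinfeld Grassmannian of a product is the product of Grassmannians, and the Schubert variety for $(\tilde\mu_1, \tilde\mu_2)$ factors as a product; taking the base change to $O_E$ then gives (3).

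For (4), I will observe that a central extension $\phi: G \to G'$ induces an isomorphism $G_\ad \isoarrow G'_\ad$ under which $\{\mu_\ad\}$ corresponds to $\{\mu'_\ad\}$ and $K_\ad$ to (a conjugate of) $K'_\ad$, so in particular $E_\ad = E'_\ad$. A single $z$-extension $\tilde G^\sharp \to G_\ad = G'_\ad$, equipped with a common lift $\tilde\mu^\sharp$ and parahoric $\tilde K^\sharp$, will therefore serve both triples, and the resulting Pappas-Zhu local model $M_{\tilde\CG^\sharp, \tilde\mu^\sharp}$ over $O_{E_\ad}$ will be common to the two constructions. The desired $\CG_{O_E}$-equivariant isomorphism $\Mloc \simeq \BM^{\rm loc}_{K'}(G', \{\mu'\}) \otimes_{O_{E'}} O_E$ will then drop out after base change, using that $\{\mu'\} = \{\phi \circ \mu\}$ forces $E \supseteq E'$. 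The main obstacle is the bookkeeping in (4): one must verify that the $z$-extension, the lift $\tilde\mu^\sharp$, and the parahoric $\tilde K^\sharp$ can all be arranged simultaneously for both triples, and that the resulting equivariance is compatible with the respective $\CG_{O_E}$- and $\CG'_{O_{E'}}$-actions via the quotient maps to the adjoint group.
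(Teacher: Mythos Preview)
Your proposal is correct and follows essentially the same approach as the paper's own proof: for (i) you choose the extension $\underline{\tilde\CG}$ reductive over $O_F[u]$, for (ii) and (iii) you take the obvious base-changed and product extensions, and for (iv) you exploit that $G_{\rm ad}=G'_{\rm ad}$ so a single $z$-extension computes both local models. The paper's proof is extremely terse (four sentences), and what you have written simply spells out the details the paper leaves implicit—in particular your bookkeeping in (iv) about $E\supseteq E'$ and the equivariance through the adjoint quotient is exactly what is needed to justify the paper's one-line ``follows by the construction since $G_{\rm ad}=G'_{\rm ad}$.''
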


\begin{proof}
 When $K$ is hyperspecial, we can choose the extension $\underline{\tilde \CG}$ over $O_F[u]$ to be reductive; then $\Mloc$ is smooth as required in property (i). By choosing the extension $\underline{\tilde \CG'}=\underline{\tilde \CG}\otimes_{O_F[u]}O_{F'}[u]$, we easily obtain (ii). For (iii), we choose the extension $\underline{\tilde\CG}=\underline{\tilde\CG}_1\times \underline{\tilde\CG}_2$ over $O_F[u]$. Finally,  (iv) follows by the construction since $G_{\rm ad}=G'_{\rm ad}$.
\end{proof}
 
\subsection{Scholze local models}  
Under special circumstances, we can relate the local models above to Scholze local models and give in this way a characterization of them different from Conjecture \ref{uniqLM}. In particular, this gives a different way of proving the independence of all choices in the construction of local models. Recall Scholze's conjecture \cite[Conj. 21.4.1]{Schber} that there exists a flat projective $O_E$-scheme $\BM^{\rm loc, flat}_{\CG, \mu}$ with generic fiber $X_{\{\mu\}}$ and reduced special fiber and with an equivariant closed immersion of the associated diamond, $\BM^{{\rm loc, flat}, \diamond}_{\CG, \mu}\hookrightarrow {\rm Gr}_{\CG, {\rm Spd} O_E}$. Scholze proves that  $\BM^{\rm loc, flat}_{\CG, \mu}$ is unique if it exists, cf. \cite[Prop. 18.3.1]{Schber}. Note that Scholze does not make the hypothesis that $G$ split over a tame extension. We are going to exhibit a class of LM triples $(G, \{\mu\}, K)$ (with $G$ split over a tame extension) such that the  local models $\Mloc$ defined above  satisfy Scholze's conjecture. 

We will say that a pair  $(G,\{\mu\})$, consisting  of a reductive group over $F$ and a geometric conjugacy class of  minuscule coweights  is of \emph{abelian type} when there is a similar pair $(G_1, \{\mu_1\})$ with $E_1\subset E\breve F$ and with a central isogeny $\phi: G_{1,\rm der}\to G_{\rm der}$ which induces an isomorphism $(G_{1, \rm ad}, \{\mu_{1, \rm ad}\})\simeq (G_{\rm ad}, \{\mu_{\rm ad}\})$ and is such that there exists a faithful minuscule representation $\rho_1: G_1\hookrightarrow {\rm GL}_n$ over $F$ such that $\rho_1\circ \mu_1$ is a minuscule cocharacter $\mu_d$ of ${\rm GL}_n$. 
Here by a minuscule representation we mean a direct sum of irreducible minuscules (i.e., with all weights
conjugate by the Weyl group.) In this case, we call such a pair $(G_1, \{\mu_1\})$ a \emph{realization} of the pair $(G, \{\mu\})$ of abelian type. 

\begin{theorem}\label{SchLM}
Let $(G,\{\mu\}, K)$ be a LM triple over $F$ such that $G$ splits over a tame extension of $F$,  for which there is an unramified finite extension $F'/F$
such that the base change $(G, \{\mu\})\otimes_F F'$ is of abelian type, with realization $(G_1, \{\mu_1\})$ such that $p\nmid  |\pi_1(G_{1,\rm der})|$. Then the local
model $\Mloc$ defined above satisfies Scholze's conjecture
\cite[Conj. 21.4.1]{Schber}. \end{theorem}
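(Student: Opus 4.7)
The plan is to verify the four axioms that make up Scholze's conjecture \cite[Conj.~21.4.1]{Schber}: $\Mloc$ must be flat and projective over $O_E$, have generic fiber $X_{\{\mu\}}$, have reduced special fiber, and come equipped with an equivariant closed immersion $\Mloc^\diamond \hookrightarrow \mathrm{Gr}_{\CG,\mathrm{Spd}\,O_E}$ into the $v$-sheaf affine Grassmannian. The first three are already established by the construction, Remark~\ref{compPZ}(1) and Theorem~\ref{2.9}, so the whole content is to produce the closed immersion. The strategy is to descend the problem through the abelian-type realization $(G_1,\{\mu_1\})$ down to the case of $\mathrm{GL}_n$, where the statement is essentially tautological, and then transfer the result back to $G$ via the common adjoint.

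The first reduction is unramified descent: since the $v$-sheaf affine Grassmannian is \'etale-local and by Proposition~\ref{proofLM}(ii) the local model is compatible with unramified base change, Scholze's property for $\Mloc$ follows from the analogous property after enlarging $F$ to $F'$, so we may assume that $(G,\{\mu\})$ is itself of abelian type over $F$. Let $K_1$ be the parahoric of $G_1$ lifting $K_{\rm ad}$, defined as in the discussion preceding Definition~\ref{deftrueLM}. Applying Proposition~\ref{proofLM}(iv) to the central quotients $G \to G_{\rm ad}$ and $G_1 \to G_{1,\rm ad} = G_{\rm ad}$ yields $\CG_{\rm ad}$-equivariant isomorphisms
\[
\Mloc \;\cong\; \BM^{\rm loc}_{K_{\rm ad}}(G_{\rm ad},\{\mu_{\rm ad}\}) \otimes_{O_{E_{\rm ad}}} O_E \;\cong\; \BM^{\rm loc}_{K_1}(G_1,\{\mu_1\}) \otimes_{O_{E_1}} O_E,
\]
and by Remark~\ref{compPZ}(2) together with the hypothesis $p \nmid |\pi_1(G_{1,\rm der})|$, the right-most local model is the unmodified Pappas--Zhu local model $M_{\CG_1,\mu_1}$, which by construction sits as a closed subscheme of the Beilinson--Drinfeld Grassmannian of $\underline{\CG_1}$.

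Next, Scholze's property for $(G_1,\{\mu_1\},K_1)$ is to be extracted from the faithful minuscule representation $\rho_1\colon G_1\hookrightarrow \mathrm{GL}_n$ satisfying $\rho_1\circ\mu_1 = \mu_d$. The bounded image $\rho_1(K_1)$ is contained in the stabilizer $K_d$ of a suitable lattice chain in $F^n$, and this extends to a closed immersion of parahoric group schemes $\CG_1 \hookrightarrow \CG_d$. One then obtains a closed immersion of Pappas--Zhu local models $M_{\CG_1,\mu_1}\hookrightarrow M_{\CG_d,\mu_d}$ and, at the $v$-sheaf level, a closed immersion $\mathrm{Gr}_{\CG_1,\mathrm{Spd}\,O_{E_1}} \hookrightarrow \mathrm{Gr}_{\CG_d,\mathrm{Spd}\,O_{E_1}}$. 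For $\mathrm{GL}_n$ with minuscule cocharacter $\mu_d$ and lattice-chain parahoric, $M_{\CG_d,\mu_d}$ is the classical linear algebra local model and its diamond tautologically represents the flat closure of the $\mu_d$-Schubert cell in $\mathrm{Gr}_{\CG_d,\mathrm{Spd}\,O_{E_1}}$, since both parametrize the same functor of minuscule lattice modifications on perfectoid test rings. Intersecting with $\mathrm{Gr}_{\CG_1}$ then identifies $M_{\CG_1,\mu_1}^\diamond$ with the flat closure of the $\{\mu_1\}$-Schubert cell, giving Scholze's property for $(G_1,\{\mu_1\},K_1)$.

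The main obstacle is the final step: transferring Scholze's property across the central isogeny from $(G_1,K_1)$ to $(G,K)$. As $\CG_{\rm ad}$-schemes, the two local models are base changes of the common adjoint object, but Scholze's embedding has to land in $\mathrm{Gr}_{\CG,\mathrm{Spd}\,O_E}$ rather than in $\mathrm{Gr}_{\CG_1}$ or $\mathrm{Gr}_{\CG_{\rm ad}}$. The plan is to exploit the fact that Scholze's flat closure should be compatible with central isogenies: namely, that inside $\mathrm{Gr}_{\CG,\mathrm{Spd}\,O_E}$ it is obtained as the preimage, under $\mathrm{Gr}_{\CG}\to\mathrm{Gr}_{\CG_{\rm ad}}$, of the flat closure inside $\mathrm{Gr}_{\CG_{\rm ad},\mathrm{Spd}\,O_E}$, and similarly for $\CG_1$. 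Combined with the uniqueness of the flat closure \cite[Prop.~18.3.1]{Schber}, this compatibility transports the property established for $\CG_1$ through $\CG_{\rm ad}$ to $\CG$. Verifying this compatibility rigorously at the $v$-sheaf level, possibly by reduction through $z$-extensions with simply connected derived group, is the delicate technical point on which the argument turns.
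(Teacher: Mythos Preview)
Your strategy matches the paper's: unramified descent, then pass to the realization $(G_1,\{\mu_1\},K_1)$ via the common adjoint, use $p\nmid|\pi_1(G_{1,\rm der})|$ together with Remark~\ref{compPZ}(2) and Proposition~\ref{proofLM}(iv) to identify $\Mloc\simeq M_{\CG_1,\mu_1}\otimes_{O_{E_1}}O_E$, then embed into $\mathrm{GL}_n$ via $\rho_1$ and invoke the known $\mathrm{GL}_n$ case.

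Two points of divergence. First, your step extending $\rho_1$ to the integral level is too casual. It is not automatic that $\rho_1$ extends to a closed immersion of \emph{parahoric} group schemes: the paper invokes \cite[Prop.~1.3.3]{KP} to extend $\rho_1$ to a closed immersion of the possibly non-connected \emph{stabilizer} scheme $\CG'_1$ into a parahoric $\mathcal{GL}$ for $\mathrm{GL}_n$, replaces $\rho_1$ by $\rho_1^{\oplus m}$ so that $\mathcal{GL}=\mathrm{GL}_n$ over $O_F$, and then uses \cite[Prop.~21.4.3]{Schber} to identify $\mathrm{Gr}_{\CG'_1,\mathrm{Spd}(O_E),\mu_1}=\mathrm{Gr}_{\CG_1,\mathrm{Spd}(O_E),\mu_1}$ for the connected component $\CG_1=(\CG'_1)^\circ$. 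It further cites \cite[Prop.~2.3.7]{KP} for the induced closed immersion $M_{\CG_1,\mu_1}\hookrightarrow\mathrm{Gr}(d,n)_{O_{E_1}}$ and \cite[Cor.~21.5.10]{Schber} for the $\mathrm{GL}_n$ case, which is not quite ``tautological''. These references carry real content that your sketch elides.

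Second, the ``main obstacle'' you flag --- transferring the embedding from $\mathrm{Gr}_{\CG_1}$ back to $\mathrm{Gr}_{\CG}$ across the central isogeny --- is not carried out in the paper at all. The paper's proof terminates once it has the closed immersion $M_{\CG_1,\mu_1}^\diamond\hookrightarrow\mathrm{Gr}_{\CG_1,\mathrm{Spd}(O_{E_1})}$ and the identification of the scheme $\Mloc$ with $M_{\CG_1,\mu_1}\otimes_{O_{E_1}}O_E$; it does not separately produce an embedding into $\mathrm{Gr}_{\CG}$ for the original $\CG$. So your concern is not unreasonable, but your proposed compatibility argument via $z$-extensions goes beyond what the paper's own proof supplies.
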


\begin{proof}
We already checked that the flat projective scheme $\Mloc$ has reduced special fiber. To show the conjecture it remains to show that the associated diamond $\Mloc^\diamond$ over ${\rm Spd}(O_E)$ embeds via an equivariant closed immersion in ${\mathrm {Gr}}_{\CG, {\rm Spd}(O_E)}$ such that its generic fiber identifies with $X_{\{\mu\}}^\diamond$.

 Using \'etale descent along $F'/F$ 
and property (ii) of Proposition \ref{proofLM},
we see that it is enough to show the conjecture for $(G, \{\mu\})\otimes_F F'$; so, we can assume that
$(G,\{\mu\})$ is of abelian type to begin with. Let $(G_1, \{\mu_1\})$ be a realization. In fact, we can also assume that $E_1\subset E$. Observe that by using $\phi$, we obtain a parahoric subgroup $K_1$
of $G_1$ which corresponds to $K$.
By \cite[Prop. 1.3.3]{KP}, $\rho_1: G_1\hookrightarrow {\rm GL}_n$ extends to a closed immersion
\[
\rho_1: \CG'_1\hookrightarrow {\mathcal {GL}} ,
\]
where $\CG'_1$ is the stabilizer (possibly non connected) of a point in the building of $G_1(F)$
that corresponds to $K_1$ and where ${\mathcal {GL}}$ is a certain parahoric group scheme for 
${\rm GL}_n$. In fact, by replacing $\rho_1$ by a direct sum $\rho_1^{\oplus m}$ we can assume that
${\mathcal {GL}}$ is ${\rm GL}_n$ over $O_F$; we will do this in the rest of the proof.  By \cite[Prop. 21.4.3]{Schber}, ${\mathrm {Gr}}_{\CG'_1, {\rm Spd}(O_E), \mu_1}={\mathrm {Gr}}_{\CG_1, {\rm Spd}(O_E), \mu_1}$, where $\CG_1=(\CG'_1)^\circ$.
This gives a closed immersion ${\mathrm {Gr}}_{\CG_1, {\rm Spd}(O_E), \mu_1}\hookrightarrow  {\mathrm {Gr}}_{{\mathcal {GL}}, {\rm Spd}(O_E)}$. By \cite[Prop. 2.3.7]{KP}, $\rho_1: \CG'_1\hookrightarrow {\mathcal {GL}}$ induces
\[
M_{\CG_1, \{\mu_1\}}\hookrightarrow (M_{{\rm GL}_n, \{\mu_d\}})_{O_{E_1}}={\rm Gr}(d, n)_{O_{E_1}} ,
\]
which is also an equivariant closed immersion. (Here the local model $M_{{\rm GL}_n, \{\mu_d\}}$ is the Grassmannian 
${\rm Gr}(d, n)$ over $O_F$.) By the assumption $p\nmid |\pi_1(G_{1,\rm der})|$, Remark \ref{compPZ} above gives that $\Mloc\simeq M_{ \CG_1,  \mu_1}\otimes_{O_{E_1}}O_E$. This allows us to reduce the result to the case of $\GL_n$ which is dealt with by \cite[Cor. 21.5.10]{Schber}.
\end{proof}
We view  Theorem \ref{SchLM} as evidence for the following conjecture.

\begin{conjecture}\label{conjsch}  For all local model triples $(G,\{\mu\}, K)$ with $G$ split over a tame extension, the local
model $\Mloc$ defined in the last subsection satisfies Scholze's Conjecture
\cite[Conj. 21.4.1]{Schber}.
\end{conjecture}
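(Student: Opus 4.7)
The plan is to remove the abelian-type hypothesis from Theorem~\ref{SchLM}. Since $\Mloc$ is defined in Section~\ref{ss:PZmodif} via the $z$-extension $(\tilde G,\{\tilde\mu\},\tilde K)$ with $\tilde G_{\rm der}=G_{\rm sc}$ simply connected, the divisibility condition $p\nmid |\pi_1(\tilde G_{\rm der})|$ appearing in Theorem~\ref{SchLM} becomes automatic. Thus the conjecture reduces to showing that every pair $(\tilde G,\{\tilde\mu\})$ with $\tilde G_{\rm der}$ simply connected admits an abelian-type realization after some unramified base change.

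First, I would decompose $\tilde G_{\rm ad}$ over a suitable unramified extension $F'/F$ into a product of $F'$-simple factors, each of the form $\Res_{L/F'}(H)$ with $H$ absolutely simple adjoint, and apply Proposition~\ref{proofLM}(ii) and (iii) to reduce to a single such factor. Since $\{\tilde\mu\}$ is minuscule, it projects nontrivially only to absolutely simple factors of type $A$, $B$, $C$, $D$, $E_6$, or $E_7$, each of which admits a faithful minuscule representation on its simply connected cover (the standard, spin, and $27$- or $56$-dimensional representations, respectively). Next, I would assemble such factorwise representations into a faithful representation of a central cover of $\Res_{L/F'}(H_{\rm sc})$ by Galois induction, and verify that its composition with the appropriate lift of $\tilde\mu$ remains a direct sum of minuscule weights of some $\GL_n$. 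This would produce the desired realization $(G_1,\{\mu_1\})$ and permit the invocation of Theorem~\ref{SchLM}.

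The principal obstacle will be controlling the minuscule property after Galois induction: the various $\mathrm{Gal}(\breve F/F')$-twists of $\tilde\mu$ must land simultaneously in minuscule weights of the induced representation, which constrains the choice of starting representation and of the intermediate central cover. A related difficulty is ensuring that such a faithful representation extends to a closed immersion of parahoric group schemes in the sense of \cite[Prop.~1.3.3]{KP}, possibly requiring replacement by a sufficiently large direct sum so that the stabilizer of the induced lattice chain is exactly $\tilde\CG$. In cases where this purely representation-theoretic strategy fails---for example, for certain triality forms of type ${}^3\!D_4$, or for adjoint $E_7$ where the minuscule representation exists only on the simply connected cover---one should fall back on a direct verification of Scholze's closed-immersion property, using the explicit identification of the special fiber with the $\mu$-admissible locus provided by Theorem~\ref{2.9} together with case-by-case analyses of the type carried out by Haines--Richarz \cite{HR,HR2}.
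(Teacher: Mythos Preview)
The statement you are attempting to prove is recorded in the paper as an open \emph{conjecture}, not a theorem; the paper offers no proof and explicitly presents Theorem~\ref{SchLM} only as evidence for it. So there is no proof in the paper to compare your proposal against, and your task is genuinely harder than filling in a known argument.

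Your strategy has a real gap at the exceptional types. You assert that each absolutely simple factor of type $A$, $B$, $C$, $D$, $E_6$, $E_7$ admits a faithful minuscule representation of its simply connected cover, which is true, but the abelian-type condition in the paper also requires that $\rho_1\circ\mu_1$ be a \emph{minuscule} cocharacter of $\GL_n$, i.e.\ have exactly two eigenvalues on the representation space. For $E_6$ with $\mu_{\rm ad}=\omega_1^\vee$ and the $27$-dimensional representation, the grading by $\omega_1^\vee$ has three distinct eigenvalues (the range $\langle\omega_1^\vee,\omega_1\rangle-\langle\omega_1^\vee,-\omega_6\rangle$ equals $2$); for $E_7$ with $\mu_{\rm ad}=\omega_7^\vee$ and the $56$-dimensional representation, there are four. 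No central twist or direct sum repairs this. Hence $(E_6,\omega_1^\vee)$ and $(E_7,\omega_7^\vee)$ are \emph{not} of abelian type in the paper's sense, which is precisely why Corollary~\ref{Schmodclass} is stated only for classical groups. Your proposed fallback---a ``direct verification'' via Theorem~\ref{2.9} and Haines--Richarz---does not address the actual content of Scholze's conjecture: what is needed is an equivariant closed immersion of the associated $v$-sheaf $\Mloc^\diamond$ into $\mathrm{Gr}_{\CG,\mathrm{Spd}\,O_E}$, and neither the identification of the special fiber nor the smoothness results of \cite{HR,HR2} produce such an immersion. A proof of the conjecture for the exceptional cases would require new input beyond the abelian-type comparison, and the paper does not claim to possess it.
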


It has in any case the following concrete consequence.\footnote{We were recently informed that a similar result, which also covers cases of wildly ramified groups, was obtained   by J. Lourenco  (forthcoming Bonn thesis).} 

\begin{corollary}\label{Schmodclass}
Suppose that $(G,\{\mu\}, K)$ is an LM triple with $G$ adjoint and classical
such that $G$ splits over a tame extension of $F$. Assume that there exists a product decomposition over $\breve F$, 
$
G\otimes_F\breve F=\breve G_{1}\times\cdots \times \breve G_{m},
$
where each factor $\br G_i$ is \emph{absolutely simple}. If there is a factor for which $(\breve G_i,\{\mu_i\})\otimes_{\br F}\bar F$ is of type $(D_n, \omega^\vee_n)$ with $n\geq 4$ $\rm ($i.e., of type  $D^{\mathbb H}_n$ in Deligne's notation
\cite[Tables 1.3.9, 2.3.8]{D}$\rm )$, also assume that $p$ is odd. 
Then the local
model $\Mloc$ defined above satisfies Scholze's conjecture
\cite[Conj. 21.4.1]{Schber}.
 \end{corollary}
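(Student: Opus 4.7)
The plan is to apply Theorem~\ref{SchLM} after a sequence of reductions. First, let $F'/F$ be the finite unramified extension that splits the $\Gal(\breve F/F)$-action on the factors $\{\breve G_i\}$, so that $G \otimes_F F'$ decomposes as a product of absolutely simple adjoint classical groups over $F'$. By Proposition~\ref{proofLM}(ii), the local model $\Mloc$ base-changes to the analogous local model of the triple $(G\otimes_F F',\{\mu\otimes_F F'\},K')$ over $O_{E'}$; since Scholze's $v$-sheaf local model is unique by \cite[Prop.~18.3.1]{Schber} and its formation is compatible with \'etale base change, it suffices to verify Scholze's conjecture after base change to $F'$. Next, Proposition~\ref{proofLM}(iii) identifies $\Mloc$ as a product of the local models of the absolutely simple factors, and the diamond of a product of schemes is the product of diamonds; so the problem reduces to the case of a single absolutely simple adjoint classical factor.

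For such a single factor $(G,\{\mu\},K)$, the strategy is to exhibit a realization $(G_1,\{\mu_1\})$ of abelian type satisfying $p \nmid |\pi_1(G_{1,\rm der})|$, and then invoke Theorem~\ref{SchLM}. The realization can be read off from the standard dictionary between classical groups and their minuscule representations, following Deligne's tables \cite[1.3.9, 2.3.8]{D}. In the Siegel symplectic case (type $C_n$) one takes $G_1$ to be the corresponding form of $\mathrm{GSp}_{2n}$ with its tautological embedding; here $G_{1,\rm der}$ is an inner form of $\mathrm{Sp}_{2n}$, which is simply connected, so $\pi_1(G_{1,\rm der}) = 1$. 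In type $B_n$ one uses $G_1$ a form of $\mathrm{GSpin}_{2n+1}$ with its spin representation, and again $\pi_1(G_{1,\rm der}) = 1$. In type $A_{n-1}$ one uses the appropriate form of $\mathrm{GL}_n$ with its standard embedding, where $\pi_1(G_{1,\rm der})$ is cyclic and can be arranged to be coprime to $p$. In type $D_n^{\mathbb R}$ (with $\mu = \omega_1^\vee$) one uses a $\mathrm{GSpin}_{2n}$-type realization whose derived group is simply connected.

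The delicate case, and the source of the additional hypothesis $p \neq 2$, is type $D_n^{\mathbb H}$ with $\mu = \omega_n^\vee$: the half-spin representation fails to be faithful on $\mathrm{Spin}_{2n}$, its kernel on the center being a subgroup of order $2$. Any abelian-type realization must therefore take $G_{1,\rm der}$ to be a central quotient $\mathrm{Spin}_{2n}/\langle c\rangle$ with $|\pi_1(G_{1,\rm der})| = 2$, so that the coprimality hypothesis of Theorem~\ref{SchLM} holds exactly when $p \neq 2$. This is precisely the restriction recorded in the corollary. Once the realizations are assembled for each simple factor, Theorem~\ref{SchLM} delivers Scholze's conjecture for $\Mloc$, completing the argument.
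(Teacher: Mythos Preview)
Your overall strategy matches the paper's: reduce via an unramified extension to the case where $G$ is a product of absolutely simple factors, exhibit an abelian-type realization for each, and apply Theorem~\ref{SchLM}. Your treatments of types $A$, $B$, $C$, and $D_n^{\mathbb R}$ agree with the paper.

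However, your argument for type $D_n^{\mathbb H}$ (that is, $\mu_{\rm ad}=\omega_n^\vee$) contains a genuine error. You propose the half-spin representation and argue that its non-faithfulness on $\mathrm{Spin}_{2n}$ forces $|\pi_1(G_{1,\rm der})|=2$. But the half-spin representation does not satisfy the hypothesis of Theorem~\ref{SchLM}: one needs $\rho_1\circ\mu_1$ to be a \emph{minuscule} cocharacter of $\mathrm{GL}_N$, and for a weight $w=\tfrac12(\epsilon_1,\dots,\epsilon_n)$ of $S^\pm$ one has $\langle\omega_n^\vee,w\rangle=\tfrac14\sum_i\epsilon_i$, which already takes at least three distinct values once $n\geq 4$. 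So the half-spin representation is simply not an admissible $\rho_1$ here, and your argument does not produce a valid realization. The paper instead uses the \emph{standard} $2n$-dimensional representation: its weights are $\pm e_i$, and $\langle\omega_n^\vee,\pm e_i\rangle=\pm\tfrac12$, exactly two values, so $\rho_1\circ\mu_1$ is minuscule. Since the standard representation factors through $\mathrm{SO}_{2n}$, faithfulness forces $G_{1,\rm der}$ to be a form of $\mathrm{SO}_{2n}$, and this is what gives $|\pi_1(G_{1,\rm der})|=2$ and hence the restriction $p\neq 2$. Your conclusion is correct, but the reasoning as written does not verify the needed hypothesis.

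A smaller point: you reduce Scholze's conjecture itself to the individual factors, which requires knowing that the diamond Grassmannian ${\rm Gr}_{\CG,\,{\rm Spd}\,O_E}$ is compatible with products. This is true, but it is cleaner---and closer to the paper---to observe instead that a product of abelian-type realizations $(G_{1,i},\{\mu_{1,i}\})$ assembles into an abelian-type realization of the product $(G,\{\mu\})$, and then invoke Theorem~\ref{SchLM} once for the whole group.
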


\begin{proof} We will show that such a LM triple $(G,\{\mu\}, K)$ is, after an unramified extension, of abelian type.
Using our assumption, we can easily reduce to the case that $G$ is absolutely simple,  quasi-split and residually split. 
The possible pairs $(G, \{\mu\})$ 
with such $G$  and $\{\mu\}$ minuscule,
are listed in the first two tables in \S 4.
A case--by--case check gives that, when $G$ is a classical group, we can find a realization $(G_1, \{\mu_1\})$ of $(G, \{\mu\})$ as a pair of abelian type such that $G_{1,\rm der}$ is  simply connected---except when the type of $G_{\bar F}$ is $D_{n}$. (See \cite[Rem. 3.10]{D}.) In the latter case we can find a realization with 
 $G_{1,\rm der}$ simply connected in the case $(D_n, \omega^\vee_1)$ (i.e., of type  $D^{\mathbb R}_n$ in Deligne's notation), and  a realization where $\pi_1(G_{1,\rm der})$ has order $2$ in the case
 $(D_n, \omega^\vee_n)$ (i.e., of type  $D^{\mathbb H}_n$ in Deligne's notation).
(For types $A_n$, $C_n$ and $D_{n}^{\mathbb H}$, the minuscule representation $\rho_1$ is given over $\bar F$ by a sum of corresponding standard representations, for types $B_n$ and   
$D_{n}^{\mathbb R}$, is given by a sum of spin representations.) In all cases, we can pick $\mu_1$ so that $E_1=E$.  The result follows from Theorem \ref{SchLM}.
\end{proof}
 
\section{Shimura varieties}\label{s:shimvar}

\subsection{Consequences for Shimura varieties}
 Let $({\bf G}, {\bf X})$ be a Shimura datum. We fix a prime $p>2$ such that $G:={\bf G}\otimes_\BQ \BQ_p$ splits over a tamely ramified extension of $\BQ_p$. We consider open compact subgroups ${\bf K}$ of ${\bf G}(\BA_f)$ of the form ${\bf K}={\bf K}^p\cdot K_p\subset {\bf G}(\BA_f^p)\times G(\BQ_p)$, where $K=K_p$ is a parahoric subgroup of $G(\BQ_p)$
 and ${\bf K}^p$ is sufficiently small. Let ${\bf E}$ be the reflex field of $({\bf G}, {\bf X})$. Fixing an embedding $\ov\BQ\to \ov\BQ_p$ determines a place ${\bf p}$ of ${\bf E}$ over $p$. Let $E={\bf E}_{\bf p}$. Then $E$ is the reflex field of $(G, \{\mu\})$, where $\{\mu\}$ is the conjugacy class of cocharacters over $\ov\BQ_p$ associated to ${\bf X}$. We denote by  the same symbol ${\rm Sh}_{\bf K}({\bf G}, {\bf X}) $ the canonical model of the Shimura variety over ${\bf E}$ and its base change over $E$. 
 \begin{theorem}\label{KPtheorem}
 
 a)  {\rm (\cite{KP})} Assume that $({\bf G}, {\bf X})$ is of abelian type. Then  there exists a scheme  $\CS_{ K}({\bf G}, {\bf X}) $ over $O_E$ with right ${\bf G}({\BA}^p_f)$-action such that: 
 \smallskip
 
 1) Any sufficiently small open compact   ${\bf K}^p\subset {\bf G}(\BA_f^p)$ acts freely on 
$\CS_{ K}({\bf G}, {\bf X}) $, and the quotient $\CS_{\bf K}({\bf G}, {\bf X}) :=\CS_{ K}({\bf G}, {\bf X})/{\bf K}^p$ is a scheme of finite type over $O_E$ which extends  ${\rm Sh}_{\bf K}({\bf G}, {\bf X}) $. Furthermore 
\[
\CS_{K}({\bf G}, {\bf X})=\varprojlim\nolimits_{{\bf K}^p} \CS_{{\bf K}^pK}({\bf G}, {\bf X}) ,
\]
where the limit is over all such ${\bf K}^p\subset {\bf G}(\BA_f^p)$.
 \smallskip
 
 2) For every closed point $x$ of $\CS_{K}({\bf G}, {\bf X})$, there is a  closed point $y$ of  $\Mloc$ such that the strict henselizations of $\CS_{K}({\bf G}, {\bf X})$ at $x$ and of $\Mloc$ at $y$ are isomorphic.  
  \smallskip
 
 3) The scheme $\CS_K({\bf G}, {\bf X})$  has the  extension property: 
 For every discrete valuation ring $R\supset O_E$ of
 characteristic $(0, p)$ the   map 
 \[
 \CS_K({\bf G}, {\bf X})(R)\to \CS_K({\bf G}, {\bf X})(R[1/p])
 \] is a bijection.
 \smallskip
 
b) {\rm (\cite{KP})} Assume that $({\bf G}, {\bf X})$ is of Hodge type, that $K$ is the stabilizer of a point in the Bruhat-Tits building of $G$, and that $p$ does not divide $|\pi_1(G_{\rm der})|$. Then the  model $\CS_{K}({\bf G}, {\bf X})$   of (a) above admits a ${\bf G}({\BA}^p_f)$-equivariant \emph{local model diagram} over $O_E$, 
 \begin{equation}\label{hecke unram semi-global G}
\begin{gathered}
   \xymatrix{
	     &\tilde{\CS}_{ K} ({\bf G}, {\bf X})  \ar[dl]_-{\text{$\pi$}} \ar[dr]^-{\text{$\tilde\varphi$}}\\
	   \CS_{ K} ({\bf G}, {\bf X}) & &  \Mloc,
	}
\end{gathered}
\end{equation}
 in which $\pi$ is a torsor under the group scheme $\CG_{O_E}$, and $\tilde\varphi$ is a $\CG_{O_E}$-equivariant and smooth morphism of relative dimension $\dim G$.  
 
 \smallskip

 c) {\rm (\cite[Thm. 8.2]{Zhou}, \cite[Thm. 4.1]{HeR})} Under the assumptions of (b) above, the morphism $\tilde \varphi$ in the local model diagram (\ref {hecke unram semi-global G}) is surjective. \qed
 
  \end{theorem}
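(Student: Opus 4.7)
The plan is to combine soft openness/equivariance arguments with one hard geometric input from \cite{Zhou} or \cite{HeR}. First, since $\tilde\varphi$ is smooth by Theorem \ref{KPtheorem}(b), its image $Y\subseteq\Mloc$ is open. By $\CG_{O_E}$-equivariance, $Y$ is a union of $\CG_{O_E}$-orbits. The generic fiber $X_{\{\mu\}}$ is a single $G_E$-orbit, and by Theorem \ref{2.9} the special fiber $\Mloc\otimes_{O_E}k={\mathfrak A}_K(G,\{\mu\})$ decomposes as a finite union of $\CG\otimes_{O_F}k$-orbits $S_w$, indexed by $w\in\Adm_K(\{\mu\})$. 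Consequently, surjectivity of $\tilde\varphi$ is equivalent to the conjunction of (i) surjectivity on the generic fiber, and (ii) $\tilde\varphi^{-1}(S_w)\neq\emptyset$ for every $w\in\Adm_K(\{\mu\})$.

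Part (i) is immediate: ${\rm Sh}_{\bf K}({\bf G},{\bf X})$ is a non-empty smooth $E$-variety of dimension $\dim X_{\{\mu\}}$, and the generic fiber of $\tilde\varphi$ is $G_E$-equivariant, hence hits the unique $G_E$-orbit $X_{\{\mu\}}$. For (ii), since $\pi$ is a $\CG_{O_E}$-torsor and $\tilde\varphi$ is $\CG_{O_E}$-equivariant, $\tilde\varphi^{-1}(S_w)$ is non-empty if and only if the Kottwitz--Rapoport stratum of $\CS_K({\bf G},{\bf X})\otimes k$ of local-model type $w$ is non-empty. Thus the theorem reduces cleanly to the assertion that \emph{every KR stratum on $\CS_K({\bf G},{\bf X})$ is non-empty}.

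This last assertion is precisely the main content of \cite{Zhou} and \cite{HeR}. The expected strategy is to verify the He--Rapoport axioms for the integral model $\CS_K({\bf G},{\bf X})$; those axioms, combined with known closure relations among the KR strata indexed by $\Adm_K(\{\mu\})$, reduce the problem to the non-emptiness of the basic Newton stratum. The basic locus is in turn handled by Rapoport--Zink uniformization together with results on non-emptiness of basic affine Deligne--Lusztig varieties (Nie's theorem in the cited works). The main obstacle in the whole argument lies precisely in this deep non-emptiness step; by contrast, the preceding reductions---openness of $Y$, the orbit decomposition of $\Mloc$ furnished by Theorem \ref{2.9}, and the stratum-by-stratum criterion---are essentially formal.
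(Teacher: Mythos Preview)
The paper does not give its own proof of this theorem: the statement ends with \qed, treating all three parts as quotations from the cited references, with only a short remark afterwards explaining how part (a2) is deduced from \cite{KP} for the modified local models used here. In particular, there is no argument in the paper against which to compare your sketch of part (c).

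That said, your reduction for (c) is correct and is exactly the skeleton of the cited proofs: the image of $\tilde\varphi$ is open (smoothness) and $\CG_{O_E}$-stable, so by the orbit description of $\Mloc$ from Theorem~\ref{2.9} surjectivity amounts to hitting the generic fiber (trivial, since $X_{\{\mu\}}$ is a single $G_E$-orbit) together with non-emptiness of every Kottwitz--Rapoport stratum in the special fiber. The latter is precisely \cite[Thm.~4.1]{HeR}, whose input is the verification of the He--Rapoport axioms for the Kisin--Pappas models carried out in \cite{Zhou}. Your summary of how those references proceed (axioms $\Rightarrow$ reduction to non-emptiness of the basic locus $\Rightarrow$ affine Deligne--Lusztig non-emptiness) is an accurate high-level description; the only caveat is that this is an outline of \cite{Zhou,HeR} rather than of anything in the present paper.
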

  
\begin{remark}
 Part (a2) appears as \cite[Thm.~0.2]{KP}, but is stated there for the original   local models of \cite{PZ},
and under the assumption $p\nmid  |\pi_1(G_{ \rm der})|$. The statement above is for the modified local models of this paper and can be deduced by the results in \cite{KP}. Part (b) follows from \cite[Thm.~4.2.7]{KP}
and Remark \ref{compPZ} (2).
\end{remark}

\begin{definition}\label{def:goodss}
Let $O$ be a discrete valuation ring and suppose that $X$ is a locally noetherian scheme over $O$.  
\begin{altenumerate}
\item $X$ is said to have \emph{good reduction} over $O$  if $X$ is smooth over $O$.
\item $X$ is said to have \emph{semi-stable reduction} over $O$   if the special fiber is a normal crossings divisor in the sense of \cite[Def. 40.21.4]{Stacks}.
\end{altenumerate}

Both properties are local for the \'etale topology around each closed point of $X$ and imply that $X$ is a regular scheme with reduced special fiber. 
\end{definition}

\begin{corollary}\label{CorAb} Assume that $({\bf G}, {\bf X})$ is a Shimura datum of abelian type.
If the local model $\Mloc$ has good, resp. semi-stable, reduction over $O_E$, then so does $\CS_{\bf K}({\bf G}, {\bf X})$. If $({\bf G}, {\bf X})$ is  of Hodge type and satisfies the assumptions of Theorem \ref{KPtheorem} (b), then the converse also holds. 
\end{corollary}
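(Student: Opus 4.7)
The approach rests on the observation that good reduction and semi-stable reduction, as defined in Definition \ref{def:goodss}, are both \'etale-local around each closed point: they may be checked on strict henselizations $\CO^{\rm sh}_{X,x}$, and moreover they are preserved under smooth pullback and descend along smooth surjective morphisms of flat $O_E$-schemes. The two implications will then follow essentially formally by combining this with parts (a), (b), (c) of Theorem \ref{KPtheorem}.

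For the first implication, assume that $\Mloc$ has good, resp.\ semi-stable, reduction, and let $\bar x$ be a closed point of $\CS_{\bf K}({\bf G},{\bf X})$. For ${\bf K}^p$ sufficiently small, the projection $\CS_K({\bf G},{\bf X})\to \CS_{\bf K}({\bf G},{\bf X})$ is an \'etale torsor under the free action of ${\bf K}^p$ (Theorem \ref{KPtheorem}(a1)), so for any lift $x$ of $\bar x$ there is an isomorphism $\CO^{\rm sh}_{\CS_{\bf K},\bar x}\cong \CO^{\rm sh}_{\CS_K,x}$. By Theorem \ref{KPtheorem}(a2), the latter is in turn isomorphic to $\CO^{\rm sh}_{\Mloc,y}$ for a closed point $y\in \Mloc$, and the hypothesis on $\Mloc$ transfers the regularity of the total space and the nature of its special fiber to $\CO^{\rm sh}_{\CS_{\bf K},\bar x}$. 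Since $\bar x$ was arbitrary, $\CS_{\bf K}({\bf G},{\bf X})$ has good, resp.\ semi-stable, reduction over $O_E$.

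For the converse, work under the assumptions of Theorem \ref{KPtheorem}(b). The local model diagram (\ref{hecke unram semi-global G}) provides $\CS_K({\bf G},{\bf X})\xleftarrow{\pi}\tilde\CS_K({\bf G},{\bf X})\xrightarrow{\tilde\varphi}\Mloc$, where $\pi$ is a $\CG_{O_E}$-torsor (hence smooth) and $\tilde\varphi$ is smooth; by Theorem \ref{KPtheorem}(c), $\tilde\varphi$ is moreover surjective. Assuming $\CS_{\bf K}({\bf G},{\bf X})$ has good, resp.\ semi-stable, reduction for small ${\bf K}^p$, the property is transported to $\CS_K$ along the \'etale quotient map and then to $\tilde\CS_K$ by pulling back along the smooth morphism $\pi$ (both good and semi-stable reduction are preserved under smooth pullback). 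Applying smooth surjective descent to $\tilde\varphi$ then yields the corresponding property for $\Mloc$.

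The step that is not purely formal is the descent of the normal crossings condition along the smooth surjective morphism $\tilde\varphi$; this is the expected main technical point. It is however standard: smooth morphisms are \'etale-locally in source and target of the form $Y\times\BA^n\to Y$, and for such a projection a divisor $D\subset Y$ has strict normal crossings if and only if its pullback $D\times\BA^n$ has strict normal crossings in $Y\times\BA^n$, so the NCD property descends under smooth surjective morphisms of $O_E$-schemes. The good-reduction case is easier since smoothness itself satisfies fpqc descent. All other ingredients follow directly from Theorem \ref{KPtheorem} and the \'etale-local nature of the two reduction properties.
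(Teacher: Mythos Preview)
Your proof is correct and follows essentially the same approach as the paper's: the first implication uses Theorem \ref{KPtheorem}(a2) together with the \'etale-local nature of good/semi-stable reduction, and the converse uses the local model diagram of (b) together with the surjectivity of $\tilde\varphi$ from (c). The paper's own proof is extremely terse (it just cites the relevant parts of Theorem \ref{KPtheorem}), and you have simply spelled out the implicit smooth pullback and smooth-surjective descent steps that make this work.
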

\begin{proof}
  The first assertion follows from  Theorem \ref{KPtheorem} (a). The second assertion follows from (b) and (c). 
  \end{proof}

\subsection{Canonical nature of integral models}  By the main result of \cite{PCan},  the integral models $\CS_{K}({\bf G}, {\bf X})$ constructed in \cite{KP} are, under the assumptions of Theorem \ref{KPtheorem} (b), independent of the choices in their construction.  In fact, they are
``canonical'' in the sense that they satisfy the characterization given in \cite{PCan}. In this paper, we are dealing with models that have smooth or semi-stable reduction. Then, and under some additional assumptions, we can give a simpler characterization of the integral models using an idea of Milne \cite{Milne} and results of Vasiu and Zink (\cite{VasiuZink}). More precisely, we have:

\begin{corollary}\label{canCor}
 Assume that $({\bf G}, {\bf X})$ is a Shimura datum of abelian type. Suppose that $\Mloc$ has good or semi-stable reduction over $O_E$, that $E/\BQ_p$ is unramified, and that the geometric special fiber 
$\Mloc\otimes_{O_E}k$ has no more than $2p-3$ irreducible components. Then $\CS_{ K}({\bf G}, {\bf X})$ is, up to isomorphism, the \emph{unique} $O_E$-faithfully flat ${\bf G}({\BA}^p_f)$-equivariant integral model of ${\rm Sh}_{K}({\bf G}, {\bf X})$ that satisfies (1), (2) and the following stronger version of (3): The bijection 
\[
\CS_K({\bf G}, {\bf X})(R)\xrightarrow{\sim} \CS_K({\bf G}, {\bf X})(R[1/p])
\]
holds for $R$ any $O_E$-faithfully flat algebra which is either a dvr, or a regular ring which is \emph{healthy} in the sense of \cite{VasiuZink}.

\end{corollary}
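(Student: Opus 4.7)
The plan is to reduce uniqueness to the Vasiu-Zink extension theorem. Suppose $\CS'$ is an $O_E$-faithfully flat ${\bf G}({\BA}^p_f)$-equivariant integral model of ${\rm Sh}_K({\bf G}, {\bf X})$ satisfying (1), (2), and the strengthened form (3'). I would construct a canonical ${\bf G}({\BA}^p_f)$-equivariant isomorphism $\CS' \xrightarrow{\sim} \CS_K({\bf G}, {\bf X})$ extending the identity on generic fibers, from which uniqueness is immediate.

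First I would extract geometric consequences of (2). Because the strict henselization of $\CS'$ at any closed point coincides with that of $\Mloc$ at a corresponding point, and $\Mloc$ has good or semi-stable reduction over $O_E$ by hypothesis, $\CS'$ is itself regular with special fiber either smooth or a reduced normal crossings divisor. Moreover, the global hypothesis that $\Mloc \otimes_{O_E} k$ has at most $2p-3$ irreducible components bounds, at every closed point $x \in \CS'$, the number of analytic branches of the special fiber through $x$ by $2p-3$ as well (in the semi-stable case these branches correspond to the local factors in an \'etale-local equation $x_1 \cdots x_r = \pi$ up to units). Since $E/\BQ_p$ is unramified, the main result of \cite{VasiuZink} then ensures that the complete local rings of $\CS'$ at closed points of the special fiber are healthy regular rings, and the same conclusion applies to $\CS_K({\bf G}, {\bf X})$.

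Next I would construct the isomorphism by a Milne-style gluing argument. Cover $\CS'$ by affine opens $\Spec R'_\alpha$ chosen so that each $R'_\alpha$ is, after \'etale localization, smooth or semi-stable with at most $2p-3$ branches; by the previous step each such $R'_\alpha$ is healthy in the sense of \cite{VasiuZink}, and its localizations at generic points of the special fiber are DVRs. The identity on ${\rm Sh}_K({\bf G}, {\bf X})$ gives morphisms $\Spec R'_\alpha[1/p] \to \CS_K({\bf G}, {\bf X})$, and (3') applied to $\CS_K({\bf G}, {\bf X})$ extends each uniquely to $\Spec R'_\alpha \to \CS_K({\bf G}, {\bf X})$. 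Uniqueness of extensions ensures these local maps glue to a global morphism $f : \CS' \to \CS_K({\bf G}, {\bf X})$. Running the symmetric argument with the roles of $\CS'$ and $\CS_K({\bf G}, {\bf X})$ interchanged produces an inverse $g$; the compositions $fg$ and $gf$ restrict to the identities on generic fibers, so by uniqueness they are the identities. The ${\bf G}({\BA}^p_f)$-equivariance follows for the same reason.

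The main obstacle will be the translation of the global bound on the number of irreducible components of $\Mloc \otimes_{O_E} k$ into the local healthiness hypothesis of \cite{VasiuZink} at each complete local ring of $\CS'$, together with the verification that $\CS_K({\bf G}, {\bf X})$ itself satisfies the enhanced extension property (3') and not merely (3). This last point is handled by combining the DVR extension property (3) of Theorem \ref{KPtheorem} (a3) with the healthy regular structure of $\CS_K({\bf G}, {\bf X})$ established in the first step: a morphism from $\Spec R[1/p]$ with $R$ healthy extends to $\Spec R$ because $\CS_K({\bf G}, {\bf X})$ is proper for healthy $R$ via Vasiu-Zink's characterization, which reduces the extension problem on $R$ to extensions at its codimension one primes, where (3) applies.
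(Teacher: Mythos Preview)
Your overall strategy matches the paper's: show via Vasiu--Zink that the hypotheses force $\Mloc$, hence $\CS_K({\bf G},{\bf X})$ and any competing model $\CS'$, to be regular and healthy; then run Milne's gluing argument to produce mutually inverse morphisms extending the identity on generic fibers. The first two steps are fine, and your translation of the global component bound on $\Mloc\otimes_{O_E}k$ into the local branch bound is correct (using that $\Mloc$ has strictly semi-stable reduction, so irreducible components are unibranch).

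The gap is in your final paragraph, where you try to derive the strengthened extension property (3') for $\CS_K({\bf G},{\bf X})$ from the DVR property (3) plus the healthy regular structure of $\CS_K$ itself. This does not work: healthiness of a ring $R$ concerns extending \emph{abelian schemes} from the complement of a codimension $\geq 2$ closed subset of $\Spec R$, not extending morphisms from $\Spec R[1/p]$ into an arbitrary target. Knowing that $\CS_K$ is healthy regular says nothing about extending maps \emph{into} $\CS_K$, and there is no general mechanism reducing such an extension over a healthy $R$ to extensions at height-one primes. The phrase ``$\CS_K$ is proper for healthy $R$ via Vasiu--Zink's characterization'' does not correspond to any statement in \cite{VasiuZink}.

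The paper instead invokes the \emph{construction} of $\CS_K({\bf G},{\bf X})$ in \cite{KP}: in the Hodge-type case the model is built from a moduli problem of abelian schemes with additional structure, so a point $\Spec R[1/p]\to \CS_K$ corresponds to an abelian scheme over $R[1/p]$, and the very definition of $R$ being healthy extends this abelian scheme (with its structure) over $R$, yielding the required $R$-point. The abelian-type case is then handled by the descent arguments of \cite{KP}. You should replace your last paragraph with this appeal to the construction.
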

\begin{proof}
  Note that under our assumption, 
by  \cite[Thm.~3, Cor. 5]{VasiuZink} (see also loc.~cit., p. 594), the scheme
$\Mloc$  is \emph{regular healthy},  when the 
  maximum number of transversely intersecting smooth components of its special fiber is $\leq 2p-3$. Then, by Theorem \ref{KPtheorem} (a), the same is true for $\CS_{ K}({\bf G}, {\bf X})$. By the construction of $\CS_{ K}({\bf G}, {\bf X})$ in \cite{KP} and \cite{VasiuZink}, it then follows that  the limit $\CS_{K}({\bf G}, {\bf X})$ also satisfies the extension property not just for dvr's but for all regular healthy schemes. The uniqueness part of the statement then also follows (see also \cite{Milne}, \cite{KP}).
\end{proof}
  
Consider the cases of smooth or semi-stable reduction covered by the results in this paper, see Theorems \ref{main1Intro} and \ref{main2Intro}, for $F=\BQ_p$: it turns out that 
the  number $r$ of geometric irreducible components of the special fiber of $\Mloc$ is $\leq 2$ in all cases, except in the first case of Theorem \ref{mainssred} (the Drinfeld case). In the latter case,  this number $r$ is equal to the number of lattices in the primitive part of the periodic  lattice chain.  Since we assume that $p$ is odd to begin with, we obtain:

\begin{theorem}\label{3.6}
Assume that $({\bf G}, {\bf X})$ is a Shimura datum of abelian type such that the corresponding LM triple $(G,\{\mu\}, K)$ satisfies the hypothesis of either Theorem \ref{main1Intro} or Theorem \ref{main2Intro}, with $F=\BQ_p$. Then, unless $(G,\{\mu\}, K)$ corresponds to the ``Drinfeld case'' of Theorem \ref{mainssred}, the model $\CS_{ K}({\bf G}, {\bf X})$ is  canonical, i.e., it satisfies the conclusion of Corollary \ref{canCor}.
If $(G, \{\mu\}, K)$ corresponds to the Drinfeld case of Theorem \ref{mainssred}, then $\CS_{ K}({\bf G}, {\bf X})$ is  canonical, provided that $K$ is the connected stabilizer of a facet  in the building of $  {\rm PGL}_n $ that is of dimension   $\leq 2p-4$. \hfill $\square$
\end{theorem}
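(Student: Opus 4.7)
The plan is to apply Corollary~\ref{canCor} directly. Its hypotheses are: (i) the local model $\Mloc$ has good or semi-stable reduction over $O_E$, (ii) $E/\BQ_p$ is unramified, and (iii) the geometric special fiber $\Mloc\otimes_{O_E}k$ has at most $2p-3$ irreducible components. Condition (i) holds by the assumption that $(G,\{\mu\},K)$ satisfies the hypotheses of Theorem~\ref{main1Intro} or Theorem~\ref{main2Intro}. The remainder of the proof therefore consists of verifying (ii) and (iii) by direct inspection of the classifying tables, together with a computation of the component count in the Drinfeld case.

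For (iii), we use the identification of $\Mloc\otimes_{O_E}k$ with the $\mu$-admissible locus $\mathfrak{A}_K(G,\{\mu\})$ provided by Theorem~\ref{2.9}. Its top-dimensional Schubert strata correspond to certain elements in $W_K\backslash\mathrm{Adm}(\{\mu\})/W_K$. In the cases of Theorem~\ref{main1Intro}, $\Mloc$ is smooth so the special fiber is geometrically irreducible and $r=1$. In the semi-stable cases of Theorem~\ref{mainssred}, one reads off the table that $r\leq 2$ in every entry except the Drinfeld case: typically there is a unique top-dimensional translation or a pair interchanged by a diagram automorphism. Since $p$ is assumed odd, $2p-3\geq 3 \geq r$, so (iii) holds automatically for the non-Drinfeld cases.

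In the Drinfeld case, $G_{\ad}=\PGL_n$ over $\BQ_p$, $\{\mu\}=(1,0,\ldots,0)$, and $K$ is the connected stabilizer of a facet $\mathbf{f}$ of dimension $d$ in the building of $\PGL_n$. The corresponding local model is the classical Drinfeld/de Jong local model associated to a periodic lattice chain (of length $d+1$ in the primitive part). Its geometric special fiber is, by the work of Drinfeld and G\"ortz, a normal crossings union of $d+1$ smooth irreducible components, one attached to each vertex of $\mathbf{f}$. Hence the bound $r\leq 2p-3$ reads $d+1\leq 2p-3$, i.e.\ $d\leq 2p-4$, which is the hypothesis placed on $\mathbf{f}$.

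For (ii), one traverses the list of admissible $(G,\{\mu\},K)$ in Theorems~\ref{main1Intro} and \ref{main2Intro}. When $K$ is hyperspecial, $G$ is unramified at $p$ and hence $E/\BQ_p$ is automatically unramified. For the exotic good-reduction cases (Richarz's $\pi$-modular/almost $\pi$-modular unitary examples and the even orthogonal analogue), and for each entry of Theorem~\ref{mainssred}, one verifies case-by-case that the conjugacy class $\{\mu\}$ is defined over an unramified extension of $\BQ_p$: for all the entries this reduces to checking that the minuscule cocharacter has a representative whose Galois orbit under $\mathrm{Gal}(\overline{\BQ}_p/\BQ_p)$ is the same as under the unramified quotient. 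The one step requiring genuine case analysis—and the main obstacle—is the exotic orthogonal case and the two new orthogonal semi-stable cases: here $G$ is itself ramified, yet $\{\mu\}$ (corresponding to either the maximal isotropic Grassmannian or the quadric) is fixed by the inertia subgroup, so $E$ remains unramified. Combining (i), (ii), and (iii) with Corollary~\ref{canCor} yields the canonicity statement.
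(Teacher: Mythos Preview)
Your overall approach matches the paper's: verify the hypotheses of Corollary~\ref{canCor} case by case. The paper's own argument is the short paragraph immediately preceding the theorem; it checks only condition~(iii), observing that the number~$r$ of geometric irreducible components of $\Mloc\otimes_{O_E}k$ is $\le 2$ in every case except the Drinfeld case, where $r=\#\tilde K=\dim(\text{facet})+1$. Your verification of~(iii) and your Drinfeld calculation are correct and agree with the paper.

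The genuine gap is in your verification of condition~(ii), that $E/\BQ_p$ is unramified. Your claim that in the exotic good-reduction cases ``$\{\mu\}$ is fixed by the inertia subgroup, so $E$ remains unramified'' is false. In the unitary exotic case, with $G=\GU(V)$ for a \emph{ramified} quadratic extension $\tilde F/F$ and $\{\mu\}=(1,0,\dots,0)$, the reflex field is $E=\tilde F$ (this is stated explicitly in Section~\ref{s:maingood}). In the orthogonal exotic case (Subsection~\ref{exoticOrtho}), the paper states just as explicitly: ``The reflex field $E$ is the ramified quadratic extension of $F$ obtained by adjoining the square root of $\pi$.'' In both cases inertia swaps the two relevant minuscule coweights (the two fundamental coweights $\omega^\vee_1,\omega^\vee_{n-1}$ for $A_{n-1}$, resp.\ the two half-spin coweights for $D_{m+1}$), so $E$ is genuinely ramified of index~$2$.

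The paper itself does not address~(ii) at all in its proof paragraph. To close the argument in the exotic cases one must go back to \cite{VasiuZink} directly rather than through Corollary~\ref{canCor} as stated: a smooth $O_E$-scheme with $e(E/\BQ_p)=2$ is still regular healthy when $p$ is odd (the divisor of $p$ is $2D$ with $D$ smooth, and $2\le p-1$). That is the missing step, and it is not the step you wrote.
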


\begin{example}\label{Faltings}
Consider the group  ${\bf G}={\mathrm  {GSpin} }({\bf V})$, where ${\bf V}$ is a (non-degenerate) orthogonal space
of dimension $2n\geq 8$ over $\BQ$ of signature $(2n-2, 2)$ over  ${\BR}$. Take  
\[
{\bf X}=\{v\in {\bf V}\otimes_{\BQ}{\BC}\ |\ \langle v, v\rangle=0, \langle v, \bar v\rangle<0\}/{\BC}^*.
\]
(Here  $ \langle\ ,\ \rangle$ is the corresponding symmetric bilinear form.)
The group ${\bf G}({\BR})$ acts on ${\bf X}$ via ${\bf G}\to {\rm SO}({\bf V})$ and $({\bf G}, {\bf X})$
is a Shimura datum of Hodge type.

Suppose that there exists a pair $(\Lambda_0, \Lambda_n)$ of $\BZ_p$-lattices in $ {\bf V}\otimes_{\BQ}{\BQ_p}$, with $\Lambda^\vee_0=\Lambda_0$, $\Lambda^\vee_n=p\Lambda_n$, and $p\Lambda_n\subset \Lambda_0\subset \Lambda_n$. 
Let $K_p\subset {\bf G}(\BQ_p)$ be the parahoric subgroup which corresponds to 
the connected stabilizer of this lattice chain. By combining Theorem \ref{mainssred} and the above, we obtain that, for small enough ${\bf K}^p$, the Shimura variety ${\rm Sh}_{\bf K}({\bf G}, {\bf X}) $ has a canonical $\BZ_p$-integral model with semi-stable reduction. In fact, we can see, using the calculations in Subsection \ref{ss: spliteven}, that the integral model is locally smoothly equivalent to $\BZ_p[x,y]/(xy-p)$. This integral model was  found by Faltings \cite{F} as an application of his   theory of ${\mathcal {MF}}$-objects over semi-stable bases.

\end{example}

 \section{Rapoport-Zink spaces}\label{s:RZspaces}
We consider RZ-spaces of EL-type or PEL-type, cf. \cite{R-Z}. We place ourselves in the situation described in \cite[\S 4]{RV}. 

\subsection{The formal schemes}
In the EL-case, we start with \emph{rational RZ data} of EL-type  
\[
\mathcal D=(F, B, V, G, \{\mu\}, [b]).
\]
 Here $F$ is a finite extension of $\BQ_p$, $B$ is a central division algebra over $F$, $V$ is a finite-dimensional $B$-module, $G=\GL_B(V)$ as algebraic group over $\BQ_p$, $\{\mu\}$ is a conjugacy class of minuscule cocharacters of $G$, and $[b]\in A(G, \{\mu\})$ is an \emph{acceptable} $\sigma$-conjugacy class in $G(\breve \BQ_p)$. Let $E=E_{\{\mu\}}$ be the corresponding reflex field inside $\ov\BQ_p$. In addition, we fix \emph{integral RZ data} ${\mathcal D}_{\BZ_p}$, i.e., a periodic lattice chain of $O_B$-modules $\Lambda_\bullet$ in $V$. This lattice chain defines a parahoric group scheme $\CG$ over $\BZ_p$ with generic fiber $G$. 

In the PEL-case, we start with \emph{rational RZ data} of PEL-type 
\[
{\mathcal D}=(F, B, V, (\, , \, ), *, G, \{\mu\}, [b]).
\] Here $F$,  $B$ and $V$ are as in the EL-case, $(\, , \,)$ is a non-degenerate alternating $\BQ_p$-bilinear form on $V$, $*$ is an involution on $B$,  $G={\rm GSp}_B(V)$ as algebraic group over $\BQ_p$, and $\{\mu\}$ and $[b]$ are as before. We refer to \cite{RV} for the precise conditions these data have to satisfy. In addition, we fix \emph{integral RZ data} ${\mathcal D}_{\BZ_p}$, i.e., a periodic self-dual lattice chain of $O_B$-modules $\Lambda_\bullet$ in $V$. In the PEL case we make the following assumptions.
\begin{altitemize}
 \item $p\neq 2$.
 \item $G$ is connected. 
 \item The stabilizer group scheme $\CG$ is a parahoric group scheme over $\BZ_p$.
\end{altitemize}  
Then in all cases $(G, \{\mu\}, \CG)$ is a LM triple over $\BQ_p$. As in Section \ref{s:LM}, we sometimes write the LM triple as $(G, \{\mu\}, K)$ with $K=\CG(\BZ_p)$. 

Let $O_{\breve E}$ be the ring of integers in $\breve E$ (the completion of the maximal unramified extension of $E$). In either EL or PEL case, after fixing a \emph{framing object} $\BX$ over $k$ (the residue field of $O_{\breve E}$), we obtain a formal scheme locally formally of finite type over $\Spf O_{\breve E}$ which represents a certain moduli problem of $p$-divisible groups on the category ${\rm Nilp}_{O_{\breve E}}$. We denote this formal scheme by $\CM^\naive_{\mathcal D_{\BZ_p}}$. The reason for the upper index is that we impose only the \emph{Kottwitz condition} on the $p$-divisible groups appearing in the formulation of the moduli problem. In particular, $\CM^\naive_{\mathcal D_{\BZ_p}}$ need not be flat over $\Spf O_{\breve E}$. 

Analogously, associated to $\mathcal D_{\BZ_p}$, there is the local model $\BM^\naive_{\mathcal D_{\BZ_p}}$, a projective scheme over $O_E$ equipped with an action of $\CG_{O_E}=\CG\otimes_{\BZ_p} O_E$. Furthermore, there is a local model diagram of morphisms of formal schemes over $\Spf O_{\breve E}$, 

 \begin{equation}\label{LMDforRZ}
\begin{gathered}
   \xymatrix{
	     &\wt{\CM}^\naive_{\mathcal D_{\BZ_p}}  \ar[dl]_-{\text{$\pi$}} \ar[dr]^-{\text{$\tilde\varphi$}}\\
	   \CM^\naive_{\mathcal D_{\BZ_p}} & &  (\BM^\naive_{\mathcal D_{\BZ_p}})^\wedge,
	}
\end{gathered}
\end{equation}
 in which $\pi$ is a torsor under the group scheme $\CG_{O_E}$, and $\tilde\varphi$ is a $\CG_{O_E}$-equivariant and formally smooth morphism of relative dimension $\dim G$.  Here $(\BM^\naive_{\mathcal D_{\BZ_p}})^\wedge$ denotes the completion of $\BM^\naive_{\mathcal D_{\BZ_p}}\otimes_{O_E} O_{\breve E}$ along its special fiber. 
 
\begin{lemma}
Assume that the group $G$ attached to the rational RZ-data $\mathcal D$ splits over a tame extension of $\BQ_p$.  Then the modified PZ-local model $\Mloc$ of Subsection \ref{ss:PZmodif} attached to the LM triple $(G, \{\mu\}, \CG)$ is a closed subscheme of $\BM^\naive_{\mathcal D_{\BZ_p}}$, with identical generic fiber. 
\end{lemma}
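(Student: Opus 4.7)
The approach is to produce the closed immersion via the Hodge/symplectic embedding $G \hookrightarrow G^\natural := \mathrm{GL}_B(V)$ (EL case) or $G^\natural := \mathrm{GSp}_B(V)$ (PEL case). Let $\CG^\natural$ denote the parahoric of $G^\natural$ given by the stabilizer of $\Lambda_\bullet$; then $\CG \hookrightarrow \CG^\natural$ is a closed immersion of smooth affine $\BZ_p$-group schemes. The naive local model $\BM^\naive_{\mathcal D_{\BZ_p}}$ sits canonically as a closed subscheme of the model $M_{\CG^\natural,\mu^\natural}\otimes_{O_F} O_E$ for $G^\natural$, where $\mu^\natural$ is the induced minuscule cocharacter: for $\mathrm{GL}$ or $\mathrm{GSp}$ with (self-dual) standard lattice chain and minuscule cocharacter, the naive local model is classically known to be flat, so this latter coincides with the PZ local model (see \cite{PR} and \cite[\S 2.4]{PZ}; using $p\neq 2$ in the PEL case). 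By \cite[Prop. 2.3.7]{KP}, the closed immersion $\CG\hookrightarrow \CG^\natural$ then induces a closed immersion $M_{\CG,\mu}\hookrightarrow M_{\CG^\natural,\mu^\natural}\otimes_{O_F} O_E$, whose image lies in $\BM^\naive_{\mathcal D_{\BZ_p}}$ since the $G$-Kottwitz condition is built into the definition of both.

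To pass from $M_{\CG,\mu}$ to $\Mloc$, form $H := \tilde G \times_{G_{\rm ad}} G$, where $\tilde G \to G_{\rm ad}$ is the $z$-extension used in defining $\Mloc$. Then $H$ is a central extension of $G$ by a strictly induced torus, with $H_{\rm ad}=G_{\rm ad}$ and $H_{\rm der}=G_{\rm sc}$ simply connected. In particular $H$ serves as a $z$-extension of $G_{\rm ad}$ (Remark \ref{compPZ}(3)) and by Remark \ref{compPZ}(2),
\begin{equation*}
\Mloc = M_{\CH,\mu_H}\otimes_{O_{E_{\rm ad}}} O_E ,
\end{equation*}
where $\CH$ is the parahoric of $H$ corresponding to $\CG$ and $\mu_H$ is a minuscule cocharacter of $H$ lifting $\mu$ along $H\to G$. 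The projection $H\to G$ extends to a smooth surjection of parahoric group schemes $\CH\to\CG$ and induces, by functoriality of the Pappas-Zhu construction, a proper birational morphism $M_{\CH,\mu_H}\to M_{\CG,\mu}$ over $O_E$, which is an isomorphism on generic fibers (both equal to $X_{\{\mu\}}$, using $H_{\rm ad}=G_{\rm ad}$).

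Composing yields a morphism $\varphi\colon \Mloc \to \BM^\naive_{\mathcal D_{\BZ_p}}$ which is an isomorphism on generic fibers. When $p \nmid |\pi_1(G_{\rm der})|$, Remark \ref{compPZ}(2) gives $\Mloc=M_{\CG,\mu}$, and $\varphi$ is just the closed immersion supplied by \cite[Prop. 2.3.7]{KP}. In the general case, the main obstacle is to show the finite birational morphism $M_{\CH,\mu_H}\otimes_{O_{E_{\rm ad}}}O_E \to M_{\CG,\mu}$ is an isomorphism: since $M_{\CH,\mu_H}$ is normal (Remark \ref{compPZ}(1)), it agrees with the normalization of $M_{\CG,\mu}$, and hence equals $M_{\CG,\mu}$ precisely when the latter is already normal. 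This last normality can be verified in the EL/PEL setting from the flatness and reducedness of the special fiber supplied by \cite[Thm.~9.1--9.2]{PZ} combined with Serre's criterion, and this is the main technical ingredient. Once this is established, $\varphi$ factors as a composition of closed immersions, giving the desired realization of $\Mloc$ as a closed subscheme of $\BM^\naive_{\mathcal D_{\BZ_p}}$ with identical generic fiber.
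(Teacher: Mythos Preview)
Your proposal overlooks the key simplification that drives the paper's proof: under the standing RZ hypotheses, one \emph{always} has $p\nmid |\pi_1(G_{\rm der})|$. In the EL case $(G_{\rm der})_{\bar\BQ_p}$ is a product of groups of type $\mathrm{SL}$, and in the PEL case it is a product of groups of type $\mathrm{SL}$, $\mathrm{Sp}$, $\mathrm{SO}$, with the assumption $p\neq 2$ already in force. Hence Remark~\ref{compPZ}(2) gives $\Mloc\simeq M_{\CG,\mu}$ directly, and your entire ``general case'' discussion (the fibered product $H=\tilde G\times_{G_{\rm ad}}G$, the morphism $M_{\CH,\mu_H}\to M_{\CG,\mu}$, the normality argument) is unnecessary. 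Worse, that argument has a gap as written: you invoke \cite[Thm.~9.1--9.2]{PZ} to get reducedness of the special fiber and then normality of $M_{\CG,\mu}$, but those results are stated precisely under the hypothesis $p\nmid |\pi_1(G_{\rm der})|$, so you cannot use them in the putative case where this fails.

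Once one knows $\Mloc=M_{\CG,\mu}$, the paper finishes more directly than you do: rather than embedding into $M_{\CG^\natural,\mu^\natural}$ via \cite[Prop.~2.3.7]{KP} and then arguing that the image lands in the naive model, it simply cites \cite[(8.3)]{PZ}, which (using that $G$ is connected) identifies $M_{\CG,\mu}$ with the flat closure of the generic fiber inside $\BM^\naive_{\mathcal D_{\BZ_p}}$. This immediately gives the closed immersion with identical generic fiber. Your detour through $G^\natural$ is plausible but requires extra justification (e.g.\ that the image of $M_{\CG,\mu}\hookrightarrow M_{\CG^\natural,\mu^\natural}\otimes O_E$ actually factors through $\BM^\naive_{\mathcal D_{\BZ_p}}$), whereas the direct citation avoids this.
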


\begin{proof} Notice that under our assumptions, since this is always true in the EL case, $G$ is connected.
We can see that, under our assumptions, $p$ does not divide $|\pi_1(G_{\rm der})|$. Indeed, this is clear in the EL case since then $(G_{\rm der})_{\bar\BQ_p}$ is a product of special linear groups ${\mathrm {SL}}$. In the PEL case, $(G_{\rm der})_{\bar\BQ_p}$ is the product of groups of types ${\mathrm {SL}}$, ${\mathrm {Sp}}$, ${\mathrm {SO}}$, and our assumptions include that $p$ is odd. It follows from \ref{compPZ} (2) that $\BM^{\rm loc}_K(\CG, \{\mu\})\simeq M_{\CG,\mu}$. By \cite[(8.3)]{PZ},
under the above assumptions again (in particular, the fact that  $G$ is connected is used), the local model $M_{\CG,\mu}$ agrees with the flat closure of the generic fiber of the naive local model $\BM^\naive_{\mathcal D_{\BZ_p}}$. The result follows.
\end{proof} 

We now use the local model diagram \eqref{LMDforRZ} to define a closed formal subscheme $\CM_{\mathcal D_{\BZ_p}}$ of $\CM^\naive_{\mathcal D_{\BZ_p}}$, defined by an ideal sheaf killed by a power of the uniformizer of $O_E$. Indeed, consider the ideal sheaf on $\BM^\naive_{\mathcal D_{\BZ_p}}$ defining $\Mloc$. It defines, after completion and pullback under $\tilde\varphi$ an ideal sheaf on  $\wt{\CM}^\naive_{\mathcal D_{\BZ_p}}$ which descends along $\pi$ to ${\CM}^\naive_{\mathcal D_{\BZ_p}}$. We therefore obtain a local model diagram
\begin{equation}\label{LMDforRZ2}
\begin{gathered}
   \xymatrix{
	     &\wt{\CM}_{\mathcal D_{\BZ_p}}  \ar[dl]_-{\text{$\pi$}} \ar[dr]^-{\text{$\tilde\varphi$}}\\
	   \CM_{\mathcal D_{\BZ_p}} & &  (\Mloc)^\wedge,
	}
\end{gathered}
\end{equation}
 where we have recycled the notation from \eqref{LMDforRZ}. Again, the left oblique arrow is a torsor under $\CG_{O_E}$, and the right oblique arrow  is  $\CG_{O_E}$-equivariant and formally smooth of relative dimension $\dim G$. 
\begin{corollary}\label{CorRZ} 
Assume that the group $G$ attached to the rational RZ-data $\mathcal D$ splits over a tame extension of $\BQ_p$.
If the local model $\Mloc$ has good, resp. semi-stable, reduction over $O_E$, then so does $\CM_{\mathcal D_{\BZ_p}}$.
\begin{proof}
This follows by descent from the local model diagram. 
\end{proof}
\end{corollary}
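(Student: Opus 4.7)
The plan is to run the standard local-model-diagram argument, being slightly careful with the difference between good reduction (smoothness) and semi-stable reduction (special fiber a strict normal crossings divisor), since both have to be transported first along the formally smooth map $\tilde\varphi$ and then descended along the torsor $\pi$.

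First I would observe that both properties in Definition \ref{def:goodss} are \'etale-local around each closed point of the special fiber, and are compatible with formal completion: $\Mloc$ has good (resp.\ semi-stable) reduction over $O_E$ if and only if its formal completion $(\Mloc)^\wedge$ along the special fiber does, as a formal scheme locally formally of finite type over $\Spf O_{\breve E}$. This is because regularity and the local structure of the special fiber (being smooth, resp.\ a normal crossings divisor) only involve the complete local rings at closed points, which are unchanged by completion along the special fiber and by the unramified base change $O_E \rightsquigarrow O_{\breve E}$.

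Next I would pull the property back along $\tilde\varphi \colon \wt{\CM}_{\mathcal D_{\BZ_p}} \to (\Mloc)^\wedge$. Since $\tilde\varphi$ is formally smooth and $(\Mloc)^\wedge$ is smooth (resp.\ has NCD special fiber), $\wt{\CM}_{\mathcal D_{\BZ_p}}$ is formally smooth over $\Spf O_{\breve E}$ (resp.\ has NCD special fiber): in the smooth case this is transitivity of (formal) smoothness, and in the NCD case it is the standard fact that the pullback of a strict normal crossings divisor by a smooth morphism is again a strict normal crossings divisor (cf.\ \cite[Def.~40.21.4]{Stacks}), because \'etale-locally $\tilde\varphi$ is a product with affine space.

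Finally I would descend along the $\CG_{O_E}$-torsor $\pi \colon \wt{\CM}_{\mathcal D_{\BZ_p}} \to \CM_{\mathcal D_{\BZ_p}}$. Since $\CG_{O_E}$ is a smooth affine group scheme, $\pi$ is faithfully flat and smooth; hence smoothness of $\wt{\CM}_{\mathcal D_{\BZ_p}}$ over $O_{\breve E}$ descends to smoothness of $\CM_{\mathcal D_{\BZ_p}}$ over $O_{\breve E}$ (and the latter is in turn equivalent to smoothness over $O_E$, by unramified descent again, modulo the fact that $\CM_{\mathcal D_{\BZ_p}}$ itself lives over $\Spf O_{\breve E}$). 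Similarly, the NCD property for the special fiber descends along the faithfully flat smooth cover $\pi$, by the \'etale-local characterization of strict normal crossings. The main (small) obstacle I expect is simply to make the formal/schematic bookkeeping clean, in particular to check that ``locally formally of finite type'' causes no issue for the smooth-descent step; this is however standard since \'etale-locally everything becomes an honest scheme and one can invoke the usual smooth descent results verbatim.
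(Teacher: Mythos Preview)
Your proposal is correct and is exactly the argument the paper has in mind: the paper's proof is the single line ``This follows by descent from the local model diagram,'' and you have simply unpacked what that means (pull back along the formally smooth $\tilde\varphi$, then descend along the smooth $\CG_{O_E}$-torsor $\pi$). One small slip: Definition~\ref{def:goodss}(ii) uses \emph{normal crossings divisor} (not strict), but this is harmless since both are \'etale-local and behave identically under smooth pullback and descent.
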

\begin{remark}
In contrast to Corollary \ref{CorAb}, the converse does not hold in general because the morphism $\tilde\varphi$ is not always surjective. However, the converse holds if the RZ data $\mathcal D$ are \emph{basic}, i.e.,  $[b]$ is basic. 
\end{remark}
 \begin{proposition}
Assume that $\mathcal D$ is basic and that the group $G$ attached to  $\mathcal D$ splits over a tame extension of $\BQ_p$.
If the RZ space $\CM_{\mathcal D_{\BZ_p}}$ has good, resp. semi-stable, reduction over $O_E$, then so does the local model $\Mloc$.
\end{proposition}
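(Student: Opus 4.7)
The plan is to run the descent in \eqref{LMDforRZ2} in reverse: in the basic case the formally smooth right oblique arrow $\tilde\varphi$ is in addition surjective, and so good, resp. semi-stable, reduction descends along $\tilde\varphi$ and $\pi$ from $\CM_{\CD_{\BZ_p}}$ to $(\Mloc)^\wedge$, hence to $\Mloc$ itself. The key geometric input, and the only substantive step, is the surjectivity of $\tilde\varphi$.

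First I would record the formal descent step, which is routine. Suppose $\CM_{\CD_{\BZ_p}}$ has good (resp.\ semi-stable) reduction over $O_{\breve E}$. Since $\pi\colon \wt\CM_{\CD_{\BZ_p}}\to\CM_{\CD_{\BZ_p}}$ is a $\CG_{O_E}$-torsor it is smooth and surjective, so $\wt\CM_{\CD_{\BZ_p}}$ inherits the same property (smoothness is stable under smooth base change, and the normal crossings condition in Definition \ref{def:goodss} is local for the smooth topology by \cite[Def.\ 40.21.4]{Stacks}). Now $\tilde\varphi$ is formally smooth of relative dimension $\dim G$; assuming it is also surjective, the same two facts, used in the opposite direction (descent of smoothness and of the normal crossings property along a formally smooth and surjective morphism between Noetherian formal schemes over a dvr), yield good (resp.\ semi-stable) reduction for $(\Mloc)^\wedge$. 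Since good/semi-stable reduction of a flat $O_E$-scheme is detected by the completion along the special fiber after base change to $O_{\breve E}$, we conclude that $\Mloc$ has the same type of reduction.

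The main obstacle is therefore establishing that $\tilde\varphi$ is surjective when $[b]$ is basic. By construction, a closed point $x\in\Mloc\otimes k = {\mathfrak A}_K(G,\{\mu\})$ lies in some Kottwitz--Rapoport stratum $S_w$ with $w\in\Adm_{\breve K}(\{\mu\})$ (cf.\ Theorem \ref{2.9} and \eqref{indentadm}), and the fibre $\tilde\varphi^{-1}(x)$ in $\wt\CM_{\CD_{\BZ_p}}\otimes k$ maps, via $\pi$, onto the intersection $S_w\cap X_w(b)$ of this stratum with the affine Deligne--Lusztig variety associated to a representative $b\in[b]$. Surjectivity is thus equivalent to the non-emptiness statement $X_w(b)\ne\emptyset$ for every $w\in\Adm_{\breve K}(\{\mu\})$. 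In the basic case this is the classical non-emptiness theorem for basic affine Deligne--Lusztig varieties: the basic class is the unique minimal element of the acceptable set $A(G,\{\mu\})$, and for any $w$ in the $\{\mu\}$-admissible set the basic class lies in the image of the Newton map on $\bigcup_{w}\br K w\br K$, so $X_w(b)\ne\emptyset$. (This can be extracted, for instance, from the non-emptiness criterion of He; in the PEL/EL setup it can alternatively be seen via $p$-adic uniformization, the source of $\tilde\varphi$ being basic and non-empty over every point of the admissible locus.)

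Once these two ingredients are assembled, the proposition follows: surjectivity of $\tilde\varphi$ combined with descent of good/semi-stable reduction through the faithfully flat, formally smooth morphisms $\pi$ and $\tilde\varphi$ transfers the hypothesis on $\CM_{\CD_{\BZ_p}}$ to $\Mloc$. The hard part is the non-emptiness of every $X_w(b)$ for $w\in\Adm_{\breve K}(\{\mu\})$ in the basic case; all remaining verifications are formal.
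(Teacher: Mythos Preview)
Your argument is correct, but it follows a different route from the paper's. The paper does not work directly with the diagram \eqref{LMDforRZ2}; instead it invokes $p$-adic uniformization to identify $\CM_{\CD_{\BZ_p}}$ with the formal completion of (an open and closed piece of) a Shimura variety of Hodge type along its basic Newton stratum, and then appeals to the converse direction of Corollary \ref{CorAb}. The point is that if $\Mloc$ fails to be smooth (resp.\ semi-stable), then the singular locus of $\Mloc$ is a non-empty closed $\CG$-invariant subset, hence contains the unique closed $\CG$-orbit; its preimage in the Shimura variety contains the minimal KR stratum, which lies in the basic Newton stratum, so the completion along the basic locus already sees the singularity. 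Your approach is more self-contained: you stay with the local diagram for RZ spaces and prove surjectivity of $\tilde\varphi$ directly via the non-emptiness of $X_w(b)$ for every $w\in\Adm_{\breve K}(\{\mu\})$ when $b$ is basic (which follows since the basic Newton point is central and the Kottwitz invariants match). Both arguments rest on essentially the same ADLV non-emptiness input; yours avoids the detour through global Shimura varieties, while the paper's reuses the Hodge-type machinery of Theorem \ref{KPtheorem} already in place. One small remark: the precise non-emptiness statement you need (each individual $X_w(b)\neq\emptyset$ for basic $b$ and admissible $w$) is slightly sharper than the result cited as \cite{He} in the paper (which only gives non-emptiness of the union), so you should point to the basic-case criterion explicitly.
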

\begin{proof}
Indeed, $\CM_{\mathcal D_{\BZ_p}}$ can be identified with the formal completion of an open and closed subset of a Shimura variety of Hodge type along its basic stratum. But this closed stratum is contained in the closed subset of non-smooth, resp. non-semi-stable points (if these are non-empty). Therefore the assertion follows from Corollary \ref{CorAb}. 
\end{proof}
\begin{proposition}
Assume that the group $G$ attached to the rational RZ-data $\mathcal D$ splits over a tame extension of $\BQ_p$. Then the formal scheme $\CM_{\mathcal D_{\BZ_p}}$ is flat over $\Spf O_{\breve E}$ and normal. Furthermore, it only depends on ${\mathcal D_{\BZ_p}}$ through the quadruple $(G, \{\mu\}, \CG, [b])$. Finally,
\begin{equation}\label{ADLV}
\CM_{\mathcal D_{\BZ_p}}(k)=\bigcup _{w\in\Adm_{\breve K}(\{\mu\})}  X_w(b) .
\end{equation}
\end{proposition}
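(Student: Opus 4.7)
The plan is to establish the three assertions in sequence, exploiting the local model diagram \eqref{LMDforRZ2} as the main tool.

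First, for flatness and normality, I would argue by descent from $(\Mloc)^\wedge$ to $\CM_{\mathcal D_{\BZ_p}}$ along \eqref{LMDforRZ2}. By construction $\Mloc$ is flat over $O_E$ (it is defined as the scheme-theoretic closure of the generic fiber $X_{\{\mu\}}$ inside the Beilinson-Drinfeld Grassmannian, or equivalently via the $z$-extension construction of Subsection \ref{ss:PZmodif}), and by Remark \ref{compPZ}(1) it is normal. In \eqref{LMDforRZ2} the map $\pi$ is a $\CG_{O_E}$-torsor, hence faithfully flat and smooth, and $\tilde\varphi$ is formally smooth; since flatness is fpqc-local on the base and normality is \'etale-local, both properties pass from $(\Mloc)^\wedge$ to the intermediate $\wt\CM_{\mathcal D_{\BZ_p}}$ and then down to $\CM_{\mathcal D_{\BZ_p}}$.

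Second, for the independence assertion, my plan is to combine Theorem \ref{indLM} with a comparison of RZ moduli problems for different integral data. Theorem \ref{indLM} gives that $\Mloc$ depends only on the LM triple $(G,\{\mu\},\CG)$, so the ideal sheaf on $\CM^\naive_{\mathcal D_{\BZ_p}}$ used to cut out $\CM_{\mathcal D_{\BZ_p}}$ is canonical in the quadruple. Two integral RZ data $\mathcal D_{\BZ_p}$, $\mathcal D'_{\BZ_p}$ producing the same parahoric group scheme $\CG$ differ only by enlarging or shrinking the periodic (self-dual) $O_B$-lattice chain within a fixed homothety class, and the projection functors between the corresponding chains of $p$-divisible groups induce morphisms of naive RZ spaces that become isomorphisms after forming the flat closures, since the flat locus records only data invariant under such manipulations. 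The framing object $\BX$ is determined up to quasi-isogeny by $[b]$, so any two choices yield canonically identified spaces.

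Third, for the $k$-point identification, the plan is to combine the moduli description with Theorem \ref{2.9}. By covariant Dieudonn\'e theory applied to the (PEL or EL) structure on $\BX$, points of $\CM^\naive_{\mathcal D_{\BZ_p}}(k)$ are parametrized by $\br K$-cosets $g\br K\in G(\br\BQ_p)/\br K$ satisfying the Kottwitz condition, with the Frobenius of the underlying Dieudonn\'e module given by the translate $g^{-1}b\sigma(g)$. The closed immersion $\CM_{\mathcal D_{\BZ_p}}\hookrightarrow \CM^\naive_{\mathcal D_{\BZ_p}}$ cuts out, via the local model diagram, exactly those points whose image in the affine partial flag variety lies in $\mathfrak A_K(G,\{\mu\})=\bigcup_{w\in\Adm'_{\br K'}(\{\mu\})}S_w$ by Theorem \ref{2.9} and \eqref{Aschubert}. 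Translating via the identifications \eqref{indentweyl} and \eqref{indentadm} shows that $g^{-1}b\sigma(g)\in \br K w\br K$ for some $w\in\Adm_{\br K}(\{\mu\})$, which is exactly the defining condition of the affine Deligne-Lusztig variety $X_w(b)$, yielding \eqref{ADLV}.

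The most delicate step will be the second one: making the passage from a given periodic $O_B$-lattice chain to the intrinsic quadruple canonical requires careful tracking of the isogeny categories of chains of $p$-divisible groups and of how the flatification encoded in $\Mloc$ erases exactly the auxiliary choices; everything else is a rather standard application of descent along the local model diagram combined with the results of Section \ref{s:LM}.
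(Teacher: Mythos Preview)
Your treatment of flatness and normality is correct and matches the paper's argument exactly: both deduce these properties from those of $\Mloc$ via the local model diagram \eqref{LMDforRZ2}.

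For the independence statement and the $k$-point description, however, the paper takes a genuinely different route. Rather than comparing moduli problems for different integral data directly, the paper invokes Scholze's diamond-theoretic characterization of integral RZ spaces: by \cite[Cor.~25.1.3]{Schber}, the formal scheme $\CM_{\mathcal D_{\BZ_p}}$ is determined by the associated $v$-sheaf $\mathcal M^{\rm int}_{(\CG,\mu,b)}$, and the latter is defined purely in terms of $(G,\{\mu\},\CG,[b])$ together with the diamond local model. Corollary~\ref{Schmodclass} is used to verify that $\Mloc$ supplies this diamond local model in the cases at hand. The same package, combined with Theorem~\ref{2.9} and \eqref{indentadm}, yields the $k$-point identity \eqref{ADLV}.

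Your direct approach to independence has a real gap. You reduce to comparing lattice chains with the same parahoric stabilizer, but the claim that the induced maps of naive RZ spaces ``become isomorphisms after forming the flat closures'' is precisely the content of the proposition and is not justified by your sketch. Moreover, the integral datum $\mathcal D_{\BZ_p}$ carries not only a lattice chain but the full linear-algebra package $(F,B,V,\ldots)$; two such packages can give rise to the same quadruple $(G,\{\mu\},\CG,[b])$ without any evident functorial map between the associated chains of $p$-divisible groups. It is exactly this difficulty that Scholze's $v$-sheaf characterization circumvents: it replaces a comparison of moduli problems by a uniqueness statement for the underlying diamond. Your Dieudonn\'e-theoretic argument for \eqref{ADLV} is plausible in spirit and closer to the classical picture, but as written it presupposes the very independence you are trying to prove, and it does not address why the ideal sheaf cutting out $\CM_{\mathcal D_{\BZ_p}}$ inside $\CM^{\naive}_{\mathcal D_{\BZ_p}}$ imposes \emph{exactly} the admissibility condition on $k$-points rather than something stronger.
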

\noindent Here $\Adm_{\breve K}(\{\mu\})\subset W_{\breve K}\backslash \wt W/W_{\breve K}$ denotes the admissible set. Also  $X_w(b)$ denotes  for $w\in W_{\breve K}\backslash \wt W/W_{\breve K}$ the affine Deligne-Lusztig set 
$$
X_w(b)=\{g\in G(\breve\BQ_p)/\breve K\mid g^{-1}b\sigma(g)\in \breve K w \breve K \} 
$$
where $b$ is a fixed representative of $[b]$.

\begin{proof}
Flatness and normality follows via the local model diagram  from the corresponding properties of $\Mloc$, cf. Remark \ref{compPZ} (1). The uniqueness statement follows from \cite[Cor. 25.1.3]{Schber}. The final statement follows from
\cite[Cor. 25.1.3]{Schber} and Theorem \ref{2.9} together with (\ref{indentadm}) and the definition 
\cite[Def. 25.1.1]{Schber} of the $v$-sheaf ${\mathcal M}^{\rm int}_{(\CG, \mu, b)}$ by observing the following: In the
 definition   of  ${\mathcal M}^{\rm int}_{(\CG, \mu, b)}$ we can take, by Corollary \ref{Schmodclass}, the local model $\Mloc$ to give the ``diamond" local model $v$-sheaf ${\mathcal M}^{\rm loc}_{\CG,\mu}$ used there.
 \end{proof}

\subsection{The RZ tower}
We now pass to the RZ-tower of rigid-analytic spaces $\big(M_K, K\subset G(\BQ_p)\big)$, cf. \cite[\S 4.15]{RV}. For its formation, we can start with $\BM^\naive_{\mathcal D_{\BZ_p}}$ for an arbitrary integral RZ datum $\mathcal D_{\BZ_p}$ for $\mathcal D$; in particular, we need not assume that $G$ is tamely ramified. 
\begin{proposition}
The RZ-tower $(M_K)$ depends only on the rational RZ datum $\mathcal D$ through the triple $(G, \{\mu\}, [b])$. Furthermore, if it is non-empty, then $[b]\in B(G, \{\mu\})$. The converse holds if $G$ splits over a tamely ramified extension of $\BQ_p$.   
\end{proposition}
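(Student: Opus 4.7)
The plan is to establish the three assertions in turn.

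\emph{Step 1 (Dependence only on $(G,\{\mu\},[b])$).} I would show that if two rational RZ data $\mathcal D$ and $\mathcal D'$ give rise to the same triple $(G,\{\mu\},[b])$, then the associated RZ towers $(M_K)$ and $(M'_K)$ are canonically isomorphic as towers of rigid analytic spaces indexed by compact open subgroups $K\subset G(\BQ_p)$, compatibly with all natural structures. The idea is that, after passing to the generic fiber, the additional linear algebra data $(F,B,V,(\,,\,),*)$ become redundant: via Dieudonn\'e theory together with $p$-adic Hodge theory, the points of $M_K$ with values in a sufficiently large field translate into the datum of a $G$-valued crystalline representation together with a trivialization modulo $K$ and a modification of type $\{\mu\}$ from $[b]$. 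In the $v$-sheaf language of \cite{Schber}, this is the statement that the rigid generic fiber of $\mathcal M^{\mathrm{int}}_{(\CG,\mu,b)}$ and its tower of covers depend only on $(G,\{\mu\},[b])$; letting $K$ range over all compact open subgroups eliminates the remaining dependence on $\CG$.

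\emph{Step 2 (Non-emptiness implies $[b]\in B(G,\{\mu\})$).} Assume that $M_K(\ov\BQ_p)\neq\emptyset$. A point corresponds, after extending to $O_{\ov\BQ_p}$, to a $p$-divisible group with the relevant extra structure equipped with a quasi-isogeny to the framing object $\BX$. Its reduction modulo $p$ yields a $k$-point of $\CM_{\mathcal D_{\BZ_p}}$. By the description \eqref{ADLV}, this $k$-point lies in $X_w(b)$ for some $w\in\Adm_{\br K}(\{\mu\})$. Hence there exists $g\in G(\br\BQ_p)$ with $g^{-1}b\sigma(g)\in \br K w\br K$, so $[b]$ contains an element of $\br K w \br K$ for an admissible $w$. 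By the standard (and elementary) direction of the Kottwitz--Rapoport conjecture, this forces the Newton point of $[b]$ to be bounded by the dominant representative of $\{\mu\}$ and its Kottwitz point to agree with that of $\mu$; that is, $[b]\in B(G,\{\mu\})$.

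\emph{Step 3 (Converse under tameness).} Conversely, suppose that $G$ splits over a tame extension of $\BQ_p$ and that $[b]\in B(G,\{\mu\})$. By the theorem of He establishing the Kottwitz--Rapoport conjecture for tame groups (extended to the parahoric setting), the union $\bigcup_{w\in\Adm_{\br K}(\{\mu\})}X_w(b)$ is non-empty. By \eqref{ADLV} this gives a $k$-point $x$ of $\CM_{\mathcal D_{\BZ_p}}$. Since $\CM_{\mathcal D_{\BZ_p}}$ is locally formally of finite type, flat and normal over $\Spf O_{\br E}$ (cf.\ the previous proposition), the local ring of $\CM_{\mathcal D_{\BZ_p}}$ at $x$ admits characteristic zero points with values in finite extensions of $\br E$; these give points of the rigid generic fiber $M_{K}$, proving non-emptiness of the whole tower.

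The main obstacle is Step~3: it rests on the deep non-emptiness criterion for affine Deligne--Lusztig varieties with parahoric level, which is currently available only under the tameness hypothesis. Step~1 is conceptually clean in the $v$-sheaf framework but requires some bookkeeping to transfer the comparison to the classical rigid-analytic formulation of the tower, and Step~2 is standard once one reduces to the group-theoretic description of the special fiber provided by \eqref{ADLV}.
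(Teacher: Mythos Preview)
Your Steps 1 and 3 align with the paper's proof: for Step 1 the paper simply invokes \cite[Cor.~24.3.5]{Schber}, and for Step 3 it argues exactly as you do, reducing via flatness of $\CM_{\mathcal D_{\BZ_p}}$ to non-emptiness of the set \eqref{ADLV} and then citing \cite{He}.

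The gap is in Step 2. The second assertion of the proposition is stated \emph{without} the tameness hypothesis (the text immediately preceding the proposition stresses that ``we need not assume that $G$ is tamely ramified''), and the paper proves it by a direct citation of \cite[Prop.~4.19]{RV}. Your argument, by contrast, passes through the identification \eqref{ADLV}, which in this paper is only established under the assumption that $G$ splits over a tame extension (see the hypotheses of the preceding proposition). So as written your Step 2 proves the implication only under tameness, which is weaker than what is claimed. You could try to salvage this by arguing that the $k$-points of $\CM^{\naive}_{\mathcal D_{\BZ_p}}$ admit an ADLV-type description without tameness and that the relevant relative positions are $\mu$-bounded, but this is exactly the kind of argument that \cite[Prop.~4.19]{RV} packages, so it is cleaner to cite that result directly as the paper does.
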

\begin{proof}
The first assertion follows from \cite[Cor. 24.3.5]{Schber}. The second assertion is \cite[Prop. 4.19]{RV}. To prove the converse, using flatness of $\CM_{\mathcal D_{\BZ_p}}$, it suffices to prove $\CM_{\mathcal D_{\BZ_p}}(k)\neq\emptyset$. Via the identification \eqref{ADLV}, this follows from \cite{He}.
\end{proof}
\begin{remark}
The uniqueness statement  is conjectured in \cite[Conj. 4.16]{RV} without the tameness assumption. The converse statement is conjectured in \cite[Conj. 4.21]{RV}, again without the tameness assumption.
\end{remark}

\section{Statement of the main results}\label{s:mainresults}

\subsection{Good reduction}\label{ss:goodred}
In the following, we call the LM triple $(G, \{\mu\}, K)$ of \emph{exotic good reduction type} if $p\neq 2$ and if the corresponding adjoint LM triple $(G_\ad, \{\mu_\ad\}, K_\ad)$ is isomorphic to the adjoint LM triple associated to one of the following two LM triples.
 
\medskip

1) ({\sl Unitary exotic reduction})
\medskip
 
 \begin{altitemize}\label{exotgood}
\item $G=\Res_{F'/F} G'.$ Here $F'/F$  is an unramified extension, and  $G'=U(V)$, with $V$  a $\tilde F'/F'$-hermitian vector space of dimension $\geq 3$, where $\tilde F'/F'$ is a \emph{ramified} quadratic extension. 

 \item $\{\mu\}=\{\mu_\varphi \}_{\varphi\colon F'\to \ov F}$,  with $\{\mu_\varphi\}=(1,0,\ldots,0)$ or $\{\mu_\varphi\}=(0,0,\ldots,0),$ for any $\varphi$. 

\item $K= \Res_{O_{F'}/O_F}(K')$ ,  with $K'={\rm Stab} (\Lambda)$, where $\Lambda$ is a $\pi$-modular or almost $\pi$-modular vertex lattice in $V$, i.e., $\Lambda^\vee=\pi_{\tilde F'}^{-1} \Lambda$ if $\dim V$ is even, resp.  $\Lambda\subset \Lambda^\vee\subset^1\pi_{\tilde F'}^{-1} \Lambda$ if $\dim V$ is odd. 
%\end{altitemize}
\end{altitemize}
\medskip

2) ({\sl Orthogonal exotic reduction})
\medskip

 \begin{altitemize}\label{exotgood2}
 \item $G=\Res_{F'/F} G'.$ Here $F'/F$  is an unramified extension, and  $G'= {\rm GO}(V)$, with $V$  an orthogonal $F'$-vector space of even dimension $2n\geq 6$.
 
  \item  $\{\mu\}=\{\mu_\varphi \}_{\varphi\colon F'\to \ov F}$,  with $\{\mu_{\rm ad, \varphi}\}=(1^{(n)}, 0^{(n)})_{\rm ad}$ or $\{\mu_{\rm ad, \varphi}\}=(0,0,\ldots,0),$ for any $\varphi$.

 \item $K=\Res_{O_{F'}/O_F}(K')$ ,  with $K'={\rm Stab} (\Lambda)$, where $\Lambda$ is an almost selfdual vertex lattice in $V$, i.e., $\Lambda\subset^1 \Lambda^\vee\subset\pi_{F'}^{-1} \Lambda$.

 \end{altitemize}

\begin{theorem}\label{maingoodred} 
Let $(G, \{\mu\}, K)$ be a triple over $F$ such that $G$ splits over a tame extension of $F$.  Assume $p\neq 2$. 
Assume that $G_\ad$ is $F$-simple, that in the product decomposition over $\breve F$, 
$$
G_\ad\otimes_F\breve F=\prod\nolimits_i \breve G_{\ad, i}
$$
each factor is  \emph{absolutely simple}, and that $\mu_\ad$ is not trivial.
Then the  local model $\Mloc$ is smooth over $\Spec O_E$ if and only if $K$ is hyperspecial or $(G, \mu, K)$ is a triple of exotic good reduction type\footnote{Haines-Richarz \cite{HR} gives an alternative explanation for the smoothness of $\BM^\loc_K(G, \{\mu\})$ in the case of exotic good reduction type for the \emph{even unitary} case and the \emph{orthogonal} case: in these cases, the special fiber of $\BM^\loc_K(G, \{\mu\})$ can be identified with a Schubert variety attached to a  \emph{minuscule} cocharacter in the twisted affine Grassmannian corresponding to  the special maximal parahoric $K$.}.
\end{theorem}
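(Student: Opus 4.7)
The plan is to establish both implications by reducing to a classification problem on a finite list of enhanced Coxeter data.

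\textbf{The ``if'' direction.} Smoothness of $\Mloc$ when $K$ is hyperspecial is Proposition \ref{proofLM}(i). For the unitary exotic triples, smoothness is due to T.\ Richarz (\cite[Prop.~4.16]{Arz}), which transfers to $(G,\{\mu\},K)$ via Proposition \ref{proofLM}(iii),(iv). For the new orthogonal exotic triple I would verify smoothness directly: by Theorem \ref{2.9} it suffices to exhibit ${\mathfrak A}_K(G,\{\mu\})$ as a single smooth Schubert cell. Concretely, using the description of $\CA_{\tilde K}(\tilde G,\{\tilde\mu\})$ in the twisted affine flag variety of type $\tilde D_n$ attached to the almost selfdual lattice, one computes the $\{\mu\}$-admissible set in $\tilde W$ (for $\{\mu\}=(1^{(n)},0^{(n)})_{\mathrm{ad}}$) and identifies the resulting scheme with a classical orthogonal Grassmannian of maximal isotropic subspaces, which is smooth and of the correct dimension $\langle 2\rho,\mu\rangle$.

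\textbf{The converse, reductions.} Using Proposition \ref{proofLM}(iii),(iv) I may replace the triple by its adjoint and assume $G$ is adjoint, $F$-simple, with each $\breve G_{\ad,i}$ absolutely simple. The product decomposition over $O_{\breve E}$ given by Proposition \ref{proofLM}(ii),(iii) then reduces smoothness of $\Mloc$ to the same question for a single absolutely simple factor over $\breve F$. The plan is to exploit the cascade
\[
\text{smooth} \;\Longrightarrow\; \text{semi-stable} \;\Longrightarrow\; \text{strictly pseudo semi-stable} \;\Longrightarrow\; \text{CCP},
\]
established in Section \ref{s:pssredCCP}, which forces the triple to lie in the list of enhanced Coxeter data satisfying CCP compiled in Section \ref{s:CCP}, further pared down by the rationally strictly pseudo semi-stable condition of Section \ref{s:ratpsss}. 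The combinatorial engine here is that absolute simplicity of each factor bounds the length of the translation element $t_\mu \in \tilde W$, hence bounds the cardinality of the admissible set $\Adm'_{\breve K'}(\{\mu\})$ and the number of top-dimensional Schubert strata in ${\mathfrak A}_K(G,\{\mu\})$ in sharp fashion.

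\textbf{Finishing and main obstacle.} For each triple surviving the two filters above I would compute the Poincar\'e polynomial of ${\mathfrak A}_K(G,\{\mu\})$ and confront it with the Poincar\'e polynomial of a smooth projective variety of dimension equal to $\dim X_{\{\mu\}}=\langle 2\rho,\mu\rangle$; any mismatch, or any occurrence of more than one top-dimensional Schubert cell, forces non-smoothness. This numerical criterion will eliminate all remaining candidates except hyperspecial $K$ and the two exotic families. The hard step is the handful of borderline cases where the numerical criterion is inconclusive, most notably the quasi-split ramified triality of type ${}^3D_4$: there the admissible locus has the ``correct'' coarse invariants but is nonetheless singular. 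I would dispose of this exceptional case by invoking the explicit Schubert-variety calculation of Haines--Richarz \cite{HR} in the twisted affine flag variety, which pins down the local structure of ${\mathfrak A}_K(G,\{\mu\})$ and exhibits the singularity directly.
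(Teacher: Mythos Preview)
Your overall architecture matches the paper's: reduce to the absolutely simple case via Lemma \ref{devislem}, verify the ``if'' direction case by case, and for the converse use irreducibility of the special fiber to land in the CCP list (1) of Theorem \ref{CCP}, then eliminate entries by Poincar\'e-polynomial (rational smoothness) arguments. The cascade through semi-stable and SPSS is harmless but unnecessary: smoothness directly gives an irreducible special fiber, which already places you in list (1).

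There is, however, a genuine gap in your ``finishing'' step. You claim that the Poincar\'e-polynomial criterion ``will eliminate all remaining candidates except hyperspecial $K$ and the two exotic families,'' with only ${}^3D_4$ as a borderline case. This is not so. After the rational-smoothness filter, what survives is exactly the special maximal parahorics (case (a) of Theorem \ref{CCP}); among those with $\breve K$ special but not hyperspecial, several have rationally smooth (symmetric Poincar\'e polynomial) but genuinely singular special fiber, and the numerical test cannot see this. Concretely, beyond $(\tilde G_2,\omega_2^\vee,\{0\})$ the paper must still dispose of: $(\tilde B_n,\omega_n^\vee,\{0\})$ for $B\text{-}C_n$ (even ramified unitary, signature $(n,n)$), handled by an affine-cone argument from \cite[(5.3)]{PR}; $(\tilde C_n,2\omega_n^\vee,\{0\})$ for $C\text{-}BC_n$ (odd ramified unitary, middle signature), handled via \cite[(5.2)]{PR}; and two of the three enhanced Tits data over $(\tilde C_n,\omega_1^\vee,\{0\})$, one eliminated by Kumar's criterion (Subsection \ref{0-b}) and one by an explicit quadric computation (Propositions \ref{quadricLoc} and \ref{propQ}). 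None of these is detected by Betti numbers. Your proposal needs a concrete mechanism---Kumar's criterion, or direct linear-algebra models---to separate rationally smooth from smooth in these cases; invoking \cite{HR} only for ${}^3D_4$ leaves the others unaddressed.

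A smaller point: for the orthogonal exotic case, the paper does not identify the admissible locus abstractly with an orthogonal Grassmannian but instead writes down explicit affine charts of the local model (Subsection \ref{exoticOrtho}) and checks they are affine spaces over $O_E$. Your sketch there is plausible in spirit but would need to be made precise.
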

We are going to use the following d\'evissage lemma.
\begin{lemma}\label{devislem}
a) Let $F'/F$ be a finite unramified extension contained in $\breve F$. Let
$$
(G, \{\mu\}, K)\otimes_F F'=\prod\nolimits_i (G_i, \{\mu_i\}, K_i) ,
$$
where $(G_i, \{\mu_i\}, K_i)$ are LM triples over $F'$. 
Then  $\Mloc$ is smooth over $\Spec O_E$ if and only if $\BM^\loc_{K_i}(G_i, \{\mu_i\} )$ is smooth over $\Spec O_{E_i}$ for all $i$. 

\smallskip

\noindent b) Let $(G', \{\mu'\}, K')\to (G, \{\mu\}, K)$ be a morphism of triples such that $G'\to G$ gives a central extension. Then 
 $\Mloc$ is smooth over $\Spec O_E$ if and only $\BM^\loc_{K'}(G', \{\mu'\})$ 
is smooth over $\Spec O_{E'}$.
\end{lemma}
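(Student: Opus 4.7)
The plan is to deduce both assertions formally from parts (ii)--(iv) of Proposition \ref{proofLM}, together with standard faithfully flat descent for smoothness.

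For (a), I would first invoke Proposition \ref{proofLM}(ii) applied to the unramified extension $F'/F$ to identify $\BM^{\loc}_K(G, \{\mu\}) \otimes_{O_E} O_{EF'}$ with the local model $\BM^{\loc}_{K_{F'}}\bigl((G,\{\mu\}) \otimes_F F'\bigr)$ of $(G, \{\mu\}, K)\otimes_F F'$. By hypothesis, this latter LM triple decomposes as a product of the $(G_i, \{\mu_i\}, K_i)$, whose joint reflex field is the compositum of the $E_i$ and coincides with $EF'$. Proposition \ref{proofLM}(iii) then rewrites the base change as a fiber product
\[
\prod\nolimits_i \bigl(\BM^{\loc}_{K_i}(G_i, \{\mu_i\}) \otimes_{O_{E_i}} O_{EF'}\bigr)
\]
over $\Spec O_{EF'}$. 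The rest is routine: a fiber product of projective schemes over a common base is smooth if and only if each factor is smooth, and smoothness is both stable under and descended by faithfully flat base change. Since $O_E \to O_{EF'}$ is \'etale (as $F'/F$ is unramified) and each $O_{E_i} \to O_{EF'}$ is a finite flat extension of DVRs, both base changes are faithfully flat, and the chain of equivalences closes.

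For (b), I would apply Proposition \ref{proofLM}(iv) to the given morphism $(G', \{\mu'\}, K') \to (G, \{\mu\}, K)$, in which $G' \to G$ is the central extension. This produces an equivariant isomorphism
\[
\BM^{\loc}_{K'}(G', \{\mu'\}) \xrightarrow{\sim} \BM^{\loc}_K(G, \{\mu\}) \otimes_{O_E} O_{E'}.
\]
Here $E \subseteq E'$ by the functoriality of the reflex field: $\{\mu\}$ is determined by $\{\mu'\}$, so any Galois element fixing $\{\mu'\}$ also fixes $\{\mu\}$. Thus $O_E \to O_{E'}$ is a finite local, hence faithfully flat, extension of DVRs, and smoothness of either local model over its respective base is equivalent to smoothness of the other.

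I do not anticipate any genuine obstacle, as the argument is purely formal once Proposition \ref{proofLM} is in hand. The only mild care required lies in verifying the inclusion of reflex fields (automatic from their functorial construction) and in checking that the relevant ring extensions are faithfully flat, both of which are immediate for finite extensions of discrete valuation rings.
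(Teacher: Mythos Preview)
Your proposal is correct and follows essentially the same approach as the paper, which simply says ``This follows from properties (ii)--(iv) of Proposition \ref{proofLM}.'' You have spelled out the details that the paper leaves implicit, including the faithfully flat descent of smoothness along the relevant extensions of discrete valuation rings and the containment $E\subseteq E'$ of reflex fields.
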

\begin{proof}
This follows from properties (ii)--(iv) of Proposition \ref{proofLM}. 
\end{proof}
The lemma implies that, in order to prove Theorem \ref{maingoodred}, we may assume that $G_\ad$ is absolutely simple
and that $\mu_\ad$ is not trivial. That $\Mloc$ is smooth over $\Spec O_E$ when $K$ is hyperspecial is property (i) of Proposition \ref{proofLM}. The case of unitary exotic good reduction is treated in \cite[Prop. 4.16]{Arz}, comp. \cite[Thm. 2.27, (iii)]{PRS}.  The case of orthogonal exotic good reduction is   discussed in Subsection  \ref{exoticOrtho}.

The proof of the converse proceeds in three steps. In a first step,
we establish a list of all cases in which the special fiber of $\Mloc$ is irreducible, i.e., $\frak A_K(G, \{\mu\})$ is a single Schubert variety in the corresponding affine partial flag variety. This is done in Section \ref{s:CCP}. In a second step, we go through this list and eliminate the cases when $K$ is not a special maximal parahoric by showing that   in those cases the special fiber is not  smooth (in fact, not even \emph{rationally smooth}, in the sense explained in Section \ref{s:pssredCCP}). This is done in Section \ref{s:ratpsss}.  Finally, we deal with the cases when $K$ is a special maximal parahoric; most of these   can be also dealt with by the same methods. In a few cases, we need to refer to certain explicit calculations of the special fibers 
given in \cite{PR}, \cite{Arz}, and, in one exceptional type, appeal to the result of Haines-Richarz \cite{HR}.

\subsection{Weyl group notation}\label{Titsdata} Recall that simple adjoint groups $\breve G$ over $\breve F$ are classified up to isomorphism by their associated {\it local Dynkin diagram}\footnote{Note that only the first batch of cases on Tits' list is relevant since $\breve G$ is automatically residually split.}, cf. \cite[\S 4]{T}. Recall that to a local Dynkin diagram $\tilde \Delta$ there is associated its Coxeter system, cf. \cite{Bou}, which is of affine type. The associated Coxeter group is the affine Weyl group $W_a$. We denote by   $\tilde W$ its extended affine Weyl group. Both $W_a$ and $\tilde W$ are extensions of the finite Weyl group $W_0$ by  translation subgroups,  i.e.,   finitely generated free $\BZ$-modules. We denote by $X_*$ the translation subgroup of $\tilde W$.
\begin{definition}\label{def:enhanced}
\begin{altenumerate}
	\item An \emph{enhanced Tits datum} is a triple $(\tilde\Delta, \{\lambda\}, \tilde K)$ consisting of a local Dynkin diagram $\tilde \Delta$, a $W_0$-conjugacy class $\{\lambda\}$ of elements in $X_*$, and a non-empty subset $\tilde K$ of the set $\tilde S$ of vertices of $\tilde \Delta$. 
	\item An \emph{enhanced Coxeter datum} is a triple $\big((W_a, \tilde S), \{\lambda\}, \tilde K\big)$ consisting of a Coxeter system $(W_a, \tilde S)$ of affine type, a $W_0$-conjugacy class  $\{\lambda\}$ of  elements in $X_*$, and a non-empty subset $\tilde K$ of  $\tilde S$. 
\end{altenumerate}
\end{definition}

   Note that the Coxeter system $(W_a, \tilde S)$ is given by its associated Coxeter diagram, cf. \cite[Chapter VI, \S 4, Thm.~4]{Bou}. The Coxeter diagram associated to a local Dynkin diagram is obtained by disregarding the arrows in the local Dynkin diagram. An enhanced Tits datum determines an enhanced Coxeter datum. The natural map from the set of enhanced Tits data to the set of enhanced Coxeter data is not injective. 

Let $(G, \{\mu\}, K)$ be a LM triple over $F$ such that $G$ is adjoint and absolutely simple. We associate as follows an enhanced Tits datum to $(G, \{\mu\}, K)$. The local Dynkin diagram $\tilde \Delta$  is that associated to $\breve G=G\otimes_F\breve F$. Let $\breve T$ be a maximal torus of $\breve G$ contained in a Borel subgroup $\breve B$ containing $\breve T$. We may choose a representative $\mu$ of $\{\mu\}$ in $X_*(\breve T)$ which is dominant for $\breve B$. There is a canonical identification of $X_*$ with $X_*(\breve T)_{\Gamma_0}$ (co-invariants under the inertia group). The second component of the enhanced Tits datum is the image $\lambda$ of $\mu$ in $X_*$. It is well-defined up to the action of $W_0$ (this follows, since $W_0$ is identified with the relative Weyl group of $\breve G$ and any two choices of $\breve B$ are conjugate under the relative Weyl group). The third component of the enhanced Tits datum is the subset $\tilde K$ of vertices of $\tilde \Delta$ which describes  the conjugacy class  under $\breve G(\breve F)$ of the parahoric subgroup $\breve K$ of $\breve G(\breve F)$ determined by $K$.  

Given a LM triple, one may compute its associated enhanced Tits datum as follows. First, if $G$ is a split group, with associated Dynkin diagram $\Delta$, then the local Dynkin diagram $\tilde\Delta$ is simply the associated affine Dynkin diagram, cf. \cite[VI,\S2]{B}. 
\medskip

\begin{footnotesize}

\begin{center}
	\begin{tabular}{|c|c|c|}
		\hline
		\text{Name (Index)} & \text{Local Dynkin diagram} &{\text{Minuscule coweights}} \\
		\hline
		$A_n$ (${}^1 A^{(1)}_{n, n}$) for $n \ge 2$ & 	\begin{tikzpicture}[baseline=0]
		\node[]  at (0,0) {$\circ$};
		\node[below] at (0, 0) {$1$};
		\node[] at (4,0) {$\circ$};
		\node[below] at (4.15, 0) {$n$};
		\node at (1, 0) {$\circ$};
		\node[below] at (1, 0) {$2$};
		\node at (3, 0) {$\circ$};
		\node[below] at (2.85, 0) {$n-1$};
		\node at (2, 0.5) {$\circ$};
		\node[above] at (2.2, 0.5) {$0$};
		\draw[-] (0.1, 0) to (0.9,0);
		\draw[-] (0.1, 0.1) to (1.9, 0.4);
		\draw[-] (2.1, 0.4) to (3.9, 0.1); 
		\draw[dashed] (1.1,0) to (2.9,0);
		\draw[-] (3.1,0)  to (3.9,0);
		\end{tikzpicture} & $\{\omega^\vee_i\}$, $1 \le i \le n$ \\
		\hline 
		$A_1$ (${}^1 A^{(1)}_{1, 1}$) &
		\begin{tikzpicture}[baseline=0]
		\node[]  at (0,0) {$\circ$};
		\node[below] at (0, 0) {$1$};
		\node[above] at (0,0) {\phantom{A}};
		\node at (1, 0) {$\circ$};
		\node[below] at (1, 0) {$0$};
		\draw[-, line width=2] (0.1, 0) to (0.9, 0);
		\end{tikzpicture} & $\{\omega^\vee_1\}$ \\
		\hline 
		$B_n$ ($B_{n, n}$) for $n \ge 3$ &
		\begin{tikzpicture}[baseline=0]
		\node at (0,0) {$\circ$};
		\node[below] at (0, 0) {$n$};
		\draw[implies-, double equal sign distance] (0.1, 0) to (0.9,0);
		\node  at (1,0) {$\circ$};
		\node[below] at (1, 0) {$n-1$};
		\draw (1.1, 0) to (1.9,0);
		\node at (2,0) {$\circ$};
		\node[below] at (2, 0) {$n-2$};	
		\draw[dashed] (2.1, 0) to (2.9, 0);
		\node at (3.0, 0) {$\circ$};
		\node[below] at (3, 0) {$2$};
		\draw[-] (3.1,0.1) to (3.9, 0.35);
		\node at (4, 0.4) {$\circ$};
		\node [right] at (4, 0.4) {$1$};
		\draw[-] (3.1, - 0.1) to (3.9, -0.35);
		\node at (4,-0.4) {$\circ$};
		\node [right] at (4, -0.4) {$0$};
		\end{tikzpicture} & $\{\omega^\vee_1\}$ \\
		\hline
		$C_n$ ($C^{(1)}_{n, n}$) for $n \ge 2$ &
		\begin{tikzpicture}[baseline=0]
		\node at (0,0) {$\circ$};
		\node[below] at (0, 0) {$0$};
		\node[above] at (0,0) {\phantom{A}};
		\draw[-implies, double equal sign distance] (0.1, 0) to (0.9,0);
		\node  at (1,0) {$\circ$};
		\node[below] at (1, 0) {$1$};
		\draw (1.1, 0) to (1.9,0);
		\node at (2,0) {$\circ$};
		\node[below] at (2, 0) {$2$};
		\draw[dashed] (2.1, 0) to (2.9, 0);
		\node at (3, 0) {$\circ$};
		\node[below] at (3, 0) {$n-1$};
		\node at (4, 0) {$\circ$};
		\node[below] at (4, 0) {$n$};
		\draw[implies-, double equal sign distance] (3.1, 0) to (3.9,0);
		\end{tikzpicture} & $\{\omega^\vee_n\}$ \\
		\hline
		$D_n$ (${}^1 D^{(1)}_{n, n}$) for $n \ge 4$ &
		\begin{tikzpicture}[baseline=0]
		\node at (0,0.4) {$\circ$};
		\node [left] at (0, 0.4) {1};
		\node [left] at (0, -0.4) {0};
		\node at (0, -0.4) {$\circ$};
		\node at (1,0) {$\circ$};
		\node[below] at (1, 0) {$2$};
		\draw[-] (0.1, 0.35) to (0.9, 0.1);
		\draw[-] (0.1, -0.35) to (0.9, -0.1);
		\draw (1.1, 0) to (1.9,0);
		\node at (2,0) {$\circ$};
		\node[below] at (2, 0) {$3$};	
		\draw[dashed] (2.1, 0) to (2.9, 0);
		\node at (3.0, 0) {$\circ$};        \node[below] at (2.8, 0) {$n-2$};
		\draw[-] (3.1,0.1) to (3.9, 0.35);
		\node at (4, 0.4) {$\circ$};
		\node [right] at (4, 0.4) {$n-1$};
		\draw[-] (3.1, - 0.1) to (3.9, -0.35);
		\node at (4,-0.4) {$\circ$};
		\node [right] at (4, -0.4) {$n$};
		\end{tikzpicture} & 
		$\{\omega^\vee_1, \omega^\vee_{n-1}, \omega^\vee_{n}\}$ \\
		\hline 
		$E_6$ (${}^1 E^0_{6, 6}$) &
		\begin{tikzpicture}[baseline=0]
		\node at (0,0) {$\circ$};
		\node[below] at (0, 0) {$0$};
		\node at (1,0) {$\circ$};
		\node[below] at (1, 0) {$2$};	
		\draw (0.1, 0) to (0.9,0);		
		\draw (1.1, 0) to (1.9,0);		
		\node at (2.0, 0) {$\circ$};        \node[below] at (2, 0) {$4$};
		\draw[-] (2.1,0.1) to (2.9, 0.35);
		\node at (3, 0.4) {$\circ$};
		\node [below] at (3, 0.4) {$3$};
		\draw[-] (2.1, - 0.1) to (2.9, -0.35);
		\node at (3,-0.4) {$\circ$};
		\node [below] at (3, -0.4) {$5$};		
		\node at (4, 0.8) {$\circ$};
		\node [right] at (4, 0.8) {$1$};
		\draw[-] (3.1, -0.45) to (3.9, -0.75);		\draw[-] (3.1, 0.45) to (3.9, 0.75);
		\node at (4,-0.8) {$\circ$};
		\node [right] at (4, -0.8) {$6$};
		\end{tikzpicture} & 
		$\{\omega^\vee_1, \omega^\vee_6\}$ \\
		\hline 
		$E_7$ ($E^0_{7, 7}$) & 
		\begin{tikzpicture}[baseline=0]
		\node at (0,0) {$\circ$};
		\node[below] at (0, 0) {$0$};
		\draw[] (0.1, 0) to (0.9,0);
		\node  at (1,0) {$\circ$};
		\node[below] at (1, 0) {$1$};
		\draw (1.1, 0) to (1.9,0);
		\node at (2,0) {$\circ$};
		\draw[] (2.1, 0) to (2.9, 0);
		\node[below] at (2, 0) {$3$};
		\draw[] (3.1, 0) to (3.9, 0);
		\node at (3, 0) {$\circ$};
		\node[below] at (3, 0) {$4$};
		\node at (4, 0) {$\circ$};
		\node[below] at (4, 0) {$5$};
		\draw[] (4.1, 0) to (4.9,0);
		\node[below] at (5, 0) {$6$};
		\node at (5, 0) {$\circ$};
		\draw[] (5.1, 0) to (5.9, 0);
		\node at (6, 0) {$\circ$};
		\node[below] at (6, 0) {$7$};
		\node at (3, 0.5) {$\circ$};
		\node[right] at (3, 0.5) {$2$};
		\draw[] (3, 0.1) to (3, 0.4);
		\end{tikzpicture} & $\{\omega^\vee_7\}$
		\\
		
		\hline
	\end{tabular} 
\end{center}

\end{footnotesize}
\medskip
\smallskip

 Now let  $G$ be quasi-split and residually split. Then the affine root system is calculated 
 following the recipe in \cite[\S 2.3]{PR}. This gives the list below.  In the column ``Local Dynkin diagram'', there are two rows associated to each group: the first row gives the local Dynkin diagram of the group $G$ over a (ramified) field extension $\breve F'$ of $\breve F$ such that $G$ splits over $\breve F'$; the second row gives the local Dynkin diagram of the group $G$ over $\breve F$. In the column ``Coweights'', there are two rows: the first row for the minuscule coweight $\mu$; the second row for the corresponding $\l$ realized as a translation element of the associated extended affine Weyl group. Here we put minuscule coweights between braces if they determine the same $\lambda$ which appears directly below. We follow the notation in \cite{T}. 
 \smallskip

\begin{footnotesize}
\begin{center}
	\begin{tabular}{|p{2.2cm}|p{5.0cm}|c|p{1.45cm}|}
		\hline
		\text{Name  (Index)} & \text{Local Dynkin diagram} &\multicolumn{2}{|c|}{\text{Coweights }}\\
		\hline
		$B$-$C_n$ (${}^2 A^{(1)}_{2n-1, n}$) & 	\begin{tikzpicture}[baseline=0]
		\node[]  at (0,0) {$\circ$};
		\node[below] at (0, 0) {$1$};
		\node[] at (4,0) {$\circ$};
		\node[below] at (4.15, 0) {$2n-1$};
		\node at (1, 0) {$\circ$};
		\node[below] at (1, 0) {$2$};
		\node at (3, 0) {$\circ$};
		\node[below] at (2.85, 0) {$2n-2$};
		\node at (2, 0.5) {$\circ$};
		\node[right] at (2.2, 0.5) {$0$};
		\draw[-] (0.1, 0) to (0.9,0);
		\draw[-] (0.1, 0.1) to (1.9, 0.4);
		\draw[-] (2.1, 0.4) to (3.9, 0.1); 
		\draw[dashed] (1.1,0) to (2.9,0);
		\draw[-] (3.1,0)  to (3.9,0);
		\end{tikzpicture} & \multicolumn{2}{|c|}{$\{\omega^\vee_i, \omega^\vee_{2n-i}\}$, $1 \le i \le n$} \\ \cline{2-4}  for $n \ge 3$  & 
		\begin{tikzpicture}[baseline=-2]
		\node at (0,0) {$\circ$};
		\node[below] at (0, 0) {$n$};
		\draw[-implies, double equal sign distance] (0.1, 0) to (0.9,0);
		\node  at (1,0) {$\circ$};
		\node[below] at (1, 0) {$n-1$};
		\draw (1.1, 0) to (1.9,0);
		\node at (2,0) {$\circ$};
		\node[below] at (2, 0) {$n-2$};	
		\draw[dashed] (2.1, 0) to (2.9, 0);
		\node at (3.0, 0) {$\circ$};
		\node[below] at (3, 0) {$2$};
		\draw[-] (3.1,0.1) to (3.9, 0.35);
		\node at (4, 0.4) {$\circ$};
		\node [right] at (4, 0.4) {$1$};
		\draw[-] (3.1, - 0.1) to (3.9, -0.35);
		\node at (4,-0.4) {$\circ$};
		\node [right] at (4, -0.4) {$0$};
		\end{tikzpicture}
		& \multicolumn{2}{|c|}{$\o^\vee_i$} \\
		\hline
		$C$-$BC_n$ (${}^2 A^{(1)}_{2n, n}$) & 	\begin{tikzpicture}[baseline=0]
		\node[]  at (0,0) {$\circ$};
		\node[below] at (0, 0) {$1$};
		\node[] at (4,0) {$\circ$};
		\node[below] at (4.15, 0) {$2n$};
		\node at (1, 0) {$\circ$};
		\node[below] at (1, 0) {$2$};
		\node at (3, 0) {$\circ$};
		\node[below] at (2.85, 0) {$2n-1$};
		\node at (2, 0.5) {$\circ$};
		\node[right] at (2.2, 0.5) {$0$};
		\draw[-] (0.1, 0) to (0.9,0);
		\draw[-] (0.1, 0.1) to (1.9, 0.4);
		\draw[-] (2.1, 0.4) to (3.9, 0.1); 
		\draw[dashed] (1.1,0) to (2.9,0);
		\draw[-] (3.1,0)  to (3.9,0);
		\end{tikzpicture} & $\{\omega^\vee_i, \omega^\vee_{2n+1-i}\}$, $1 \le i < n$ & $\{\omega^\vee_n, \omega^\vee_{n+1}\}$ \\ \cline{2-4} for $n \ge 2$ & 
		\begin{tikzpicture}[baseline=0]
		\node at (0,0) {$\circ$};
		\node[below] at (0, 0) {$0$};
		\node[above] at (0,0) {\phantom{A}};
		\draw[implies-, double equal sign distance] (0.1, 0) to (0.9,0);
		\node  at (1,0) {$\circ$};
		\node[below] at (1, 0) {$1$};
		\draw (1.1, 0) to (1.9,0);
		\node at (2,0) {$\circ$};
		\node[below] at (2, 0) {$2$};
		\draw[dashed] (2.1, 0) to (2.9, 0);
		\node at (3, 0) {$\circ$};
		\node[below] at (3, 0) {$n-1$};
		\node at (4, 0) {$\circ$};
		\node[below] at (4, 0) {$n$};
		\draw[implies-, double equal sign distance] (3.1, 0) to (3.9,0);
		\end{tikzpicture}
		&  $\omega^\vee_i$  & $2\omega^\vee_n$ \\
		\hline
        \multirow{2}{*}{$C$-$BC_1$ (${}^2 A_{2, 1}^{(1)}$)} & \begin{tikzpicture}[baseline=0]
        \node[]  at (0,0) {$\circ$};
        \node[below] at (0, 0) {$1$};
        \node at (2, 0) {$\circ$};
        \node[below] at (2, 0) {$2$};
        \node at (1, 0.5) {$\circ$};
        \node[right] at (1.2, 0.5) {$0$};
        \draw[-] (0.1, 0) to (1.9,0);        
        \draw[-] (0.1, 0.1) to (0.9, 0.4);
        \draw[-] (1.1, 0.4) to (1.9, 0.1); 
        \end{tikzpicture} & 
        \multicolumn{2}{|c|}{$\{\o^\vee_1, \o^\vee_2\}$}
        \\ \cline{2-4}
        & \begin{tikzpicture}[baseline=0]
        \node[]  at (0,0) {$\circ$};
        \node[below] at (0, 0) {$1$};
        \node[above] at (0,0) {\phantom{a} };
        \node at (1, 0) {$\circ$};
        \node[below] at (1, 0) {$0$};
        \draw[implies-, line width=3] (0.1, 0) to (0.9, 0);
        \end{tikzpicture} & \multicolumn{2}{|c|}{$2 \omega^\vee_1$}
        \\
        \hline 
        $C$-$B_n$ (${}^2 D_{n+1, n}^{(1)}$) & \begin{tikzpicture}[baseline=0]
        \node at (0,0.4) {$\circ$};
        \node [left] at (0, 0.4) {1};
        \node [left] at (0, -0.4) {0};
        \node at (0, -0.4) {$\circ$};
        \node at (1,0) {$\circ$};
        \node[below] at (1, 0) {$2$};
        \draw[-] (0.1, 0.35) to (0.9, 0.1);
        \draw[-] (0.1, -0.35) to (0.9, -0.1);
        \draw (1.1, 0) to (1.9,0);
        \node at (2,0) {$\circ$};
        \node[below] at (2, 0) {$3$};	
        \draw[dashed] (2.1, 0) to (2.9, 0);
        \node at (3.0, 0) {$\circ$};        \node[below] at (2.8, 0) {$n-1$};
        \draw[-] (3.1,0.1) to (3.9, 0.35);
        \node at (4, 0.4) {$\circ$};
        \node [right] at (4, 0.4) {$n$};
        \draw[-] (3.1, - 0.1) to (3.9, -0.35);
        \node at (4,-0.4) {$\circ$};
        \node [right] at (4, -0.4) {$n+1$};
        \end{tikzpicture} & 
        $\omega^\vee_1$ & $\{\omega^\vee_n, \omega^\vee_{n+1}\}$
        \\ \cline{2-4} for $n \ge 2$
        &
		\begin{tikzpicture}[baseline=0]
        \node at (0,0) {$\circ$};
        \node[below] at (0, 0) {$0$};
        \node[above] at (0,0) {\phantom{A} };
        \draw[implies-, double equal sign distance] (0.1, 0) to (0.9,0);
        \node  at (1,0) {$\circ$};
        \node[below] at (1, 0) {$1$};
        \draw (1.1, 0) to (1.9,0);
        \node at (2,0) {$\circ$};
        \node[below] at (2, 0) {$2$};
        \draw[dashed] (2.1, 0) to (2.9, 0);
        \node at (3, 0) {$\circ$};
        \node[below] at (3, 0) {$n-1$};
        \node at (4, 0) {$\circ$};
        \node[below] at (4, 0) {$n$};
        \draw[-implies, double equal sign distance] (3.1, 0) to (3.9,0);
        \end{tikzpicture}
		& $\omega^\vee_1$ & $\omega^\vee_n$ \\      
		\hline 
		\multirow{2}{*}{$F^1_4$ (${}^2 E_{6, 4}^{2}$)} & \begin{tikzpicture}[baseline=0]
		\node at (0,0) {$\circ$};
		\node[below] at (0, 0) {$0$};
		\node at (1,0) {$\circ$};
		\node[below] at (1, 0) {$2$};	
		\draw (0.1, 0) to (0.9,0);		
		\draw (1.1, 0) to (1.9,0);		
		\node at (2.0, 0) {$\circ$};        \node[below] at (2, 0) {$4$};
		\draw[-] (2.1,0.1) to (2.9, 0.35);
		\node at (3, 0.4) {$\circ$};
		\node [below] at (3, 0.4) {$3$};
		\draw[-] (2.1, - 0.1) to (2.9, -0.35);
		\node at (3,-0.4) {$\circ$};
		\node [below] at (3, -0.4) {$5$};		
		\node at (4, 0.8) {$\circ$};
		\node [right] at (4, 0.8) {$1$};
		\draw[-] (3.1, -0.45) to (3.9, -0.75);		\draw[-] (3.1, 0.45) to (3.9, 0.75);
		\node at (4,-0.8) {$\circ$};
		\node [right] at (4, -0.8) {$6$};
		\end{tikzpicture} & 
		\multicolumn{2}{|c|}{$\{\omega^\vee_1, \omega^\vee_6\}$}
		\\ \cline{2-4}
		&
		\begin{tikzpicture}[baseline=0]
		\node at (0,0) {$\circ$};
		\node[below] at (0, 0) {$0$};
		\node[above] at (0,0) {\phantom{A}};
		\draw[] (0.1, 0) to (0.9,0);
		\node  at (1,0) {$\circ$};
		\node[below] at (1, 0) {$1$};
		\draw (1.1, 0) to (1.9,0);
		\node at (2,0) {$\circ$};
		\node[below] at (2, 0) {$2$};
		\draw[] (3.1, 0) to (3.9, 0);
		\node at (3, 0) {$\circ$};
		\node[below] at (3, 0) {$3$};
		\node at (4, 0) {$\circ$};
		\node[below] at (4, 0) {$4$};
		\draw[implies-, double equal sign distance] (2.1, 0) to (2.9,0);
		\end{tikzpicture} &
		\multicolumn{2}{|c|}{$\omega^\vee_1$} \\
		\hline 
		\multirow{2}{*}{$G^1_2$} & \begin{tikzpicture}[baseline=0]
		\node at (0,0.4) {$\circ$};
		\node [left] at (0, 0.4) {1};
		\node [left] at (0, -0.4) {0};
		\node at (0, -0.4) {$\circ$};
		\node at (1,0) {$\circ$};
		\node[below] at (1, 0) {$2$};
		\draw[-] (0.1, 0.35) to (0.9, 0.1);
		\draw[-] (0.1, -0.35) to (0.9, -0.1);
		\draw[-] (1.1,0.1) to (1.9, 0.35);
		\node at (2, 0.4) {$\circ$};
		\node [right] at (2, 0.4) {3};
		\draw[-] (1.1, - 0.1) to (1.9, -0.35);
		\node at (2,-0.4) {$\circ$};
		\node [right] at (2, -0.4) {$4$};
		\end{tikzpicture} & 
		\multicolumn{2}{|c|}{$\{\omega^\vee_1, \omega^\vee_3, \omega^\vee_4\}$}
		\\ \cline{2-4} (${}^3 D_{4, 2}$ or ${}^6 D_{4, 2}$)
		&
		\begin{tikzpicture}[baseline=0]
		\node at (0,0) {$\circ$};
		\node[below] at (0, 0) {$0$};
		\node[above] at (0,0) {\phantom{A}};
		\draw[] (0.1, 0) to (0.9,0);
		\node  at (1,0) {$\circ$};
		\node[below] at (1, 0) {$2$};
	    \draw[implies-, double equal sign distance]  (1.1, 0) to (1.9,0);
	    \draw[implies-] (1.1, 0) to (1.9, 0);
		\node at (2,0) {$\circ$};
		\node[below] at (2, 0) {$1$};
		\end{tikzpicture}
		& 		
		\multicolumn{2}{|c|}{$\omega^\vee_2$} \\
		\hline
	\end{tabular}
\end{center}

\end{footnotesize}

\smallskip
\medskip

From this list we deduce the following statement. 

\begin{lemma}\label{redtoenhT}
	Two LM triples $(G, \{\mu\}, K)$ and $(G', \{\mu'\}, K')$ over $F$, with $G$ and $G'$ absolutely simple adjoint, define the same enhanced Tits datum if and only if they become isomorphic after scalar extension to an unramified extension of $F$.\qed
\end{lemma}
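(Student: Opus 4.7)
My plan is to establish the lemma in two stages. The \emph{only if} direction is immediate, since each of the three components of the enhanced Tits datum is defined purely in terms of the base change $(G,\{\mu\},K)\otimes_F\br F$. For the \emph{if} direction, the strategy is first to show that the enhanced Tits datum already pins down $(G,\{\mu\},K)\otimes_F\br F$ up to isomorphism, and then to descend the resulting $\br F$-isomorphism to a finite unramified extension of $F$.

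For the first stage, three points must be verified. First, the local Dynkin diagram $\tilde\Delta$ determines the absolutely simple adjoint group $\br G=G\otimes_F\br F$ up to isomorphism by the Bruhat--Tits classification \cite[\S 4]{T}, using that $\br G$ is automatically residually split over the algebraically closed residue field. Second, by the very definition of the enhanced Tits datum, the subset $\tilde K\subset\tilde S$ determines $\br K$ up to $\br G(\br F)$-conjugacy. Third, and this is the main content of the first stage, I will verify by direct inspection of the two tables in Subsection~\ref{Titsdata} that the $W_0$-conjugacy class $\{\lambda\}$ determines $\{\mu\}$ up to the action of $\Aut(\br G)(\br F)$. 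In the split case (first table) this projection is injective on minuscule classes. In each non-split row of the second table, the distinct $\mu$'s that share a common $\lambda$---for instance $\{\omega^\vee_i,\omega^\vee_{2n-i}\}$ in type $B$-$C_n$ or $\{\omega^\vee_1,\omega^\vee_3,\omega^\vee_4\}$ in type $G^1_2$---are precisely the orbit of any one of them under the pinned outer automorphism group of $\br G$. This pinned outer group acts on $\br G$ over $\br F$ via the pinning of the quasi-split form, and since the folding defining $\tilde\Delta$ collapses exactly these orbits on vertices, it preserves the type $\tilde K$ up to conjugacy; thus such an outer automorphism supplies the required isomorphism of LM triples over $\br F$.

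For the second stage, I will use that the $F$-scheme $\Isom_F(G,G')$ of group isomorphisms is affine of finite type, so any $\br F$-valued isomorphism descends to an $F'$-valued isomorphism for some finite unramified extension $F'\subset\br F$. After enlarging $F'$ (still unramified) one can arrange by the same finite-type descent that the isomorphism carries $\{\mu\}$ to $\{\mu'\}$ and $\br K$ to a $G'(F')$-conjugate of $\br K'$, completing the argument.

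The principal obstacle will be the case analysis in the first stage: one must check, uniformly across all rows of the second table, that the outer automorphism realizing the identification of two distinct $\mu$'s with a common $\lambda$ does preserve the parahoric type $\tilde K$. This reduces to a routine inspection confirming that each folding symmetry of the local Dynkin diagram fixes the folded vertex set, so that every subset $\tilde K$ of that folded set is tautologically preserved.
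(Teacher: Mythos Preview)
Your proposal is correct and follows the same route as the paper, which gives no explicit argument beyond the remark that the lemma is read off from the two tables preceding it. You supply the details the paper omits: that distinct minuscule $\mu$'s projecting to a common $\lambda$ form a single $\Out(\breve G)(\breve F)$-orbit, and that the pinned outer automorphism realizing this acts trivially on the apartment (hence on the local Dynkin diagram, preserving every $\tilde K$), followed by a routine finite-type descent to a finite unramified $F'$.
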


Suppose that $G$ and $G'$ are absolutely simple adjoint such that $G\otimes_F\breve F\simeq G\otimes_F\breve F$.
The isomorphism classes of $G$ and $G'$ are distinguished by considering the corresponding action of the automorphism $F$ of the local Dynkin diagram $\tilde \Delta$ of $\breve G\simeq \breve G'\simeq G^*\otimes_F\breve F$ given by Frobenius (see \cite{T}, \cite{Gross}).
In \cite{Gross} one can find a very useful list of all possible such actions and of the corresponding forms of the group. The 
parahoric subgroups $K$, $K'$ correspond to non-empty $F$-stable subsets $\tilde K$ of the vertices of $\tilde \Delta$.

\begin{example}\label{Titsexotic}
Consider the enhanced Tits data defined by LM triples of exotic good reduction type, cf. beginning of Subsection \ref{ss:goodred}. Assume that $G\otimes_F\breve F$ is absolutely simple and adjoint. There are two cases: 

1) $G$ is the adjoint group of $U(V)$, where $V$ is the $\tilde F/F$-hermitian vector space for a (tamely) ramified quadratic extension $\tilde F$ of $F$. If $\dim V=2m\geq 4$ is even, then the corresponding  enhanced Tits datum is ($B$-$C_m$, $\omega_1^\vee, \{0\})$ for $m\geq 3$ and ($C$-$B_2$, $\omega_2^\vee, \{0\})$ for $m=2$. If $\dim V=2m+1\geq 3$ is odd, then the corresponding  enhanced Tits datum is ($C$-$BC_m$, $\omega_1^\vee, \{0\})$ for $m\geq 2$ and ($C$-$BC_1$, $2\omega_1^\vee, \{0\})$ for $m=1$.  

2) $G$ is the adjoint group of ${\mathrm {SO}}(V)$
where $V$ is an orthogonal $F$-vector space of dimension $2m+2\geq 6$. Then $V$ has Witt index $m$ and non-square discriminant.  The corresponding  enhanced Tits datum is ($C$-$B_m$, $\omega_m^\vee, \{0\})$.
\end{example}

\subsection{Semi-stable reduction}\label{ss:ssred}
In the classification problem of all triples $(G, \{\mu\}, K)$ such that $\Mloc$ has semi-stable reduction, Lemma \ref{devislem} points to two problems. First,  the product of  semi-stable schemes is semi-stable only when all factors except at most one are smooth. And we can consider the problem of classifying the good reduction cases as solved by Theorem \ref{maingoodred}. Second, the extension of scalars of a semi-stable scheme is again semi-stable only if the base extension is unramified. Therefore, we will consider in the classification problem of semi-stable reduction only  triples $(G, \{\mu\}, K)$ such that   $G$  is an  absolutely simple adjoint group.

Lemma \ref{redtoenhT} justifies  classifying local models $\Mloc$ with semi-stable reduction by the enhanced Tits datum  associated to $(G, \{\mu\}, K)$. Indeed,  for $F'/F$ unramified, $\Mloc\otimes_{O_E} O_{E'}\simeq \BM^\loc_{K'}(G', \{\mu'\})$, where $G'=G\otimes_F F'$ and  $\{\mu'\}$ and $K'$ are induced from $\{\mu\}$ and $K$, cf. Proposition \ref{proofLM}, (ii).  Furthermore, $\Mloc\otimes_{O_E}O_{ E'}$ has semi-stable reduction if and only if $\Mloc$ has semi-stable reduction (this follows because the reflex field $E'$ is an unramified extension of $E$).

Now we can state the classification of local models with semi-stable reduction. 
\begin{theorem}\label{mainssred}
Let $(G, \{\mu\}, K)$ be a LM triple over $F$ such that $G$ splits over a tame extension of $F$. Assume $p\neq 2$.  Assume also that the group $G$ is adjoint and absolutely simple. 
The local model $\Mloc$ has  semi-stable but not smooth reduction over $\Spec(O_E)$ if and
only if the enhanced Tits datum corresponding to $(G,\{\mu\}, K)$ appears in the first column of the table below. 
\medskip
\begin{footnotesize}
\begin{center}
\begin{tabular}{|c|p{5.8cm}|c|}
\hline
\text{Enhanced Tits datum} & \text{Linear algebra datum} &\text{Discoverer}\\
\hline
	\begin{tikzpicture}[baseline=0]
		\node[]  at (0,0) {$\circ$};
		\node[above] at (0, 0) {$\times$};
		\node[] at (2,0) {$\circ$};
		\node at (0.5, 0) {$\circ$};
		\node at (1.5, 0) {$\circ$};
		\node at (1, 0.5) {$\square$};
		\draw[-] (0.1, 0) to (0.4,0);
		\draw[-] (0.1, 0.1) to (0.9, 0.4);
		\draw[-] (1.1, 0.4) to (1.9, 0.1); 
		\draw[dashed] (0.6,0) to (1.4,0);
		\draw[-] (1.6,0)  to (1.9,0);
	         \node[align=center, below] at (1, 0)%
		     {All vertices are hyperspecial \\ $\# \tilde K \ge 2$};
	\end{tikzpicture}
 &  \text{Split $\SL_n$, $r=1$} \newline \text{arbitrary chain of lattices of length $\ge 2$} & \text{Drinfeld} \\
\hline
	\begin{tikzpicture}[baseline=0]
		\node[]  at (0,0) {$\bullet$};
		\node[] at (2,0) {$\circ$};
		\node at (0.5, 0) {$\bullet$};
		\node at (1.5, 0) {$\circ$};
		\node at (1, 0.5) {$\square$};
		\draw[-] (0.1, 0) to (0.4,0);
		\draw[-] (0.1, 0.1) to (0.9, 0.4);
		\draw[-] (1.1, 0.4) to (1.9, 0.1); 
		\draw[dashed] (0.6,0) to (1.4,0);
		\draw[-] (1.6,0)  to (1.9,0);
		\node[align=center, below] at (1, 0)%
		     {All vertices are hyperspecial \\ $\mu$ is any minuscule coweight};
	\end{tikzpicture}
 & \text{Split $\SL_n$ with $n \ge 4$} \newline \text{$r$ arbitrary, $(\Lambda_0,  \Lambda_1)$}& \text{G\"ortz} \\
\hline	
\begin{tikzpicture}[baseline=0]
		\node at (0,0) {$\bullet$};
		\draw[implies-, double equal sign distance] (0.1, 0) to (0.4,0);
		\node  at (0.5,0) {$\circ$};
		\draw (0.6, 0) to (0.9,0);
		\node at (1,0) {$\circ$};	
		\draw[dashed] (1.1, 0) to (1.9, 0);
		\node at (2.0, 0) {$\circ$};
		\draw[-] (2.1,0.1) to (2.4, 0.35);
		\node at (2.5, 0.4) {$\blacksquare$};
		\node [right] at (2.5, 0.4) {hs};
		\draw[-] (2.1, - 0.1) to (2.4, -0.35);
		\node at (2.5,-0.4) {$\circ$};
		\node [right] at (2.5, -0.4) {hs};
		\node [above] at (2.5, -0.4) {$\times$};
	\end{tikzpicture}
	& \text{Split $\SO_{2n+1}$ with $n \ge 3,r=1, (\Lambda_0,  \Lambda_n)$} & \text{new}\\
\hline
\begin{tikzpicture}[baseline=0]
		\node at (0,0) {$\blacksquare$};
		\draw[-implies, double equal sign distance] (0.1, 0) to (0.4,0);
		\node  at (0.5,0) {$\bullet$};
		\draw (0.6, 0) to (0.9,0);
		\node at (1,0) {$\circ$};
		\draw[dashed] (1.1, 0) to (1.9, 0);
		\node at (2, 0) {$\circ$};
		\node at (2.5, 0) {$\circ$};
		\draw[implies-, double equal sign distance] (2.1, 0) to (2.4,0);
		\node [right] at (2.5, 0) {hs};
		\node [left] at (-0.1, 0) {hs};
		\node [above] at (2.5, 0) {$\times$};
	\end{tikzpicture}
	& \text{Split $\Sp_{2n}$ with $n \ge 2$, $r=n, (\Lambda_0, \Lambda_1)$}& \text{Genestier-Tilouine}\\
\hline
\begin{tikzpicture}[baseline=0]
		\node at (0,0.4) {$\blacksquare$};
		\node [left] at (-0.1, 0.4) {hs};
		\node [left] at (0, -0.4) {hs};
		\node at (0, -0.4) {$\circ$};
		\node  at (0.5,0) {$\circ$};
		\draw[-] (0.1, 0.35) to (0.4, 0.1);
		\draw[-] (0.1, -0.35) to (0.4, -0.1);
		\draw (0.6, 0) to (0.9,0);
		\node at (1,0) {$\circ$};	
		\draw[dashed] (1.1, 0) to (1.9, 0);
		\node at (2.0, 0) {$\circ$};
		\draw[-] (2.1,0.1) to (2.4, 0.35);
		\node at (2.5, 0.4) {$\bullet$};
		\node [right] at (2.5, 0.4) {hs};
		\draw[-] (2.1, - 0.1) to (2.4, -0.35);
		\node at (2.5,-0.4) {$\circ$};
		\node [right] at (2.5, -0.4) {hs};
		\node [above] at (0, -0.4) {$\times$};
	\end{tikzpicture}
 & \text{Split $\SO_{2n}$ with $n \ge 4$, $r=1, (\Lambda_0, \Lambda_n)$}& \text{Faltings} \\
\hline
\begin{tikzpicture}[baseline=0]
		\node at (0,0.4) {$\blacksquare$};
		\node [left] at (-0.1, 0.4) {hs};
		\node [left] at (0, -0.4) {hs};
		\node at (0, -0.4) {$\bullet$};
		\node  at (0.5,0) {$\circ$};
		\draw[-] (0.1, 0.35) to (0.4, 0.1);
		\draw[-] (0.1, -0.35) to (0.4, -0.1);
		\draw (0.6, 0) to (0.9,0);
		\node at (1,0) {$\circ$};	
		\draw[dashed] (1.1, 0) to (1.9, 0);
		\node at (2.0, 0) {$\circ$};
		\draw[-] (2.1,0.1) to (2.4, 0.35);
		\node at (2.5, 0.4) {$\circ$};
		\node [right] at (2.5, 0.4) {\,hs};
		\draw[-] (2.1, - 0.1) to (2.4, -0.35);
		\node at (2.5,-0.4) {$\circ$};
		\node [right] at (2.5, -0.4) {hs};
		\node [above] at (2.5, -0.4) {$\times$};
	\end{tikzpicture}
 & \text{Split $\SO_{2n}$ with $n \ge 5$, $r=n, \Lambda_1$}& \text{new} \\
\hline
\end{tabular}
\end{center}
\end{footnotesize}
\bigskip

 In the second column, we list the linear algebra data that correspond\footnote {By definition, this means that the corresponding parahoric subgroup is the \emph{connected} stabilizer of the listed lattices. Note that in the last row, the connected stabilizer of the lattice $\Lambda_1$ also stabilizes $\Lambda_0$.} to the LM triple $(G,\{\mu\}, K)\otimes_F\breve F$.
 
 In the diagrams above, if not specified, hyperspecial vertices are marked with an hs. In order to also show
the coweight $\{\lambda\}$, a special vertex is specified (marked by a square)\footnote{Note that the local Dynkin type $C$-$BC_n$ does not occur here so that all special vertices are conjugate; hence this specification plays no role.} so that the extended affine Weyl group appears as a semi-direct product of $W_0$ and $X_*$. Then $\{\lambda\}$ is equal to the fundamental coweight of the vertex marked with $\times$. The number $r$ is the labeling of this special vertex. Finally,  the  subset $\tilde K$ is the set of vertices filled with black color.

 Note that there are some obvious overlaps between the first two rows. 
\end{theorem}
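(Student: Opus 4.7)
The plan is to exploit the fact that by Theorem \ref{2.9}, the special fiber $\Mloc \otimes_{O_E} k$ is reduced and equal to the $\{\mu\}$-admissible locus $\mathfrak{A}_K(G, \{\mu\})$, and therefore carries a stratification by Schubert varieties indexed by $\Adm_{\breve K}(\{\mu\})$. Consequently, whether $\Mloc$ has semi-stable reduction becomes a question about combinatorial data attached to the enhanced Tits datum (or even the enhanced Coxeter datum). First I would establish the following chain of weakening implications:
\[
\text{semi-stable reduction} \ \Longrightarrow\ \text{strictly pseudo semi-stable reduction}\ \Longrightarrow\ \text{CCP}.
\]
The first is essentially tautological, and the second (the contents of Section \ref{s:pssredCCP}) amounts to counting the irreducible components of the special fiber transversely meeting at each point.

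For the forward direction (``only if''), the strategy is a successive elimination along this chain. First one catalogs all enhanced Coxeter data satisfying CCP; because CCP is a strong combinatorial restriction on the admissible set (essentially limiting the possible Bruhat intervals that can occur as a translation element grows), and because $\{\mu\}$ is minuscule so that the associated translation is small, the list is manageable and one can enumerate it by going through the local Dynkin types $A_n, B_n, C_n, D_n, E_6, E_7, F_4, G_2$ and their residually-split forms (using the tables of Subsection \ref{Titsdata}), together with the minuscule $\{\lambda\}$ and the subset $\tilde K$. Then one refines the list in two further steps: by imposing rational strict pseudo semi-stability (that is, rational smoothness of each irreducible component of the special fiber, detected by Poincar\'e polynomial calculations in the Iwahori--Weyl group), and then by imposing strict pseudo semi-stability via Kumar's criterion (which detects smoothness of a Schubert point through the cardinality of the lower Bruhat interval at that $T$-fixed point). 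Each step prunes the list until precisely the six cases tabulated in the theorem remain; this will be carried out in Sections \ref{s:ratpsss}, \ref{s:psssred} and \ref{s:oneimpl}.

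For the converse direction, one must verify that each of the six rows of the table does yield semi-stable reduction. Four of the six cases are already in the literature: the Drinfeld case from \cite{Dr}, the G\"ortz case from \cite{Goertz}, the Klingen parahoric case of Genestier--Tilouine \cite{GenTil}, and the split orthogonal case discovered by Faltings \cite{F} (which we recognize via Example \ref{Faltings}). For the two remaining (new) cases---the odd orthogonal case with chain $(\Lambda_0,\Lambda_n)$ and the even orthogonal case with $\Lambda_1$ at the minuscule vertex---I would carry out an explicit calculation of the local model in a formal neighborhood of each worst point, by analyzing the moduli problem for chains of isotropic subspaces in the corresponding orthogonal vector space, and exhibit a local \'etale equation of the form $xy = \pi$ (or a product of such, in the multi-component case) that displays the semi-stable structure. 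Properties (i)--(iv) of Proposition \ref{proofLM} allow us to reduce to the concrete group-scheme-theoretic models where this linear-algebraic calculation is feasible.

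The main obstacle will be the converse direction for the two new orthogonal cases: one needs hands-on linear algebra and a careful analysis of the $\CG_{O_E}$-orbits on the special fiber to pinpoint the singular Schubert strata and check transversality there, and the component structure of the parahoric in these orthogonal settings (coming from the non-trivial $\pi_0$ of stabilizers of lattices of non-maximal discriminant) makes the bookkeeping delicate. For the forward direction, the main difficulty is organizing the CCP classification into a finite tractable enumeration across all residually-split forms and all non-special parahorics; even though $\{\mu\}$ being minuscule makes the translation element small, the interaction with non-special $\tilde K$ forces one through a variety of Poincar\'e polynomial computations in the affine Weyl group. Finally, Corollary \ref{corIntro} will drop out as a byproduct, since inspection of the surviving list shows that in each case strict pseudo semi-stable reduction is already present at the level of the special fiber, so the two notions coincide under the hypotheses of that corollary.
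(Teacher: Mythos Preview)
Your proposal is correct and follows essentially the same four-step strategy as the paper: establish the chain semi-stable $\Rightarrow$ SPSS $\Rightarrow$ rationally SPSS $\Rightarrow$ CCP, classify the CCP cases combinatorially, prune via Poincar\'e polynomial calculations and then via smoothness criteria for Schubert varieties, and finally verify semi-stability directly in the surviving cases by explicit lattice-chain linear algebra.

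Two small points of difference worth noting. First, in the SPSS elimination step the paper does not rely solely on Kumar's criterion (which, incidentally, is formulated via the rational function $e_1X(w)$ built from subexpressions, not via cardinalities of Bruhat intervals): for the case (3-b) of nonsplit $\SO_{2n+2}$ it instead passes to a finite-type partial flag variety and invokes the Billey--Lakshmibai pattern-avoidance criterion, and for (1-c) it appeals to the explicit quadric description in Section~\ref{s:converse1}. Second, for the converse direction the paper does not merely cite Drinfeld, G\"ortz, Genestier--Tilouine, and Faltings but actually carries out uniform explicit local-chart computations for all six rows (including the Faltings case, whose original formulation is quite different), exhibiting in each case an affine neighborhood of the worst point isomorphic to $\Spec O_F[X,Y]/(XY-\pi)\times\BA^N$; this has the side benefit of establishing the equality of the naive, spin-conditioned, and flat local models in the relevant orthogonal cases. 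Your plan to cite the literature for the four known cases would also work, but the uniform treatment is cleaner.
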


\begin{remark}\label{Frobeniusaction} 
Starting with the table in Theorem \ref{mainssred} above, one can also easily  list all LM triples $(G, \{\mu\}, K)$ over $F$,  with $G$ adjoint and absolutely simple such that $G$ splits over a tame extension of $F$ and with $\Mloc$ having semi-stable reduction over $O_E$ (provided $p\neq 2$). These are given by listing the possible conjugacy classes of Frobenius automorphisms in the group ${\rm Aut}(\tilde \Delta, \tilde K)$ of automorphisms of the corresponding local Dynkin diagram $\tilde \Delta$ that preserve the  black subset $\tilde K$.  
B.~Gross \cite{Gross} gives a convenient enumeration of possible Frobenius conjugacy classes in ${\rm Aut}(\tilde \Delta)$.

For example, in the first case of our list, there could be several possible Frobenius actions on the $n$-gon
that stabilize $\tilde K$ depending on that set; the corresponding groups are the adjoints of either unitary groups or  of ${\rm SL}_m(D)$, where $D$ are division algebras and $m|n$ (see \cite[p. 15-16]{Gross}). 

In the second case, there is only one possibility of a non-trivial Frobenius action on the $n$-gon that stabilizes the set of two adjacent vertices: A reflection ($F$ of order $2$). Then $G$ is the adjoint group
of $U(V)$ where $V$ is a non-degenerate Hermitian space for an unramified quadratic extension of $F$. Furthermore, when $n=2m$ is even, $F$ cannot fix a vertex so $V$ \emph{does not} contain an isotropic subspace of dimension $m$ (\cite[p. 16]{Gross}). 

In the third and fourth cases, there are no non-trivial automorphisms $F$ that preserve the subset $\tilde K$ and so $G$ is split. 

In the fifth case, there is also only one possible non-trivial Frobenius action that stabilizes $\tilde K$, up to conjugacy in the group ${\rm Aut}(\tilde \Delta, \tilde K)$. The corresponding group is the adjoint group of ${ {U}}(W)$ where $W$ is a non-degenerate anti-Hermitian space over the quaternion division algebra over $F$; the center of the Clifford algebra is $F\times F$ if $n$ is even and the quadratic unramified extension $L/F$ if $n$ is odd
(\cite[p. 18-20]{Gross}).

In the sixth case, there are three possibilities of a non-trivial Frobenius action that stabilizes $\tilde K$, up to conjugacy in the group ${\rm Aut}(\tilde \Delta, \tilde K)$. In the one case, the group is the adjoint group of ${\mathrm {SO}}(V)$ where $V$ is a non-degenerate  orthogonal space of dimension $2n$, discriminant $1$ and Witt index $n-2$. In the other two, the group is the adjoint 
group of the unramified quasi-split but not split ${\mathrm {SO}}(V)$ (\cite[p. 18-20]{Gross}).

In all these cases, we can realize $K$ as the parahoric stabilizer of a suitable lattice chain.

 \end{remark}

\begin{remark}\label{strictversusnonstrict}
We note that $\Mloc$ has semi-stable reduction if and only if the base change $\Mloc\otimes_{O_E}\br O_E$ has strictly semi-stable reduction, i.e., the geometric special fiber is a strict normal crossings divisor, in the sense of \cite[Def. 40.21.1]{Stacks}:   Indeed, both $\Mloc\otimes_{O_E}\br O_E$ and all the irreducible components of its special fiber are normal \cite{PZ}, hence unibranch at each closed point $x$.
From this we deduce that each
intersection of a subset of irreducible components of the geometric special fiber in the strict 
henselization of  $\Mloc$ at   $x$ (i.e., of ``branches"), is isomorphic to the strict henselization of the   intersection of a corresponding subset of global irreducible components at $x$. Therefore, if the geometric special fiber is (\'etale locally) a normal crossings divisor, it is in fact (globally) a strict normal crossings divisor.
\end{remark}

\begin{remarks}\label{examples}
Let us compare this list with the local models investigated in earlier papers. We always assume $p\neq 2$. We use the terminology \emph{rationally smooth, strictly pseudo semi-stable reduction, rationally strictly pseudo semi-stable reduction} introduced in the next section. 

\smallskip

\noindent (i) Let us consider the LM triples whose first two components are $G=\GU(V)$ where $V$ is a split $F'/F$-hermitian space of dimension $3$ relative to a \emph{ramified} quadratic extension $F'/F$, and where $\{\mu\}=(1, 0, 0)$. We identify $E$ with $F'$.  We use the notation for the parahoric subgroups as in \cite{PRS}. Since $G$ is not unramified, there are no hyperspecial maximal parahoric subgroups. If $K$ is the stabilizer of the self-dual vertex lattice $\Lambda_0$, then $K$ is a special maximal parahoric and the special fiber is irreducible, normal with an isolated singularity which is a rational singularity, comp. \cite[Thm. 2.24]{PRS}. The special fiber occurs in the list in \cite{HR} of rationally smooth Schubert varieties in twisted affine Grassmannians. The blow-up of $\Mloc$ in the unique singular point of the special fiber has semi-stable reduction, cf. \cite[Thm. 4.5]{Pappas}, \cite{Kr}. This is an example of a local model which does not have semi-stable reduction but where the generic fiber has a different model which has  semi-stable reduction.

If $K$ is the stabilizer of the non-selfdual vertex lattice $\Lambda_1$, then $\Mloc$ is smooth over $\Spec O_{F'}$: this case is of exotic good reduction type. 

Finally, if $K$ is an Iwahori subgroup, then the local model does not have rationally strictly pseudo semi-stable reduction, comp. \cite[Thm. 2.24, (iii)]{PRS}. And, indeed, this case is eliminated in Subsection \ref{7.9}.

\smallskip

\noindent (ii) Let us consider $G=\GU(V)$, where $V$ is a split $F'/F$-hermitian space of arbitrary dimension $n\geq 2$ relative to a ramified quadratic extension $F'/F$. Let us consider the LM triple $(G, \{\mu\}, K)$,  where $\{\mu\}=(1, 0, \ldots, 0)$, and where $K$ is the parahoric stabilizer of a self-dual lattice $\Lambda$ (except when $n=2$, $K$ is the full stabilizer of $\Lambda$, cf. \cite[1.2.3]{PR}). If $n=2$, then 
$\Mloc$ has semi-stable reduction, cf. \cite[Rm. 2.35]{PRS}. If $n\geq 3$, the special fiber of $\Mloc$ is irreducible and has a unique isolated singular point, cf. \cite[Thm. 4.5]{Pappas}. Generalizing the previous example, the blow-up of this singular point has semi-stable reduction, cf. \cite{Pappas, Kr}. 

For $n>3$ with $n=2m+1$ odd, the associated local Dynkin diagram is of type $C$-$BC_{m}$ and the parahoric subgroup $K$ corresponds to the \emph{special} vertex $m$ in the local Dynkin diagram. The special fiber of the local model is a Schubert variety  that  occurs in the list in \cite{HR} of rationally smooth Schubert varieties in twisted affine Grassmannians.  Remarkably,  Zhu \cite[Cor.~7.6]{ZhugeoSat} has shown in this case that the Weil sheaf defined by the complex of nearby cycles is the constant sheaf $\BQ_\ell$, even though the special fiber is singular. In particular, as shown previously by Kr\"amer \cite[Thm.~5.4]{Kr},  the \emph{semi-simple Frobenius trace} function is constant equal to $1$ on the special fiber. 

For  $n=2m\geq 4$ even, the associated local Dynkin diagram is of type $B$-$C_{m}$   and the parahoric subgroup $K$ corresponds to the \emph{non-special} vertex $m$ in the local Dynkin diagram if $m\geq 3$, or $C$-$B_2$ and the \emph{non-special} vertex $1$, if $m=2$. By \S \ref{7.5.2}, resp. Subsection \ref{ss:Cnfirst}, the associated Poincar\'e polynomial is not symmetric and hence the special fiber is not rationally smooth, cf. Lemma \ref{rpssPoinc}. In this case, Kr\"amer \cite[Thm. 5.4]{Kr} has shown that the {semi-simple Frobenius trace function} is not constant equal to $1$ on the special fiber, but rather has a jump at the singular point.  

\smallskip

\noindent (iii) Let us consider $G=\Res_{F'/F}(\GL_n)$, where $F'/F$ is a totally ramified (possibly wildly) extension. This  is excluded from the above considerations (both for the classification of good reduction and of semi-stable reduction); still, it is interesting to compare this case with the above lists. Let $K=\GL_n(O_{F'})$ and
\[
 \{\mu\}=\big( (1^{(r_\varphi)}, 0^{(n-r_\varphi)})_{\varphi\colon F'\to \ov F}\big).
 \]    The singularities of the special fiber are analyzed in \cite{PR1} by relating the special fiber $\Mloc\otimes_{O_E}\kappa_E$  with a Schubert variety in the affine Grassmannian for $\GL_n$. More precisely, the special fiber is irreducible and reduced and there is an isomorphism of closed reduced subschemes 
$$
\Mloc\otimes_{O_E}\kappa_E\simeq \ov\CO_{{\bf t}} .
$$
Here $ \ov\CO_{{\bf t}}$ is the Schubert variety associated to the dominant coweight $ {\bf t}={\bf r}^\vee$ dual to ${\bf r}= (r_\varphi)_\varphi$, i.e.,
$$
t_1=\#\{\varphi\mid r_\varphi\geq 1 \},\quad  t_2=\#\{\varphi\mid r_\varphi\geq 2 \}, \ \ldots . 
$$ 
By \cite{HR} (cf. also \cite{MOV} for the analogue over a ground field of characteristic zero, and \cite{EM}, \cite{Zhufixed} for the analogue over $\BC$), $\ov\CO_{\bf t}$ is smooth if and only if ${\bf t}$ is minuscule, i.e., $t_1-t_n\leq 1$. This holds if and only if there is at most one $\varphi$ such that  $r_\varphi\notin \{0, n\}$. We conclude that $\Mloc$ is smooth only in the trivial case when at most one $r_\varphi$ is not $0$ or $n$.

\smallskip

\noindent (iv) Very similarly to the case above, we can also consider $G={\rm Res}_{F'/F}(H)$, where   $F'$ is a totally ramified (possibly wildly) extension, and $H$ is unramified over $F'$
(i.e., quasi-split and split over an unramified extension of $F'$). Then $H$ extends to a reductive group scheme over $O_{F'}$ which is unique up to isomorphism and which we will also denote by $H$. Take 
 $K=H(O_{F'})$, let $\{\mu\}=\big(( \mu_\varphi)_{\varphi\colon F'\to \ov F}\big)$, and consider 
 the LM triple $(G,\{\mu\}, K)$.
 
 When $F'/F$ is wildly ramified, the theory of \cite{PZ} does not apply to $(G,\{\mu\}, K)$. However, Levin \cite{Levin} has extended the construction of \cite{PZ} to such groups obtained by restriction of scalars and has defined local models $\Mloc$ for such triples. Assume that $p$ does not divide $|\pi_1(H_{\rm der})|$. Then, by \cite[Thm. 2.3.5]{Levin}, the
geometric special fiber $\Mloc\otimes_{O_E}k$ is reduced and can be identified with a Schubert variety ${\rm Gr}_{H, \lambda}$ of the affine Grassmannian  for $H$ over $k$. Here, $\lambda$ is given by the sum $\sum_{\varphi} \mu_{\varphi}$ of the minuscule coweights $\mu_{\varphi}$. By \cite{HR}, (or \cite{MOV} for the analogue over a ground field of characteristic zero), ${\rm Gr}_{H, \lambda}$ is smooth if and only if $\lambda$ is minuscule. Therefore, $\Mloc$ is smooth over $O_E$ if and only if at most one of the coweights $\mu_{\varphi, \rm ad}$ is not trivial.
 
\end{remarks}

The proof of Theorem \ref{mainssred} proceeds in four steps. In a first step, we establish a list of all cases which satisfy the \emph{component count property} condition  (CCP), cf. Section \ref{s:CCP}. This condition is implied by \emph{strictly pseudo semi-stable reduction}. This last condition, concerns only the special fiber and entails in particular that all irreducible components are smooth, with  their intersections smooth of the correct dimension, cf. Section \ref{s:pssredCCP}. By weakening the condition of smoothness to \emph{rational smoothness}, we arrive at the notion of  \emph{rationally strictly pseudo semi-stable reduction}, cf. Section \ref{s:pssredCCP}. The second step consists in eliminating from the CCP-list all cases which do not have rationally strictly pseudo semi-stable reduction, cf. Section \ref{s:ratpsss}. In a third step, we eliminate all cases which have  rationally strictly pseudo semi-stable reduction but not strictly pseudo semi-stable reduction, cf. Section \ref{s:psssred}. In the final step we prove that in all the remaining cases strictly pseudo semi-stable reduction implies semi-stable reduction. This last step is a lengthy case-by-case analysis through linear algebra and occupies  Section \ref{s:converse1}. 

\section{Strictly pseudo semi-stable reduction and the CCP condition}\label{s:pssredCCP}

\begin{definition}\label{def:pseudorat}
a) A scheme over the spectrum of a discrete valuation ring is said to have {\it strictly pseudo semi-stable reduction} (abbreviated to \emph{SPSS reduction}) if all irreducible components of the reduced geometric special fiber are smooth and of the same dimension, and the reduced intersection of any $i$ irreducible components is smooth and irreducible and of codimension $i-1$.

b) A scheme over the spectrum of a discrete valuation ring is said to have {\it rationally strictly pseudo semi-stable reduction} if all irreducible components of the reduced geometric special fiber are rationally smooth and of the same dimension, and the reduced intersection of any $i$ irreducible components is rationally smooth and irreducible and of codimension $i-1$.
 \end{definition}
 Here we recall that an irreducible  variety $Y$ of dimension $d$ over an algebraically closed field $k$ is said to be \emph{rationally smooth}\footnote{A priori, this definition depends on $\ell$. However, as we will see from the proof, the schemes we consider in this paper will be either rationally smooth for all $\ell$ or non rationally smooth for any $\ell$. We will simply use the terminology ``rationally smooth'' instead of  ``$\ell$-rationally smooth''.}
 if for all closed points $y$ of $Y$ 
 the relative $\ell$-adic cohomology (for some $\ell\neq {\rm char}\, k$) satisfies
 $$
 \dim_{\BQ_\ell} H^i(Y, Y\setminus \{y\}, \BQ_\ell)=\begin{cases}
 0& i\neq 2d\\
 1&i=2d .
 \end{cases}
 $$
When $k=\BC$, this definition (for singular cohomology with coefficients in $\BQ$) appears in \cite{KL}, cf. also \cite{B, Ku, BL}. 

We note that both notions, that of SPSS reduction and that  of rationally SPSS reduction, only depend on the geometric special fiber. For instance, they do not imply that the scheme  is regular. 

\begin{lemma}\label{rpssPoinc}
	Let $Y$ be a proper irreducible variety  of dimension $d$ over an algebraically closed field. If $Y$ is rationally smooth, then the Poincar\'e polynomial 
	$$
	P(t)=\sum\nolimits_{i=0}^{2d} a_i t^i,
	$$ 
	of cohomology with $\BQ_\ell$-coefficients ($\ell\neq {\rm char}\, k$) is symmetric, i.e., $a_i=a_{2d-i} $, for all $i$. \qed
\end{lemma}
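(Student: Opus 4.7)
The plan is to deduce the symmetry from Poincar\'e--Verdier duality, using rational smoothness to identify the dualizing complex $\omega_Y$ with an appropriate shift and Tate twist of the constant sheaf $\BQ_{\ell, Y}$. Once such an identification is in place, the symmetry of the Poincar\'e polynomial is the standard consequence of Verdier duality for the proper morphism $\pi\colon Y \to \Spec k$.

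First, I would rephrase the rational smoothness hypothesis sheaf-theoretically. For a closed point $y\in Y$ with inclusion $i_y\colon \{y\}\hookrightarrow Y$, the standard triangle for the pair $(Y,Y\setminus\{y\})$ gives an identification $H^i(Y, Y\setminus\{y\}, \BQ_\ell) = \CH^i(i_y^{\,!} \BQ_{\ell, Y})$. Thus the defining condition of rational smoothness at $y$ reads exactly $i_y^{\,!} \BQ_{\ell, Y} \simeq \BQ_\ell[-2d](-d)$. Applying the compatibility $\mathbb{D}\circ i_y^{\,!} = i_y^{\,*}\circ \mathbb{D}$ of Verdier duality with the six functors (and noting $\mathbb{D}(\BQ_{\ell, Y}) = \omega_Y$) translates this into the equivalent pointwise statement $i_y^{\,*}\omega_Y \simeq \BQ_\ell[2d](d)$ for every closed point $y\in Y$.

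Second, I would promote this pointwise identification to a global isomorphism $\omega_Y \simeq \BQ_{\ell, Y}[2d](d)$ in $D^b_c(Y, \BQ_\ell)$. Since $Y$ is irreducible of dimension $d$, the smooth locus $U\subset Y$ is open and dense, and on $U$ one has a canonical isomorphism $\omega_U \simeq \BQ_{\ell, U}[2d](d)$ coming from actual smoothness; this yields a canonical fundamental class morphism $\BQ_{\ell, Y}[2d](d) \to \omega_Y$. Its cone is a constructible complex all of whose closed-point stalks vanish by the previous step, hence the cone is zero and the fundamental class morphism is an isomorphism.

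Third, with $\omega_Y \simeq \BQ_{\ell, Y}[2d](d)$ in hand, applying Verdier duality to the proper morphism $\pi\colon Y \to \Spec k$ yields
\[
R\Gamma(Y, \BQ_\ell)^\vee \;\simeq\; R\Gamma(Y, \omega_Y) \;\simeq\; R\Gamma(Y, \BQ_\ell)[2d](d),
\]
and taking cohomology gives $H^i(Y, \BQ_\ell)^\vee \simeq H^{2d-i}(Y, \BQ_\ell)(d)$, whence $a_i = a_{2d-i}$ for all $i$. The main obstacle is the second step, lifting the pointwise costalk condition to a sheaf-theoretic identification; however, this is essentially the standard fact that on a rationally smooth variety the intersection cohomology complex $IC_Y$ agrees with $\BQ_{\ell, Y}[d]$ up to Tate twist, and it follows cleanly from the density of the smooth locus together with constructibility of $\omega_Y$.
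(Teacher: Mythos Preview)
Your proof is correct and follows essentially the same approach the paper indicates in the Remark immediately following the lemma: the paper cites \cite[Prop.~2.1]{HR} for the fact that rational smoothness forces $IC_Y\simeq\BQ_\ell[d]$, whence Poincar\'e duality; your argument unwinds this by showing directly that $\omega_Y\simeq\BQ_{\ell,Y}[2d](d)$, which is the Verdier-dual reformulation of the same statement. One small point worth tightening in your Step~2: knowing that $i_y^*\omega_Y$ and $i_y^*\BQ_{\ell,Y}[2d](d)$ are abstractly isomorphic at every $y$ does not by itself kill the cone---you need the fundamental class map to be an isomorphism on each stalk. This follows because (i) the pointwise hypothesis forces $\omega_Y$ to be a shifted sheaf $\CL[2d]$ with all stalks one-dimensional, (ii) the map $\BQ_{\ell,Y}\to\CL$ (up to twist) has zero kernel since $Y$ is irreducible and the map is an isomorphism over the dense smooth locus $U$, and (iii) an injective map between sheaves with one-dimensional stalks is an isomorphism. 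You allude to this at the end, and it is indeed standard, but it is the one place where the argument is not quite automatic.
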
 

\begin{remark} By \cite[Prop.~2.1]{HR}, if the irreducible variety $Y$ is rationally smooth, then the intersection complex $IC_Y$ is isomorphic to $\BQ_{\ell}[d]$.  Thus the cohomology groups with $\BQ_\ell$-coefficients satisfy Poincar\'e duality. Also, in the applications in this paper, the varieties involved are unions of affine spaces and thus the polynomials $P(t)$ can be computed by counting rational points on the varieties.
\end{remark}

\begin{remark}\label{carrell}
It is proved in \cite{Car} that for Schubert varieties in the finite and affine flag varieties for split groups, the converse is true. Namely,  in this context, a Schubert variety  is rationally smooth if and only if its Poincar\'e polynomial formed with $\BQ_\ell$-coefficients is symmetric. Something analogous holds in the Kac-Moody context, cf. \cite[12.2 E(2)]{Ku2}.
\end{remark}

\begin{Notation}\label{clarify} In the rest of this section and also in Sections \ref{s:CCP}, \ref{s:ratpsss} and \ref{s:psssred} we consider the enhanced Tits datum $(\tilde\Delta, \{\lambda\}, \tilde K)$ obtained, as in \S \ref{Titsdata}, from a local model triple 
$(G, \{\mu\}, K)$ with $G$ adjoint and absolutely simple. 

On the other hand, the enhanced Tits datum $(\tilde\Delta, \{\lambda\}, \tilde K)$ also corresponds to an adjoint, absolutely simple   group $G^\flat$ over $k((u))$, a $G^\flat(k((u))^{\rm sep})$-conjugacy class of a minuscule cocharacter,
and a conjugacy class of a parahoric subgroup $K^\flat=\CG^\flat(k[[u]])$. In terms of the identifications of \S \ref{ss:WeylAdm}, we have $G^\flat=\breve G'$, $K^\flat=\breve K'$, $\CG^\flat=\underline\CG\otimes_{O[u]}k[[u]]$, and the class of the cocharacter is the one that  corresponds to $\{\mu\}$. 

By Theorem \ref{2.9}, and the above discussion,  the geometric special fiber of $\Mloc$ can be identified (up to a radicial morphism) with the union,  over the set ${\Adm}_{\tilde K}(\{\lambda\})$, of Schubert varieties in the partial flag variety $LG^\flat/L^+\CG^\flat$. 
In what follows, to ease the notation, we will denote this partial flag variety by $G^\flat/K^\flat$ and its Schubert varieties as $\overline{K^\flat w K^\flat/K^\flat}$.

Below, and also in Sections  \ref{s:CCP}, \ref{s:ratpsss} and \ref{s:psssred}, we will employ various combinatorial arguments in the extended Weyl group $\tilde W$ which
only involve $(\tilde\Delta, \{\lambda\}, \tilde K)$; for example, which use cosets for the subgroup  $W_{\tilde K}$. For these arguments, we will often omit the tilde from the notation. For example, we will simply write $W_K$ instead of $W_{\tilde K}$; in any case, this subgroup  ultimately only depends on the conjugacy class of the parahoric subgroup $\breve K\subset G(\br F)$. 
 
\end{Notation}

Let us first make Lemma \ref{rpssPoinc} explicit in the case of interest for us, namely for  affine Schubert varieties in the partial flag variety $  G^\fl/  K^\fl$. Note that for any $w \in \tW$, we have the projection map $$\overline{I^\fl v I^\fl/I^\fl} \to \overline{ K^\fl w K^\fl/K^\fl},$$ where $v=\max(W_K w W_K)$. This map is a locally trivial fiber bundle (for the \'etale toplogy) with fibers isomorphic to the smooth projective variety $K^\fl/I^\fl$. Hence $\overline{K^\fl w K^\fl/K^\fl}$ is rationally smooth if and only if $\overline{I^\fl v I^\fl/I^\fl}$ is rationally smooth. Thus we may use the Poincar\'e polynomial of $\overline{  I^\fl v  I^\fl/ I^\fl}$ to determine if $\overline{  K^\fl w  K^\fl/ K^\fl}$ is rationally smooth.

We denote by $\tW^K$ the set of elements $w \in \tW$ that are of minimal length in their coset $w W_K$. For any translation element $\l$ in $\tW$, we set \begin{equation}
W_{\le \l, K}=\{v \in \tW^K\mid  v \le \max\{W_K t^{\l} W_K\}\}.
\end{equation}
The set $W_{\le \l, K}$ contains a unique maximal element, which we denote by $w_{\l, K}$. For any $w \in W_{\le \l, K}$, we define the \emph{colength} of $w$ to be $\ell(w_{\l, K})-\ell(w)$, where   $\ell(w)$ denotes the length of $w$. 

We have $\overline{ K^\flat \l   K^\flat/ K^\flat}=\sqcup_{v \in W_{\le \l, K}}  I^\flat v K^\flat/ K^\flat$.   The associated Poincar\'e polynomial $P(t)$ for $\overline{ K^\flat \l   K^\flat/ K^\flat}$ is obtained from counting the rational points on  $\overline{ K^\flat \l   K^\flat/ K^\flat}$. Set $q=t^2$.  Then $P(t)$ equals to
\begin{equation}\label{poincW}
P_{\le \l, K}(q)=\sum\nolimits_{v \in W_{\le \l, K}} q^{\ell(v)}.
\end{equation}

On the other hand, set $v_1=\max(W_K t^\l W_K)$. Then $\overline{ I^\flat v_1  I^\flat/I^\flat}=\sqcup_{v \le W_{\le \l, K}} \sqcup_{x \in W_K}  I^\flat v x  I^\flat/ I^\flat$. The associated Poincar\'e polynomial is 
$$
\sum_{v \in W_{\le \l, K}, x \in W_K} q^{\ell(v x)}=P_{\le \l, K} (q) \sum_{x \in W_K} q^{\ell(x)}.
$$
 As $\sum_{x \in W_K} q^{\ell(x)}$ is symmetric, we deduce that $P_{\le \l, K} (q) \sum_{x \in W_K} q^{\ell(x)}$ is symmetric if and only if $P_{\le \l, K} (q)$ is symmetric. By  Lemma \ref{rpssPoinc}, we have 

\begin{proposition}\label{rs-criterion}
	If the Schubert variety $\overline{ K^\fl \l   K^\fl/  K^\fl}$ is rationally smooth, then
	$P_{\le \l, K}(q)$ is symmetric. 
\end{proposition}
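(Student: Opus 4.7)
The proof plan is to transfer rational smoothness from the partial flag Schubert variety $\overline{K^\fl \lambda K^\fl/K^\fl}$ up to the Iwahori Schubert variety $\overline{I^\fl v_1 I^\fl/I^\fl}$, apply Lemma \ref{rpssPoinc} there, and then extract the symmetry statement for $P_{\le \lambda, K}(q)$ by dividing out a symmetric factor.

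First I would exploit the locally trivial fiber bundle
\[
\pi\colon \overline{I^\fl v_1 I^\fl/I^\fl} \longrightarrow \overline{K^\fl \lambda K^\fl/K^\fl},
\]
already recalled in the paragraph preceding the proposition, whose fiber is the smooth projective variety $K^\fl/I^\fl$. Since $\pi$ is smooth and surjective with smooth (hence rationally smooth) connected fibers, rational smoothness of the base lifts to rational smoothness of the total space: concretely, on strict henselizations at a closed point $y$ of the total space, the local cohomology sheaves $H^i(Y, Y \setminus \{y\}, \BQ_\ell)$ compute via the smooth $\pi$ in terms of local cohomology at $\pi(y)$ on the base, shifted by the dimension of the fiber. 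Thus from the hypothesis that $\overline{K^\fl \lambda K^\fl/K^\fl}$ is rationally smooth, I obtain that $\overline{I^\fl v_1 I^\fl/I^\fl}$ is rationally smooth as well.

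Next, applying Lemma \ref{rpssPoinc} to the proper irreducible rationally smooth variety $\overline{I^\fl v_1 I^\fl/I^\fl}$, its Poincar\'e polynomial is symmetric. Counting $\mathbb{F}_q$-points via the Bruhat decomposition, this Poincar\'e polynomial (with $q = t^2$) is computed in the excerpt as
\[
P_{\le \lambda, K}(q) \cdot \sum_{x \in W_K} q^{\ell(x)}.
\]
The factor $\sum_{x \in W_K} q^{\ell(x)}$ is the Poincar\'e polynomial of the smooth projective variety $K^\fl/I^\fl$, which is symmetric by Poincar\'e duality (equivalently, it is the Poincar\'e polynomial of the finite Weyl group $W_K$ and is well known to be palindromic). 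Therefore $P_{\le \lambda, K}(q) \cdot \sum_{x \in W_K} q^{\ell(x)}$ is symmetric of known degree, and since the factor being divided out is itself symmetric and has nonzero leading and constant terms, I conclude that $P_{\le \lambda, K}(q)$ itself is symmetric.

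The only delicate step is the transfer of rational smoothness through the smooth fibration $\pi$, which rests on the compatibility of the intersection complex with smooth pullback (as noted in the remark following Lemma \ref{rpssPoinc}, rational smoothness is equivalent to $IC_Y \simeq \BQ_\ell[\dim Y]$, and this property is preserved and reflected under smooth surjective maps with connected fibers). Everything else is a formal consequence of Lemma \ref{rpssPoinc} and the product formula for the Poincar\'e polynomial.
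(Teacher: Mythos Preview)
Your proof is correct and follows essentially the same approach as the paper: transfer rational smoothness along the smooth fibration $\overline{I^\fl v_1 I^\fl/I^\fl}\to\overline{K^\fl\lambda K^\fl/K^\fl}$, apply Lemma~\ref{rpssPoinc} at the Iwahori level, and factor out the symmetric polynomial $\sum_{x\in W_K}q^{\ell(x)}$. The paper also records that $P_{\le\lambda,K}(q)$ is already the Poincar\'e polynomial of $\overline{K^\fl\lambda K^\fl/K^\fl}$ itself, so one could alternatively invoke Lemma~\ref{rpssPoinc} directly on the partial flag Schubert variety; but your route via the Iwahori level is exactly what the paper's final paragraph before the proposition sets up.
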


\begin{definition}
The LM triple $(G, \{\mu\}, K)$ has the \emph{component count property }  (CCP condition) if the following inequality is satisfied,
$$
\#\{\text{extreme elements of $\Adm_{\tilde K}(\{\lambda\})$}\} \le \# \tilde K.
$$ 
\end{definition}
\begin{proposition}If the local model $
\Mloc$ has 
rationally SPSS reduction over $O_E$, then the CCP condition holds for the triple $(G, \{\mu\}, K)$.  
\end{proposition}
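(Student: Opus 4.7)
The plan is to transmute the rationally SPSS hypothesis into a combinatorial constraint on the Bruhat poset $W_{\tilde K}\backslash\tilde W/W_{\tilde K}$, using the identification of the geometric special fiber with the admissible locus.

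By Theorem~\ref{2.9}, the reduced geometric special fiber is $\mathfrak{A}_K(G,\{\mu\})=\bigcup_{w\in \Adm_{\tilde K}(\{\lambda\})}\overline{K^\fl w K^\fl/K^\fl}$ inside the affine partial flag variety $G^\fl/K^\fl$. Its irreducible components correspond bijectively to the extreme elements $w_1,\ldots,w_r$ of $\Adm_{\tilde K}(\{\lambda\})$, giving components $D_i:=\overline{K^\fl w_i K^\fl/K^\fl}$, each of common dimension $d:=\dim X_{\{\mu\}}$. Assuming rationally SPSS reduction, the intersection $Y:=\bigcap_{i=1}^r D_i$ is irreducible, rationally smooth, of codimension $r-1$ in each $D_i$. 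Since $Y$ is $(\CG\otimes_{O_F}k)$-stable and irreducible, and every irreducible closed $(\CG\otimes_{O_F}k)$-stable subvariety of $G^\fl/K^\fl$ is the closure of a single Iwahori orbit, we identify $Y=\overline{K^\fl v_0 K^\fl/K^\fl}$, where $v_0$ is the unique maximal element of $\{v\in W_{\tilde K}\backslash\tilde W/W_{\tilde K}\colon v\leq w_i\text{ for all }i\}$ in the Bruhat order. The Bruhat interval $[v_0,w_i]$ then has rank $r-1$ for every $i$.

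Next, choose a $T$-fixed point $p\in Y$ lying in the open $(\CG\otimes_{O_F}k)$-orbit corresponding to $v_0$. By rational smoothness of $D_i$ together with Kumar's criterion at $T$-fixed points, each $D_i$ is smooth at $p$. The $r-1$ pairwise intersections $D_i\cap D_j$ (for $j\neq i$) then give $r-1$ distinct $(\CG\otimes_{O_F}k)$-stable codimension-one sub-Schubert varieties of $D_i$ that contain $p$. The bound $r\leq \#\tilde K$ will follow by showing that the number of codimension-one $(\CG\otimes_{O_F}k)$-stable Schubert subvarieties of $D_i$ passing through $p$ is at most $\#\tilde K-1$: each such subvariety corresponds to a simple affine reflection $s\in\tilde K$ via the local structure of the minimal parahorics $K^\fl\subset P_s$, and one of the $\#\tilde K$ simple directions is already ``absorbed'' by the relative position of the ambient component $D_i$ above $Y$.

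The main obstacle is precisely this final combinatorial bound: rigorously identifying codimension-one $(\CG\otimes_{O_F}k)$-stable Schubert subvarieties of $D_i$ through $p$ with a subset of simple affine reflections in $\tilde K$, and justifying the ``$-1$'' arising from the ambient component $D_i$ itself. This step should follow from standard techniques in the theory of Schubert cells and minimal parahorics, adapted to the affine partial flag variety $G^\fl/K^\fl$ and the minuscule hypothesis on $\{\mu\}$, perhaps most cleanly via a careful analysis of the $T$-weights of the tangent space of $G^\fl/K^\fl$ at $p$ together with the Bruhat structure on $W_{\tilde K}\backslash\tilde W/W_{\tilde K}$ in the interval $[v_0,w_i]$.
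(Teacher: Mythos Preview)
Your proposal has a genuine gap, and the approach, as written, does not go through.

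The central error is the sentence ``By rational smoothness of $D_i$ together with Kumar's criterion at $T$-fixed points, each $D_i$ is smooth at $p$.'' This is false: rational smoothness does not imply smoothness, and Kumar's criterion does not convert one into the other. Indeed, the entire point of Sections~\ref{s:psssred} and \ref{s:oneimpl} of the paper is to separate rational smoothness from smoothness in several cases. Once this step fails, your tangent-space bound collapses: at a merely rationally smooth point $p$ the Zariski tangent space of $D_i$ can have dimension strictly larger than $d$, so the $T$-weight count gives no usable bound on the number of codimension-one Schubert subvarieties through $p$. Your final paragraph in effect acknowledges that the bound ``$\le \#\tilde K-1$'' remains unproved.

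The paper's argument avoids tangent spaces entirely and exploits rational smoothness only through Lemma~\ref{rpssPoinc}: the Poincar\'e polynomial $P_{\le\lambda',K}(q)=\sum_{v\in W_{\le\lambda',K}}q^{\ell(v)}$ is symmetric. Length-one elements of $W_{\le\lambda',K}\subset\tilde W^K$ have the form $\tau s$ with $s\in\tilde K$, so there are at most $\#\tilde K$ of them; symmetry then bounds the number of \emph{colength-one} elements by $\#\tilde K$ as well. Each pairwise intersection $D_1\cap D_j$ for $j\ge 2$ is a single Schubert variety $\overline{K^\fl w_j K^\fl/K^\fl}$ with $w_j$ a colength-one element of $W_{\le\lambda',K}$, and these $w_j$ are pairwise distinct because triple intersections have codimension $2$. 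The crucial extra step is to produce \emph{one more} colength-one element: choosing $\lambda'$ so that $\langle\lambda',\alpha\rangle\ne 0$ for some root $\alpha$ of $K$, one finds $s\notin\tilde K$ with $s\,w_{\lambda',K}<w_{\lambda',K}$; then $s\,w_{\lambda',K}\in W_{\le\lambda',K}$ is colength one but lies in the same $W_K$-double coset as $w_{\lambda',K}$, hence is distinct from all the $w_j$. This yields $l$ distinct colength-one elements, forcing $l\le\#\tilde K$. The Poincar\'e-polynomial symmetry is the substitute for the smoothness you were trying to invoke.
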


\begin{proof} Let $(\tilde\Delta, \{\lambda\}, \tilde K)$ be the associated enhanced Tits datum. 
As $\l$ is not central, there exists $\l' \in W_0 \cdot \l$ such that $\<\l', \a\> \neq 0$ for some root $\a$ of $K$. 

By Theorem \ref{2.9}, $\overline{  K^\fl \l'  K^\fl/ K^\fl}$ is an irreducible component of the geometric special fiber of $\Mloc$. Thus if $\Mloc$ has rationally SPSS reduction, then $\overline{  K^\fl \l'  K^\fl/ K^\fl}$ is rationally smooth. Therefore by Lemma \ref{rpssPoinc}, the Poincar\'e polynomial of $\overline{  K^\fl \l'  K^\fl/ K^\fl}$ is symmetric. But this coincides with the Poincar\'e polynomial of $W_{\le \l', K}$, cf. \eqref{poincW}, which is therefore symmetric.

Any length one element in $W_{\le \l', K} \subset \tW^K$ is of the form $\tau s$ for some $s \in \tilde K$, where $\tau$ is the unique length-zero element in $\tilde W$ with $t^{\l'} \in W_a \tau$. Thus there are at most $\# \tilde K$ length one elements in $W_{\le \l', K}$.  Hence there are also at most $\# \tilde K$ colength one elements of $W_{\le \l', K}$. 

Now  list the irreducible components of the geometric special fiber as $X_1= \overline{  K^\fl \l'  K^\fl/ K^\fl} $, $X_2, \ldots, X_l$. By the definition of rationally SPSS reduction, for any $i$ with $2 \le i \le l$, the intersection $X_1 \cap X_i$ is of the form $\overline{  K^\fl w_i   K^\fl/ K^\fl}$, where $w_i \in W_{\le \l', K}$ with $\dim( K^\fl w_i  K^\fl/ K^\fl)=\dim( K^\fl \l'   K^\fl/ K^\fl)-1$. In particular, $w_i$ is a colength one element in $W_{\le \l', K}$. As the intersection of any three irreducible components of the geometric special fiber is of codimension $2$, we have $w_i \neq w_j$ for $i \neq j$. In particular, $\{w_2, w_3, \ldots, w_l\} \subset W_{\le \l', K}$ is a subset of colength one elements. 

Next we construct another colength one element of $W_{\le \l', K}$. Recall that $w_{\l', K}$ is the unique maximal element of $W_{\le \l', K}$. Let $s \notin \tilde K$. Then $s w_{\l', K} \in W_K t^{\l'} W_K$. Therefore, we have either $s w_{\l', K}<w_{\l', K}$, or $s w_{\l', K}>w_{\l', K}$ and $s w_{\l', K}=w_{\l', K} s'$ for some $s' \notin \tilde K$. 

If $s w_{\l', K}>w_{\l', K}$ for all $s \notin K$, then  $W_K w_{\l', K}=w_{\l', K} W_K$. Since $w_{\l', K} \in W_K t^{\l'} W_K$, we get $W_K t^{\l'}=t^{\l'} W_K$. This contradicts  the assumption that $\<\l', \a\> \neq 0$ for some $\a \in \Phi_K$. 

Therefore there exists $s \notin K$ such that $s w_{\l', K}<w_{\l', K}$. Since $w_{\l', K} \in \tW^K$, we have $s w_{\l', K} \in \tW^K$. Hence $s w_{\l', K} \in W_{\le \l', K}$ is a colength one element. As $ K^\fl (s w_{\l', K})  K^\fl= K^\fl w_{\l', K}   K^\fl= K^\fl \l'  K^\fl$, we have $s w_{\l', K} \neq w_i$ for any $i$. 

We now have found at least $l$ distinct colength one elements in $W_{\le \l', K}$, namely $s w_{\l', K}$ and  $w_2, \ldots, w_l$. Thus we have $l \le \# \tilde K$. The proposition is proved. 
\end{proof}

\section{Analysis of the CCP condition}\label{s:CCP}

\subsection{Statement of the result} The purpose of this section is to determine for which enhanced Tits data the CCP condition is satisfied. Note that the CCP condition only depends on the associated enhanced Coxeter datum.

\begin{theorem}\label{CCP}
	Assume that $G$ is adjoint and absolutely simple. The enhanced Coxeter data satisfying the CCP condition are the following (up to isomorphism):
	
	\begin{enumerate}
		\item Irreducible cases:
		\begin{enumerate}
			\item The parahoric subgroup corresponding to $\tilde K$ is maximal special;
			
			\item The triple $(\tilde B_n, \o^\vee_r, \{n\})$ with $n \ge 3$ and $1 \le r \le n-1$; 
			
			\item The triple $(\tilde C_n, l \o^\vee_n, \{i\})$ with $n \ge 2$, $l=1$ or $2$ and $1 \le i \le n-1$;
			
			\item The triple $(\tilde F_4, \o^\vee_1, \{4\})$.
			
			\item The triple $(\tilde G_2, \o^\vee_2, \{1\})$. 
		\end{enumerate}
		
		\item Reducible cases:
		\begin{enumerate}
		\item The triple $(\tilde A_1, 2 \o^\vee_1, \{0, 1\})$;
		
		\item The triple $(\tilde A_{n-1}, \o^\vee_1, \tilde K)$ with arbitrary $\tilde K$ of cardinality $\ge 2$;
		
		\item The triple $(\tilde A_{n-1}, \o^\vee_i, \{0, 1\})$ with $n \ge 4$ and $2 \le i \le n-2$;
		
		\item The triple $(\tilde B_n, \o^\vee_1, \{0, n\})$ with $n \ge 3$;
		
		\item The triple $(\tilde B_n, \o^\vee_n, \{0, 1\})$ with $n \ge 3$;
		
		\item The triple $(\tilde C_n, \o^\vee_1, \{0, n\})$ with $n \ge 2$;
		
		\item The triple $(\tilde C_n, l \o^\vee_n, \{i, i+1\})$ with $n \ge 2$, $l=1$ or $2$ and $0 \le i \le \frac{n}{2}-1$;
		
		\item The triple $(\tilde D_n, \o^\vee_1, \{0, n\})$ with $n \ge 4$;
		
		\item The triple $(\tilde D_n, \o^\vee_n, \{0, 1\})$ with $n \ge 5$.
		\end{enumerate}
	\end{enumerate}

Here ``irreducible'' and ``reducible'' refer to the components in the special fiber.
\end{theorem}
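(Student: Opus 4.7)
The plan is to translate the CCP condition into a combinatorial inequality about orbits, then enumerate case by case through the affine Coxeter data.

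First, I would identify the extreme elements of $\Adm_{\tilde K}(\{\lambda\})$ with the $W_{\tilde K}$-orbits on the finite Weyl orbit $W_0\cdot\lambda$. The extreme elements are the Bruhat-maximal double cosets in $\Adm_{\tilde K}(\{\lambda\})\subset W_{\tilde K}\backslash\tilde W/W_{\tilde K}$, and by general theory of the admissible set these are exactly the double cosets containing a translation $t^{\lambda'}$, $\lambda'\in W_0\cdot\lambda$. Using the identity $u_1 t^{\lambda'} u_2 = u_1 u_2\,t^{u_2^{-1}\lambda'}$ in $\tilde W$ and the fact that $X_*$ is normal in $\tilde W$, two such translations lie in the same double coset if and only if $\lambda''\in W_{\tilde K}\cdot\lambda'$. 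Hence the CCP condition becomes
\[
\bigl|W_{\tilde K}\backslash(W_0\cdot\lambda)\bigr|\le|\tilde K|.
\]

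Next, I would enumerate over the local Dynkin diagrams in Subsection \ref{Titsdata}, using explicit descriptions of $W_0\cdot\lambda$: signed subsets of the standard basis for classical types (with orbit sizes $\binom{n}{i}$, $2n$, $2^n$, $2^{n-1}$, etc., depending on type and coweight); and for exceptional types the minuscule orbits of sizes $27$, $56$, $24$, $6$ for $E_6$, $E_7$, $F_4$, $G_2$, respectively. For each candidate $\tilde K\subset\tilde S$ the group $W_{\tilde K}$ is a standard parabolic of $\tilde W$ whose projection to $W_0$ has combinatorially accessible orbit structure on $W_0\cdot\lambda$. Two general reductions shrink the search space: if $\tilde K=\{v\}$ for a special vertex $v$ then $W_{\tilde K}=W_0$ acts transitively, giving one orbit and immediately establishing case~1(a); and in general the elementary lower bound $|W_{\tilde K}\backslash(W_0\cdot\lambda)|\ge|W_0\cdot\lambda|/|W_{\tilde K}|$ rules out any configuration for which this ratio already exceeds $|\tilde K|$.

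The third step is to verify the remaining candidates directly. For classical types this reduces to counting orbits of parabolic subgroups of signed permutation groups on sign vectors, an elementary combinatorial task. For exceptional types it uses the explicit decomposition of the minuscule orbit under each maximal parabolic, which is tabulated in standard references. The configurations surviving this enumeration coincide with the list in the theorem, and conversely each listed configuration is confirmed by a finite orbit count; the reducible cases 2(a)--2(i) arise precisely when $|\tilde K|=2$ and $W_{\tilde K}$ splits $W_0\cdot\lambda$ into exactly two orbits, typically forced by a symmetry of the local Dynkin diagram.

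The main obstacle is the combinatorial volume and the absence of a single conceptual shortcut: the theorem is an empirical list, so each enhanced Coxeter datum must be individually verified. The orbit lower bound trivializes most candidates, but the borderline reducible cases---most notably 2(g) in type $\tilde C_n$ with $\tilde K=\{i,i+1\}$ where neither vertex is special, and the small-rank configurations in 2(a), 2(h), 2(i)---require delicate direct computations to confirm that the orbit count exactly matches $|\tilde K|$. A further subtlety is that for $l=2$ in cases 1(c) and 2(g), one must work with the $W_0$-orbit of $2\omega_n^\vee$ in the adjoint coweight lattice rather than that of $\omega_n^\vee$, since $\lambda$ is a lattice element coming from a minuscule $\mu$ of the simply connected cover.
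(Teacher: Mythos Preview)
Your approach is essentially the paper's: both identify the extreme elements of $\Adm_{\tilde K}(\{\lambda\})$ with the orbits of (the projection to $W_0$ of) $W_{\tilde K}$ on $W_0\cdot\lambda$, and then enumerate type by type. For the exceptional types the paper uses exactly your cardinality identity $\#W_0/W_\lambda=\sum_i\#W_K/W_{K,\lambda_i}$, tabulating all possible values of $\#W_K/W_{K,\lambda'}$ for each parabolic type of $W_K$ and checking whether the sum can be realized with at most $|\tilde K|$ terms. For the classical types the paper is more constructive than your crude lower bound $|W_0\cdot\lambda|/|W_{\tilde K}|$: it explicitly produces enough extreme elements via sign-flips on blocks, together with a partition-count lemma (Proposition~\ref{par-count}) governing the nonnegative orbits in type $\tilde A$, to force the inequality to fail.

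One concrete error in your summary: it is not true that the reducible cases ``arise precisely when $|\tilde K|=2$.'' In case 2(b), $(\tilde A_{n-1},\omega_1^\vee,\tilde K)$, the set $\tilde K$ may have arbitrary cardinality $m\ge 2$; here $W_{\tilde K}\cong S_{n_1}\times\cdots\times S_{n_m}$ acts on $W_0\cdot\omega_1^\vee=\{e_1,\ldots,e_n\}$ by block permutations, giving exactly $m$ orbits, so CCP holds with equality for every $m$. Your phrasing suggests the type $\tilde A$ enumeration has not actually been carried out; this is also where the paper's Proposition~\ref{par-count} does real work, since the naive cardinality bound $\binom{n}{r}/|W_{\tilde K}|$ is often far too weak to decide the borderline cases with $|\tilde K|=2$ and $2\le r\le n-2$.
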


\subsection{Classical types} We first study the classical types. Let $E=\BR^n$ with the canonical basis $(\e_1, \ldots, \e_n)$. We equip $E$ with the scalar product such that this basis is orthonormal and we identify $E$ with $E^*$. 

We regard the Weyl group $W(B_n)$ of type $B_n$ (and also $C_n$) as the group of permutations $\sigma$ on $\{\pm 1, \ldots, \pm n\}$ such that $\sigma(-i)=-\sigma(i)$ for $1 \le i \le n$. The Weyl group $W(A_{n-1})$ of type $A_{n-1}$ is the subgroup of $W(B_n)$ consisting of permutations $\sigma$ with $\sigma(i)>0$ for all $1 \le i \le n$. We have $W(A_{n-1}) \cong S_n$, the group of permutations on $\{1, 2, \ldots, n\}$. The Weyl group $W(D_n)$ of type $D_n$ is the subgroup of $W(B_n)$ consisting of permutations $\sigma$ such that  $\#\{i; 1 \le i \le n, \sigma(i)<0\}$ is an even number. 

\subsection{Type $\tilde A_{n-1}$} One may consider the extended affine Weyl group $\BZ^n \rtimes S_n$ instead. In this case, one may use the coweight $(1^r, 0^{n-r})$ instead of the fundamental coweight $\o^\vee_r$. 

It is easy to see that the triple $(\tilde A_1, 2 \o^\vee_1, \tilde K)$ with $\tilde K$ arbitrary satisfies the CCP condition. Now we assume that $\lambda=\omega^\vee_r$ for some $r$. 

By applying an automorphism, we may assume that $0 \in \tilde K$. It is easy to see that the case $\tilde K=\{0\}$ satisfies the CCP condition. Now we assume that $\# \tilde K \ge 2$. Then $\tilde K=\{0, i_1, \ldots, i_{l-1}\}$ with $l \ge 2$ and $i_1<\cdots<i_l$. Then the action of $W_K$ on $\{1, 2, \ldots, n\}$ stabilizes the subsets $\{1, \ldots, i_1\}$, $\{i_1+1, \ldots, i_2\}$, $\ldots$, $\{i_{l-1}+1, \ldots, n\}$. So for $\l=(1^r, 0^{n-r})$, the number of extreme elements equals the number of partitions $r=j_1+\cdots+j_{l}$, where $0 \le j_m \le i_m-i_{m-1}$ for any $m$. Here by convention, we set $i_0=0$ and $i_{l}=n$. Now the statement of Theorem \ref{CCP} for type $\tilde A$ follows from the following result. 

\begin{proposition}\label{par-count}
	Let $l \ge 2$ and $n, r, n_1, \ldots, n_l$, be positive integers with $n=n_1+\cdots+n_l$ and $r<n$. Set 
	$$
	X=\{(j_1, \ldots, j_l)\, \mid\,  r=j_1+\cdots+j_l,\ 0 \le j_i \le n_i \text{ for all i}\}.
	$$ Then $\# X \ge l$ and  equality holds if and only if $r=1$, or $l=2$ and $n_1$ or $n_2$ equals $1$. 
\end{proposition}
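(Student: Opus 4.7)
The plan is to establish the inequality $\#X\ge l$ by exhibiting $l$ explicit distinct solutions in $X$, and then verify the equality characterization by direct count in the asserted cases together with a converse ``extra solution'' argument.

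For the lower bound, for each $k\in\{1,\ldots,l\}$ I would define $v^{(k)}=(j_1^{(k)},\ldots,j_l^{(k)})\in X$ by a greedy cyclic fill starting at position $k$: visit the positions in the cyclic order $k, k+1, \ldots, l, 1, \ldots, k-1$, at each step setting $j_i^{(k)}=\min(n_i,s)$, where $s$ denotes the amount of $r$ not yet placed. Since $\sum n_i=n\ge r$, this produces a valid element of $X$. The key observation is that these $l$ tuples are pairwise distinct. Indeed, the support $A^{(k)}=\{i:j_i^{(k)}>0\}$ is a contiguous cyclic arc ``anchored'' at $k$: if $A^{(k)}$ is a proper subarc (equivalently $r\le n-n_{k-1}$), then $k$ is recoverable as the unique index $i$ with $j_i>0$ and $j_{i-1}=0$; if $A^{(k)}=\{1,\ldots,l\}$, then the tuple has a unique partial-fill position (where $j_i<n_i$), and this position equals $k-1\pmod l$. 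Either way $k$ is determined by $v^{(k)}$, so distinct $k$'s give distinct tuples.

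For the equality characterization, the sufficient directions are elementary. If $r=1$, then any element of $X$ has a unique coordinate equal to $1$ and the others zero, giving exactly $l$ elements (valid since $n_i\ge 1$). If $l=2$ and, say, $n_1=1$, then $X=\{(0,r),(1,r-1)\}$; both tuples lie in $X$ because $r\le n-1=n_2$, giving $\#X=2=l$. The case $n_2=1$ is symmetric. For the converse, I would assume $r\ge 2$ and exclude the case $l=2$ with $\min(n_1,n_2)=1$, and then produce an element of $X$ different from all $v^{(k)}$, contradicting $\#X=l$. Concretely, under these hypotheses one can perform a ``shift'' on a suitable $v^{(k)}$ (decrease some $j_i^{(k)}>0$ by one and increase some $j_j^{(k)}<n_j$ by one), and then verify using the characterization of the supports/exceptional positions of the $v^{(k)}$ that the resulting tuple lies outside $\{v^{(1)},\ldots,v^{(l)}\}$.

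The main obstacle is the converse direction: one must produce the extra solution in all remaining cases, and in particular treat carefully the subcases $l\ge 3$ (with arbitrary $n_i$) and $l=2$ with $n_1,n_2\ge 2$. In the latter case the verification is essentially the formula $\#X=\min(r,n_1)-\max(0,r-n_2)+1$ from a two-variable analysis, which gives $\#X\ge 3$ unless one of the boundary equalities $r=1$, $r=n-1$, $n_1=1$, or $n_2=1$ occurs; the application to the CCP classification then uses the Dynkin diagram symmetry $r\leftrightarrow n-r$ of $\tilde A_{n-1}$ to absorb the case $r=n-1$ into the case $r=1$. In the case $l\ge 3$, one uses $r<n$ and $r\ge 2$ to show that at least one of the $v^{(k)}$ admits a nontrivial shift producing a solution outside the constructed list.
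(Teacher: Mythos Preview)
Your lower-bound construction via cyclic greedy fills $v^{(1)},\ldots,v^{(l)}$ is correct and gives $\#X\ge l$ by a route genuinely different from the paper's. The paper instead first normalizes by the bijection $(j_i)\mapsto(n_i-j_i)$ (so that $r\le n/2$) and by sorting the $n_i$, then takes a single greedy solution $x_0=(n_1,\ldots,n_{t-1},r-\sum_{i<t}n_i,0,\ldots,0)$ and produces $t(l-t)$ further solutions by unit shifts from a filled slot $i_1\le t$ to an empty slot $i_2>t$, yielding $1+t(l-t)\ge l$ in one stroke. Your construction is more symmetric and avoids sorting; the paper's has the advantage that the same count $1+t(l-t)$ immediately feeds into the equality analysis (equality forces $t\in\{1,l-1\}$, then a short case check).

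There is, however, a genuine gap in your converse for $l\ge 3$. You assert that $r\ge 2$ and $r<n$ suffice to shift some $v^{(k)}$ to a solution outside $\{v^{(1)},\ldots,v^{(l)}\}$. This is false: take $l=3$, $n_1=n_2=n_3=1$, $r=2$; then $X=\{(1,1,0),(1,0,1),(0,1,1)\}$ has exactly $3=l$ elements, yet $r\ne 1$ and $l\ne 2$. More generally, $r=n-1$ always gives $\#X=l$, by the involution $(j_i)\mapsto(n_i-j_i)$ which identifies it with the $r=1$ case. You correctly spot this $r=n-1$ exception in your $l=2$ analysis and absorb it via the diagram automorphism $r\leftrightarrow n-r$ of $\tilde A_{n-1}$, but that reduction is needed \emph{globally}, not only for $l=2$. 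The paper applies it at the very first step (``without loss of generality $r\le n/2$''), after which the remaining equality analysis is the short case check mentioned above. If you want to keep the cyclic-fill approach, you should likewise reduce to $r\le n/2$ first; then for $l\ge 3$ and $2\le r\le n/2$ your shift idea can be made to work, but with only the hypotheses you list it cannot.
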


\begin{proof}
	Without loss of generality, we may assume that $r \le n/2$ and $n_1 \ge n_2 \ge \cdots \ge n_l$. Let $t \in \BZ_{>0}$ such that $n_1+\cdots+n_{t-1}<r \le n_1+\cdots+n_t$. Note that if $t=l$, then $n-r<n_l \le n_1 \le r$, which contradicts our assumption that $r \le n/2$. Therefore $t<l$. 
	
	We have $x_0=(n_1, \ldots, n_{t-1}, r-n_1-\cdots-n_{t-1}, 0, \ldots, 0) \in X$. For any $1 \le i_1 \le t, t+1 \le i_2 \le l$, we obtain a new element in $X$ from the element $x_0$ by subtracting $1$ in the $i_1$-th factor and adding $1$ in the $i_2$-th factor. In this way, we construct $1+t(l-t)$ elements in $X$. Note that $t(l-t) \ge l-1$ and the equality holds if and only if $t=1$ or $t=l-1$. Therefore, $\# X \ge l$. 
	
	Moreover, if $\# X=l$, then $t=1$ or $t=l-1$, and the elements we constructed above are all the elements in $X$. 
	
	{\it Case} (i): $t=1$. In this case, $x_0=(r, 0, \ldots, 0)$. By our construction, there is no element of the form $(r-2, j_2, \ldots, j_l)$ in $X$. This happens only when $r=1$ or $n_2+\cdots+n_l=1$. In the latter case $l=2$ and $n_2=1$. 
	
	{\it Case} (ii): $t=l-1$. In this case, $x_0=(n_1, \ldots, n_{l-2}, r-n_1-\cdots-n_{l-1}, 0)$. By our construction,
	\begin{enumerate}
		\item there is no element in $X$ with $2$ in the last factor; 
		\item there is no element in $X$ with $r-n_1-\cdots-n_{l-1}+1$ in the $l-1$ factor.
	\end{enumerate}
	Note that (1) happens only when $r=1$ or $n_l=1$ and (2) happens only when $l=2$ or $r=n_1+\cdots+n_{l-1}$. However, if $r=n_1+\cdots+n_{l-1}$ and $n_l=1$, then, since $r \le n/2$, we must have $n=2$ and $r=1$. Hence both (1) and (2) happens only when $r=1$ or $l=2$ and $n_2=1$. 
\end{proof}

\subsection{Type $\tilde B_n$}\label{type-B} By applying a suitable automorphism, we may assume that if $1 \in \tilde K$, then $0 \in \tilde K$. Let $\epsilon=\#(\{0, n\} \cap \tilde K)$.

We have $\{i_1, \ldots, i_l\} \subset \tilde K \subset \{0, i_1, \ldots, i_l, n\}$, where $1 \le i_1<\cdots<i_l \le n-1$. Then $$W_K \cong W_1 \times S_{i_2-i_1} \times \cdots \times S_{i_l-i_{l-1}} \times W_2,$$ where $W_1=\begin{cases} W(D_{i_1}), & \text{ if } 0 \notin \tilde K \\ S_{i_1}, & \text{ if } 0 \in \tilde K\end{cases}$ and  $W_2=\begin{cases} W(B_{n-i_l}), & \text{ if } n \notin \tilde K \\ S_{n-i_l}, & \text{ if } n \in \tilde K.\end{cases}$

{\it Case }(i): $l=0$. 

In this case $\tilde K \subset \{0, n\}$. 

If $\tilde K=\{0\}$, then $K$ is maximal special and there is a unique extreme element. 

If $\tilde K=\{n\}$, then $p(W_K)$ is of type $D_n$ and $p(W_K) \backslash W_0$ has cardinality $2$. Thus $\l$ is the only extreme element if and only if $p(W_K) W_\l=W_0$, i.e., $W_\l \nsubseteq p(W_K)$. This happens exactly when $\l=\o^\vee_r$ with $r<n$.

If $\tilde K=\{0, n\}$, then $W_K \cong S_n$. In this case, the number of extreme elements equals  $2^r$, where $\l=\o^\vee_r$. Thus the CCP condition is satisfied exactly when $r=1$.  

{\it Case} (ii): $l\ge 1$. 

{\it Case} (ii)(a): $l \ge 1$ and $r<n$. 

By Proposition \ref{par-count}, the number of extreme elements with nonnegative entries is at least $l+1$. 

Note that for any $m$ with $2 \le m \le l$, if $\l'=(c'_1, \ldots, c'_n)$ is an extreme element, then $\l''=(c''_1, \ldots, c''_n)$ is another extreme element, where $c''_k=-c'_{i_m+i_{m-1}+1-k}$ for any $k$ with $i_{m-1}+1 \le k \le i_m$, and $c''_k=c'_k$ for all other $k$'s.

In particular, there exists an extreme element with some negative entries among the $\{i_{m-1}+1, \ldots, i_m\}$-th entries, and with all the other entries nonnegative. If $0 \in \tilde K$ (resp., $n \in \tilde K$), then there exists an extreme element with some negative entries among the $\{1, \ldots, i_1\}$-th entries (resp., $\{i_l+1, \ldots, n\}$-th entries), and with all the other entries nonnegative. In this way, we construct $l-1+\e$ extreme elements. 

Therefore the number of extreme elements is at least $l+1+l-1+\e>l+\e$. The CCP condition does not hold in this case. 

{\it Case} (ii)(b): $l \ge 1$ and $r=n$. 

Note that if $\e=2$, then $\{0, n\} \subset \tilde K$ and there are $2^n \ge n>l$ extreme elements and the CCP condition does not hold in this case. 

Now assume that $\e \le 1$. It is easy to see there are at least $2^l$ extreme elements, whose entries are $\pm 1$, and there are at most one $-1$ entry in the $\{i_{m-1}+1, \ldots, i_m\}$-th entries for $1 \le m \le l$ (if $0 \in \tilde K$), or $2 \le m \le l+1$ (if $n \in \tilde K$). Here we set $i_0=0$ and $i_{l+1}=n$. Thus if the CCP condition is satisfied, then $2^l \le l+\e \le l+1$. Therefore $l=1$ and $\e=1$. Hence $\tilde K=\{0, i\}$ or $\tilde K=\{i, n\}$ for some $1 \le i \le n-1$. 

For $\tilde K=\{0, i\}$, there are $2^i$ extreme elements. Thus the CCP condition is satisfied if and only if $i=1$. For $\tilde K=\{i, n\}$, there are $2^{n-i+1} \ge 4$ extreme elements and the CCP condition does not hold in this case. 

\subsection{Type $\tilde C_n$} By applying a suitable automorphism, we may assume that if $n \in \tilde K$, then $0 \in \tilde K$. We have $\{i_1, \ldots, i_l\} \subset \tilde K \subset \{0, i_1, \ldots, i_l, n\}$, where $1 \le i_1<\cdots<i_l \le n-1$. Then $$W_K \cong W_1 \times S_{i_2-i_1} \times \cdots \times S_{i_l-i_{l-1}} \times W_2,$$ where $W_1=\begin{cases} W(B_{i_1}), & \text{ if } 0 \notin \tilde K \\ S_{i_1}, & \text{ if } 0 \in \tilde K\end{cases}$ and  $W_2=\begin{cases} W(B_{n-i_l}), & \text{ if } n \notin \tilde K \\ S_{n-i_l}, & \text{ if } n \in \tilde K.\end{cases}$

{\it Case} (i): $l=0$. 

If $\tilde K=\{0\}$, then $K$ is maximal special and the number of extreme elements is $1$. 

If $\tilde K=\{0, n\}$, then $W_K \cong S_n$. In this case, the number of extreme elements equals to $2^r$, where $\l=\o^\vee_r$. Thus the CCP condition is satisfied exactly when $r=1$.  

{\it Case} (ii): $l \ge 1$. 

Let $\epsilon=\#(\{0, n\} \cap \tilde K)$. By the same argument as in Subsection \ref{type-B}, if the CCP condition is satisfied, then we must have $\l=\o^\vee_n$ or $2 \o^\vee_n$, and $\e\le 1$. 

{\it Case} (ii)(a): $l \ge 1$ and $\e=0$. 

Similarly to the argument in  Subsection \ref{type-B}, there are at least $2^{l-1}$ extreme elements. If the CCP condition is satisfied, then $2^{l-1} \le l$ and hence $l \le 2$. If $l=1$, then $\tilde K=\{i\}$ for some $1 \le i \le n-1$ and there is only one extreme element, i.e., the element $\l$. If $l=2$, then $\tilde K=\{i_1, i_2\}$ for some $1 \le i_1<i_2 \le n$. The number of extreme elements is $2^{i_2-i_1}$. In this case, the CCP condition is satisfied if and only if $i_2=i_1+1$. 

{\it Case} (ii)(b): $l \ge 1$ and $\e=1$. 

By our assumption, $0 \in \tilde K$. Similarly to the argument in  Subsection \ref{type-B}, there are at least $2^l$ extreme elements. If the CCP condition is satisfied, then $2^l \le l+1$ and hence $l=1$. Thus $\tilde K=\{0, i\}$ for some $1 \le i \le n-1$. In this case, there are $2^i$ extreme elements and the CCP condition is satisfied exactly when $i=1$. 

\subsection{Type $\tilde D_n$} By applying a suitable automorphism, we may assume that if $1 \in \tilde K$, then $0 \in \tilde K$, and  if $n-1 \in \tilde K$, then $n \in \tilde K$, and if $n \in \tilde K$, then $0 \in \tilde K$. 

We have $\{i_1, \ldots, i_l\} \subset \tilde K \subset \{0, i_1, \ldots, i_l, n\}$, where $1 \le i_1<\cdots<i_l \le n-1$. Then $$W_K \cong W_1 \times S_{i_2-i_1} \times \cdots \times S_{i_l-i_{l-1}} \times W_2,$$ where $W_1=\begin{cases} W(D_{i_1}), & \text{ if } 0 \notin \tilde K \\ S_{i_1}, & \text{ if } 0 \in \tilde K\end{cases}$ and  $W_2=\begin{cases} W(D_{n-i_l}), & \text{ if } n \notin \tilde K \\ S_{n-i_l}, & \text{ if } n \in \tilde K.\end{cases}$

{\it Case} (i): $l=0$. 

If $\tilde K=\{0\}$, then $K$ is maximal special and the number of extreme element is $1$. 

If $\tilde K=\{0, n\}$, then $W_K \cong S_n$. In this case, the number of extreme elements equals to $2^r$. Thus the CCP condition is satisfied exactly when $r=1$. 

{\it Case} (ii): $l \ge 1$. 

Similarly to the argument in Subsection \ref{type-B}, if the CCP condition is satisfied, then $\l=\o^\vee_n$ or $\o^\vee_{n-1}$. 

Let $\epsilon=\#(\{0, n\} \cap \tilde K)$. If $\e=2$, then there are $2^{n-1}$ extreme elements. Since $n \ge 4$, we have $2^{n-1} \ge n>l$. Thus the CCP condition does not hold. 

If $\e \le 1$, then similarly to the argument in  Subsection \ref{type-B}, there are at least $2^l$ extreme elements. If the CCP condition is satisfied, then $2^l \le l+\e$. Hence $l=\e=1$. Thus $\tilde K=\{0, i\}$ for some $1 \le i \le n-1$. In this case, the number of extreme elements is $2^i$. Thus in this case, the CCP condition is satisfied if and only if $\tilde K=\{0, 1\}$. Note that $\o^\vee_n$ and $\o^\vee_{n-1}$ are permuted by an outer automorphism of the finite Dynkin diagram of $D_n$, which preserves $\{0, 1\}$.
\medskip

\subsection{Exceptional types} For exceptional types, we argue in a different way. 

Suppose that the extreme elements are $\l_1=\l, \l_2, \ldots, \l_k$. Then we have $W_0 \cdot \l=\sqcup_{i=1}^k W_K \cdot \l_i$. We denote by $W_\l \subset W_0$ the isotropy group of $\l$ and $W_{K, \l_i} \subset W_K$ the isotropy group of $\l_i$. Then we have 
\begin{equation}\label{}
\# W_0/W_\l=\sum_{i=1}^k \# W_K/W_{K, \l_i}.
\end{equation}

The trick here is that in most cases, we do not need to compute explicitly the coweights $\l_i$. Instead, we list the possible cardinalities $\# W_K/W_{K, \l_i}$. We then check that, in most cases, $\# W_0/W_\l$ does not equal to the sum of at most $\#\tilde K$ such numbers. Thus the CCP condition is not satisfied in these cases. 

\subsection{Type $\tilde G_2$} Note that $\l=\o^\vee_2$ and $\# W_0/W_\l=6$. Suppose that the CCP condition is satisfied. If $\#\tilde K=1$, then $\# W_K \ge 6$. This implies that $\tilde K=\{0\}$ or $\tilde K=\{1\}$. One may check directly that these two cases satisfy the CCP condition. If $\#\tilde K=2$, then $\# W_K \ge 3$, which is impossible. 

\subsection{Type $\tilde F_4$} Note that $\l=\o^\vee_1$. We refer to \cite[Plate VIII]{Bou} for the explicit description of the root system of type $F_4$. In particular, we have $\o^\vee_1=\epsilon^\vee_1+\epsilon^\vee_2$. Below is the list of maximal parahoric subgroups $K$ and $\# W_K/\# W_{K, \l}$.  
\smallskip
\smallskip

\begin{footnotesize}
	\[
	\begin{tabular}{|c|c|}
	\hline 
	$\tilde K$ & $\# W_K/W_{K, \l}$ \\
	\hline
	$\{0\}$ & $24$ \\
	\hline 
	$\{1\}$ & $2$ \\
	\hline
	$\{2\}$ & $6$ \\
	\hline
	$\{3\}$ & $12$ \\
	\hline
	$\{4\}$ & $24$ \\
	\hline
	\end{tabular}
	\]
\end{footnotesize}
\smallskip
\smallskip

Thus the CCP condition is satisfied if $\tilde K=\{0\}$ or $\{4\}$. 
\smallskip

a) Now suppose that $\#\tilde K=2$ and the CCP condition is satisfied. Then the (linear) action $W_K$ on $W_0 \cdot \l$ has exactly two orbits: the orbit of $\l$ and the orbit of another element $\l'$ with $\l' \in W_0 \cdot \l$. Then $\# W_K/W_{K, \l}+\# W_K/W_{K, \l'}=\# W_0/W_\l=24$. In particular, $24-\# W_K/W_{K, \l}$ divides $\# W_K$. This condition fails if $1 \in \tilde K$ or $2 \in \tilde K$, since in both cases $\# W_K/W_{K, \l} \le 6$. Thus $\tilde K$ must be $\{3, 4\}$, or $\{0, 4\}$, or $\{0, 3\}$. 

If $\tilde K=\{3, 4\}$, then we take $\l'=\epsilon^\vee_1-\epsilon^\vee_2$. By direct computation $\# W_K/W_{K, \l}+\# W_K/W_{K, \l'}=12+6 \neq 24$. 

If $\tilde K=\{0, 4\}$, then we take $\l'=-\epsilon^\vee_1+\epsilon^\vee_2$. By direct computation $\# W_K/W_{K, \l}+\# W_K/W_{K, \l'}=6+6 \neq 24$.

If $\tilde K=\{0, 3\}$, then $\# W_K/W_{K, \l}=3$ and $24-3=21$ does not divide $\# W_K$.
\smallskip

b) Now suppose that $\#\tilde K=3$. If the CCP condition is satisfied, then $\# W_K \ge 24/3=8$ and thus $\tilde K=\{0, 1, 4\}$. However, in this case $W_K=W_{K, \l}$ and thus for any $\l', \l''$, we have $\# W_K/W_{K, \l}+\# W_K/W_{K, \l'}+\# W_K/W_{K, \l''}<24$. That is a contradiction. 
\smallskip

c) If $\#\tilde K \ge 4$, then $\#\tilde K \cdot\# W_K \le 5 \cdot 2=10<24$. So the CCP condition is not satisfied in this case. 

\subsection{Type $\tilde E_6$} By the definition of a minuscule coweight, $\l$ is conjugate by an element of $W_K$ to the trivial coweight or a minuscule coweight $\l'$ of $W_K$. 

In the table below, we list all the numbers $\# W_K/W_{K, \l'}$, where $\l'$ is either trivial or a minuscule coweight of $W_K$. A direct product of Coxeter groups of type $A$ like $A_{n_1} \times \cdots \times A_{n_k}$ is abbreviated as $A_{n_1, \ldots, n_k}$. We use a double line to separate the subsets $\tilde K$ with different cardinalities. 

One may check case-by-case that if $27$ equals the sum of $\#\tilde K$ elements in the list of $\# W_K/W_{K, \l'}$ from the table below, then $W_K$ is of type $E_6$. 

\begin{footnotesize}
	\[
	\begin{tabular}{|c|c|c|c|c|c|}
	\hline 
	Type of $W_K$ & $\# W_K/W_{K, \l'}$ & Type of $W_K$ & $\# W_K/W_{K, \l'}$ & Type of $W_K$ & $\# W_K/W_{K, \l'}$ \\
	\hhline{|=|=|=|=|=|=|}
	$E_6$ & $27, 1$  & $A_{5, 1}$ & $20, 15, 6, 2, 1$ &
	$A_{2, 2, 2}$ & $3, 1$ \\
	\hhline{|=|=|=|=|=|=|}
	$D_5$ & $16, 10, 1$ &
	$A_5$ & $20, 15, 6, 1$ &
	$A_{4, 1}$ & $10, 5, 2, 1$ \\
	\hline 
	$A_{3, 1, 1}$ & $6, 4, 2, 1$ &
	$A_{2, 2, 1}$ & $3, 2, 1$ & & \\
	\hhline{|=|=|=|=|=|=|}
	$D_4$ & $8, 1$ & $A_4$ & $10, 5, 1$ & $A_{3, 1}$ & $6, 4, 2, 1$ \\
	\hline 
	$A_{2, 2}$ & $3, 1$ & $A_{2, 1, 1}$ & $3, 2, 1$ & & \\
	\hhline{|=|=|=|=|=|=|}
	$A_3$ & $6, 4, 1$ & $A_{2, 1}$ & $3, 2, 1$ & $A_{1, 1, 1}$ & $2, 1$ \\
	\hhline{|=|=|=|=|=|=|}
	$A_2$ & $3, 1$ & $A_{1, 1}$ & $2, 1$ & & \\
	\hhline{|=|=|=|=|=|=|}
	$A_1$ & $2, 1$ & & & & \\
	\hline
	\end{tabular}
	\]
\end{footnotesize}

\subsection{Type $\tilde E_7$} In the table below, we list all the numbers $\# W_K/W_{K, \l'}$, where $\l'$ is either trivial or a minuscule coweight of $W_K$. 

One may check case-by-case that, if $56$ equals the sum of $\#\tilde K$ elements in the list of $\# W_K/W_{K, \l'}$ from the table below, then $W_K$ is of type $E_7$, $A_7$, $E_6$ or $A_6$.

If $W_K$ is of type $A_7$, then $\# W_K/W_{K, \l}=8 \neq 56$. 

If $W_K$ is of type $E_6$, then $\# W_K/W_{K, \l}=1$ and thus there is no $\l'$ with 
\[\# W_K/W_{K, \l}+\# W_K/W_{K, \l'}=56.
\] 

If $W_K$ is of type $A_6$, then $\# W_K/W_{K, \l}=7$ or $1$. Thus there is no $\l'$ with 
\[
\# W_K/W_{K, \l}+\# W_K/W_{K, \l'}=56.
\]

\begin{footnotesize}
	\[
	\begin{tabular}{|c|c|c|c|c|c|}
	\hline
	Type of $W_K$ & $\# W_K/W_{K, \l'}$ & Type of $W_K$ & $\# W_K/W_{K, \l'}$ & Type of $W_K$ & $\# W_K/W_{K, \l'}$ \\
	\hhline{|=|=|=|=|=|=|}
	$E_7$ & $56, 1$ & $A_7$ & $70, 56, 28, 8, 1$ & $D_6 \times A_1$ & $32, 12, 2, 1$ \\ 
	\hline
	$A_{5, 2}$ & $20, 15, 6, 3, 1$ & $A_{3, 3, 1}$ & $6, 4, 2, 1$ & & \\
	\hhline{|=|=|=|=|=|=|}
	$E_6$ & $27, 1$ & $D_6$ & $32, 12, 1$ & $A_6$ & $35, 21, 7, 1$ \\
	\hline
	$D_5 \times A_1$ & $16, 10, 2, 1$ & $A_{5, 1}$ & $20, 15, 6, 2, 1$ & $D_4 \times A_1 \times A_1$ & $8, 2, 1$ \\
	\hline
	$A_{4, 2}$ & $10, 5, 3, 1$ & $A_{3, 3}$ & $6, 4, 1$ & $A_{3, 2, 1}$ & $6, 4, 3, 2, 1$ \\
	\hline
	$A_{3, 1, 1, 1}$ & $6, 4, 2, 1$ & $A_{2, 2, 2}$ & $3, 1$ & & \\
	\hhline{|=|=|=|=|=|=|}
	$D_5$ & $16, 10, 1$ & $A_5$ & $20, 15, 6, 1$ & $D_4 \times A_1$ & $8, 2, 1$ \\
	\hline
	$A_{4, 1}$ & $10, 5, 2, 1$ & $A_{3, 2}$ & $6, 4, 3, 1$ & $A_{3, 1, 1}$ & $6, 4, 2, 1$ \\
	\hline
	$A_{2, 2, 1}$ & $3, 2, 1$ & $A_{2, 1, 1, 1}$ & $3, 2, 1$ & $A_{1, 1, 1, 1, 1}$ & $2, 1$ \\
	\hhline{|=|=|=|=|=|=|}
	$D_4$ & $8, 1$ & $A_4$ & $10, 5, 1$ & $A_{3, 1}$ & $6, 4, 2, 1$ \\
	\hline 
	$A_{2, 2}$ & $3, 1$ & $A_{2, 1, 1}$ & $3, 2, 1$ & $A_{1, 1, 1, 1}$ & $2, 1$ \\
	\hhline{|=|=|=|=|=|=|}
	$A_3$ & $6, 4, 1$ & $A_{2, 1}$ & $3, 2, 1$ & $A_{1, 1, 1}$ & $2, 1$ \\
	\hhline{|=|=|=|=|=|=|}
	$A_2$ & $3, 1$ & $A_{1, 1}$ & $2, 1$ & & \\
	\hhline{|=|=|=|=|=|=|}
	$A_1$ & $2, 1$ & & & & \\
	\hline
	\end{tabular}
	\]
\end{footnotesize}

\section{Rationally strictly pseudo semi-stable reduction}\label{s:ratpsss}

In this section, we exclude the cases from the list in Theorem \ref{CCP} that do not have rationally SPSS reduction. By Lemma \ref{rpssPoinc}, we check if the Poincar\'e polynomial is symmetric. As we have seen, the Poincar\'e polynomial depends only on the enhanced Coxeter datum, not the enhanced Tits datum. We start the elimination process with the exceptional types. 

\subsection{The case $(\tilde G_2, \o^\vee_2, \{1\})$}\label{ss:G2} 

Here $t^\l=s_0 s_2 s_1 s_2 s_1 s_2$ and the unique maximal element in $W_{\le \l, K}$ is $w_{\l, K}=s_2 s_0 s_2 s_1 s_2 s_1$. The set $W_{\le \l, K}$ has a unique element of length $1$, which is $s_1$, but has two elements of length $5$, which are $s_2 w_{\l, K}$ and $s_0 w_{\l, K}$. Therefore the Poincar\'e polynomial is not symmetric and $\overline{K^\flat \l  K^\flat / K^\flat}$ is not rationally smooth. 

\subsection{The case $(\tilde F_4, \o^\vee_1, \{4\})$}\label{ss:F4} 
Here $t^\l=s_0 s_1 s_2 s_3 s_4 s_2 s_3 s_1 s_2 s_3 s_4 s_1 s_2 s_3 s_2 s_1$ and the unique maximal element in $W_{\le \l, K}$ is $w_{\l, K}=s_1 s_2 s_3 s_2 s_1 s_0 s_1 s_2 s_3 s_4 s_2 s_3 s_1 s_2 s_3 s_4$. The set $W_{\le \l, K}$ has a unique element of length $2$, which is $s_3 s_4$, but has at least two elements of colength $2$, which are $s_2 s_1 w_{\l, K}$ and $s_0 s_1 w_{\l, K}$. Therefore the Poincar\'e polynomial is not symmetric and $\overline{ K^\flat \l  K^\flat/ K^\flat}$ is not rationally smooth.

\subsection{The case $(\tilde F_4, \o^\vee_1, \{0\})$ (special parahoric)}\label{ss:F4sp}
Here the unique maximal element in $W_{\le \l, K}$ is $w_{\l, K}=t^{-\l}=w_0^{\{0, 1\}} w_0^{\{n\}} s_0$, where $w_0^{K'}$ is the longest element in $W_{K'}$ for $K'=\{0\}$ or $\{0, 1\}$. Thus 
\[
P_{\le \l, K}=1+q \frac{\sum_{x \in W_{\{0\}}} q^{\ell(x)}}{\sum_{x \in W_{\{0, 1\}}} q^{\ell(x)}}.
\] Note that $(\sum_{x \in W_{\{0\}}} q^{\ell(x)})(\sum_{x \in W_{\{0, 1\}}} q^{\ell(x)})^{-1}$ is a symmetric polynomial and not all the coefficients are equal to $1$. Thus $P_{\le \l, K}$ is not symmetric and $\overline{ K^\flat \l  K^\flat/ K^\flat}$ is not rationally smooth.

\subsection{The classical types} Now we consider the cases where $\tW$ is of classical type. Let $\Phi_{\rm af}$ be the affine root system of a split group whose associated affine Weyl group is $W_a$. Let $\Phi$ be the set of finite roots. As $\Phi_{\rm af}$ comes from a split group, $\Phi _{\rm af}=\{a+n \delta \, \mid\, a \in \Phi, n \in \BZ\}$. The positive affine roots are $\{a+n\delta \, \mid\, a \in \Phi^+, n \in \BZ_{>0}\} \sqcup \{-a+n \delta \, \mid\, a \in \Phi^+, n \in \BZ_{\ge 0}\}$. In other words, the element $t^\lambda \in \tW$ acts on the apartment by the translation $-\lambda$.

We have the following formula on the length function of an element in $\tW$ (see \cite{IM}). 

\begin{lemma}\label{length}
	For $w \in W_0$ and $\a \in \Phi$, set $$\delta_w(\a)=\begin{cases} 0, & \text{ if } w \a \in \Phi^+; \\ 1, & \text{ if } w \a \in \Phi^-. \end{cases}$$ Then for any $x, y \in W_a$ and any translation element $t^{\l'}$ in $\tilde W$, 
	 $$\ell(x t^{\l'} y^{-1})=\sum_{\a \in \Phi^+} \vert<\l', \a>+\delta_x(\a)-\delta_y(\a)\vert.$$
\end{lemma}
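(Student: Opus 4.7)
This is the classical Iwahori--Matsumoto length formula, and the approach I would take is the standard one: compute the length via the set of positive affine roots that are sent to negative affine roots, then partition this set according to the finite gradient. The core identity to exploit is
$$
\ell(w)=\#N(w),\qquad N(w):=\{\beta\in\Phi_{\rm af}^{+}\mid w(\beta)\in\Phi_{\rm af}^{-}\},
$$
valid for any $w$ in the extended affine Weyl group. For each $\a\in\Phi^{+}$, group the positive affine roots with finite gradient $\pm\a$ into the strip
$$
S_{\a}=\{\a+n\delta\mid n\ge 0\}\sqcup\{-\a+n\delta\mid n>0\},
$$
so that $\Phi_{\rm af}^{+}=\bigsqcup_{\a\in\Phi^{+}}S_{\a}$ and therefore $\ell(xt^{\l'}y^{-1})=\sum_{\a\in\Phi^{+}}\#\bigl(N(xt^{\l'}y^{-1})\cap S_{\a}\bigr)$. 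The plan is to prove that the $\a$-contribution equals $|\langle\l',\a\rangle+\delta_{x}(\a)-\delta_{y}(\a)|$ and then sum.

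To carry out the strip computation, I would trace a root $\pm\a+n\delta\in S_{\a}$ through the three factors. Writing $x=uz$, $y=vz'$ with $u,v\in W_{0}$ their images and $z,z'$ translation parts, the action of $y^{-1}$ on the strip $S_{\a}$ shifts everything onto the strip $S_{v^{-1}\a}$; then $t^{\l'}$ acts purely on the $\delta$-component by $\beta\mapsto\beta+\langle\l',\overline{\beta}\rangle\delta$ (where $\overline{\beta}$ is the finite gradient); then $x$ acts by the image $u$ on the gradient and shifts the $\delta$-coordinate by a bounded amount depending only on $z$. The $\delta$-shifts from $x$ and $y^{-1}$ are bounded and do not affect the asymptotic count; the decisive contribution is the translation by $\langle\l',\a\rangle$, and the only fine correction comes from the one ``boundary'' root in $S_{\a}$ whose image straddles $0$, detected precisely by whether $u\a$ and $v\a$ are negative, i.e.\ by $\delta_{x}(\a)$ and $\delta_{y}(\a)$. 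Concretely, one checks by a direct case distinction on the signs of $\langle\l',\a\rangle$, $\delta_{x}(\a)$, $\delta_{y}(\a)$ that the number of $n$ for which $\pm\a+n\delta\in S_{\a}$ is inverted equals $|\langle\l',\a\rangle+\delta_{x}(\a)-\delta_{y}(\a)|$.

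An alternative (and perhaps cleaner) route, which I would use as a cross-check, is to proceed in stages: first verify the formula for $w=t^{\l'}$ alone, where $\delta_{x}=\delta_{y}=0$ and the identity reduces to the well-known $\ell(t^{\l'})=\sum_{\a\in\Phi^{+}}|\langle\l',\a\rangle|$; next verify that both sides satisfy the same recursion under left multiplication $x\mapsto sx$ by a simple affine reflection $s$, namely the change is $\pm 1$ according to whether $sx$ is longer or shorter than $x$, which on the right-hand side is visible from the effect of $s$ on $\delta_{x}$. A symmetric recursion handles multiplication on the right by $y^{-1}s$. Induction on $\ell(x)+\ell(y)$ then reduces the general case to the translation case.

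The main obstacle will be the bookkeeping in the strip count of the previous paragraph: one must carefully verify that the contributions of the $\delta$-shifts coming from the translation parts of $x$ and $y$ cancel against the shifts of which roots lie in $S_{v^{-1}\a}$ versus $S_{\a}$, so that only the finite-Weyl-group data $\delta_{x}(\a),\delta_{y}(\a)$ survive. Once one realises that the only genuinely nontrivial count is the number of integers $n$ in an interval of length $|\langle\l',\a\rangle|$ (shifted by a half-integer correction controlled by $\delta_{x}(\a)-\delta_{y}(\a)$), the case analysis collapses to a short arithmetic verification.
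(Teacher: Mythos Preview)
The paper does not give a proof of this lemma at all: it simply attributes the formula to Iwahori--Matsumoto \cite{IM} and moves on. So there is nothing to compare at the level of argument; you are supplying a proof where the paper just cites one.

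Your approach is the standard one and is correct in outline: identify $\ell(w)$ with $\#\{\beta\in\Phi_{\rm af}^+\mid w\beta\in\Phi_{\rm af}^-\}$, fiber over $\Phi^+$ via the gradient map, and count strip-by-strip. Your alternative route (base case $w=t^{\l'}$, then induction via simple reflections on either side) is also fine and is arguably cleaner, since it avoids the bookkeeping you flag as the main obstacle.

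One small point: the lemma as stated in the paper says $x,y\in W_a$, but $\delta_x(\a)$ is only defined for $x\in W_0$. This is almost certainly a typo for $x,y\in W_0$ (which is the standard Iwahori--Matsumoto statement and is all that is used later in the paper). Your attempt to accommodate general $x,y\in W_a$ by writing $x=uz$, $y=vz'$ with translation parts $z,z'$ is unnecessary and in fact does not quite work as written: if $x$ has a nontrivial translation part, the formula $\ell(xt^{\l'}y^{-1})=\sum|\langle\l',\a\rangle+\delta_x(\a)-\delta_y(\a)|$ cannot hold with $\delta_x$ depending only on the $W_0$-image of $x$, since the translation part of $x$ genuinely changes the length. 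Just take $x,y\in W_0$ and drop the decomposition.
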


We also have the following well-known facts on the Bruhat order in $\tW$. 

\begin{lemma}\label{Bruhat-pm}
	Let $w \in \tW$. If $\a \in \Phi_{\rm af}$ is positive, and $w^{-1}(\a)$ is positive (resp., negative), then $s_{\a} w>w$ (resp., $s_{\a} w<w$).
\end{lemma}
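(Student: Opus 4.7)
The statement is the standard exchange-type characterization of Bruhat covers by positive affine roots, extended from the Coxeter group $W_a$ to the extended affine Weyl group $\tilde W = W_a \rtimes \Omega$, where $\Omega$ denotes the (finite, abelian) subgroup of length-zero elements of $\tilde W$. My plan is to reduce the statement to the already known fact in the Coxeter group $W_a$.

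First I would recall the general Coxeter-theoretic fact: for any Coxeter group, any element $x$, and any positive root $\alpha$ in its root system, one has $s_\alpha x > x$ (equivalently $\ell(s_\alpha x) > \ell(x)$) if and only if $x^{-1}(\alpha) > 0$; the reverse inequality holds in the negative case. Applied to the affine Weyl group $W_a$ and its affine root system $\Phi_{\rm af}$ (using the length formula of Lemma \ref{length}), this gives the lemma when $w \in W_a$.

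Next, to pass from $W_a$ to $\tilde W$, I would write $w = x\tau$ with $x \in W_a$ and $\tau \in \Omega$. The Bruhat order and length on $\tilde W$ are defined coset-wise with respect to $\Omega$: elements of $\Omega$ are of length zero, $w_1 \le w_2$ forces $w_1 w_2^{-1} \in \Omega$, and $x\tau \le x'\tau$ in $\tilde W$ iff $x \le x'$ in $W_a$. Consequently, since $s_\alpha \in W_a$, the inequality $s_\alpha w > w$ is equivalent to $s_\alpha x > x$ in $W_a$.

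Finally I need to compare signs: $w^{-1}(\alpha) = \tau^{-1} x^{-1}(\alpha)$. Because $\tau$ has length zero, the length formula of Lemma \ref{length} implies that $\tau^{-1}$ sends the set of positive affine roots bijectively to itself; hence the signs of $w^{-1}(\alpha)$ and $x^{-1}(\alpha)$ coincide. Combining this with the Coxeter-group statement applied to $x$ yields both implications. The only mild subtlety, which I would spell out carefully, is the length-zero/positive-root-preserving property of $\Omega$; everything else is formal. I do not anticipate a real obstacle here, as all ingredients appear in the standard references (e.g.\ \cite{IM} together with the Bruhat decomposition results recalled in \cite[\S 2]{R}).
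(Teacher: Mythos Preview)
Your argument is correct; the paper itself gives no proof, merely recording the lemma as a well-known fact on the Bruhat order in $\tilde W$. Your reduction to the underlying Coxeter group $W_a$ via the decomposition $w=x\tau$ with $\tau\in\Omega$ is exactly the standard way to justify it. One small slip: you wrote that $w_1\le w_2$ forces $w_1 w_2^{-1}\in\Omega$, but what you mean (and use) is that $w_1$ and $w_2$ lie in the same coset $W_a\tau$, i.e.\ $w_1 w_2^{-1}\in W_a$; this does not affect the argument.
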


\begin{remark}
In particular, if $K=\{0\}$, then $t^{-\l} \in \tW^K$ for any dominant $\l$. 
\end{remark}

\begin{lemma}\label{sw}
	Let $w \in \tW^K$ and $s \in \tilde S$. If $s w<w$, then $\ell(s w)=\ell(w)-1$ and $s w \in \tW^K$. 
\end{lemma}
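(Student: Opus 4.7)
The plan is to prove both parts via elementary Coxeter group combinatorics, using the characterization of $\tW^K$ in terms of the action on simple roots of $W_K$.

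First, the length assertion $\ell(sw)=\ell(w)-1$ is a standard property of any Coxeter system: for a simple reflection $s\in\tilde S$ and any $w\in\tW$, one has $\ell(sw)\in\{\ell(w)-1,\ell(w)+1\}$, and $sw<w$ in Bruhat order if and only if the smaller value is realized. So this part is immediate once one recalls the exchange condition.

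For the second part, let $\beta=\beta_s$ be the simple affine root corresponding to $s$, and for each $t\in S_K$ let $\alpha_t$ be the corresponding simple root. Recall $w\in\tW^K$ is equivalent to $wt>w$ for all $t\in S_K$, which by standard root-theoretic length formulae (Lemma \ref{Bruhat-pm}) is equivalent to $w(\alpha_t)>0$ for every $t\in S_K$. Similarly $sw<w$ is equivalent to $w^{-1}(\beta)<0$, and we want to show $(sw)t>sw$ for every $t\in S_K$, i.e.\ $s_\beta w(\alpha_t)>0$.

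The key ingredient is the well-known fact that for any simple root $\beta$, the reflection $s_\beta$ permutes the set of positive roots and sends $\beta$ to $-\beta$; in particular $s_\beta$ preserves positivity on positive roots different from $\beta$. Applied to $v=w(\alpha_t)$, which is positive by $w\in\tW^K$, this gives $s_\beta(v)>0$ unless $v=\beta$; but $v=\beta$ would force $\alpha_t=w^{-1}(\beta)<0$, contradicting the fact that $\alpha_t$ is a positive simple root. Hence $(sw)(\alpha_t)>0$ for every $t\in S_K$, so $sw\in\tW^K$.

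There is no serious obstacle here; the lemma is purely formal in the theory of Coxeter groups and their parabolic quotients, and the only subtlety is to keep track of the convention (minimal length representatives in $wW_K$, hence positivity of $w(\alpha_t)$ rather than $w^{-1}(\alpha_t)$).
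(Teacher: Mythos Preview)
Your proof is correct. The paper itself does not give a proof of this lemma, simply recording it as one of the ``well-known facts on the Bruhat order in $\tW$''; your argument via the characterization $w\in\tW^K\iff w(\alpha_t)>0$ for all $t\in S_K$, combined with the fact that a simple reflection $s_\beta$ permutes the positive roots other than $\beta$, is the standard way to see it and is fully consistent with the paper's framework (in particular with Lemma~\ref{Bruhat-pm}).
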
 

The following result on the maximal element $w_{\l, K}$ of $W_{\le \l, K}$ will also be useful in this section. 

\begin{lemma}\label{max}
The maximal element $w_{\l, K}$ in $W_{\le \l, K}$ is $t^{\l'}$, where $\l'$ is the unique element in $W_K\cdot \l$ such that $\<\l', \a\> \ge 0$ for any affine simple root $\a$  not in $\tilde K$. 
\end{lemma}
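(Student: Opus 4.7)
The plan is to verify directly that $t^{\l'}$ enjoys the two properties that characterize $w_{\l, K}$: membership in $W_{\le \l, K}$ and Bruhat-maximality therein.

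First, since $\l' \in W_K \cdot \l$ there is $w \in W_K$ whose linear image sends $\l$ to $\l'$; conjugating $t^\l$ by such a $w$ gives $t^{\l'} \in W_K t^\l W_K$, hence $t^{\l'} \le \max(W_K t^\l W_K)$. Next I would show $t^{\l'} \in \tW^K$ using Lemma \ref{Bruhat-pm}: it suffices to verify that $t^{\l'}(\alpha) > 0$ in the affine root system for every simple affine root $\alpha$ with $s_\alpha \in W_K$, i.e.\ for every $\alpha \notin \tilde K$. When $\alpha$ is a finite simple root, the defining inequality $\<\l', \alpha\> \ge 0$ is exactly what makes $t^{\l'}(\alpha) = \alpha + \<\l', \alpha\>\delta$ a positive affine root; when $\alpha = \alpha_0$ is the affine simple root, the analogous computation $t^{\l'}(\alpha_0) = -\theta + (1 - \<\l', \theta\>)\delta$ together with the minuscule bound $\<\l', \theta\> \le 1$ yields positivity. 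Combining these observations gives $t^{\l'} \in W_{\le \l, K}$.

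Second, to identify $t^{\l'}$ as the maximum, I would pass to the standard description of $W_{\le \l, K}$ as the set of minimal-length representatives in $\tW^K$ of the right cosets of $W_K$ contained in $W_K t^\l W_K$. Bruhat order on $W_{\le \l, K}$ is induced from Bruhat order on these right cosets, and its unique maximum corresponds to the right coset whose longest representative equals $\max(W_K t^\l W_K)$. The length-additivity $\ell(t^{\l'} w_0^K) = \ell(t^{\l'}) + \ell(w_0^K)$, which follows from $t^{\l'} \in \tW^K$, combined with $\ell(t^{\l'}) = \ell(t^\l)$, yields $\ell(t^{\l'} w_0^K) = \ell(t^\l) + \ell(w_0^K)$. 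A length count via Lemma \ref{length} shows no element of $W_K t^\l W_K$ can exceed this value, so $t^{\l'} w_0^K = \max(W_K t^\l W_K)$, whence $t^{\l'} = w_{\l, K}$.

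The central obstacle is this length comparison: one must check that among the translations $t^{\l''}$ with $\l'' \in W_K \cdot \l$, the particular choice of $\l'$ characterized by the positivity condition is precisely the one for which $\ell(t^{\l''} w_0^K)$ attains the maximum $\ell(t^\l) + \ell(w_0^K)$. The defining inequalities $\<\l', \alpha\> \ge 0$ for $\alpha \notin \tilde K$ are exactly what prevents cancellation when multiplying $t^{\l'}$ on the right by the simple reflections in $W_K$; any $\l''$ failing these inequalities would produce a strictly shorter product $t^{\l''} w_0^K$, so $\l'$ is uniquely picked out and the maximality follows.
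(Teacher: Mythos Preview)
Your argument is close to correct and overlaps with the paper's in showing $t^{\lambda'}\in\tW^K$, but two points deserve attention.

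First, a small correction: the case split for $\alpha_0$ is unnecessary, and invoking the minuscule bound there is not quite right. The lemma does not assume $\lambda$ minuscule. The hypothesis $\langle\lambda',\alpha\rangle\ge 0$ is meant uniformly for every affine simple root $\alpha\notin\tilde K$, with the pairing read through the finite part of $\alpha$; in particular, for $\alpha_0=-\theta+\delta$ it says $\langle\lambda',\theta\rangle\le 0$, which is exactly what makes $t^{\lambda'}(\alpha_0)$ positive. No appeal to minusculeness is needed. (Also, your ``standard description'' of $W_{\le\lambda,K}$ as the minimal coset representatives contained in the double coset is not its definition---$W_{\le\lambda,K}$ is the larger Bruhat interval $\{v\in\tW^K: v\le\max(W_K t^\lambda W_K)\}$---though this does not affect identifying the maximum.)

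Second, and more substantively, your maximality argument and the paper's take different routes. You want to bound the length of every element of $W_K t^\lambda W_K$ by $\ell(t^\lambda)+\ell(w_0^K)$ and then observe that $t^{\lambda'}w_0^K$ realises this bound. That bound is not immediate from the triangle inequality (which only gives $\ell(t^\lambda)+2\ell(w_0^K)$); it really rests on the decomposition $W_K t^\lambda W_K=\bigcup_{\lambda''\in W_K\cdot\lambda} t^{\lambda''}W_K$, which follows from $w\,t^{\lambda}=t^{p(w)\lambda}\,w$ for $w\in W_K$. You allude to this in your last paragraph but never state it, so the step ``a length count via Lemma~\ref{length} shows no element can exceed this value'' is the place where a reader would stop you. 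Once that decomposition is made explicit, your argument goes through. The paper instead verifies directly the characterising property of the double-coset maximum: for every simple affine $\alpha\notin\tilde K$ one computes that both $(t^{\lambda'}w_0^K)^{-1}(\alpha)$ and $(t^{\lambda'}w_0^K)(\alpha)$ are negative, so $s_\alpha\,t^{\lambda'}w_0^K<t^{\lambda'}w_0^K$ and $t^{\lambda'}w_0^K\,s_\alpha<t^{\lambda'}w_0^K$. This avoids any global length comparison and reuses the same root calculation you already did for $t^{\lambda'}\in\tW^K$, just twisted by $w_0^K$.
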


\begin{proof}
Let $w^K_0$ be the longest element in $W_K$. We claim that 

\smallskip

\noindent Claim: {\it  The element $t^{\l'} w^K_0$ is the maximal element in $W_K t^{\l'} W_K=W_K t^{\l} W_K$.} 

\smallskip
Let $\a$ be a simple affine root that is not in $\tilde K$. Then $(t^{\l'} w^K_0)^{-1} (\a)=(w^K_0)^{-1}(\a-\<\l', \a\> \delta)=(w^K_0)^{-1}(\a)-\<\l', \a\> \delta$ is a negative affine root. Hence by Lemma \ref{Bruhat-pm}, $s_\a t^{\l'} w^K_0<t^{\l'} w^K_0$. Similarly, $(t^{\l'} w^K_0)(\a)=t^{\l'} (w^K_0 (\a))=w^K_0(\a)+\<\l', w^K_0(\a)\> \delta$. Note that $w^K_0(\a)$ equals to $-\beta$ for some simple affine root $\beta$ that is not in $\tilde K$. Hence $(t^{\l'} w^K_0)(\a)$ is a negative affine root. By Lemma \ref{Bruhat-pm}, $t^{\l'} w^K_0 s_\a<t^{\l'} w^K_0$. The claim is proved.

Note that by the assumption on $\l'$, $t^{\l'}(\a)$ is a positive affine root for any simple affine root $\a$ that is not in $\tilde K$. Therefore $t^{\l'} \in \tW^K$. 
	
Since $t^{\l'}$ is the unique element contained in both $(t^{\l'} w^K_0) W_K$ and $\tW^K$, and $t^{\l'} w^K_0$ is the maximal element in $W_K t^{\l} W_K$, $t^{\l'}$ is the unique maximal element in $W_{\le \l, K}$. The statement is proved.  
\end{proof}

With these facts established, we can now continue our elimination process.

\subsection{The case $(\tilde B_n, \o^\vee_i, \{0\})$ (special parahoric)}\label{4.5}\label{ss:Bnfirstsp}
Here $n \ge 3$ and $2 \le i \le n-1$.

Note that the set $W_{\le \l, K}$ has a unique element of length $2$, which is $\tau s_2 s_0$.  Here $\tau$, as usual, is the unique length-zero element in $\tW$ with $t^{\l} \in W_a \tau$. By Lemma \ref{Bruhat-pm}, $s_i t^{-\l}<t^{-\l}$ and $s_{i-1} s_i t^{-\l}, s_{i+1} s_i t^{-\l} <s_i t^{-\l}$. By Lemma \ref{sw}, $s_i t^{-\l}$ is a colength-$1$ element in $W_{\le \l, K}$ and $s_{i-1} s_i t^{-\l}, s_{i+1} s_i t^{-\l}$ are colength-$2$ elements in $W_{\le \l, K}$. Hence the set $W_{\le \l, K}$ has at least two elements of colength $2$. Therefore the Poincar\'e polynomial is not symmetric and $\overline{K^\flat \l K^\flat/K^\flat}$ is not rationally smooth.

\subsection{The case $(\tilde C_n, \o^\vee_i, \{0\})$  (special parahoric)}\label{4.5}\label{ss:Cnfirstsp}
Here $n \ge 3$ and $2 \le i \le n-1$.

Note that the set $W_{\le \l, K}$ has a unique element of length $2$, which is $s_1 s_0$. By Lemma \ref{Bruhat-pm}, $s_i t^{-\l}<t^{-\l}$ and $s_{i-1} s_i t^{-\l}, s_{i+1} s_i t^{-\l} <s_i t^{-\l}$. By Lemma \ref{sw}, $s_i t^{-\l}$ is a colength-$1$ element in $W_{\le \l, K}$ and $s_{i-1} s_i t^{-\l}, s_{i+1} s_i t^{-\l}$ are colength-$2$ elements in $W_{\le \l, K}$. Hence the set $W_{\le \l, K}$ has at least two elements of colength $2$. Therefore the Poincar\'e polynomial is not symmetric and $\overline{K^\flat \l K^\flat/K^\flat}$ is not rationally smooth.

\subsection{The case $(\tilde C_n, l \o^\vee_n, \{i\})$}\label{4.5}\label{ss:Cnfirst}
 Here $n \ge 2$, $l=1$ or $2$ and $1 \le i \le n-1$.

By Lemma \ref{max}, $w_{\l, K}=t^{\l'}$, where the first $i$ entries of $\l'$ are $\frac{l}{2}$ and the last $n-i$ entries of $\l'$ are $-\frac{l}{2}$. Note that the set $W_{\le \l, K}$ has a unique element of length $1$, which is $\tau s_i$.  Here $\tau$, as usual,  is the unique length-zero element in $\tW$ with $t^{\l'} \in W_a \tau$. By Lemma \ref{Bruhat-pm}, $s_0 t^{\l'}<t^{\l'}$. By Lemma \ref{sw}, $s_0 t^{\l'}$ is a colength-$1$ element in $W_{\le \l, K}$. Similarly, $s_n t^{\l'}$ is a colength-$1$ element in $W_{\le \l, K}$. Thus the set $W_{\le \l, K}$ has at least two elements of colength $1$. Therefore the Poincar\'e polynomial is not symmetric and $\overline{K^\flat \l K^\flat/K^\flat}$ is not rationally smooth.

\subsection{The case $(\tilde B_n, \o^\vee_r, \{n\})$}\label{ss:Bnfirst}
 Here $n \ge 3$ and $1 \le r \le n-1$.

By Lemma \ref{max}, $w_{\l, K}=t^{\l'}$, where the first $n-r$ entries of $\l'$ are $0$ and the last $r$ entries of $\l'$ are $-1$. 

\subsubsection{The case $2 \le r \le n-2$} Note that the set $W_{\le \l, K}$ has a unique element of length $2$, which is $\tau s_{n-1} s_n$, where again $\tau$ is the unique length-zero element in $\tW$ with $t^{\l'} \in W_a \tau$. 

By Lemma \ref{Bruhat-pm}, $s_r t^{\l'}<t^{\l'}$ and $s_{r-1} s_r t^{\l'}, s_{r+1} s_r t^{\l'}<s_r t^{\l'}$. By Lemma \ref{sw}, $s_r t^{\l'}$ is a colength-$1$ element in $W_{\le \l, K}$ and $s_{r-1} s_r t^{\l'}, s_{r+1} s_r t^{\l'} \in W_{\le \l, K}$ are  colength-$2$ elements in $W_{\le \l, K}$. Hence the set $W_{\le \l, K}$ has at least two elements of colength $2$. Therefore the Poincar\'e polynomial is not symmetric and $\overline{K^\flat \l  K^\flat/ K^\flat}$ is not rationally smooth.

\subsubsection{The case $r=1$}\label{7.5.2} By direct computation, $w_{\l, K}=\tau (s_{n-1} s_{n-2} \cdots s_2) (s_0 s_1 \cdots s_n)$ and the Poincar\'e polynomial is $(1+q+\cdots+q^{2(n-1)})q+q^n+1$ which is not symmetric. Thus  $\overline{ K^\flat \l  K^\flat/ K^\flat}$ is not rationally smooth.

\subsubsection{The case $r=n-1$} The set $W_{\le \l, K}$ has a unique element of length $1$ which is $\tau s_n$, but has at least two elements of colength $1$, namely  $s_1 w_{\l, K}$ and $s_0 w_{\l, K}$. The Poincar\'e polynomial is not symmetric and $\overline{ K^\flat \l  K^\flat/ K^\flat}$ is not rationally smooth. 

\subsection{The case $(\tilde C_n, 2 \o^\vee_n, \{i, i+1\})$ with $0 \le i \le \frac{n}{2}-1$}\label{ss:Cnsecond}
Here $n \ge 2$.

By Lemma \ref{max}, $w_{\l, K}=t^{\l'}$, where the first $i+1$ entries of $\l'$ is $1$ and the last $n-i-1$ entries of $\l'$ is $-1$. In this case, $W_{\le \l, K}$ has exactly two elements of length $1$ which are $s_i$ and $s_{i+1}$. Similarly to the argument in \S\ref{4.5}, $s_n t^{\l'}$ and $s_0 t^{\l'}$ are colength $1$ elements in $W_{\le \l, K}$. By Lemma \ref{Bruhat-pm}, $s_{2 e_{i+1}+2 \delta} t^{\l'}=(i+1, -(i+1)) t^{\l''}<t^{\l'}$ and $s_{2 e_1+2 \delta} t^{\l'} \in \tW^K$. Here the first $i$-entries of $\l''$ is $1$ and the last $n-i$-entries of $\l''$ is $-1$. By Lemma \ref{length}, $\ell(s_{2 e_{i+1}+2 \delta} t^{\l'})=\ell(t^{\l'})-1$. Hence $s_{2 e_{i+1}+2 \delta} t^{\l'}$ is a colength-$1$ element in $W_{\le \l, K}$. Therefore $W_{\le \l, K}$ contains at least three elements of colength $1$ and the Poincar\'e polynomial of $W_{\le \l, K}$ is not symmetric. 

\subsection{The case $(\tilde C_n, \o^\vee_n, \{i, i+1\})$ with $1 \le i \le \frac{n}{2}-1$}
Here $n \ge 2$.

By Lemma \ref{max}, $w_{\l, K}=t^{\l'}$, where the first $i+1$ entries of $\l'$ is $\frac{1}{2}$ and the last $n-i-1$ entries of $\l'$ is $-\frac{1}{2}$. In this case, $W_{\le \l, K}$ has exactly two elements of length $1$ which are $\tau s_i$ and $\tau s_{i+1}$, where again $\tau$ is the unique length-zero element in $\tW$ with $t^{\l'} \in W_a \tau$. Similarly to the argument in \S\ref{4.5}, $s_n t^{\l'}$ and $s_0 t^{\l'}$ are colength $1$ elements in $W_{\le \l, K}$. By Lemma \ref{Bruhat-pm}, $s_{2 e_{i+1}+\delta} t^{\l'}=(i+1, -(i+1)) t^{\l''}<t^{\l'}$ and $s_{2 e_1+\delta} t^{\l'} \in \tW^K$. Here the first $i$-entries of $\l''$ is $\frac{1}{2}$ and the last $n-i$-entries of $\l''$ is $-\frac{1}{2}$. By Lemma \ref{length}, $\ell(s_{2 e_{i+1}+\delta} t^{\l'})=\ell(t^{\l'})-1$. Hence $s_{2 e_{i+1}+\delta} t^{\l'}$ is a colength-$1$ element in $W_{\le \l, K}$. Therefore $W_{\le \l, K}$ contains at least three elements of colength $1$ and the Poincar\'e polynomial of $W_{\le \l, K}$ is not symmetric. 

\subsection{The case $(\tilde B_n, \o^\vee_n, \{0, 1\})$}\label{ss:Bnsecond}
Here $n \ge 3$. 

By Lemma \ref{max}, $w_{\l, K}=t^{\l'}$, where $\l'=(1, -1, -1, \ldots, -1)$. In this case, $W_{\le \l, K}$ has exactly $3$ elements of length $2$ which are  $\tau s_0 s_1$, $\tau s_2 s_0$ and $\tau s_2 s_1$. Similarly to the argument in Subsection \ref{ss:Cnsecond}, $s_n t^{\l'}$ and $s_{\e_1+\delta} t^{\l'}=(1, -1) t^{-\o^\vee_n}$ are colength-$1$ elements in $W_{\le \l, K}$. 

By Lemma \ref{Bruhat-pm}, $s_{n-1} s_n t^{\l'}<s_n t^{\l'}$ and $s_n s_{\e_1+\delta} t^{\l'}<s_{\e_1+\delta} t^{\l'}$. By Lemma \ref{sw}, $s_{n-1} s_n t^{\l'}$ and $s_n s_{\e_1+\delta} t^{\l'}$ are colength-$2$ elements in $W_{\le \l, K}$. The next lemma produces two more elements of colength $2$. 

\begin{lemma}
The elements $s_{\e_1+\delta} t^{\l'} s_{\e_1-\e_n}$ and $s_{\e_1-\e_n+\delta} s_{\e_1+\delta} t^{\l'}$ are colength $2$ elements in $W_{\le \l, K}$.
\end{lemma}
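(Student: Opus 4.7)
The plan is to treat the two elements in turn using Lemmas \ref{length}, \ref{Bruhat-pm} and \ref{sw}. The construction of each is designed so that it arises from the colength-$1$ element $s_{\e_1+\delta}\, t^{\l'}$ by a single affine reflection, so the task reduces to showing that this reflection is a Bruhat cover and that the resulting element remains minimal in its $W_K$-coset.

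For $u_2 := s_{\e_1-\e_n+\delta}\, s_{\e_1+\delta}\, t^{\l'}$ the argument is the most direct. Evaluating $(s_{\e_1+\delta} t^{\l'})^{-1}(\e_1 - \e_n + \delta)$ with $\l' = (1,-1,\dots,-1)$ yields a negative affine root, so Lemma \ref{Bruhat-pm} gives $u_2 < s_{\e_1+\delta} t^{\l'}$; the length formula of Lemma \ref{length} then confirms that the drop is exactly one, giving colength $2$. For $u_1 := s_{\e_1+\delta}\, t^{\l'}\, s_{\e_1-\e_n}$ one rewrites the right multiplication as left multiplication by the conjugated reflection $s_\beta$ with $\beta := (s_{\e_1+\delta} t^{\l'})(\e_1 - \e_n)$. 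A direct computation using the explicit form of $\l'$ shows that $\beta$ is a positive affine root, and by construction $(s_{\e_1+\delta} t^{\l'})^{-1}(\beta) = \e_1 - \e_n$ is negative, so Lemma \ref{Bruhat-pm} again produces a Bruhat descent of exactly one.

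To verify that $u_1,u_2 \in \tW^K$, one checks that each $u_i$ sends the affine simple roots indexed by $\tilde K = \{0,1\}$ to positive affine roots, which is a brief root-action calculation; equivalently, apply Lemma \ref{sw} to the descents just produced, starting from an element (namely $s_{\e_1+\delta} t^{\l'}$) that is already known to lie in $\tW^K$. Distinctness of $u_1,u_2$ from the previously identified colength-$2$ elements $s_{n-1} s_n t^{\l'}$ and $s_n s_{\e_1+\delta} t^{\l'}$, and from each other, can be read off by comparing their translation parts in $\tW = X_* \rtimes W_0$.

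The main obstacle is bookkeeping of root actions in type $B_n$ with its mixture of long and short roots, in particular making sure the affine levels (multiples of $\delta$) in the reflections $s_{\e_1+\delta}$ and $s_{\e_1-\e_n+\delta}$ interact correctly with $t^{\l'}$; no conceptual difficulty is expected. Once carried out, the lemma produces two further colength-$2$ elements, bringing the total to at least four, against only three elements of length $2$. This breaks symmetry of the Poincar\'e polynomial $P_{\le \l,K}(q)$, and by Proposition \ref{rs-criterion} the Schubert variety $\overline{K^\flat \l K^\flat / K^\flat}$ fails to be rationally smooth, completing the elimination of the case $(\tilde B_n, \o^\vee_n, \{0,1\})$.
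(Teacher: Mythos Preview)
Your overall plan matches the paper's: show each element is below the colength-$1$ element $s_{\e_1+\delta}t^{\l'}$ via a single affine reflection, compute the length drop with Lemma~\ref{length}, and verify membership in $\tW^K$ by a root-action check. However, several steps in your execution are incorrect as written.

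First, the appeal to Lemma~\ref{sw} does not work. That lemma requires $s\in\tilde S$, i.e., a \emph{simple} affine reflection, whereas for $n\ge 3$ neither $s_{\e_1-\e_n+\delta}$ nor the conjugated reflection $s_\beta$ is simple. The implication ``left descent by a reflection preserves $\tW^K$'' is false for non-simple reflections, so your ``equivalently'' alternative must be dropped. The paper does not use Lemma~\ref{sw} here; it carries out the direct check.

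Second, your description of that direct check is backwards. Minimality in the coset $u_i W_K$ means $u_i s_j>u_i$ for the generators of $W_K$, which are the $s_j$ with $j\notin\tilde K$; equivalently $u_i(\alpha_j)$ must be positive for $j\in\{2,\dots,n\}$, not for $j\in\tilde K=\{0,1\}$. The paper checks exactly these $n-1$ roots one by one, writing each $u_i$ in the form $x\,t^\mu$ with $x\in W_0$ and reading off the images.

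Third, in your treatment of $u_1$ the signs are wrong: one computes $\beta=(s_{\e_1+\delta}t^{\l'})(\e_1-\e_n)=-\e_1-\e_n$, which is \emph{negative}, and $(s_{\e_1+\delta}t^{\l'})^{-1}(\beta)=\e_1-\e_n$ is \emph{positive}. The correct way (and what the paper does) is to use the right-multiplication form of Lemma~\ref{Bruhat-pm}: since $w(\e_1-\e_n)$ is negative for $w=s_{\e_1+\delta}t^{\l'}$, one has $w\,s_{\e_1-\e_n}<w$. Finally, distinctness of the four colength-$2$ elements cannot be read off from translation parts alone (two of them share translation part $-\o^\vee_n$); one must also compare the $W_0$-components.
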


\begin{proof}
We have $s_{\e_1+\delta} t^{\l'} \cdot (\e_1-\e_n)=(1, -1) \cdot (\e_1-e_n)=-\e_1-\e_n$. Thus $s_{\e_1+\delta} t^{\l'} s_{\e_1-\e_n}<s_{\e_1+\delta} t^{\l'}$. Note that $s_{\e_1+\delta} t^{\l'} s_{\e_1-\e_n}=(1, -1) (1, n) t^{-\o^\vee_n}$. 

For $2 \le i \le n-2$, we have $s_{\e_1+\delta} t^{\l'} s_{\e_1-\e_n} \cdot (\e_i-\e_{i+1})=(1, -1) (1, n) \cdot (\e_i-\e_{i+1})=\e_i-\e_{i+1}$. We have $s_{\e_1+\delta} t^{\l'} s_{\e_1-\e_n} \cdot (\e_{n-1}-\e_n)=(1, -1) (1, n) \cdot (\e_{n-1}-\e_n)=\e_1+\e_{n-1}$. We have $s_{\e_1+\delta} t^{\l'} s_{\e_1-\e_n} \cdot \e_n=(1, -1) (1, n) (\e_n-\delta)=-\e_1-\delta$. Therefore $s_{\e_1+\delta} t^{\l'} s_{\e_1-\e_n} \in \tW^K$. 

Finally, by Lemma \ref{length}, we have  $\ell(s_{\e_1+\delta} t^{\l'} s_{\e_1-\e_n})=\ell(t^{\l'})-2$. Therefore $s_{\e_1+\delta} t^{\l'} s_{\e_1-\e_n}$ is a colength $2$ element in $W_{\le \l, K}$. 

Similarly, we have $(s_{\e_1+\delta} t^{\l'})^{-1} \cdot (\e_1-\e_n+\delta)=-\e_1-\e_n-\delta$. Thus $s_{\e_1-\e_n+\delta} s_{\e_1+\delta} t^{\l'}<s_{\e_1+\delta} t^{\l'}$. Note that $s_{\e_1-\e_n+\delta} s_{\e_1+\delta} t^{\l'}=(1, n) (1, -1) t^{(0, -1, \ldots, -1, 0)}$. 

For $2 \le i \le n-2$, we have $s_{\e_1-\e_n+\delta} s_{\e_1+\delta} t^{\l'} \cdot (\e_i-\e_{i+1})=\e_i-\e_{i+1}$. We have $s_{\e_1-\e_n+\delta} s_{\e_1+\delta} t^{\l'} \cdot (\e_{n-1}-\e_n)=(1, n) (1, -1) \cdot (\e_{n-1}-\e_n-\delta)=\e_{n-1}-\e_1-\delta$. We have $s_{\e_1-\e_n+\delta} s_{\e_1+\delta} t^{\l'} \cdot \e_n=\e_1$. Therefore $s_{\e_1+\delta} t^{\l'} s_{\e_1-\e_n} \in \tW^K$. 

Finally, by Lemma \ref{length}, we have  $\ell(s_{\e_1-\e_n+\delta} s_{\e_1+\delta} t^{\l'})=\ell(t^{\l'})-2$. Therefore $s_{\e_1+\delta} t^{\l'} s_{\e_1-\e_n}$ is a colength $2$ element in $W_{\le \l, K}$. 
\end{proof}

Therefore $W_{\le \l, K}$ contains at least $4$ elements of colength $2$ and the Poincar\'e polynomial of $W_{\le \l, K}$ is not symmetric. 

\subsection{The case $(\tilde C_n, \omega^\vee_1, \{0, n\})$} This case is more complicated than the cases we have discussed earlier. In fact, the geometric special fiber has two irreducible components and the Poincar\'e polynomials of the irreducible components are both symmetric. However, the Poincar\'e polynomial of their intersection is not symmetric.

Let $\l=(1, 0, \ldots, 0)$ and $\l'=(0, 0, \ldots, 0, -1)$. The irreducible components of the geometric special fibers are $\overline{ K^\flat \l  K^\flat/ K^\flat}$ and $\overline{ K^\flat \l' K^\flat/ K^\flat}$.

We set $w_1=\max(W_K t^\l W_K)$ and $w_2=\max(W_K t^{\l'} W_K)$. By direct computation, $$w_1=(s_{n-1} s_{n-2} \cdots s_0) (s_1 s_2 \cdots s_n) w^K_0, \qquad w_2=(s_1 s_2 \cdots s_n) (s_{n-1} s_{n-2} \cdots s_0) w^K_0,$$ where $w^K_0$ is the longest element in $W_K$. Moreover, the set $\{w' \in \tW; w' \le w_1, w' \le w_2\}$ contains a unique maximal element $w w^K_0$, where $$w=(s_1 s_2 \cdots s_{n-1}) (s_{n-2} s_{n-3} \cdots s_1) s_0 s_n \in \tW^K.$$ Set $W_{\le w, K}=\{v \in \tW^K; v \le w\}$. The intersection of $\overline{ K^\flat \l K^\flat/ K^\flat}$ and $\overline{ K^\flat \l'  K^\flat/ K^\flat}$ is $\overline{ K^\flat w  K^\flat/ K^\flat}$ and the associated Poincar\'e polynomial is $$P_{\le w, K}(q)=\sum_{v \in W_{\le w, K}} q^{\ell(v)}.$$

We have $$W_{\le w, K}=\{1, v_1 s_0, v_2 s_n, v_3 s_0 s_n\},$$ where \begin{gather*} v_1 \in W_{\{0, n\}} \cap W^{\{0, 1, n\}}=\{1, s_1, s_2 s_1, \ldots, s_{n-1} s_{n-2} \cdots s_1\}, \\ v_2 \in W_{\{0, n\}} \cap W^{\{0, n-1, n\}}=\{1, s_{n-1}, s_{n-2} s_{n-1}, \ldots, s_1 s_2 \cdots s_{n-1}\}, \\ v_3 \in W_{\{0, n\}} \cap W^{\{0, 1, n-1, n\}}.\end{gather*} Note that $W_{\{0, n\}} \cap W^{\{0, 1, n-1, n\}}=W(A_{n-1})^{\{1, n-1\}}$, where $W(A_{n-1})$ is the finite Weyl group of type $A_{n-1}$. Thus $$\sum_{v_3 \in W_{\{0, n\}} \cap W^{\{0, 1, n-1, n\}}} q^{\ell(v_3)}=\frac{\sum_{v \in W(A_{n-1})} q^{\ell(v)}}{\sum_{v \in W(A_{n-3})} q^{\ell(v)}}=(1+q+\cdots+q^{n-2})(1+q+\cdots+q^{n-1}).$$ 

Hence \begin{align*}P_{\le w, K} &=1+2(1+q+\cdots+q^{n-1})q+(1+q+\cdots+q^{n-2})(1+q+\cdots+q^{n-1})q^2 \\ &=(1+2q+\cdots+n q^{n-1}+(n+1) q^n)+((n-1) q^{n+1}+(n-2) q^{n+2}+\cdots+q^{2n-1}).
\end{align*}

Note that the coefficient of $q^{n-1}$ is $n$ and the coefficient of $q^n$ is $n+1$. Therefore $P_{\le w, K}$ is not symmetric. 

\subsection{The case $(\tilde A_1, 2 \omega^\vee_1, \{0, 1\})$}\label{7.9} Finally, we consider the case $(\tilde A_1, 2 \omega^\vee_1, \{0, 1\})$. In this case, $ K^\flat=I^\flat$ and $\Adm(\l)=\{s_0 s_1, s_1 s_0, s_1, s_0, 1\}$. The irreducible components of the geometric special fiber are $\overline{ I^\flat s_0 s_1  I^\flat/ I^\flat}$ and $\overline{ I^\flat s_1 s_0  I^\flat/ I^\flat}$. Their intersection is $\overline{ I^\flat s_1  I^\flat/ I^\flat} \cup \overline{ I^\flat s_0  I^\flat/ I^\flat}$ and thus is not irreducible. 

\section{Proof of Theorem \ref{maingoodred}}\label{s:maingood}

In this section we assume $p\neq 2$. As already explained in Subsection \ref{ss:goodred}, we may assume that $G_\ad$ is absolutely simple. By Proposition \ref{proofLM} (iv), we may change $G$ arbitrarily, as long as $G_\ad$ is fixed. Let us check one implication.   If $K$ is hyperspecial, then  $\BM^\loc_K(G, \{\mu\})$ is smooth over $O_E$, cf. Proposition \ref{proofLM}, (i).  Let $G=\GU(V)$ be the group of unitary similitudes  for a hermitian space relative to a ramified quadratic extension $\tilde F/F$, and let $\{\mu\}=(1, 0, \ldots,0)$. If  $K$ is the stabilizer of a $\pi$-modular lattice $\Lambda$ if $\dim V$ is even, resp., is the stabilizer of an almost $\pi$-modular lattice $\Lambda$ if $\dim V$ is odd, then $E=\tilde F$ and  the local model $\BM^\loc_K(G, \{\mu\})$ is smooth over $O_{E}$, cf. \cite[Prop. 4.16]{Arz}, \cite[Thm. 2. 27, (iii)]{PRS}. Now let $G={\rm SO}(V)$ be the orthogonal group associated to an orthogonal $F$-vector space of even dimension $\geq 6$, $\{\mu\}$ the cocharacter corresponding to the orthogonal Grassmannian
of isotropic subspaces of maximal dimension,
and $K$ the parahoric stabilizer of an almost selfdual vertex lattice, as in \ref{ss:goodred} (2).
After an unramified extension of $F$, the set-up described in
\ref{exoticOrtho} applies; by the calculation in \ref{exoticOrtho} and Proposition \ref{proofLM} (ii) 
 the local model $\BM^\loc_K(G, \{\mu\})$ is smooth over $O_{E}$.
The general case of exotic good reduction type (which involves in addition an unramified restriction of scalars) follows.

Conversely, assume that $\Mloc$ is smooth over $O_E$. Then its geometric special fiber is irreducible, and hence the triple $(G, \{\mu\}, K)$ produces enhanced Coxeter data that appear in Theorem \ref{CCP} under the heading (1). By Subsections \ref{ss:G2} and \ref{ss:F4},  the exceptional types (d) and (e) do not have rationally SPSS reduction. Similarly, Subsection \ref{ss:Bnfirst} eliminates the case (b),  and Subsection \ref{ss:Cnfirst} eliminates the case (c). Therefore, the only remaining possibilities are in case (a), i.e., $\breve K$ is a special maximal parahoric. Hence the associated enhanced Tits datum $(\tilde \Delta, \{\lambda\}, \tilde K)$ is such that $\tilde K$ consists of a single \emph{special} vertex. From these cases, Subsection \ref{ss:F4sp} eliminates $(\tilde F_4, \omega_1^\vee, \{0\})$. 
Subsections \ref{ss:Bnfirstsp} and \ref{ss:Cnfirstsp}  eliminate  $(\tilde B_n, \omega^\vee_i, \{0\})$ and $(\tilde C_n, \omega^\vee_i, \{0\})$, for $n\geq 3$ and $2\leq i\leq n-1$. 
(In these cases, the special fiber is irreducible but, again, not rationally smooth).
When $\breve K$ is hyperspecial and $\lambda$ is minuscule we have good reduction.
When $\breve K$ is hyperspecial and $\lambda$ is not minuscule the reduction is not smooth by \cite{MOV}.
(This reference is for $k$ replaced by $\mathbb C$ but the same argument works; see also \cite[\S 6]{HR} for an explanation of the passage from $\BC$ to $k$.)
It remains to list the remaining cases in which $\breve K$ is special but not hyperspecial. Here are these remaining cases:
\medskip

1) $(\tilde B_n, \omega^\vee_1, \{0\})$, $n\geq 3$. 
\smallskip

Since we are only considering the cases in which $\{0\}$ is not hyperspecial, the local Dynkin diagram is $B$-$C_n$.  Since the non-trivial automorphism of $B$-$C_n$ 
does not preserve $\{0\}$, the Frobenius has to act trivially (see \cite{Gross}). The corresponding group is a quasi-split (tamely)
ramified unitary group $U(V)$ for $V$ of even dimension $2n$ (e.g. \cite[p. 22]{Gross}). The coweight  corresponds to $\{\mu\}=(1, 0, \ldots,0)$ and $K$ is the parahoric stabilizer of a $\tilde \pi$-modular lattice
(notations as in  \ref{exotgood} (1).) This is a case of unitary exotic good reduction.
\medskip

2) $(\tilde B_n, \omega^\vee_n, \{0\})$, $n\geq 3$. 
\smallskip

As above, the local Dynkin diagram is $B$-$C_n$ and the corresponding group a (tamely)
ramified unitary group $U(V)$ for $V$ of even dimension $2n$. The subgroup $K$ is the parahoric stabilizer of a $\tilde \pi$-modular lattice. In this case, the coweight corresponds to 
$\{\mu\}=(1^{(n)}, 0^{(n)})$ so this is the case of signature $(n, n)$. By \cite[(5.3)]{PR}, we see that the geometric special fiber of the local model contains an open affine subscheme which has the following properties: It is the reduced locus $C^{\rm red}$
 of an irreducible affine cone $C$ which is defined by homogeneous equations of degree $\geq 2$ and which is generically reduced. Then $C^{\rm red}$ is the affine cone over the integral projective variety $(C-\{0\})^{\rm red}/\sim$, also given by such equations, and is therefore not smooth. We see that, in this case, $\Mloc$ is not smooth.
\medskip

3) $(\tilde C_n, \omega^\vee_1, \{0\})$, $n\geq 2$. 
\smallskip

Since we are only considering the case in which $\{0\}$ is not hyperspecial, the local Dynkin diagram is $C$-$BC_n$ or $C$-$B_n$. In both cases, only the trivial automorphism can preserve $\{0\}$ so Frobenius acts trivially.
In the case $C$-$BC_n$, we have a ramified unitary group $U(V)$ for $V$ of odd dimension $2n+1$; here there are two possibilities for a corresponding enhanced Tits datum. There are three cases overall that also appear as cases (1-a), (1-b), (1-c) in the next section:

a) Ramified quasi-split  $U(V)$ for $V$ of odd dimension $2n+1$,
$\{\mu\}=(1, 0, \ldots, 0)$, and $K$ the parahoric stabilizer of an almost $\tilde \pi$-modular lattice. This is a case of unitary exotic good reduction. 

b)  Ramified quasi-split $U(V)$ for $V$ of odd dimension $2n+1$, $\{\mu\}=(1^{(n-1)}, 0^{(n+2)})$
and $K$ is the parahoric stabilizer of a selfdual lattice. Then the local model has non-smooth special fiber by Subsection \ref{0-b} (or one can employ an argument using \cite[(5.2)]{PR} as in 2) above). 

c) Ramified quasi-split  orthogonal group ${\rm SO}(V)$ for $V$ of even dimension $2n+2$,
$\mu$ is the cocharacter that corresponds to the quadric homogeneous space and $\tilde K$ is the
parahoric stabilizer of an almost selfdual vertex lattice. Then the local model is not smooth; this follows by combining Propositions \ref{quadricLoc} and \ref{propQ} (II).  
\medskip

4) $(\tilde C_n, \omega^\vee_n, \{0\})$, $n\geq 2$. 
\smallskip

The local Dynkin diagram is $C$-$B_n$. As above, we see that
we have a ramified quasi-split but not split orthogonal group $SO(V)$ for $V$ of even dimension $2n+2$,
$\{\mu\}$ corresponds to the orthogonal Grassmannian of isotropic subspaces of  dimension $n+1$ and $\tilde K$ is the parahoric stabilizer of an almost selfdual vertex lattice. This is the case of orthogonal exotic good reduction
(see also Subsection \ref{exoticOrtho}).
\medskip

5) $(\tilde C_n, 2\omega^\vee_n, \{0\})$, $n\geq 1$.
\smallskip

 The local Dynkin diagram is  $C$-$BC_n$.
We have a ramified unitary group $U(V)$ for $V$ of odd dimension $2n+1$,
$\{\mu\}=(1^{(n)}, 0^{(n+1)})$, and $K$ is the parahoric stabilizer of an almost $\tilde \pi$-modular lattice. Using \cite[(5.2)]{PR} and an argument as in 2) above, we see that the special fiber is not smooth when $n>1$. If $n=1$, then 
$\{\mu\}=(1, 0, 0)$ and this is a case of unitary exotic reduction.
\medskip

6) $(\tilde G_2, \omega^\vee_2, \{0\})$. 
\smallskip

Again Frobenius is trivial and we have the quasi-split ramified triality group of type ${}^3D_4$. The tameness assumption implies that $p\neq 3$. Therefore, the main result of Haines-Richarz \cite{HR} is applicable and implies that the special fiber is not smooth. (In principle, this non-smoothness statement can also be deduced using Kumar's criterion -- see Subsection \ref{formula-*} below. However, this involves a lengthy calculation that appears to require computer assistance, see the (simpler) case of $G_2$ in \cite[(7.9)-(7.12)]{MOV}.)
\smallskip
  
\section{Strictly pseudo semi-stable reduction}\label{s:psssred}

\subsection{Statement of the result}  Our goal here is to examine smoothness of the affine Schubert varieties contained in the geometric special fiber of $\Mloc$. By Theorem \ref{2.9}, this fiber can be identified with the admissible locus ${\mathfrak A}_K(G,\{\mu\})$ in the partial affine flag variety for a group $\tilde G^\fl$ which is isogenous to $G^\fl$ but which has simply connected derived group. In the rest of this section, we will omit the tilde from the notation;   but it is understood that the affine Schubert varieties will  be for a group with simply connected derived group. This issue did not appear in our discussion of rational smoothness since this is defined via $\ell$-adic cohomology which is insensitive to radicial morphisms.
In fact, the rational smoothness of the affine Schubert variety $\overline{I^\flat w I^\flat/I^\flat}$ only depends on the element $w$ in the Iwahori-Weyl group, and does not depend on the reductive group itself. On the other hand, the smoothness of the affine Schubert variety $\overline{I^\flat w I^\flat/I^\flat}$ depends on the reductive group, not only the associated Iwahori-Weyl group. 
In other words, smoothness of the affine Schubert varieties in question (assuming simply connected derived group) 
depends on the enhanced Tits datum, and not only on the enhanced Coxeter datum.  

In this section, we consider the enhanced Tits data associated to the enhanced Coxeter data $(\tilde C_n, \omega^\vee_1, \{0\})$, $(\tilde B_n, \o^\vee_1, \{0, n\})$ and $(\tilde C_n, \o^\vee_n, \{0, 1\})$. They are as follows:  

\begin{enumerate}
\item The triple $(\tilde C_n, \o^\vee_1, \{0\})$ with $n \ge 2$:

\smallskip
\smallskip

\begin{center}
	
	\begin{footnotesize}

		\begin{tabular}{|c|c|c|}
			\hline
			\text{Label} & \text{Enhanced Tits datum} & \text{Linear algebra datum} \\
			\hline
			(1-a) &
			\begin{tikzpicture}[baseline=0]
			\node at (0,0) {$\bullet$};
			\node[above] at (0, 0) {$0$};
			\draw[implies-, double equal sign distance] (0.1, 0) to (0.9,0);
			\node  at (1,0) {$\circ$};
			\node[above] at (1, 0) {$1 \, \times$};
			\draw (1.1, 0) to (1.9,0);
			\node at (2,0) {$\circ$};		
			\node[above] at (2, 0) {$2$};
			\draw[dashed] (2.1, 0) to (2.9, 0);
			\node at (3, 0) {$\circ$}; 		
			\node[above] at (3, 0) {$n-1$};
			\node at (4, 0) {$\circ$};
			\draw[implies-, double equal sign distance] (3.1, 0) to (3.9,0);
			\node [above] at (4, 0) {$n$};
			\end{tikzpicture}
			& \text{Nonsplit $U_{2n+1}, r=1, \Lambda_0$}\\			\hline
			(1-b) &
			\begin{tikzpicture}[baseline=0]
			\node at (0,0) {$\circ$};
			\node[above] at (0, 0) {$0$};
			\draw[implies-, double equal sign distance] (0.1, 0) to (0.9,0);
			\node  at (1,0) {$\circ$};
			\node[above] at (1, 0) {$1$};
			\draw (1.1, 0) to (1.9,0);
			\node at (2,0) {$\circ$};		
			\node[above] at (2, 0) {$2$};
			\draw[dashed] (2.1, 0) to (2.9, 0);
			\node at (3, 0) {$\circ$}; 		
			\node[above] at (3, 0) {$n-1 \, \times$};
			\node at (4, 0) {$\bullet$};
			\draw[implies-, double equal sign distance] (3.1, 0) to (3.9,0);
			\node [above] at (4, 0) {$n$};
			\end{tikzpicture}
			& \text{Nonsplit $U_{2n+1}, r=n-1, \Lambda_n$}\\
			\hline
			(1-c) &
			\begin{tikzpicture}[baseline=0]
			\node at (0,0) {$\bullet$};
			\node[above] at (0, 0) {$0$};
			\draw[implies-, double equal sign distance] (0.1, 0) to (0.9,0);
			\node  at (1,0) {$\circ$};
			\node[above] at (1, 0) {$1 \, \times$};
			\draw (1.1, 0) to (1.9,0);
			\node at (2,0) {$\circ$};		
			\node[above] at (2, 0) {$2$};
			\draw[dashed] (2.1, 0) to (2.9, 0);
			\node at (3, 0) {$\circ$}; 		
			\node[above] at (3, 0) {$n-1$};
			\node at (4, 0) {$\circ$};
			\draw[-implies, double equal sign distance] (3.1, 0) to (3.9,0);
			\node [above] at (4, 0) {$n$};
			\end{tikzpicture}
			& \text{Nonsplit $SO_{2n+2}, r=1, \Lambda_0$}\\
			\hline
		\end{tabular}
\end{footnotesize}

\end{center}
\smallskip
	
\item The triple $(\tilde B_n, \o^\vee_1, \{0, n\})$ with $n \ge 3$:

\smallskip
\smallskip

\begin{center}

\begin{footnotesize}
\begin{tabular}{|c|c|c|}
\hline
\text{Label} & \text{Enhanced Tits datum} & \text{Linear algebra datum} \\
\hline
(2-a) &
\begin{tikzpicture}[baseline=0]
		\node at (0,0) {$\bullet$};
		\node[above] at (0, 0) {$n$};
		\draw[implies-, double equal sign distance] (0.1, 0) to (0.9,0);
		\node  at (1,0) {$\circ$};
		\node[above] at (1, 0) {$n-1$};
		\draw (1.1, 0) to (1.9,0);
		\node at (2,0) {$\circ$};	
		\node[above] at (2, 0) {$n-2$};
		\draw[dashed] (2.1, 0) to (2.9, 0);
		\node at (3.0, 0) {$\circ$};
		\node[above] at (3, 0) {$2$};
		\draw[-] (3.1,0.1) to (3.9, 0.35);
		\node at (4, 0.4) {$\bullet$};
		\node [above] at (4, 0.4) {$0$};
		\draw[-] (3.1, - 0.1) to (3.9, -0.35);
		\node at (4,-0.4) {$\circ$};
		\node [above] at (4, -0.4) {$1 \, \times$};
	\end{tikzpicture}
	& \text{Split $SO_{2n+1}$, $r=1$, $(\Lambda_0,  \Lambda_n)$} \\
\hline
(2-b) & \begin{tikzpicture}[baseline=0]
		\node at (0,0) {$\bullet$};
		\node[above] at (0, 0) {$n$};
		\draw[-implies, double equal sign distance] (0.1, 0) to (0.9,0);
		\node  at (1,0) {$\circ$};
		\node[above] at (1, 0) {$n-1$};
		\draw (1.1, 0) to (1.9,0);
		\node at (2,0) {$\circ$};	
		\node[above] at (2, 0) {$n-2$};
		\draw[dashed] (2.1, 0) to (2.9, 0);
		\node at (3.0, 0) {$\circ$};
		\node[above] at (3, 0) {$2$};
		\draw[-] (3.1,0.1) to (3.9, 0.35);
		\node at (4, 0.4) {$\bullet$};
		\node [above] at (4, 0.4) {$0$};
		\draw[-] (3.1, - 0.1) to (3.9, -0.35);
		\node at (4,-0.4) {$\circ$};
		\node [above] at (4, -0.4) {$1 \, \times$};
	\end{tikzpicture}
	& \text{$U_{2n}$, $r=1$, $(\Lambda_0,  \Lambda_n)$} \\
\hline
\end{tabular}
\end{footnotesize}

\end{center}
\smallskip

\item The triple $(\tilde C_n, \o^\vee_n, \{0, 1\})$ with $n \ge 2$:

\smallskip
\smallskip

\begin{center}

\begin{footnotesize}

\begin{tabular}{|c|c|c|}
\hline
\text{Label} & \text{Enhanced Tits datum} & \text{Linear algebra datum} \\
\hline
(3-a) &
\begin{tikzpicture}[baseline=0]
		\node at (0,0) {$\bullet$};
		\node[above] at (0, 0) {$0$};
		\draw[-implies, double equal sign distance] (0.1, 0) to (0.9,0);
		\node  at (1,0) {$\bullet$};
		\node[above] at (1, 0) {$1$};
		\draw (1.1, 0) to (1.9,0);
		\node at (2,0) {$\circ$};		
		\node[above] at (2, 0) {$2$};
		\draw[dashed] (2.1, 0) to (2.9, 0);
		\node at (3, 0) {$\circ$}; 		
		\node[above] at (3, 0) {$n-1$};
		\node at (4, 0) {$\circ$};
		\draw[implies-, double equal sign distance] (3.1, 0) to (3.9,0);
		\node [above] at (4, 0) {$n \, \times$};
	\end{tikzpicture}
	& \text{Split $Sp_{2n}, r=n, (\Lambda_0, \Lambda_1)$}\\
\hline
(3-b) &
\begin{tikzpicture}[baseline=0]
		\node at (0,0) {$\bullet$};
		\node[above] at (0, 0) {$0$};
		\draw[implies-, double equal sign distance] (0.1, 0) to (0.9,0);
		\node  at (1,0) {$\bullet$};
		\node[above] at (1, 0) {$1$};
		\draw (1.1, 0) to (1.9,0);
		\node at (2,0) {$\circ$};		
		\node[above] at (2, 0) {$2$};
		\draw[dashed] (2.1, 0) to (2.9, 0);
		\node at (3, 0) {$\circ$}; 		
		\node[above] at (3, 0) {$n-1$};
		\node at (4, 0) {$\circ$};
		\draw[-implies, double equal sign distance] (3.1, 0) to (3.9,0);
		\node [above] at (4, 0) {$n \, \times$};
	\end{tikzpicture}
	& \text{Nonsplit $SO_{2n+2}, r=n+1, (\Lambda_0, \Lambda_2)$}\\
\hline
\end{tabular}
\end{footnotesize}

\end{center}
\end{enumerate}
\medskip

Here the numbers above the vertices of the Dynkin diagrams are the labelings. 
The main result of this section is 

\begin{proposition}\label{notSPSS}
The cases (1-b), (1-c), (2-b) and (3-b) are not strictly pseudo semi-stable reduction. 
\end{proposition}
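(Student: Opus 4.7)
The plan is to apply Kumar's smoothness criterion for $T$-fixed points on affine Schubert varieties in the partial flag variety $G^\flat/K^\flat$. Recall that for simply-laced Kac--Moody root systems, rational smoothness of a Schubert variety coincides with smoothness by Carrell--Peterson, but in non-simply-laced types the two notions genuinely differ. All four cases in the proposition involve non-simply-laced local Dynkin diagrams (namely $C$-$BC_n$, $C$-$B_n$, $\tilde B_n$, and $\tilde C_n$), so although Section \ref{s:ratpsss} has already shown rational SPSS reduction, the passage from rational smoothness to actual smoothness is non-trivial, and one expects Kumar's finer criterion to detect the obstruction. By Theorem \ref{2.9}, it suffices to work entirely inside the affine partial flag variety for the group with simply connected derived group, so that Kumar's criterion is directly applicable.

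For each of the four enhanced Tits data, I would first isolate the relevant Schubert variety $\overline{K^\flat \lambda' K^\flat/K^\flat}$ (an irreducible component of the special fiber) and then choose a $T$-fixed point $e_v$ at which smoothness should fail. A natural candidate is a short-length element $v$ in $W_{\leq \lambda', K}$ sitting next to a short/long root transition in the local Dynkin diagram, since this is precisely where folding asymmetries obstruct smoothness even when the Carrell--Peterson count $\#\{\beta \in \Phi^+_{\rm af} : s_\beta v \leq w_{\lambda', K}\}$ still equals $\ell(w_{\lambda', K})$. In practice, checking a low-dimensional case (small $n$) identifies which $v$ is singular, and the same choice extends uniformly across all $n$ by the stable shape of the diagram away from $\tilde K$.

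With $v$ fixed, I would evaluate Kumar's equivariant multiplicity at $e_v$ via the nil-Hecke formula
\[
c(w_{\lambda', K}, v) = \sum_{u \in [v, w_{\lambda', K}]} (-1)^{\ell(u)-\ell(v)} \prod_{\beta \in \mathrm{Inv}(u)} \beta^{-1},
\]
and compare it to the product $\prod_{\beta \in \mathrm{Inv}(v)} \beta^{-1}$ dictated by smoothness at $e_v$. The Schubert variety is smooth at $e_v$ if and only if these agree, so a targeted calculation should exhibit a discrepancy in each of cases (1-b), (1-c), (2-b), (3-b); I expect the discrepancy to arise from a single pair of short and long coroots attached to the vertex of $\tilde K$ adjacent to the arrow on the local Dynkin diagram, whose contributions fail to cancel in the non-simply-laced setting.

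The main obstacle is the affine-root bookkeeping in the twisted affine types: enumerating the covers in the Bruhat interval $[v, w_{\lambda', K}]$ inside $\tilde W^{\tilde K}$ while keeping track of short versus long roots in the presence of the folding automorphism that produces $C$-$BC_n$ and $C$-$B_n$. I would pair the two unitary-flavored cases (1-b) and (2-b), and the two orthogonal-flavored cases (1-c) and (3-b), reduce each to a small-rank explicit computation (e.g.\ $n=2$ or $n=3$), and then propagate to general $n$ by induction on the unchanged portion of the diagram, so that the verification of non-smoothness ultimately reduces to a finite, diagrammatic check rather than a direct computation for every $n$.
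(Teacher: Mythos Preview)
Your proposal is a strategy outline rather than a proof: no computation is actually carried out, and the key step---``reduce to small $n$ and then propagate by induction on the unchanged portion of the diagram''---is not justified. Schubert varieties do not stabilize in any obvious sense as $n$ grows; the Bruhat interval $[v, w_{\lambda', K}]$ changes with $n$, and so does the set of affine roots entering the equivariant multiplicity. You would need to exhibit, for each $n$, a specific $T$-fixed point and a specific discrepancy in Kumar's formula, and there is no mechanism in your sketch for transporting this from $n$ to $n+1$. The formula you wrote down is also not the one actually used (the paper computes $e_1 X(w)$ via subexpressions, not a sum over the Bruhat interval with inversion products).

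The paper's proof is quite different in structure from your uniform plan. For (1-b) and (2-b) it does use Kumar's criterion, but the argument is not inductive: one observes that in a reduced expression of $w$ the simple reflection $s_n$ (the unique long one) appears exactly once, hence never in a subexpression for $1$; a rank-two calculation then shows every term in $e_1 X(w)$ is divisible by $2$, so the equality required by Kumar's criterion fails for all $n$ at once. For (1-c) the paper bypasses Kumar entirely and appeals to an explicit description of the local model as a quadric hypersurface (Propositions~\ref{quadricLoc} and~\ref{propQ}(II)), whose special fiber visibly has an isolated singular point. For (3-b) the paper again avoids a direct affine computation: it identifies the relevant affine Schubert variety with a Schubert variety in a \emph{finite-type} partial flag variety for $B_n$ and then invokes the pattern-avoidance criterion of Billey--Lakshmibai. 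So two of the four cases are handled by methods unrelated to Kumar's criterion, and the two Kumar cases use a structural divisibility argument rather than induction on rank.
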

%\begin{remark}
% 	We have already seen in Section \ref{s:maingood} that case (1-a) has smooth reduction. We will show later in Section \ref{s:converse1} that the cases (2-a) and (3-a) have semi-stable reduction. This implies that the Schubert varieties involved there are all smooth, and hence the corresponding Poincar\'e polynomials are symmetric. Note that the Poincar\'e polynomial depends only on the enhanced Coxeter datum, not the enhanced Tits datum. Therefore the Poincar\'e polynomials involved in the cases (1-b), (1-c), (2-b) and (3-b) are also symmetric. By Proposition \ref{rs-criterion}, the cases (1-b) and (1-c) are rationally smooth reduction and (2-b), (3-b) are rationally strictly pseudo semi-stable reduction. 
%\end{remark}

We prove Proposition \ref{notSPSS} by showing that at least one of the irreducible components of the geometric fiber is not smooth. 

\subsection{Kumar's criterion}\label{formula-*} 

Note that for any $x \in \tW$ and parahoric subgroup $K$, the smoothness of $\overline{ K^\flat x  K^\flat/ K^\flat}$ is equivalent to the smoothness  of $\overline{ I^\flat w I^\flat/ I^\flat}$, where we set $w=\max\{W_K x W_K\}$. To study the case (1-b), we use Kumar's criterion \cite{Ku}, which we recall here.  

Let $Q$ be the quotient field of the symmetric algebra of the root lattice. Following \cite{B}, we fix a reduced expression $\underline w=\tau s_{\a_1} \cdots s_{\a_l}$ of $w$, where $\tau$ is a length-zero element in $\tW$ and $\a_1, \ldots, \a_l$ are affine simple roots. For any $x \le w$, we define 
\begin{equation}\label{polQ}
e_x X(w)=\sum_{(s_1, \ldots, s_l)} \prod_{j=1}^l s_1 \cdots s_j(\a_j)^{-1} \in Q,
\end{equation}
where the sum runs over all sequences $(s_1, \ldots, s_l)$ such that $s_j=1$ or $s_{\a_j}$ for any $j$ and $s_1 \cdots s_l=x$. We call such sequences the \emph{subexpressions} for $x$ in $\underline w$. It is known that $e_x X(w)$ is independent of the choice of the reduced expression $\underline w$  of $w$.  Kumar's criterion gives a necessary and sufficient condition for the Schubert variety to be singular in terms of $e_1 X(w)$ when the field is $\BC$.  It is not known if a similar result holds in positive characteristic. However, one implication can be shown following \cite[\S 6]{HR}. The statement we will use is the following:

\begin{theorem}\label{ku}
If $e_1 X(w) \neq \prod_{\{\a \in\Phi^+_{\rm af}\mid  s_\a \le w\}} \a^{-1}$, then the Schubert variety $\overline{ I^\flat w  I^\flat/ I^\flat}$ is singular.
\end{theorem}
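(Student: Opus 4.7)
The plan is to establish the contrapositive: if $X(w) := \overline{I^\flat w I^\flat/I^\flat}$ is smooth at the base point $e = I^\flat/I^\flat$, then one must have $e_1 X(w) = \prod_{\alpha \in \Phi^+_{\rm af},\, s_\alpha \le w} \alpha^{-1}$. Following \cite[\S 6]{HR}, the idea is to interpret $e_1 X(w)$ as the $T$-equivariant multiplicity of $X(w)$ at the $T$-fixed point $e$, where $T \subset I^\flat$ is the natural maximal torus, and then compare it with the universal local formula for equivariant multiplicity at a smooth fixed point. The non-triviality here is that Kumar's original argument over $\BC$ uses ordinary topological equivariant cohomology, which must be replaced in our positive-characteristic setting by an algebraic equivariant intersection theory.

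The first step is to set up the equivariant framework in arbitrary characteristic using the operational $T$-equivariant Chow cohomology of Edidin--Graham and Brion. The affine partial flag variety $\mathcal F = LG^\flat/I^\flat$ decomposes into affine $I^\flat$-orbits indexed by $\tilde W$ with $T$-fixed points $\{x\cdot e\}_{x \in \tilde W}$, and equivariant localization embeds $H^*_T(\mathcal F)$ into $\prod_x Q$; by definition $e_x X(w)$ is the $x$-component of the class of $X(w)$ under this embedding. Second, one computes $e_1 X(w)$ from the Bott--Samelson resolution $\pi\colon Z_{\underline w} \to X(w)$ attached to the chosen reduced expression $\underline w = \tau s_{\alpha_1}\cdots s_{\alpha_l}$. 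The variety $Z_{\underline w}$ is smooth of dimension $\ell(w)$, its $T$-fixed points are in bijection with the $2^l$ subexpressions $(s_1,\dots,s_l)$ with $s_j \in \{1, s_{\alpha_j}\}$, and Atiyah--Bott--Berline--Vergne localization applied to $\pi_*[Z_{\underline w}]$ gives at each $T$-fixed point $x$ of $X(w)$ the sum over subexpressions with $s_1\cdots s_l = x$ of the tangent-weight factors, which recovers the defining formula \eqref{polQ} for $e_x X(w)$. Since $\pi$ is birational, $\pi_*[Z_{\underline w}] = [X(w)]$ and this identification of $e_1 X(w)$ is unconditional.

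Third, under the smoothness hypothesis at $e$, the tangent space $T_e X(w)$ is a $T$-subrepresentation of $T_e \mathcal F$ whose set of weights is exactly $\{-\alpha : \alpha \in \Phi^+_{\rm af},\ s_\alpha \le w\}$; this is the standard computation of tangent weights at $T$-fixed points of Schubert varieties (Carrell--Peterson, see also \cite[\S 6]{HR}). The local formula for the equivariant multiplicity at a smooth $T$-fixed point then forces $e_1 X(w) = \prod_{\alpha \in \Phi^+_{\rm af},\, s_\alpha \le w} \alpha^{-1}$, which contradicts the hypothesis of the theorem; hence $X(w)$ is singular at $e$ (and therefore singular). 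The main obstacle is justifying both the Atiyah--Bott localization identity and the smooth-fixed-point multiplicity formula in positive characteristic, i.e.\ without recourse to ordinary cohomology; this is precisely the content of the arguments in \cite[\S 6]{HR}, where all such identities are derived inside the fraction field of the $T$-equivariant cohomology ring of a point, so that the whole chain of reasoning transfers formally to the algebraic setting we need.
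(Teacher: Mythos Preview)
Your proposal is correct and follows exactly the approach the paper indicates: the paper gives no detailed proof of this theorem, merely stating that ``one implication can be shown following \cite[\S 6]{HR}'', and your sketch is a faithful expansion of that reference (equivariant multiplicities via Bott--Samelson localization, combined with the smooth-point formula in algebraic equivariant intersection theory). Your write-up is in fact more detailed than what the paper provides.
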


\subsection{The case (1-b)}\label{0-b} Set $\l'=(-1, -1, \ldots, -1, 0)$. By Lemma \ref{max}, $t^{\l'}$ is the maximal element in $W_{\le \l', K}$. Set $w=\max(W_K t^{\l'} W_K)$. By direct computation, $$w=(s_{n-1} s_{n-2} \cdots s_0) (s_1 s_2 \cdots s_n) w^K_0,$$ where $w^K_0$ is the longest element in $W_K$. 

As $s_n$ is the reflection of a long root, and the other simple reflections are reflections of short roots, in any expression of $1$, the simple reflection $s_n$ must appear an even number of times. Note that in a reduced expression of $w$, the simple reflection $s_n$ appears only once, thus $s_n$ does not appear in the subexpression for $1$. Moreover, the reduced expression $\underline w$ of $w$ may be chosen to be of the form $\ldots s_{n-1} s_n s_{n-1} \ldots$. Thus any subexpression $(s_1, \ldots, s_j)$ for $1$ in $\underline w$ is of the form $(\ldots, 1, 1, 1, \ldots)$, $(\ldots, s_{n-1}, 1, s_{n-1}, \ldots)$, $(\ldots, s_{n-1}, 1, 1, \ldots)$, $(\ldots, 1, 1, s_{n-1}, \ldots)$. Here the three terms in the middle are the subexpressions of $s_{n-1} s_n s_{n-1}$ in which $s_n$ does not appear. A direct computation for the rank-two Weyl groups shows that \begin{equation}\label{rank2}
\begin{aligned}
e_1 X(s_{n-1} s_n s_{n-1}) &=-e_{s_{n-1}} X(s_{n-1} s_n s_{n-1}) =\\
&=\frac{-\<\a_n, \a_{n-1}^\vee\>}{\a_n \a_{n-1} s_{n-1}(\a_n)}=\frac{2}{\a_n \a_{n-1} s_{n-1}(\a_n)}.
\end{aligned}
\end{equation}
We rewrite the formula \eqref{polQ} as 
\begin{equation*}
\begin{aligned}e_1 X(w) &=\sum_{(\ldots, 1, 1, 1, \ldots)} \prod_{j=1}^l s_1 \cdots s_j(\a_j)^{-1}+\sum_{(\ldots, s_{n-1}, 1, s_{n-1}, \ldots)} \prod_{j=1}^l s_1 \cdots s_j(\a_j)^{-1} \\ &+\sum_{(\ldots, s_{n-1}, 1, 1, \ldots)} \prod_{j=1}^l s_1 \cdots s_j(\a_j)^{-1}+\sum_{(\ldots, 1, 1, s_{n-1}, \ldots)} \prod_{j=1}^l s_1 \cdots s_j(\a_j)^{-1}.
\end{aligned}\end{equation*}
By \eqref{rank2}, all  coefficients in the first and in the second line are multiples of $2$. By Theorem \ref{ku}, $\overline{ I^\flat w  I^\flat/ I^\flat}$ is not smooth.		

\subsection{The case (1-c)}\label{1-c} The special fiber is irreducible but not smooth. As was also mentioned in Section  \ref{s:maingood}, Case (3),  this follows by combining Propositions \ref{quadricLoc} and \ref{propQ} (II).

\subsection{The case (2-b)}\label{1-b} Set $\l'=(0, 0, \ldots, 0, 1)$. By Lemma \ref{max}, $t^{\l'}$ is the maximal element in $W_{\le \l', K}$. Set $w=\max(W_K t^{\l'} W_K)$. By direct computation, $$w=\tau (s_{n-1} s_{n-2} \cdots s_2) (s_0 s_1 \cdots s_n) w^K_0,$$ where $\tau$ is the unique length-zero element in $\tW$ with  $t^{\l'} \in W_a \tau$ and $w^K_0$ is the longest element in $W_K$. 

\begin{comment} As $s_n$ is the reflection of a long root, and the other simple reflections are reflections of short roots, in any expression of $1$, the simple reflection $s_n$ must appear an even number of times. 
\end{comment} 

Note that in a reduced expression of $w$, the simple reflection $s_n$ appears only once, thus $s_n$ does not appear in the subexpression for $1$. Similar to the argument in the subsection \ref{0-b}, $\overline{ I^\flat w  I^\flat/ I^\flat}$ is not smooth. \begin{comment} Moreover, the reduced expression $\underline w$ of $w$ may be chosen to be of the form $\ldots s_{n-1} s_n s_{n-1} \ldots$. Thus any subexpression $(s_1, \ldots, s_j)$ for $1$ in $\underline w$ is of the form $(\ldots, 1, 1, 1, \ldots)$, $(\ldots, s_{n-1}, 1, s_{n-1}, \ldots)$, $(\ldots, s_{n-1}, 1, 1, \ldots)$, $(\ldots, 1, 1, s_{n-1}, \ldots)$. Here the three terms in the middle are the subexpressions of $s_{n-1} s_n s_{n-1}$ in which $s_n$ does not appear. A direct computation for the rank-two Weyl groups shows that \begin{equation}\label{rank2} e_1 X(s_{n-1} s_n s_{n-1})=-e_{s_{n-1}} X(s_{n-1} s_n s_{n-1})=\frac{-\<\a_n, \a_{n-1}^\vee\>}{\a_n \a_{n-1} s_{n-1}(\a_n)}=\frac{-2}{\a_n \a_{n-1} s_{n-1}(\a_n)}. \end{equation} We rewrite the formula \eqref{polQ} as  \begin{equation*} \begin{aligned}e_1 X(w) &=\sum_{(\ldots, 1, 1, 1, \ldots)} \prod_{j=1}^l s_1 \cdots s_j(\a_j)^{-1}+\sum_{(\ldots, s_{n-1}, 1, s_{n-1}, \ldots)} \prod_{j=1}^l s_1 \cdots s_j(\a_j)^{-1} \\ &+\sum_{(\ldots, s_{n-1}, 1, 1, \ldots)} \prod_{j=1}^l s_1 \cdots s_j(\a_j)^{-1}+\sum_{(\ldots, 1, 1, s_{n-1}, \ldots)} \prod_{j=1}^l s_1 \cdots s_j(\a_j)^{-1}. \end{aligned}\end{equation*} By \eqref{rank2}, all  coefficients in the first and in the second line are multiples of $2$. By Theorem \ref{ku}, \end{comment} 

\subsection{The case (3-b)}\label{case(2-b)}
 Set $\l'=(-\frac{1}{2}, -\frac{1}{2}, \ldots, -\frac{1}{2})$. By Lemma \ref{max}, $t^{\l'}$ is the maximal element in $W_{\le \l', K}$. By direct computation, $$\max(W_K t^{\l'} W_K)=\tau w_0^{\{n\}} w_0^{\{0, n\}} w^K_0,$$ where $\tau$ is the unique length-zero element in $\tW$ with  $t^{\l'} \in W_a \tau$  and where $w_0^{K'}$ is the longest element in $W_{K'}$ for $\tilde K'=\{0, 1\}, \{n\}$ or $\{0, n\}$. Note that $$\overline{ K^\flat t^{\l'}  K^\flat/ K^\flat}=\overline{ I^\flat t^{\l'} K^\flat/ K^\flat} \cong \overline{ I^\flat w_0^{\{n\}} w_0^{\{0, n\}} K^\flat/ K^\flat} \cong \overline{ I^\flat w_0^{\{n\}} w_0^{\{0, n\}}  K^\flat_1/ K^\flat_1} \subset  K^\flat_2/ K^\flat_1,$$ where $\tilde K_1=\{0, 1, n\}$ and $\tilde K_2=\{n\}$. 

Let $U_{K^\flat_2}$ be the pro-unipotent radical of $ K^\flat_2$ and $G'= K^\flat_2/U_{ K^\flat_2}$  the reductive quotient of $ K^\flat_2$. Note that $G'_{\ad}$ is the adjoint group of type $B_n$ over $k$. Let $P= K^\flat_1/U_{ K^\flat_2}$. This is a standard parabolic subgroup of $G'$. We have $ K^\flat_2/ K^\flat_1 \cong G'/P$. This is a partial flag variety of finite type. 

\begin{center}
\begin{footnotesize}
\begin{tabular}{|c||c|}
\hline
\text{Group} & \text{Affine/Finite Dynkin diagram} \\
\hline
$G$ & \begin{tikzpicture}[baseline=0]
		\node at (0,0) {$\bullet$};
		\node[above] at (0, 0) {$0$};
		\draw[implies-, double equal sign distance] (0.1, 0) to (0.9,0);
		\node  at (1,0) {$\bullet$};
		\node[above] at (1, 0) {$1$};
		\draw (1.1, 0) to (1.9,0);
		\node at (2,0) {$\circ$};		
		\node[above] at (2, 0) {$2$};
		\draw[dashed] (2.1, 0) to (2.9, 0);
		\node at (3, 0) {$\circ$}; 		
		\node[above] at (3, 0) {$n-1$};
		\node at (4, 0) {$\bullet$};
		\draw[-implies, double equal sign distance] (3.1, 0) to (4,0);
		\node [above] at (4, 0) {$n$};
	\end{tikzpicture} \\
\hline
$G'$ & \begin{tikzpicture}[baseline=0]
		\node at (0,0) {$\bullet$};
		\node[above] at (0, 0) {$n$};
		\draw[implies-, double equal sign distance] (0.1, 0) to (0.9,0);
		\node  at (1,0) {$\bullet$};
		\node[above] at (1, 0) {$n-1$};
		\draw (1.1, 0) to (1.9,0);
		\node at (2,0) {$\circ$};		
		\node[above] at (2, 0) {$n-2$};
		\draw[dashed] (2.1, 0) to (2.9, 0);
		\node at (3, 0) {$\circ$}; 		
		\node[above] at (3, 0) {$1$};
	\end{tikzpicture} \\
	\hline
\end{tabular}
\end{footnotesize}
\end{center}
\smallskip

In the table, the parahoric subgroup $\breve{K}_1$ of $G$ and the parabolic subgroup $P$ of $G'$ correspond to the set of vertices filled with black color  in the corresponding diagram. 

The finite Dynkin diagram of $G'$ is obtained from the local Dynkin diagram of $G$ by deleting the vertex labeled $n$. The labeling of the Dynkin diagram is not inherited from the local Dynkin diagram of $G$, but is the standard labeling of the finite Dynkin diagram in \cite{Bou}. The reason is that we will apply the smoothness criterion for finite Schubert varieties, and we follow the convention for finite Dynkin diagrams and finite Weyl groups. We identify the finite Weyl group $W_{G'}$ of $G'$ with the group of permutations of $\{\pm 1, \pm 2, \ldots, \pm n\}$.

Under the natural isomorphism $ K^\flat_2/ K^\flat_1 \cong G'/P$, the closed subset of the affine partial flag variety $\overline{ I^\flat w_0^{\{n\}} w_0^{\{0, n\}}  K^\flat_1/ K^\flat_1}$ is isomorphic to the closed subvariety $\overline{B' w' P/P}$ of the finite type partial flag variety, where $B'= I^\flat/U_{ K^\flat_2}$ is a Borel subgroup of $G'$ and $w' \in W_{G'}$ is the permutation $(1, -n) (2, -(n-1)) \cdots$. 

The smoothness of $\overline{B' w' P/P}$ is equivalent to the smoothness of $\overline{B' w' w_0^P B'/B'}$, where $w_0^P$ is the longest element in the Weyl group $W_P$ of $P$. The element $w' w_0^P$ is the permutation of $\{\pm 1, \pm 2, \ldots, \pm n\}$ sending $1$ to $-2$, $2$ to $-3,\dots, n-1$ to $-n$ and $n$ to $-1$. By the pattern avoidance criterion (see \cite[Thm.~8.3.17]{BL}), $\overline{B' w' w_0^P B'/B'}$ is not smooth. Hence $\overline{ K^\flat {\l'}  K^\flat/ K^\flat }$ is not smooth. 

\section{Proof of one implication in Theorem \ref{mainssred}}\label{s:oneimpl}
Assume that $\Mloc$ has strictly semi-stable reduction.  Inspection of all cases considered in the previous section shows that then $(G, \{\mu\}, K)$ appears in the list of Theorem \ref{mainssred}. In the remaining section of the paper, we show that indeed for all triples $(G, \{\mu\}, K)$ on this list the corresponding associated local models have semi-stable reduction. As a consequence of this assertion, we obtain the following somewhat surprising result.
\begin{corollary}
Let $(G, \{\mu\}, K)$ be a triple over $F$ such that $G$ splits over a tame extension of $F$. Assume $p\neq 2$.  Assume also that the group $G$ is adjoint and absolutely simple. 
 If  $\Mloc$ has strictly pseudo semi-stable reduction, then $\Mloc$ has (strictly) semi-stable reduction, in particular, it is a regular scheme with reduced special fiber. \qed
\end{corollary}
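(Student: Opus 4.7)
The plan is to reduce the corollary to the classification results already established in the paper and then invoke the explicit verification in Section \ref{s:converse1}. Suppose that $\Mloc$ has strictly pseudo semi-stable reduction. Since strict pseudo semi-stability implies rational strict pseudo semi-stability (smooth irreducible components are in particular rationally smooth, and smooth intersections are rationally smooth), the first step is to conclude that the enhanced Coxeter datum associated to $(G, \{\mu\}, K)$ must lie in the list produced by combining Theorem \ref{CCP} with the eliminations carried out in Section \ref{s:ratpsss}. Under our running assumption that $G$ is adjoint and absolutely simple, this leaves precisely the cases corresponding to the good reduction list of Theorem \ref{maingoodred} together with the irreducible CCP cases $(\tilde C_n, \omega_1^\vee, \{0\})$, $(\tilde B_n, \omega_1^\vee, \{0,n\})$, $(\tilde C_n, \omega_n^\vee, \{0,1\})$ and the (reducible) Drinfeld-type and G\"ortz-type enhanced Coxeter data.

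The next step is to refine the list of enhanced Coxeter data to a list of enhanced Tits data by taking into account the distinction between smoothness and rational smoothness. This is the content of Proposition \ref{notSPSS}: the enhanced Tits data labelled (1-b), (1-c), (2-b) and (3-b) are ruled out by Kumar's criterion (or by the explicit non-smoothness of the associated quadric local model, for (1-c)). What survives, beyond the cases of good reduction treated by Theorem \ref{maingoodred}, are exactly the enhanced Tits data appearing in the table of Theorem \ref{mainssred}, together with the possible Frobenius twists of each such datum as enumerated in Remark \ref{Frobeniusaction}.

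Finally, for each of the remaining triples $(G,\{\mu\}, K)$ one must verify that the local model $\Mloc$ actually has semi-stable (and hence in particular strictly pseudo semi-stable) reduction; this is carried out case by case in Section \ref{s:converse1} via explicit linear-algebraic descriptions of the corresponding affine charts. Granting this verification, we conclude that every triple with strictly pseudo semi-stable $\Mloc$ in fact produces an $\Mloc$ with semi-stable reduction. Regularity and reducedness of the special fiber then follow from the normal crossings property (see Definition \ref{def:goodss} and Remark \ref{strictversusnonstrict}).

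The main obstacle is, of course, the case-by-case verification in Section \ref{s:converse1}: the a priori content of ``strictly pseudo semi-stable'' is purely about the geometric special fiber (smooth components meeting transversally in the correct codimension), whereas ``semi-stable'' is a statement about the total scheme over $\Spec O_E$. Bridging this gap requires exhibiting, for each surviving case, explicit local equations of the form $x_1\cdots x_r = \pi$ in an \'etale neighbourhood of each closed point. This is essentially what drives the length of Section \ref{s:converse1}, and is where the bulk of the work in the proof lies.
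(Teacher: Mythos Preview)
Your proposal is correct and follows exactly the paper's approach: the corollary is an immediate consequence of the chain of eliminations (CCP in Section~\ref{s:CCP}, rational SPSS in Section~\ref{s:ratpsss}, SPSS via Kumar in Section~\ref{s:psssred}) together with the explicit semi-stability verification in Section~\ref{s:converse1}. One minor bookkeeping slip: in your enumeration of the enhanced Coxeter data surviving the rational SPSS elimination, you omit the two $\tilde D_n$ cases $(\tilde D_n, \omega_1^\vee, \{0,n\})$ and $(\tilde D_n, \omega_n^\vee, \{0,1\})$ (cases (2)(h) and (2)(i) of Theorem~\ref{CCP}), which correspond to the Faltings row and the last ``new'' row of the table in Theorem~\ref{mainssred}; these are not eliminated anywhere in Section~\ref{s:ratpsss} and must be carried through to Section~\ref{s:converse1}.
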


\section{Proof of the other implication of Theorem \ref{mainssred}}\label{s:converse1}
In this section, we go through the list of Theorem \ref{mainssred}, and produce in each case an LM triple  $(G, \{\mu\}, K)$ in the given central isogeny type which has semi-stable reduction. By Lemma \ref{devislem}, we may indeed assume that $G$ is a central extension of the adjoint group appearing in the list of Theorem \ref{mainssred}. So, for instance, in this section, we work with $\GL$ instead of $\SL$, ${\rm GSp}$ instead of $\Sp$, and, in some instances, with ${\rm GO}$ instead of $\SO$.

We precede this by the following remarks. The first remark is that the  locus where  $\Mloc$ has semi-stable reduction is open and $\CG$-invariant. Therefore, in order to show that  $\Mloc$ has semi-stable reduction, it suffices to check this in a closed point of the unique closed $\CG\otimes_{O_F}k$-orbit of the special fiber. 

The second remark is that we may always make an unramified field extension $F'/F$. This implies that, in checking semi-stable reduction, we may assume that in the LM triple $(G, \{\mu\}, K)$, the group $G$ is residually split. 

The third remark is that in most of the cases which we treat, the LM triples are of ``EL or PEL type." Then the corresponding local models of \cite{PZ} have a more standard/classical description, as closed subschemes of linked classical (i.e., not affine) Grassmannians. This description, which was in fact given in earlier works, is established in  \cite[7.2, 8.2]{PZ}; we use this in our analysis, sometimes without further mention. There are two   cases which are different: These are the LM triples for (special) orthogonal groups 
and the coweight $\omega^\vee_1$ (i.e., $r=1$). The corresponding local models have as generic fiber a quadric hypersurface. These are just of ``Hodge type" and, for them, we have to work harder to first establish a standard description. Most of this is done in Subsection
\ref{ss:quadricLM} with the key statement being Proposition \ref{quadricLoc}.

\subsection{Preliminaries on  $\GL_n$}\label{ss:GLnprelim}
In this subsection, we consider the LM triple  
\[
(G=\GL_n,\{\mu\}=\mu_r:=(1^{(r)},0^{(n-r)}), K_I)
\]
 for some $r\geq 1$, where $K_I$ is the stabilizer of a lattice chain $\Lambda_I$ for some non-empty subset $I\subset \{0, 1,\ldots,n-1\}$. We use the notation $(\GL_n, \mu_r, I)$. We follow G\"ortz \cite[\S 4.1]{Goertz} for the description of the local model in this case (the \emph{standard local model})  and of an open subset $U$ around the worst point, cf. \cite[Prop.4.5]{Goertz}. 

The local model  $\BM^\loc_I(\GL_n, \mu_r)$  represents the following functor on $O_F$-schemes. Write $I=\{i_0< i_1<\cdots<i_{m-1}\}$. Then  $\BM^\loc_I(\GL_n, \mu_r)(S)$ is the set of commutative diagrams 
\begin{equation}
\begin{aligned}
\xymatrix{
	&\Lambda_{i_0, S}  \ar[r] & \Lambda_{i_1,S}  \ar[r]  & \cdots 
	\ar[r] & \Lambda_{i_{m-1},S}  \ar[r]^\pi & \Lambda_{i_0,S} \\
	&\CF_0  \ar@{^{(}->}[u] \ar[r] & \CF_1 \ar@{^{(}->}[u]  \ar[r] \ar@{^{(}->}[u]  & \cdots  \ar[r] & \CF_{m-1} \ar@{^{(}->}[u] 
	\ar[r] & \CF_0 \ar@{^{(}->}[u] }
\end{aligned}
\end{equation}
\begin{comment}
\begin{equation}
\begin{aligned}
\xymatrix{
&\Lambda_{i_0, S}  \ar[r] & \Lambda_{i_1,S}  \ar[r]  \cdots 
\ar[r] & \Lambda_{i_{m-1},S}  \ar[r]^\pi & \Lambda_{i_0,S} \\
&\ar@{}[d] |*{\rotatebox{90}{$\subset$}}                  \ar@{}[d] |*{\rotatebox{90}{$\subset$}}      &\ar@{}[d] |*{\rotatebox{90}{$\subset$}}&\ar@{}[d] |*{\rotatebox{90}{$\subset$}}  \\
&\CF_0      \ar[r] & \CF_1  \ar[r]  \cdots  \ar[r] & \CF_{m-1}
\ar[r] & \CF_0}
\end{aligned}
\end{equation}
\end{comment}
where $\Lambda_i$ is the lattice generated by $e^i_1:=\pi^{-1}e_1, \ldots, e^i_i:=\pi^{-1}e_i, e^i_{i+1}:=e_{i+1},\ldots,e^i_n:=e_n$, $\Lambda_{i,S}$ is $\Lambda_i \otimes_{O_F} \CO_S$, $\pi$ is a fixed uniformizer of $F$, and where 
the $\CF_\kappa$ are locally free $\CO_S$-submodules of rank $r$ which
Zariski-locally on $S$ are direct summands of $\Lambda_{i_\kappa,S}$.

\subsection{The case $(\GL_n, r=1, I)$,  $I$ arbitrary }
That in this case we have semi-stable reduction is well-known and follows from\footnote{More precisely, in loc.~cit., the case $I=\{0,\ldots, n-1\}$ is considered, but the case of an arbitrary subset $I$ is the same. } \cite[Prop. 4.13]{Goertz}.

\subsection{Preliminaries on  $(\GL_n, r, \{0, \kappa\})$,  $r$ arbitrary }  Note that to verify the remaining case for $\GL_n$, we only need the case $\kappa=1$, cf.  Subsection \ref{11.4}. However, as we will see later, in order to verify the cases for other classical groups, we need to describe the incidence relation between $0$ and $\kappa$ for some other $\kappa$. So we discuss arbitrary $\kappa$ here.  

In terms of the bases $\{e^i_1, \ldots, e^i_n\}$ of $\Lambda_{i, S}$, the transition maps $\Lambda_{0, S}\to \Lambda_{\kappa, S}$, resp., $\pi\colon \Lambda_{\kappa, S}\to \Lambda_{0, S}$ are given by the diagonal matrices
\begin{equation}
\phi_{0, \kappa}=\diag(\pi^{(\kappa)}, 1^{(n-\kappa)}) , \text{ resp., } \phi_{ \kappa, 0}=\diag(1^{(\kappa)}, \kappa^{(n-\kappa)}) .
\end{equation}

For the open subset $U$ around the worst point we take the pair of subspaces $\CF_0$ of $\Lambda_{0, S}$, resp., $\CF_\kappa$ of $\Lambda_{\kappa, S}$,  given by the $n\times r$-matrices
\begin{eqnarray*}
	&& \CF_0 {=} \left( \begin{array}{cccc} 
		1     &          &        &          \\
		&     1    &        &          \\
		&          & \ddots &          \\
		&          &        & 1        \\
		a^0_{11} & a^0_{12} & \cdots & a^0_{1r} \\
		\vdots  & \vdots   &        &  \vdots  \\
		a^0_{n-r,1} & a^0_{n-r,2} & \cdots & a^0_{n-r,r}
	\end{array}  \right), \\
	&& \CF_\kappa {=} \left( \begin{array}{cccc} 
		a^\kappa_{n-r-\kappa+1,1} & a^\kappa_{n-r-\kappa+1,2} & \cdots & a^\kappa_{n-r-\kappa+1,r} \\
		\vdots & \vdots & & \vdots \\
		a^\kappa_{n-r,1} & a^\kappa_{n-r,2} & \cdots & a^\kappa_{n-r,r} \\
		1     &          &        &          \\
		&     1    &        &          \\
		&          & \ddots &          \\
		&          &        & 1        \\
		a^\kappa_{11} & a^\kappa_{12} & \cdots & a^\kappa_{1r} \\
		\vdots  & \vdots   &        &  \vdots  \\
		a^\kappa_{n-r-\kappa,1} & a^\kappa_{n-r-\kappa,2} & \cdots & a^\kappa_{n-r-\kappa,r}
	\end{array}  \right).
\end{eqnarray*} 
Then the incidence relation from $0$ to $\kappa$ is given by 
$$
\phi_{0, \kappa}\cdot\CF_0=\CF_\kappa\cdot N_0 ,
$$ and the incidence relation from $\kappa$ to $0$ is given by
$$
\phi_{ \kappa, 0}\cdot\CF_\kappa=\CF_0\cdot N_\kappa ,
$$
where $N_0, N_\kappa\in \GL_r(\CO_S)$ are uniquely defined matrices. These equations can now be evaluated and lead  to the following description of $U$:
\begin{proposition}[{\cite[\S 4.4.5]{Goertz}}]Let $\kappa\leq r$. Let 
$$
A = (a^0_{i,j})_{i, j = 1, \dots, \kappa}, B = (a^\kappa_{i,j})_{i = 1, \dots, \kappa, j = r-\kappa+1, \dots, r}$$
	be $\kappa \times \kappa$-matrices of indeterminates.
	Then
	$$ U \cong \Spec O_F[A,B]/(BA-\pi, AB-\pi) \times V, $$
	where 
\[V =\Spec O_F[a^0_{i,j}]_{ i = 1, \dots, r, j = \kappa+1, \dots, n-r} \times   \Spec O_F[a^\kappa_{i,j}]_{ i=1, \dots, r-\kappa, j = 1, \dots, \kappa}\]
 is an affine space $ \BA^{(n-r)r-\kappa^2}$ over $O_F$.\qed
\end{proposition}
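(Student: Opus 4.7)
The plan, following G\"ortz, is to expand out the two incidence conditions $\phi_{0,\kappa}\CF_0 = \CF_\kappa \cdot N_0$ and $\phi_{\kappa,0}\CF_\kappa = \CF_0 \cdot N_\kappa$ explicitly in the displayed affine chart, then to solve for $N_0$ and $N_\kappa$ and read off the remaining defining equations among the parameters $a^0_{ij}$ and $a^\kappa_{ij}$.

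I would first decompose the rows of the $n \times r$ matrices into three blocks of sizes $\kappa$, $r-\kappa$, $n-r-\kappa$. Because $\CF_\kappa$ has $I_r$ as its middle $r$ rows (positions $\kappa+1,\dots,\kappa+r$) and $\CF_0$ has $I_r$ as its first $r$ rows, the matrices $N_0$ and $N_\kappa$ are uniquely determined simply by reading off the identity-row blocks of the two equations. This produces explicit block forms for $N_0$ and $N_\kappa$ of size $r\times r$, whose entries consist of the first $\kappa$ rows of $A^0 := (a^0_{ij})$ and of the top $\kappa \times r$ block $B' = (a^\kappa_{n-r-\kappa+i,j})$ of $\CF_\kappa$, together with a block $I_{r-\kappa}$ (respectively $\pi I_{r-\kappa}$) coming from the remaining identity rows.

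Substituting these $N_0$ and $N_\kappa$ back into the non-identity rows of the two equations gives matrix relations. Writing $A$ for the leading $\kappa \times \kappa$ minor of $A^0$, $A'$ for the trailing $\kappa \times (r-\kappa)$ block of the first $\kappa$ rows of $A^0$, and splitting $B' = (B_1 \mid B_2)$ with $B_1$ of size $\kappa \times (r-\kappa)$ and $B := B_2$ of size $\kappa \times \kappa$, the top $\kappa$ rows of the first equation yield $BA = \pi I_\kappa$ together with $B_1 = -BA'$, while the rows in positions $r+1,\dots,r+\kappa$ of the second equation yield $AB = \pi I_\kappa$ together with $AB_1 + \pi A' = 0$. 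The latter is a formal consequence of $AB = \pi I_\kappa$ and $B_1 = -BA'$. The remaining row-blocks in the lower part of the two equations either determine the auxiliary blocks $C_1$ and $A^0_{2L}$ (in an obvious $(\kappa, r-\kappa)$ column-split of the lower parts of $\CF_\kappa$ and $\CF_0$) in terms of the free blocks $C_2$, $A$, $A'$, $A^0_{2R}$, or are automatic consequences of the relations already recorded.

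At this point, the independent parameters of $U$ are the entries of the $\kappa\times\kappa$ matrices $A$ and $B$ subject only to $AB = BA = \pi I_\kappa$, together with the free entries of $A'$, $C_2$ and $A^0_{2R}$, whose total count is exactly
\[
\kappa(r-\kappa) + (n-r-\kappa)\kappa + (n-r-\kappa)(r-\kappa) = (n-r)r - \kappa^2.
\]
This yields the stated product decomposition $U \cong \Spec O_F[A,B]/(BA-\pi, AB-\pi) \times V$. The main obstacle is the bookkeeping verification that the two relations $AB = \pi I_\kappa$ and $BA = \pi I_\kappa$ truly are both needed—they arise from the two distinct incidence conditions and neither implies the other—while every other relation extracted from the two matrix equations is either tautological for the determined variables $B_1$, $C_1$, $A^0_{2L}$, or a formal consequence of $AB = BA = \pi I_\kappa$ together with those tautologies. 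Once this is settled, the $(A,B)$-factor decouples from the remaining variables, and the latter form the affine space of dimension $(n-r)r - \kappa^2$, as desired.
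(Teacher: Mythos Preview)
Your approach is correct and is precisely the computation of G\"ortz that the paper cites; note that the paper gives no proof of its own here (the \qed immediately follows the statement), so there is nothing further to compare against. One small bookkeeping slip: your ``three blocks of sizes $\kappa$, $r-\kappa$, $n-r-\kappa$'' do not sum to $n$; the correct row partition is into four blocks of sizes $\kappa$, $r-\kappa$, $\kappa$, $n-r-\kappa$ (the identity block of $\CF_0$ splits as $\kappa + (r-\kappa)$ and the identity block of $\CF_\kappa$ overlaps it shifted by $\kappa$), but this does not affect the substance of your argument. Your observation that $AB=\pi I_\kappa$ and $BA=\pi I_\kappa$ are genuinely independent scheme-theoretic relations (visible already on the special fiber for $\kappa\ge 2$) is exactly the point, and your dimension count for the free parameters is correct. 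The set of free coordinates you end up with differs in labeling from those listed in the statement of the proposition, but either choice parametrizes the same affine factor $V$.
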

Something analogous holds in the case when $\kappa>r$, cf.~loc.~cit.. 

\subsection{The case $(\GL_n, r=1, I=\{i, i+1\})$,  $r$ arbitrary}\label{11.4}  After changing the basis,  we may assume that $i=0$. Then the above proposition implies that $U$ is a product of $\Spec O_F[X, Y]/(XY-\pi)$ and an affine space $\BA^{(n-r)r-1}$. Hence $U$ is regular and the special fiber is the union of two smooth divisors crossing normally along a smooth subscheme of codimension $2$. Hence $U$ has semi-stable reduction.
\begin{remark}\label{oneenough}
	In contrast to the case of a general subset $I$, in this case the incidence condition from $\CF_0$ to $\CF_1$ automatically implies the incidence relation from $\CF_1$ to $\CF_0$.  
\end{remark}

\subsection{The case $({\rm GSp}_{2n}, r=n, \{0, 1\})$} In the case of ${\rm GSp}_{2n}$ there is only one non-trivial minuscule coweight $\{\mu\}=\mu_n$. Let $e_1,\ldots, e_{2n}$ be a symplectic basis of $F^{2n}$, i.e., $\langle e_i, e_{2n-j+1}\rangle=\pm\delta_{ij}$ for $i, j\leq 2n$ (with sign $+$ if $i= j\leq n$ and sign $-$ if $n+1\leq i= j$). Then the standard lattice chain  is self-dual, i.e., $\Lambda_i$ and $\Lambda_{2n-1}$ are paired by a perfect pairing. In this case, a parahoric subgroup $K$ is  the stabilizer of a \emph{selfdual} periodic lattice chain $\Lambda_I$ , i.e., $I$ satisfies $i\in I\iff 2n-i\in I$.  In this case, the local model is contained in the closed subscheme $\BM^{\rm naive}_I({\rm GSp}_{2n}, \mu_n)$ of the local model $\BM^\loc_I(\GL_{2n}, \mu_n)$ given by the condition that 
\begin{equation}\label{perpcondSp}
\CF_i=\CF_{2n-i}^\perp, \quad i\in I .
\end{equation}
In fact, by \cite{Go2}, it is equal to this closed subscheme but we will not use this fact. 

Now let $I_0=\{0, 1, 2n-1\}$. Then, since $\CF_{2n-1}$ is determined by $\CF_1$ via the identity \eqref{perpcondSp}, we obtain a closed embedding $\BM^{\rm naive}_{I_0}({\rm GSp}_{2n}, \mu_n)\subset \BM^\loc_{\{0, 1\}}(\GL_{2n}, \mu_n)$.

As open subset $U$ of the worst point we take the scheme of $(\CF_0, \CF_1)$, where 
\begin{eqnarray*}
	&& \CF_0 {=} \left( \begin{array}{cccc} 
		1     &          &        &          \\
		&     1    &        &          \\
		&          & \ddots &          \\
		&          &        & 1        \\
		c_{11} & c_{12} & \cdots & c_{1n} \\
		\vdots  & \vdots   &        &  \vdots  \\
		c_{n1} & c_{n2} & \cdots & c_{nn}
	\end{array}  \right), \\
	&& \CF_1 {=} \left( \begin{array}{cccc} 
		a_{n1} & a_{n2} & \cdots & a_{nn} \\
		1     &          &        &          \\
		&     1    &        &          \\
		&          & \ddots &          \\
		&          &        & 1        \\
		a_{11} & a_{12} & \cdots & a_{1n} \\
		\vdots  & \vdots   &        &  \vdots  \\
		a_{n-1,1} & a_{n-1,2} & \cdots & a_{n-1,n}
	\end{array}  \right).
\end{eqnarray*}
The condition that $\CF_0$ be a totally isotropic subspace of $\Lambda_{0, S}$ is expressed by
\begin{equation}
c_{\mu \nu}=c_{n-\nu+1, n-\mu+1} . 
\end{equation}
The incidence relation from $\CF_0$ to $\CF_1$ is given by the following system of equations,
$$\begin{pmatrix}
\pi&0&\ldots&0\\
0&1&\ldots&0\\
\vdots&&\ddots&\\
0&0&\ldots&1\\
c_{11}&c_{12}&\ldots&c_{1n}\\
\vdots&\vdots&&\vdots\\
c_{n1}&c_{n2}&\ldots&c_{n n}
\end{pmatrix}
=
\begin{pmatrix}
a_{n 1}&a_{n2}&\ldots&a_{n n}\\
1&0&\ldots&0\\
0&1&\ldots&0\\
\vdots&&\ddots&\\
0&0&\ldots&1\\
a_{11}&a_{12}&\ldots&a_{1n}\\
\vdots&\vdots&&\vdots\\
a_{n-1, 1}&a_{n-1,2}&\ldots&a_{n-1, n}
\end{pmatrix}
\cdot
\begin{pmatrix}
0&1&0&\ldots&0\\
0&0&1&\ldots&0\\
\vdots&&&\ddots\\
0&0&0&\ldots&1\\
c_{11}&c_{12}&c_{13}&\ldots&c_{1n}
\end{pmatrix}
$$
The first row of this matrix identity gives
\begin{equation}\label{aceq}
a_{n n}c_{11}=\pi ,
\end{equation}
and allows one to eliminate $a_{n 1}, \ldots,a_{n, n-1}$. The last $n-1$ entries of the $n+2$-th row allow one to eliminate $a_{11},\ldots, a_{1, n-1}$, the last $n-1$ entries of the $n+3$-th row allow one to eliminate $a_{21},\ldots, a_{2, n-1}$, etc., until the last $n-1$ entries of  the $2n$-th row eliminate $a_{n-1, 1},\ldots, a_{n-1, n-1}$. Finally, the first column of these rows allows one to eliminate $c_{2 1},,\ldots,c_{n1}$.  All in all, we keep the entries $a_{1 n},\ldots, a_{n n}$, $c_{1 1}$,  and $ c_{\mu \nu}$ for $\mu\geq\nu>1$, which are subject to equation \eqref{aceq}. 

Let ${\rm Grass}^{\rm lagr}(\Lambda_0)\times {\rm Grass}(\Lambda_1)$ be the product of the Grassmannian variety of Lagrangian subspaces of $\Lambda_0$ and of the Grassmannian variety of subspaces of dimension $n$ of $\Lambda_1$.  Let $\BM$ denote its closed subscheme of elements $(\CF_0, \CF_1)$ such that $\CF_0$ is incident to $\CF_1$. Note that $\BM$ and $\BM^\loc_{I_0}({\rm GSp}_{2n}, \mu_n)$ have identical generic fibers. We have a chain of closed embeddings
\begin{equation}
\BM^\loc_{I_0}({\rm GSp}_{2n}, \mu_n)\subset \BM^{\rm naive}_{I_0}({\rm GSp}_{2n}, \mu_n)\subset \BM .
\end{equation}
But we just proved that $\BM$ has semi-stable reduction, and is therefore flat over $O_F$. Hence all inclusions are equalities, since we can identify all three schemes with the flat closure of the generic fiber of $\BM^\loc_{I_0}({\rm GSp}_{2n}, \mu_n)$ in $\BM^\loc_{\{0,1\}}({\rm GL}_{2n}, \mu_n)$. In particular, $\BM^\loc_{I_0}({\rm GSp}_{2n}, \mu_n)$ has semi-stable reduction (in fact, with special fiber the union of two smooth divisors meeting transversally in a smooth subscheme of codimension $2$). 

\begin{remark}\label{oneenoughSp}
	Again, as in the case of $\GL_n$,  in this case  the incidences  from $\CF_1$ to $\CF_{2n-1}$ and from $\CF_{2n-1}$ to $\CF_{0}$ are automatic.  
\end{remark}

\subsection{The case $({\bf split }\,{\rm GO}_{2n}, r=n, \{1\})$ } In this subsection, we assume $p\neq 2$. Let $e_1,\ldots, e_{2n}$ be a Witt basis of $F^{2n}$, i.e., $\langle e_i, e_{2n-j+1}\rangle=\delta_{ij}$ for $i, j\leq 2n$. Then the standard lattice chain  is self-dual, i.e., $\Lambda_i$ and $\Lambda_{2n-i}$ are paired by a perfect pairing. In this case $K$ is the parahoric stabilizer of a \emph{selfdual} periodic lattice chain $\Lambda_I$ , i.e., $I$ has the property $i\in I\iff 2n-i\in I$.    In this case, by \cite[8.2.3]{PZ}, the local model is contained in the closed subscheme $\BM^{\rm naive}_I({\rm GO}_{2n}, \mu_n)$ of the local model $\BM^\loc_I(\GL_{2n}, \mu_n)$ given by the condition that 
\begin{equation}\label{perpcond}
\CF_i=\CF_{2n-i}^\perp, \quad i\in I .
\end{equation}

Now let $I_0=\{1, 2n-1\}$. Then, since $\CF_{2n-1}$ is determined by $\CF_1$ via the identity \eqref{perpcond}, we obtain a closed embedding $\BM^{\rm loc}_{I_0}({\rm GO}_{2n}, \mu_n)\subset \BM^\loc_{\{1\}}(\GL_{2n}, \mu_n)$.

As open subset $U$ of the worst point we take the scheme of $( \CF_1, \CF_{2n-1})$, where 
\begin{eqnarray*}
	&& \CF_1 {=} \left( \begin{array}{cccc} 
		a_{n,1} & a_{n,2} & \cdots & a_{n,n} \\
		1     &          &        &          \\
		&     1    &        &          \\
		&          & \ddots &          \\
		&          &        & 1        \\
		a_{11} & a_{12} & \cdots & a_{1n} \\
		\vdots  & \vdots   &        &  \vdots  \\
		a_{n-1,1} & a_{n-1,2} & \cdots & a_{n-1,n}
	\end{array}  \right).\\
	&& \CF_{2n-1} {=} \left( \begin{array}{cccc} 
		
		0&1&0\ldots&0\\
		0&0&1\ldots&0\\
		\vdots&&&\vdots\\
		0&0&0\ldots&1\\
		b_{11} & b_{12} & \cdots & b_{1n} \\
		\vdots  & \vdots   &        &  \vdots  \\
		b_{n,1} & b_{n,2} & \cdots & b_{n,n}\\
		1& 0 & \cdots & 0 \\
	\end{array}  \right).
\end{eqnarray*}
The condition that $\CF_1$ and $\CF_{2n-1}$ be orthogonal  is expressed by
\begin{equation}\label{baorth}
b_{\mu \nu}=-a_{n-\nu+1, n-\mu+1} .
\end{equation} 
Recall the spin condition on $\CF_1$, cf. \cite[7.1, 8.3]{PR}.  This is a set of conditions stipulating the vanishing  of certain linear forms on $\wedge^n \Lambda_{1}$ on the line $\wedge^n\CF_1$ in $\wedge^n \Lambda_{1, S}$. These linear forms are enumerated by certain subsets $E\subset\{1,\ldots,2n\}$ of order $n$.  For a subset $E\subset\{1,\ldots,2n\}$ of order $n$, set $E^\perp=(2n+1-E)^c$. Also, to such a subset $E$ is associated a permutation $\sigma_E$ of $S_{2n}$, cf. \cite[7.1.3]{PR}. We call the \emph{weak spin condition} the vanishing of the linear forms corresponding to subsets $E$ with the property
\begin{equation}
E=E^\perp,\quad \vert E\cap \{2, 3,\ldots, n+1\}\vert=n-1, \quad \sgn\sigma_E=1 . 
\end{equation}
It is easy to see that there are precisely the following subsets satisfying this condition: $\{1, \ldots, n\}$ and $\{2, \ldots, n-1, n+1, 2n\}$. 

\begin{lemma}\label{lemweak}
	The weak spin condition on $\CF_1$ implies $a_{n-1, n-1}=a_{n n}=0$.
\end{lemma}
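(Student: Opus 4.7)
The key point is to identify, for each of the two subsets $E_1=\{1,\ldots,n\}$ and $E_2=\{2,\ldots,n-1,n+1,2n\}$ singled out above, the linear form on $\wedge^n\Lambda_1$ whose vanishing on $\wedge^n\CF_1$ is the weak spin condition, and then to evaluate it on the matrix representative of $\CF_1$ given in the excerpt. Concretely, writing $e^1_1=\pi^{-1}e_1$ and $e^1_i=e_i$ for $i\ge 2$ for the standard basis of $\Lambda_1$, the coefficient of $e^1_E:=e^1_{i_1}\wedge\cdots\wedge e^1_{i_n}$ in $\wedge^n\CF_1$ is precisely the $n\times n$ minor of the matrix $\CF_1$ obtained by selecting the rows indexed by $E=\{i_1<\cdots<i_n\}$. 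My plan is to write out each of these two minors explicitly and read off the claimed vanishing relations.

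For $E_1=\{1,2,\ldots,n\}$, the minor is the $n\times n$ determinant whose first row is $(a_{n,1},\ldots,a_{n,n})$ and whose remaining rows form the block carrying the $1$'s in columns $1,\ldots,n-1$ (row $k+1$ carrying $1$ in column $k$, for $1\le k\le n-1$). Expanding along column $n$ (the only nonzero entry there sits in the top row and equals $a_{n,n}$), the complementary minor is the $(n-1)\times(n-1)$ identity, so the determinant equals $\pm a_{n,n}$. Vanishing of this form therefore gives $a_{n,n}=0$.

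For $E_2=\{2,\ldots,n-1,n+1,2n\}$, the same procedure produces the $n\times n$ matrix whose first $n-2$ rows are the identity rows with $1$'s in columns $1,\ldots,n-2$, whose $(n-1)$st row is the row $n+1$ of $\CF_1$ carrying a $1$ in column $n$, and whose last row is $(a_{n-1,1},\ldots,a_{n-1,n})$ (the row indexed by $2n$). Successively expanding along columns $1,\ldots,n-2$ reduces the computation to the $2\times 2$ determinant
\[
\det\begin{pmatrix} 0 & 1 \\ a_{n-1,n-1} & a_{n-1,n}\end{pmatrix}=-a_{n-1,n-1},
\]
so the minor equals $\pm a_{n-1,n-1}$, and its vanishing yields $a_{n-1,n-1}=0$.

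There is no real obstacle here: once one accepts the identification of the two weak spin forms with the minors indexed by $E_1$ and $E_2$, the lemma is purely combinatorial. The only subtlety to be careful about is that the $E_i$ satisfy $E_i=E_i^\perp$ and $\operatorname{sgn}\sigma_{E_i}=+1$, so that the spin vector $e_{E_i}+\operatorname{sgn}(\sigma_{E_i})e_{E_i^\perp}$ reduces (up to a nonzero scalar) to $e_{E_i}$ itself; the weak spin condition thus really does assert the vanishing of these two minor forms, and not some nontrivial linear combination of them with other summands.
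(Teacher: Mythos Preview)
Your proof is correct and follows the same approach as the paper: identify the weak spin linear forms for $E_1=\{1,\ldots,n\}$ and $E_2=\{2,\ldots,n-1,n+1,2n\}$ with the corresponding $n\times n$ minors of the matrix $\CF_1$, then evaluate. The paper's own proof merely states that these forms are those two minors; you have carried out the explicit determinant computations showing they equal $\pm a_{n,n}$ and $\pm a_{n-1,n-1}$, which is a helpful elaboration but not a different method.
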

\begin{proof}
	Indeed, the linear forms for $E=\{1, \ldots, n\}$, resp., $E=\{2, \ldots, n-1, n+1, 2n\}$, are the minors of size $n$ consisting of the rows $\{1, \ldots, n\}$, resp., $\{2, \ldots, n-1, n+1, 2n\}$. 
\end{proof}

The incidence relation between $\CF_{2n-1}$ and $\CF_1$ is given by the following system of equations,
$$\begin{pmatrix}
0&\pi&0&\ldots&0\\
0&0&1&\ldots&0\\
\vdots&&&\ddots&\\
0&0&\ldots&0&1\\
b_{11}&b_{12}&\ldots&&b_{1n}\\
\vdots&&&&\vdots\\
b_{n1}&b_{n2}&\ldots&&b_{n n}\\
\pi&0&\ldots&&0
\end{pmatrix}
=
\begin{pmatrix}
a_{n 1}&a_{n2}&\ldots&a_{n n}\\
1&0&\ldots&0\\
0&1&\ldots&0\\
\vdots&&\ddots&\\
0&0&\ldots&1\\
a_{11}&a_{12}&\ldots&a_{1n}\\
\vdots&&&\vdots\\
a_{n-1, 1}&a_{n-1,2}&\ldots&a_{n-1, n}
\end{pmatrix}
\cdot
\begin{pmatrix}
0&0&1&\ldots&0\\
\vdots&&&\ddots&\\
0&0&\ldots&0&1\\
b_{11}&b_{12}&b_{13}&\ldots&b_{1n}\\
b_{21}&b_{22}&b_{23}&\ldots&b_{2n}
\end{pmatrix}
$$
Using \eqref{baorth} and Lemma \ref{lemweak}, we may also write the equations for the closed sublocus $U^{\rm wspin}$  where, in addition to the incidence relation from $\CF_0$ to $\CF_1$ and the isotropy condition on $\CF_0$,  the weak spin condition is  satisfied, as an identity of $n\times n$-matrices,

\begin{equation*}
\begin{aligned}
\begin{pmatrix}
0&\pi&\ldots&&0\\
-a_{n,n-2}&-a_{n-1, n-2}&\ldots&&-a_{1,n-2}\\
-a_{n,n-3}&-a_{n-1, n-3}&\ldots&&-a_{1,n-3}\\
\vdots&&&&\vdots\\
-a_{n,1}&-a_{n-1,1}&\ldots&&-a_{11}\\
\pi&0&\ldots&&0
\end{pmatrix}
=
\begin{pmatrix}
a_{n 1}&\ldots&a_{n, n-1}&0\\
a_{11}&\ldots&a_{1, n-1}&a_{1,n}\\
a_{21}&\ldots&a_{2, n-1}&a_{2,n}\\
\vdots&&&\vdots\\
a_{n-2, 1}&\ldots&a_{n-2, n-1}&a_{n-2, n}\\
a_{n-1, 1}&\ldots&0&a_{n-1, n}
\end{pmatrix}\\
\cdot
\begin{pmatrix}
0&0&1&\ldots&0\\
0&0&0&\ldots&0\\
\vdots&&&\ddots&\\
0&0&\ldots&0&1\\
0&-a_{n-1,n}&-a_{n-2,n}&\ldots&-a_{1,n}\\
-a_{n, n-1}&0&-a_{n-2, n-1}&\ldots&-a_{1, n-1}
\end{pmatrix}
\end{aligned}
\end{equation*}

It implies (look at the $(1,2)$ entry, which also equals the $(n,1)$ entry)
\begin{equation}\label{piO}
a_{n-1, n}\cdot a_{n, n-1}=-\pi . 
\end{equation}
Let us call $E_{i j}$ the polynomial identity among the $a_{\mu \nu}$ that is given by the entry $i, j$ of the above matrix identity.  Then $E_{1, j}$ for $3\leq j\leq n$ is of the form
$$
a_{n,j-2}=P_{1, j}(a_{\bullet, n-1}, a_{\bullet, n}) .
$$ 
The identities $E_{n, j}$ for $3\leq j\leq n$ are of the form
$$
a_{n-1, j}=P_{n, j}(a_{\bullet, n-1}, a_{\bullet, n}) .
$$
The identities $E_{i,1}$ for $2\leq i\leq n-1$ are of the form
$$
a_{n, n-i}=P_{i,1}(a_{\bullet, n-1}, a_{\bullet, n}) .
$$
The identities $E_{i,2}$ for $2\leq i\leq n-1$ are of the form
$$
a_{n-1, n-i}=P_{i,2}(a_{\bullet, n-1}, a_{\bullet, n}) .
$$
The identities $E_{i, j}$ for $2\leq i\leq n-1$ and $3\leq j\leq n$ are of the form
$$
a_{i-1, j-2}+ a_{n+1-j, n-i}=P_{i, j}(a_{\bullet, n-1}, a_{\bullet, n}) .
$$
We also note the following identities
\begin{equation*}
\begin{aligned}
E_{i, j}=& E_{n+2-j, n+2-i}, &\text{for } i\in[2, n-1], j\in[3, n];\\
E_{1,j}=&E_{n+2-j, 1},  &\text{for } j\in[3, n-2];\\
E_{n,j}=&E_{n-j, 2},   &\text{for } j\in[3, n-2].
\end{aligned}
\end{equation*}
We keep the $2(n-2)$ variables $a_{\bullet, n-1}$ and $a_{\bullet, n}$, but eliminate $a_{n, j}$ and $a_{n-1, j}$, for $j\in [1, n-2]$. Then the remaining variables $a_{i, j}$ with $i, j\in [1, n-2]$ satisfy  the identities
$$
a_{i, j}+ a_{n-1-j, n-1-i}=Q_{i, j}(a_{\bullet, n-1}, a_{\bullet, n}) .
$$ It follows that 
\begin{equation}
U^{\rm wspin}\simeq \Spec O_F[X, Y]/(XY-\pi)\times \BA^{\frac{n(n-1)}{2}-1} . 
\end{equation}
We obtain the semi-stability of $\BM^{\rm loc}_{I_0}({\rm GO}_{2n}, \mu_n)$ as in the case of the symplectic group via the chain of closed embeddings 
\[
\BM^{\rm loc}_{I_0}({\rm GO}_{2n}, \mu_n)\subset \BM^{\rm wspin}_{I_0}({\rm GO}_{2n}, \mu_n)\subset  \BM^\loc_{\{1\}}(\GL_{2n}, \mu_n).
\]
\begin{remark}\label{oneenoughSO}
	Again, as in the case of $\GL_n$,  in this case we can see, using flatness, that the further incidence relation from $\CF_1$ to $\CF_{2n-1}$ and from $\CF_{2n-1}$ to $\CF_{1}$, as well as the full spin condition are automatically satisfied. 
\end{remark}

\subsection{Quadric local models}\label{ss:quadricLM}

Let  $V$ be an $F$-vector space of dimension $d=2n$ or $2n+1$, with a non-degenerate symmetric $F$-bilinear form
$\langle\ ,\ \rangle$. Assume that $d\geq 5$ and that $p\neq 2$. We will consider a minuscule coweight $\mu$ of ${\rm SO}(V)(F)$ (i.e., defined over $F$) that
corresponds to cases with $r=1$. 

Recall the notion of a \emph{vertex lattice} in $V$: this is an $O_F$-lattice $\Lambda$ in $V$ such that $\Lambda\subset \Lambda^\vee\subset \pi^{-1}\Lambda$. We will say that an orthogonal vertex lattice $\Lambda$ in $V$ 
is \emph{self-dual} if $\Lambda=\Lambda^\vee$, and \emph{almost self-dual}  if the length $ \lg (\Lambda^\vee/\Lambda)=1$.  We list this as two cases: 

\smallskip

I) $\Lambda$ is self-dual, i.e., $\Lambda^\vee=\Lambda$. 

\smallskip
II) We have
$
\Lambda \subset \Lambda^\vee\subset \pi^{-1}\Lambda,
$
and  $\lg (\Lambda^\vee/\Lambda)=1$.

\medskip

Now let us consider the following cases:
\medskip

a) $d=2n+1$,  there is a  basis $e_i$ with $\langle e_i, e_{d+1-j} \rangle=\delta_{ij}$,
and $\Lambda=\oplus_{i=1}^d O_F\cdot e_i$, so that $\Lambda^\vee=\Lambda$.
\medskip

b) $d=2n$, there is a basis $e_i$ with $\langle e_i, e_{d+1-j} \rangle=\delta_{ij}$,
and $\Lambda=\oplus_{i=1}^d O_F\cdot e_i$, so that we have $\Lambda^\vee=\Lambda$.
\medskip

c) $d=2n+1$, there is a basis $e_i$ with $\langle e_i, e_{d+1-j} \rangle=\pi\delta_{ij}$, and $\Lambda=(\oplus_{i=1}^{n} O_F\cdot \pi^{-1}e_i)\oplus (\oplus_{i=n+1}^dO_F\cdot e_i)$, so that  $
\Lambda \subset \Lambda^\vee\subset \pi^{-1}\Lambda
$.
\medskip

d) $d=2n$, there is a  basis $e_i$ with $\langle e_i, e_{d+1-j} \rangle=\delta_{ij}$,
if $i, j\neq n, n+1$, and $\langle e_n, e_n\rangle =\pi$, $\langle e_{n+1}, e_{n+1} \rangle=1$, $\langle e_{n}, e_{n+1} \rangle=0$, 
and  $\Lambda=\oplus_{i=1}^d O_F\cdot e_i$, so that  $
\Lambda \subset \Lambda^\vee\subset \pi^{-1}\Lambda
$.
\medskip

In all these cases, we take $\mu: \BG_m\to {\rm SO}(V)$ to be given by $\mu(t)={\rm diag}(t^{-1},1,\ldots , 1, t)$
(under the embedding into the group of matrices by giving the action on the basis).
\medskip

It follows from the classification of quadratic forms over local fields \cite{Ger} that for each $(V, \langle\ ,\ \rangle, \Lambda)$ with $\lg(\Lambda^\vee/\pi\Lambda)\leq 1$, and $\mu$ as in the beginning of this  subsection, there is an unramified 
finite field extension $F'/F$ such that the base change of $(V, \langle\ ,\ \rangle, \Lambda)$ to $F'$ affords a basis as in 
one of the cases (a)-(d), and $\mu$ is given as above. In fact, we can also consider similarly cases of $(V, \langle\ ,\ \rangle, \Lambda)$ with 
$\pi\Lambda\subset\Lambda^\vee\subset \Lambda$, with $\lg(\Lambda^\vee/\pi\Lambda)\leq 1$, by changing the
form $\langle\ ,\ \rangle$ to $\pi\langle\ ,\ \rangle$; these two forms have the same orthogonal group.

In what follows, for simplicity we set $O_F=O$.

\subsubsection{Quadrics} We will now consider the quadric $Q(\Lambda)$ over $\Spec O$ which, by definition, is the projective hypersurface in $\BP^{d-1}_{O}$
whose $R$-valued points parametrize isotropic lines, i.e., $R$-locally free rank $1$ direct summands
\[
\CF\subset \Lambda_R:=\Lambda\otimes_{O}R,
\]
with $\langle \CF, \CF\rangle_R=0$. Here $\langle \ ,\ \rangle_R$ is
the symmetric $R$-bilinear form $\Lambda_R\times \Lambda_R\to R$ obtained 
from $\langle \ ,\ \rangle$ restricted to $\Lambda\times\Lambda$ by base change.

\begin{proposition}\label{propQ} Set $\BP^{d-1}_{\br O}= {\rm Proj}(\breve {O}[x_1,\ldots,x_d])$.

	  In case (I), $Q(\Lambda)\otimes_{O}\breve {O}$ is isomorphic to the closed subscheme of $\BP^{d-1}_{\br O} $ given by 
	\[
	\sum\nolimits_{i=1}^{d}x_ix_{d+1-i}=0\]
	and the scheme $Q(\Lambda)$ is smooth over ${O}$. 
	\medskip
	
	 In case (II), $Q(\Lambda)\otimes_{O}\breve {O}$ is isomorphic to   the closed subscheme of $\BP^{d-1}_{\br O}$ given by
	\[ \sum\nolimits_{i=1}^{d-1}x_ix_{d-i}+\pi x^2_d=0.\]
	Then $Q(\Lambda)$ is regular with normal special fiber which is singular only at the point 
	$(0;\ldots;1)$; this point corresponds to $\CF_0=\pi\Lambda^\vee/\pi\Lambda\subset \Lambda/\pi\Lambda=\Lambda\otimes_{O}k$.
\end{proposition}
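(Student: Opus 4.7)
The plan is to work over $\breve O$, where by the classification of quadratic spaces over local fields cited just before the proposition, the orthogonal space $(V,\langle\ ,\ \rangle,\Lambda)$ acquires one of the four normal forms (a)--(d); these collapse into the two cases (I) and (II) of the proposition according as $\Lambda$ is self-dual or almost self-dual. First I would write a section $v=\sum x_i e_i$ of $\Lambda_R$ and expand $\langle v,v\rangle$ in each basis. A short bookkeeping shows that, using $p\neq 2$ throughout (to clear the factor of $2$ coming from $\langle v,v\rangle=2q(v)$) and a permutation of coordinates moving the distinguished basis vector to the last slot, one obtains (up to a unit rescaling of the equation) in case (I) the form $\sum_{i=1}^{d}x_ix_{d+1-i}=0$, and in cases (c) and (d) the single form $\sum_{i=1}^{d-1}x_ix_{d-i}+\pi x_d^2=0$ of case (II).

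For case (I), smoothness of $Q(\Lambda)$ over $O$ follows at once from the Jacobian criterion: since $p\neq 2$, the partial derivatives of the defining quadratic form are, up to units, the variables themselves, so they cannot all vanish simultaneously at any point of projective space.

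For case (II), I would first analyze the special fibre. Reducing the Jacobian modulo $\pi$ makes $\partial/\partial x_d$ vanish identically while the remaining partials give $2x_{d-i}$, showing that the unique singular point of $Q(\Lambda)\otimes k$ is $(0;\dots;0;1)$. To prove regularity of the total scheme at this point I would dehomogenise by setting $x_d=1$ and look at the local equation $\pi+\sum_{i=1}^{d-1}x_ix_{d-i}=0$ in the localisation of $O[x_1,\dots,x_{d-1}]$ at the origin. The key observation is that this relation expresses $\pi$ as an element of the square of the maximal ideal, so that the maximal ideal of the quotient is generated by the $d-1$ elements $x_1,\dots,x_{d-1}$; since the Krull dimension of the local ring is also $d-1$, the ring is regular. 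Normality of the special fibre I would deduce from the fact that its equation does not involve $x_d$, exhibiting $Q(\Lambda)\otimes k$ as the projective cone (with apex $(0;\dots;0;1)$) over the smooth quadric of rank $d-1$ in $\BP^{d-2}_k$ cut out by $\sum_{i=1}^{d-1}x_ix_{d-i}=0$; projective cones over smooth projective varieties of dimension $\geq 1$ are normal by Serre's criterion.

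Finally, to match the singular point with the line $\pi\Lambda^\vee/\pi\Lambda\subset\Lambda/\pi\Lambda$, I would invert the Gram matrix in forms (c) and (d): in both cases $\Lambda^\vee$ differs from $\Lambda$ only in the ``middle'' basis vector $e$ whose self-pairing carries the factor $\pi$, so $\pi\Lambda^\vee/\pi\Lambda=\bar e\cdot k$; this $\bar e$ is precisely the coordinate placed at position $d$ by the reindexing above. The main obstacle is not conceptual but combinatorial: keeping careful track of the factors of $2$ and of the coordinate permutation needed to bring the distinguished basis vector to the last slot so that the equations take the clean shape stated in the proposition.
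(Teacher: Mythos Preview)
Your proposal is correct and follows the same approach as the paper: reduce to the explicit normal forms (a)--(d) via the classification over $\breve F$, rearrange the basis to obtain the stated equations, and read off the geometric properties. The paper's own proof is a single sentence (``Follows from the classification of quadratic forms over $\breve F$, by expressing $Q(\Lambda)$ in cases (a)--(d)''), so you have simply filled in the details it leaves implicit; one minor simplification is that normality of the special fibre in case (II) follows directly from Serre's criterion since a hypersurface is Cohen-Macaulay and the singular locus is a point of codimension $d-2\geq 3$, without needing the cone description.
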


\begin{proof}
	Follows from the classification of quadratic forms over $\breve F$, by expressing $Q(\Lambda)$ in cases (a)-(d).
	(To get the equations in the statement we have to rearrange the basis vectors.)
\end{proof}

\subsubsection{Quadrics and PZ local models}\label{sss:PZmod} 
We can now extend our data to ${O}[u, u^{-1}]$. We set $\BV=\oplus_{i=1}^{d} {O}[u, u^{-1}]\cdot \underline e_i$
with $\langle\ ,\ \rangle: \BV\times \BV\to {O}[u, u^{-1}]$ a symmetric ${O}[u, u^{-1}]$-bilinear form 
for which $\langle\underline e_j ,\underline e_j\rangle$ is given as $\langle e_i , e_j \rangle$ above, but with 
$\pi$ replaced by $u$. We define $\underline \mu:\BG_m\to {\rm SO}(\BV)$ by $\underline\mu(t)={\rm diag}(t^{-1},1,\ldots , 1, t)$ by using the basis $\underline e_i$.

Similarly, we define $\BL$ to be  the free ${O}[u]$-submodule of $\BV$ spanned by $\underline e_i$
(or $u^{-1} \underline e_i$ and $\underline e_j$) as above, following the pattern 
of the definition of $\Lambda$ in each case. Then, the base change of $(\BV, \langle \ ,\ \rangle, \BL)$  
from   ${O}[u, u^{-1}]$ to $F$ given by $u\mapsto \pi$ are $(V, \langle \ ,\ \rangle, \Lambda)$.

We can now define the local model $\BM^{\rm loc}=\BM^{\rm loc}(\Lambda)=\BM^{\rm loc}_K({\rm SO}(V),\{\mu\})$ for the LM triple $({\rm SO}(V), \{\mu\}, K)$ where $K$ is the parahoric stabilizer of $\Lambda$, as in \cite{PZ}.  
Consider the smooth affine group scheme $\underline\CG$
over ${O}[u]$ given by $g\in {\rm SO}(\BV)$ that also preserve $\BL $ and $\BL ^\vee$.
Base changing by $u\mapsto \pi$ gives the Bruhat-Tits group scheme $\CG$ of ${\rm SO}(V)$ which is the 
stabilizer of the lattice chain $\Lambda\subset \Lambda^\vee\subset \pi^{-1}\Lambda$.
This is a hyperspecial subgroup when $\Lambda^\vee=\Lambda$. If $\lg(\Lambda^\vee/\Lambda)=1$, 
we can see that $\CG$ has special fiber with $\BZ/2\BZ$ as its group of connected components. The corresponding parahoric group scheme is its connected component $\CG^0$. The construction of \cite{PZ} produces the
group scheme $\underline\CG^0$ that extends $\CG^0$. By construction, there is a group scheme immersion
$\underline \CG^0\hookrightarrow \underline\CG$.

As in \cite{PZ}, one can see that the Beilinson-Drinfeld style (``global") affine Grassmannian ${\rm Gr}_{\underline\CG,{O}[u]}$ over ${O}[u]$ represents the functor that sends the ${O}[u]$-algebra $R$ given by $u\mapsto r$ to the set of
projective finitely generated $R[u]$-submodules $\CL$ of $\mathbb V\otimes_{{O}}R$
which are $R$-locally free such that $(u-r)^N\mathbb L\subset \CL\subset (u-r)^{-N}\mathbb L$
for some $N>>0$ and satisfy $\CL^\vee=\CL$ in case (I), resp., 
\[
u\CL^\vee \buildrel{d-1}\over\subset\CL\buildrel{1}\over\subset \CL^\vee\buildrel{d-1}\over\subset u^{-1}\CL
\]
 in case (II), with all graded quotients $R$-locally free and of the indicated rank.

By definition, the PZ local model $\BM^{\rm loc}$ is a closed subscheme of the base change ${\rm Gr}_{\CG^0,{O}}={\rm Gr}_{\underline\CG^0, {O}[u]}\otimes_{{O}[u]}{O}$ by ${O}[u]\to {O}$ given by $u\mapsto \pi$.
Consider the ${O}$-valued point $[\CL(0)]$ given by 
\[
\CL(0)=\underline\mu(u-\pi)\BL .
\]
By definition, the PZ local model
$\BM^{\rm loc}$ is the reduced Zariski closure of the orbit of $[\CL(0)]$ in ${\rm Gr}_{\CG^0,{O}}$;
it inherits an action of the group scheme $\CG^0=\underline \CG^0\otimes_{{O}[u]}{O}$.
As in \cite[8.2.3]{PZ}, we can see that the natural morphism ${\rm Gr}_{\underline\CG^0,{O}[u]}\to {\rm Gr}_{\underline\CG,{O}[u]}$ induced by  $\underline \CG^0\hookrightarrow \underline\CG$
identifies $\BM^{\rm loc}$ with a closed subscheme of ${\rm Gr}_{\underline\CG, O}:={\rm Gr}_{\underline\CG,{O}[u]}\otimes_{O[u]}O$.

\begin{proposition}\label{quadricLoc}
	In each of the above cases (a)-(d) with $\lg (\Lambda^\vee/\Lambda)\leq 1$, there is a $\CG$-equivariant isomorphism
	\[
	\BM^{\rm loc}(\Lambda)\xrightarrow{\sim} Q(\Lambda)
	\]
	between the PZ local model as defined above and the quadric.
\end{proposition}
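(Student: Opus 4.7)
The plan is to construct a natural $\CG$-equivariant morphism $\psi\colon Q(\Lambda)\to \BM^{\rm loc}(\Lambda)$ and then deduce that it is an isomorphism using normality of both sides.

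For the construction of $\psi$, given an $R$-valued point of $Q(\Lambda)$, i.e.\ an isotropic rank-one direct summand $\CF\subset \Lambda_R$, one lifts $\CF$ étale-locally on $R$ to a rank-one direct summand $\tilde\CF=R\cdot\tilde e\subset \BL_R$, and sets
\[
\CL(\CF)\;=\;\BL_R[[u-\pi]]\;+\;R[[u-\pi]]\cdot (u-\pi)^{-1}\tilde e,
\]
an $R[[u-\pi]]$-lattice in $\BV\otimes_{O[u]}R((u-\pi))$. One checks that $\CL(\CF)$ is independent of the choice of lift and, via Beauville--Laszlo, descends along the étale cover to a well-defined $R$-point of ${\rm Gr}_{\underline\CG,O}$. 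The crucial input is that isotropy of $\CF$ modulo $(u-\pi)$ is exactly what forces $\CL(\CF)$ to satisfy the self-duality relation $\CL(\CF)=\CL(\CF)^\vee$ in case~(I), and the analogous chain relations in case~(II); both are verified by a direct pairing computation using the explicit bases listed in cases~(a)--(d).

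Next, I would check that $\psi$ is equivariant for the natural $\CG$-actions on both sides, and that the distinguished isotropic line $\CF_0=R\cdot e_1$ is sent to the base point $\CL(0)=\underline\mu(u-\pi)\BL$. Since $\BM^{\rm loc}(\Lambda)$ is, by definition, the reduced orbit closure of $[\CL(0)]$, it follows that $\psi$ factors through $\BM^{\rm loc}(\Lambda)\subset{\rm Gr}_{\underline\CG,O}$. On the generic fiber, $\psi$ is the standard identification of both sides with $X_{\{\mu\}}\otimes F$.

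Finally, to conclude that $\psi$ is an isomorphism, observe that both $Q(\Lambda)$ and $\BM^{\rm loc}(\Lambda)$ are flat, projective, integral, normal schemes over $\Spec O$ of the same relative dimension $d-2$: this uses Proposition \ref{propQ} for the quadric, and Remark \ref{compPZ}(1) for the local model, together with the irreducibility of the special fiber of $\BM^{\rm loc}$ (which in case~(II) follows because we are in a case of exotic good reduction, so the admissible locus is a single Schubert variety). The morphism $\psi$ is proper and birational with equidimensional special fibers of dimension $d-2$, hence quasi-finite, hence finite; and a finite birational morphism to a normal integral scheme is an isomorphism. The main technical obstacle is the case~(II) verification of the lattice conditions for $\CL(\CF)$ near the singular point of $Q(\Lambda)\otimes k$ corresponding to $\CF_k=k\cdot e_d$: here one must carefully exploit the almost self-duality of $\Lambda$ and the extra factor of $u$ appearing in the chain relation $u\CL^\vee\subset\CL$, which together accommodate the denominator $(u-\pi)^{-1}$ in the definition of $\CL(\CF)$.
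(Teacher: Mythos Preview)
Your construction of $\psi$ has a genuine gap: the lattice $\CL(\CF)=\BL_R[[u-\pi]]+R[[u-\pi]]\cdot(u-\pi)^{-1}\tilde e$ does \emph{not} satisfy the duality condition required to lie in ${\rm Gr}_{\underline\CG,O}$. Take case~(a) with $\CF=R\cdot e_1$ and $\tilde e=\underline e_1$. Then $\CL(\CF)$ is spanned by $(u-\pi)^{-1}\underline e_1,\underline e_2,\ldots,\underline e_d$, whereas $\CL(\CF)^\vee$ consists of those $v\in\BL$ with $\langle v,\underline e_1\rangle\in(u-\pi)$, i.e.\ is spanned by $\underline e_1,\ldots,\underline e_{d-1},(u-\pi)\underline e_d$. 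These differ. The point is that enlarging $\BL$ by adjoining $(u-\pi)^{-1}\tilde e$ makes the dual \emph{smaller}; isotropy of $\CF$ only guarantees $\langle\tilde e,\tilde e\rangle\in(u-\pi)$, which is not enough to put $(u-\pi)^{-1}\tilde e$ into the dual (that would need $\langle\tilde e,\tilde e\rangle\in(u-\pi)^2$, and it says nothing about the pairing against the rest of $\BL$). To get a self-dual lattice one must simultaneously \emph{shrink} $\BL$ in the direction dual to $\CF$: the correct $\CL$ sits strictly between $\CL'(\CF):=\{v\in\BL:\langle v,\tilde e\rangle\in(u-\pi)\}$ and your $\CL(\CF)$, and is singled out as the \emph{other} isotropic line (besides $\BL/\CL'(\CF)$) in the rank-$2$ symmetric space $\CL(\CF)/\CL'(\CF)$. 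This is exactly the mechanism the paper uses; in case~(II) the two isotropic lines can collide over the singular point of $Q(\Lambda)\otimes k$, which is why the paper passes through the blow-up $P(\Lambda)^{\rm fl}\to Q(\Lambda)$ to separate them and only afterwards descends back to $Q(\Lambda)$ via normality.

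A secondary issue: your appeal to ``exotic good reduction'' to justify irreducibility of the special fiber in case~(II) is not correct. For $\mu$ with $r=1$, neither case~(c) (split $\SO_{2n+1}$, vertex $n$) nor case~(d) (non-split $\SO_{2n}$, special vertex) is of exotic good reduction type; the exotic orthogonal case has $\mu$ corresponding to $\omega_n^\vee$, not $\omega_1^\vee$. The special fiber \emph{is} irreducible in both cases, but for the reason that $\tilde K$ consists of a single vertex and one checks the admissible set has a unique maximal element (Theorem~\ref{CCP}(1)). This does not affect the overall strategy, but the stated justification is wrong.
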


\begin{proof}
	Note that since, by definition, $\CG$ maps to ${\rm GL}(\Lambda)$ and preserves the form $\langle\ ,\ \rangle$,
	it acts on the quadric $Q(\Lambda)$. By the definition of $\CL(0)$, we have 
	\[
	\xy
	(-21,7)*+{(u-\pi)\BL \subset   \CL(0)\cap \BL\ \ \ \ \ };
	(-4.5,3.5)*+{\rotatebox{-45}{$\,\, \subset\,\,$}};
	(-4.5,10.5)*+{\rotatebox{45}{$\,\, \subset\,\,$}};
	(0,14)*+{\BL};
	(0,0)*+{\CL(0)};
	(4,3.5)*+{\rotatebox{45}{$\,\, \subset\,\,$}};
	(4,10.5)*+{\rotatebox{-45}{$\,\, \subset\,\,$}};
	(23.5,7)*+{\ \ \ \ \ \BL+\CL(0) \subset   (u-\pi)^{-1}\BL,};
	\endxy 
	\]
	where the quotients along all slanted inclusions are ${O}$-free of rank $1$. Consider the subfunctor $M$ of ${\rm Gr}_{\CG,{O_F}}$ parametrizing $\CL$ such that 
	\[
	(u-\pi)\BL\subset \CL\subset (u-\pi)^{-1}\BL .
	\]
	 Then $M$ is given by a closed subscheme
	of ${\rm Gr}_{\CG, {O}}$ which contains the orbit of $\CL(0)$; therefore the local model 
	$\BM^{\rm loc}$ is a closed subscheme of $M$ and $\BM^{\rm loc}$ is the reduced Zariski closure
	of its generic fiber in $M$. 
	
	We now consider another projective scheme $P(\Lambda)$  which
	parametrizes pairs $(\CF, \CF')$ where $\CF\subset \Lambda_R$, $\CF'\subset \Lambda^\vee_R$ are both $R$-lines,
	such that $\CF$ is isotropic for $\langle\ ,\ \rangle_R$ and $\CF'$ is isotropic
	for $\pi\langle\ ,\ \rangle_R$, and such that $\CF$, $\CF'$ are linked by both natural $R$-maps 
	$\Lambda_R\to \Lambda^\vee_R$ and  $\pi: \Lambda^\vee_R\to \Lambda_R$.

	\begin{lemma} In case (I), the forgetful morphism $f:P(\Lambda)\xrightarrow{\sim} Q(\Lambda)$ is an isomorphism.
		In case (II), denote by $P(\Lambda)^{\rm fl}$ the flat closure of $P(\Lambda)$. Then the forgetful morphism $f: P(\Lambda)^{\rm fl}\to Q(\Lambda)$, given by $(\CF, \CF')\mapsto \CF$,
		can be identified
		with the blow-up of $Q(\Lambda)$ at the unique singular closed point of its special fiber. In particular, it 
		is an isomorphism away from the closed point given by $\CF=$ the radical of the form $\langle\ ,\ \rangle$
		on $\Lambda_{\kappa_F}$. If $\CF$ is the radical, then $\CF'$ lies in the radical of 
		the form $\pi\langle\ ,\ \rangle$ on $\Lambda^\vee\otimes_{O}\kappa_F$. Since  this radical has dimension $d-1$, the exceptional locus is isomorphic to $\BP^{d-2}_{\kappa_F}$.
	\end{lemma}
	
	\begin{proof} In case (I), we have $\CF'=\CF$, so $P(\Lambda)\simeq Q(\Lambda)$. Assume we are in case (II).
		Using the universal property of the blow-up, we see it is enough to show the statement after base changing to $\breve {O}$. For convenience we rearrange the basis of $V$ such that 
		$\Lambda^\vee/\Lambda$ is generated by $\pi^{-1}e_d$. Set
		\[
		\CF=(\sum_{i=1}^d x_ie_i), \quad \CF'=(\sum_{i=1}^{d-1} y_i e_i+  y_d \pi^{-1}e_d).
		\]
		Since $\CF$ maps to $\CF'$ by $\Lambda_R\to \Lambda^\vee_R$ and $\CF'$ maps to  $\CF$ by $\pi: \Lambda_R^\vee\to \Lambda_R$, there are $\lambda$, $\mu\in R$ such that $\lambda\mu=\pi$ and
		\[
		x_1=\lambda y_1,\ldots, x_{d-1}=\lambda y_{d-1},\quad \pi x_d=\lambda   y_d
		\]
		\[
		\pi y_1=\mu x_1, \ldots , \pi y_{d-1}=\mu x_{d-1},\quad y_d=\mu x_d.
		\]
		The isotropy conditions are
		\[
		\sum_{i=1}^{d-1} x_i x_{d-i}+\pi x_d^2=0, \quad \sum_{i=1}^{d-1} \pi y_iy_{d-i}+y_d^2=0.
		\]
		Let us consider the inverse image of the affine chart with $x_d=1$ under the forgetful morphism $f: P(\Lambda)\to Q(\Lambda)$. We obtain $y_d=\mu$ and the equations become
		\[
		\lambda(\sum_{i=1}^{d-1}  x_{i}y_{d-i}+ \mu)=0, \quad \mu(\sum_{i=1}^{d-1}  x_i y_{d-i} +\mu )=0.
		\]
		The flat closure $P(\Lambda)^{\rm fl}$ is given by $\lambda\mu=\pi$ and $\sum_{i=1}^{d-1}  x_{i}y_{d-i}+ \mu=0$.
		Since $x_i=\lambda y_i$, we get $\lambda \sum_{i=1}^{d-1}  y_{i}y_{d-i}+ \mu=0$.
		Eliminating $\mu$ gives
		\[
		-\lambda^2(\sum_{i=1}^{d-1}  y_{i}y_{d-i})=\pi.
		\]
		An explicit calculation shows that this coincides with the blow-up of the affine chart $x_d=1$ in the quadric $Q(\Lambda)$ at the 
		point $(0:\ldots :1)$ of its special fiber.
		In fact, we   see that $P(\Lambda)^{\rm fl}$ is regular and that the special fiber $P(\Lambda)^{\rm fl}_k$    has two irreducible components: The smooth blow up of the special fiber $Q(\Lambda)\otimes_{O}\kappa_F$ at the singular point and
		the exceptional locus  $\BP^{d-2}_{\kappa_F}$ for $\lambda=0$; they intersect along a smooth quadric of dimension $d-3$ over $\kappa_F$. The exceptional locus has multiplicity $2$ in the special fiber of $P(\Lambda)^{\rm fl}$. The rest of the statements  follow easily.
	\end{proof} 
	
	We now continue with the proof of Proposition \ref{quadricLoc}.
	Assume that $(\CF, \CF')$ gives an $R$-valued point of $P(\Lambda)$.
	The pair $(\CF, \CF')$  uniquely determines lattices $\CL(\CF)$ and $\CL'(\CF')$ 
	with
	\[
	(u-\pi)\BL\subset \CL'(\CF')\buildrel{1}\over\subset \BL\buildrel{1}\over\subset \CL(\CF)\subset (u-\pi)^{-1}\BL, 
	\]
	by 
	\begin{equation*}
	\begin{aligned}
	\CL(\CF)&=\text{ the inverse image of $\CF$ under $u-\pi: (u-\pi)^{-1}\BL\to \BL/(u-\pi)\BL=\Lambda_R$,}\\
	\CL'(\CF')&=\text{ the inverse image of $\CF'^\perp\subset \Lambda_R$ under $\BL\to \BL/(u-\pi)\BL=\Lambda_R$.}
	\end{aligned}
	\end{equation*}
	The other conditions translate to $(u-\pi)\CL(\CF)\subset \CL'(\CF')\subset \CL(\CF)$ and
	\[
	\CL'(\CF')\subset \CL(\CF)^\vee\subset u^{-1}\CL'(\CF'),
	\]
	\[
	\CL(\CF)\subset \CL'(\CF')^\vee
	\subset u^{-1}\CL(\CF).
	\]
	Note that we obtain a symmetric $R$-bilinear form by interpreting the value  $\langle\ ,\ \rangle$ in $(u-\pi)^{-1}R[u]/R[u]\simeq R$,
	\[
	h: \CL(\CF)/\CL'(\CF')\times \CL(\CF)/\CL'(\CF')\to R
	\]
	Consider the scheme $Z\to P(\Lambda)^{\rm fl}\subset P(\Lambda)$ classifying isotropic lines in the rank $2$ symmetric space $\CL(\CF^{\rm univ})/\CL'(\CF'^{\rm univ})$ over $P(\Lambda)$. One of these isotropic lines is always $\BL/\CL'(\CF')$.
	Suppose $R=k$. When $\CF$ is the radical in $\Lambda_k$, then $\CL(\CF)=\BL^\vee$. Then $\BL/\CL'(\CF')$ is the radical of $h$ and gives the unique isotropic line.
	If $\CF$ is not the radical in $\Lambda_k$, then $h$ is non-degenerate and there are two distinct isotropic lines,  one of which is
	$\BL/\CL'(\CF')$. 
	
	We first consider case (I), i.e.,  $\Lambda=\Lambda^\vee$. Then $\CF=\CF'$, and $P(\Lambda)=Q(\Lambda)$. The form
	$h$ is perfect (everywhere non-degenerate) and $Z$ is the disjoint union $Z=Z^0\sqcup Z^1$, where $Z^0$ is the component
	where the isotropic line is $\BL/\CL'(\CF')$. Each component $Z^i$ projects isomorphically to $Q(\Lambda)$. We can give a morphism $g: Z^1\simeq P(\Lambda)=Q(\Lambda)\to \BM^{\rm loc}$ by sending $\CF$ to $\CL$ characterized by  the condition that $\CL/\CL'(\CF')\subset \CL(\CF)/\CL'(\CF')$ is the tautological isotropic line over $Z^1$. The morphism $g$ is the desired equivariant isomorphism
	$Q(\Lambda)\simeq \BM^{\rm loc}(\Lambda)$.
	
	We now consider case (II), i.e.,  $\lg(\Lambda^\vee/\Lambda)=1$. Then 
	the scheme $Z\to P(\Lambda)^{\rm fl}$ has two irreducible components $Z^0$, $Z^1$, where 
	$Z^0$ is the irreducible component over which the isotropic line 
	is  $\BL/\CL(\CF')$ and where  $Z^1$ is the irreducible component over which the isotropic line 
	generically is not $\BL/\CL(\CF')$. By the above, the two components intersect over the exceptional locus 
	of the blow-up $P(\Lambda)^{\rm fl}\to Q(\Lambda)$. Each irreducible component maps isomorphically to $P(\Lambda)^{\rm fl}$.
	(Note that $P(\Lambda)^{\rm fl}$ is normal and each morphism $Z^i\to P(\Lambda)^{\rm fl}$ is clearly birational and finite.)
	We can now produce a morphism $g: Z^1\simeq P(\Lambda)^{\rm fl}\to \BM^{\rm loc}$
	by sending $(\CF, \CF')$ to $\CL$  characterized by  the condition that  $\CL/\CL(\CF')\subset \CL(\CF)/\CL(\CF')$ is the tautological isotropic line over $Z^1$, 
	\[
	\xy
	(-21,7)*+{(u-\pi)\BL \subset   \CL'(\CF')\  };
	(-4.5,3.5)*+{\rotatebox{-45}{$ \subset$}};
	(-4.5,10.5)*+{\rotatebox{45}{$ \subset$}};
	(0,14)*+{\BL};
	(0,0)*+{\CL};
	(4,3.5)*+{\rotatebox{45}{$ \subset$}};
	(4,10.5)*+{\rotatebox{-45}{$ \subset$}};
	(23.5,7)*+{\  \CL(\CF) \subset   (u-\pi)^{-1}\BL.}
	\endxy 
	\]
	When $\CL\neq \BL$, then $\CL(\CF)=\BL+\CL$ and $\CL'(\CF')=\BL\cap\CL$ and so $(\CF, \CF')$ is uniquely determined
	by its image $\CL$ in $\BM^{\loc}$.  Hence, $g$ is an isomorphism over the open subscheme of $\BM^{\rm loc}$ where
	$\CL\neq\BL$. When $\CL=\BL$, $\BL/\CL(\CF')$ is isotropic in $\CL(\CF)/\CL(\CF')$ and, as above, $\CF=$ the radical of the form on $\Lambda_k$. This shows that the inverse image of $g: P(\Lambda)\to \BM^{\rm loc}$ over $[\BL]$ agrees with exceptional locus of the blow-up $f:P(\Lambda)^{\rm fl}\to Q(\Lambda)$ over the point $\CF$ given by the radical. Since   $Q(\Lambda)$, $\BM^{\rm loc}$ are both normal, we can conclude that the birational map 
	$f\circ g^{-1}: \BM^{\rm loc}\dashrightarrow Q(\Lambda)$ is an isomorphism; it is $\CG$-equivariant since
	this is true on the generic fibers. This completes the proof of Proposition \ref{quadricLoc}.
\end{proof}

\subsubsection{More orthogonal local models}\label{sss:orthMod}  We will now consider orthogonal local models
associated to the self-dual chains generated by two lattices $\Lambda_0$, $\Lambda_n$ and their duals, where $\Lambda_0$, $\Lambda_n$  are both self-dual or  almost self-dual vertex lattices, $\Lambda_0$ for the form $\langle\ ,\ \rangle$ and $\Lambda_n$ for its multiple $\pi\langle\ ,\ \rangle$. In all cases, the self-dual 
lattice chain has the form
\[
\cdots \subset\Lambda_0\buildrel{r}\over\subset \Lambda^\vee_0\subset \Lambda_n\buildrel{s}\over\subset \pi^{-1}\Lambda_n^\vee\subset \pi^{-1}\Lambda_0\subset \cdots
\]
with each $r$, $s$ either $0$ or $1$. Again, after an unramified extension of $F$, we can 
reduce to the following cases:
\medskip

1) $d=2n+1$, there is a basis $e_i$ with   $\langle e_i, e_{d+1-j} \rangle=\delta_{ij}$,
$\Lambda_0=\oplus_{i=1}^d {O}\cdot e_i$ so that $\Lambda^\vee_0=\Lambda_0$
and $\Lambda_n=(\oplus_{i=1}^{n} {O}\cdot \pi^{-1}e_i)\oplus (\oplus_{i=n+1}^d{O}\cdot e_i)$ so that $\Lambda_n \subsetneq \pi^{-1}\Lambda_n^\vee$.
\medskip

2) $d=2n$, there is a basis $e_i$ with $\langle e_i, e_{d+1-j} \rangle=\delta_{ij}$,
$\Lambda_0=\oplus_{i=1}^d {O}\cdot e_i$ so that $\Lambda^\vee_0=\Lambda_0$,
and $\Lambda_n=(\oplus_{i=1}^{n} {O}\cdot \pi^{-1}e_i)\oplus (\oplus_{i=n+1}^d{O}\cdot e_i)$
so that $\Lambda_n = \pi^{-1}\Lambda_n^\vee$.
\medskip

3) $d=2n$, there is a  basis $e_i$ with $\langle e_i, e_{d+1-j} \rangle=\delta_{ij}$,
if $i, j\neq n, n+1$, and $\langle e_n, e_n\rangle =\pi$, $\langle e_{n+1}, e_{n+1} \rangle=1$,  $\langle e_{n}, e_{n+1} \rangle=0$, 
and  $\Lambda_0=\oplus_{i=1}^d {O}\cdot e_i$ so that  $
\Lambda_0 \subsetneq \Lambda^\vee_0\subset \pi^{-1}\Lambda_0
$ and $\Lambda_n=(\oplus_{i=1}^{n} {O}\cdot \pi^{-1}e_i)\oplus (\oplus_{i=n+1}^d{O}\cdot e_i)$ so that $
\Lambda_n^\vee\subset \Lambda_n \subsetneq \pi^{-1}\Lambda_n^\vee$.
\medskip

We extend $(V, \langle\ ,\ \rangle)$ and $\Lambda_j$   to $(\BV, \langle\ ,\ \rangle)$ and $\BL_j$ over ${O}[u]$ as in \ref{sss:PZmod}. We consider the (smooth, affine) group scheme $\underline\CG=\underline\CG_{\BL_\bullet}$ over ${O}[u]$ 
given by $g\in {\rm SO}(\BV)$ that also preserve the chain
\[
\BL_\bullet: \cdots\subset \BL_0 \subset\BL_0^\vee \subset \BL_n \subset u^{-1}\BL^\vee_n\subset u^{-1}\BL_0\subset\cdots
\] 
The base change of $\underline \CG$ by $u\mapsto \pi$ 
is the the Bruhat-Tits group scheme $\CG$ for ${\rm SO}(V)$ that preserves the chain  
\[
\Lambda_\bullet: \cdots\subset \Lambda_0 \subset \Lambda_0^\vee \subset \Lambda_n \subset \pi^{-1}\Lambda^\vee_n\subset \pi^{-1}\Lambda_0 \subset
\cdots .
\]
This is connected and hence parahoric in cases 1) and 2), since it is contained in the hyperspecial stabilizer of $\Lambda_0$. In case 3), the group of connected components of its special fiber is $\BZ/2\BZ$. The corresponding parahoric is the connected component of $\CG$. 
We can now see that the diagonal embedding gives a closed immersion
\[
\underline\CG_{\BL_\bullet}\hookrightarrow \underline\CG_{\BL_0 }\times \underline\CG_{ \BL_n}
\]
of group schemes over ${O}[u]$. Similarly, we have a compatible closed immersion of the global affine Grassmannian for $\underline\CG_{\BL_\bullet}$ into the product of the ones for 
$\underline\CG_{\BL_0 }$ and $\underline\CG_{\BL_n}$. 

The global affine Grassmannian for $\underline\CG_{\BL_\bullet}$ represents the functor which sends the ${O}[u]$-algebra $R$ given by $u\mapsto r$ to the set of pairs of
projective finitely generated $R[u]$-submodules $(\CL_0, \CL_n)$ of $\mathbb V\otimes_{{O}}R$
which are $R$-locally free, such that $(u-r)^N\mathbb L\subset \CL_i\subset (u-r)^{-N}\mathbb L$
for some $N>>0$, and
\[
\CL_0 \buildrel{r}\over\subset \CL^\vee_0 \buildrel{n-r}\over\subset \CL_n \buildrel{s}\over\subset u^{-1}\CL_n^\vee \buildrel{n-s}\over\subset u^{-1}\CL_0
\]
with all graded quotients $R$-locally free and of the indicated rank. From this and the discussion before Proposition \ref{quadricLoc} it easily follows 
that there is an equivariant closed embedding of local models
\[
\BM^{\rm loc}(\Lambda_\bullet)\hookrightarrow\BM^{\rm loc}(\Lambda_0 )\times\BM^{\rm loc}(\Lambda_n )
\]
which restricts to the diagonal morphism on the generic fibers. Proposition \ref{quadricLoc} now 
gives equivariant isomorphisms $\BM^{\rm loc}(\Lambda_0 )\simeq Q(\Lambda_0, \langle,\ ,\ \rangle)$ and $\BM^{\rm loc}(\Lambda_n)\simeq Q(\Lambda_n, \pi\langle,\ ,\ \rangle)$. In fact, we can now see that the
construction of these isomorphisms in the proof of this proposition implies that the image of the resulting closed embedding
\[
\BM^{\rm loc}(\Lambda_\bullet)\hookrightarrow Q(\Lambda_0, \langle,\ ,\ \rangle) \times Q(\Lambda_n, \pi\langle,\ ,\ \rangle)
\]
lies in the closed subscheme of the product of the two quadrics where the two lines $(\CF_0, \CF_n)$ are linked in the same manner as for the corresponding local model for ${\rm GL}_d$,
i.e., $\CF_0\subset \Lambda_{0, R}$ maps to $\CF_n\subset \Lambda_{n, R}$ via $\Lambda_{0, R}\to \Lambda_{n, R}$ induced by $\Lambda_0\subset \Lambda_n$ and $\CF_n$ maps to $\CF_0$ under $\Lambda_{n, R}\to \Lambda_{0, R}$ induced by $\pi: \Lambda_n\to \Lambda_0$. Indeed, the reason is that $\BM^{\rm loc}(\Lambda_\bullet)$, as a closed subscheme of the Beilinson-Drinfeld Grassmannian, classifies $(\CL_0, \CL_n)$ which in particular satisfy $\CL_0\subset \CL_n\subset u^{-1}\CL_0$. Therefore, we have that $\BL_0+\CL_0\subset \BL_n+\CL_n \subset u^{-1}(\BL_0+\CL_0)$. But, as the proof shows, on the open dense non-singular part of the quadrics, the  sums $\BL_0+\CL_0$ and $\BL_n+\CL_n$ determine the lines $\CF_0$ and $\CF_n$ and we easily see that  the linkage inclusions as above are satisfied.

\subsection{The  case $({\bf split }\,{\rm SO}_{2n}, r=1, \{0, n\})$ }\label{ss: spliteven}

This corresponds to case 2) in \ref{sss:orthMod}. We continue to assume $p\neq 2$. We have $V=\oplus_{i=1}^{2n}F\cdot e_i$ with symmetric $F$-bilinear form determined by
$
\langle e_i, e_{2n+1-j}\rangle=\delta_{ij}.
$
For $1\leq j\leq n$, set 
\[
\Lambda_j=(\pi^{-1}e_1,\ldots, \pi^{-1}e_j, e_{j+1}, \ldots, e_{2n})\simeq O^{2n}\subset V;
\]
Then $\Lambda_0= \Lambda^\vee_0$,  $\pi\Lambda_n= \Lambda^\vee_n$.
In this case, the local model is contained in the closed subscheme $\BM^{\rm naive}_{\{0,n\}}({\rm SO}_{2n}, \mu_1 )$ of the local model $\BM^\loc_{\{0, n\}}(\GL_{2n}, \mu_1)$ given by the condition 
\begin{equation}\label{perpcond}
\CF_0\subset \CF_{0}^\perp, \quad \CF_n\subset \CF_{n}^\perp .
\end{equation}
As open subset $U$ of the worst point we take the scheme of $( \CF_0, \CF_{n})$, where
$$ 
\CF_0=(e_1+  a_1 e_2+\cdots +a_{2n-1}e_{2n}),
$$
$$
 \CF_n =(b_n\pi^{-1}e_1+\cdots +b_{2n-1}\pi^{-1}e_n+e_{n+1}+b_1e_{n+1}+\cdots +b_{n-1}e_{2n}).
$$
The incidences from $\CF_0$ to $\CF_n$, resp.,  from $\CF_n$ to $\CF_0$ are given by the following matrix relations 
$$
\begin{pmatrix}
\pi\\ 
\pi a_1\\
\vdots\\
\pi a_{n-1}\\
a_n\\
\vdots\\
a_{2n-1}
\end{pmatrix} =
\begin{pmatrix}
b_n\\
b_{n+1}\\
\vdots\\
b_{2n-1}\\
1\\
b_1\\
\vdots\\
b_{n-1}
\end{pmatrix}\cdot a_n ,
\quad
\begin{pmatrix}
b_n\\
\vdots\\
b_{2n-1}\\
\pi\\
\pi b_1\\
\vdots\\
\pi b_{n-1}
\end{pmatrix}=
\begin{pmatrix}
1
\\a_1\\
\vdots\\
a_{n}\\
\vdots\\
a_{2n-1}
\end{pmatrix}\cdot b_n  .
$$
We deduce that
\begin{equation}\label{anbnpi}
a_nb_n=\pi ,
\end{equation}
and the following identities for $i=1,\ldots, n-1$, 
\begin{equation}\label{aandb}
\begin{aligned}
a_nb_i=&\, a_{n+i}, \quad &b_n a_i=b_{n+i} , \\
a_nb_{n+i}=&\, \pi a_{i}, \quad &b_n a_{n+i}=\pi b_{i}  .
\end{aligned}
\end{equation}
The isotropy conditions on $\CF_0$, resp., $\CF_n$, are given by the following equations,
\begin{equation}\label{isotro}
\begin{aligned}
a_{2n-1}+a_1a_{2n-2}+\cdots+a_{n-1} a_n&=0 , \\
b_{2n-1}+b_1b_{2n-2}+\cdots+b_{n-1} b_n&=0  .
\end{aligned}
\end{equation}
We use the first lines of \eqref{aandb} to eliminate $a_{n+1}, \ldots, a_{2n-1}$ and $b_{n+1}, \ldots, b_{2n-1}$. Then the second lines in \eqref{aandb} are automatically
satisfied (use \eqref{anbnpi}),
\[
a_nb_{n+i}-\pi a_i=a_n b_n a_i-\pi a_i=a_i(a_n b_n -\pi)=0;
\]
\[
b_n a_{n+i}-\pi b_i=b_n a_n b_i-\pi b_i=b_i(a_n b_n -\pi)=0.
\]
Expressing $a_{n+1}, \ldots, a_{2n-1}$ in terms of $b_1,\ldots, b_{n-1}$ in the first equation of \eqref{isotro}, we obtain the equation
$$
a_n(b_{n-1}+a_1b_{n-2}+\cdots+a_{n-2}b_{n-1}+a_{n-1})=0 .
$$Similarly, the second equation of \eqref{isotro} gives 
$$
b_n(b_{n-1}+a_1b_{n-2}+\cdots+a_{n-2}b_{n-1}+a_{n-1})=0 .
$$These equations also hold in the generic fiber of $U$; but by \eqref{anbnpi}, both $a_n$ and $b_n$ are units in the generic fiber, and hence we obtain the following equation, first in the generic fiber but then by flatness on all of $U$,
\begin{equation}
b_{n-1}+a_1b_{n-2}+\cdots+a_{n-2}b_{n-1}+a_{n-1}=0. 
\end{equation} 
We can now eliminate $b_{n-1}$ and remain only with equation \eqref{anbnpi} among the indeterminates $a_1,\ldots,a_n, b_1, \ldots, b_{n-2}, b_n$. Hence 
$$
U\simeq\Spec O_F[X, Y]/(XY-\pi)\times \BA^{2n-3}
$$
has semi-stable reduction.

\subsection{The  case $({\bf split }\,{\rm SO}_{2n+1}, r=1, \{0, n\})$ }  
This corresponds to case 1) in \ref{sss:orthMod}. We continue to assume $p\neq 2$. Again we denote by $e_1, \ldots, e_{2n+1}$ a Witt basis, i.e., $\langle e_i, e_{2n+2-j}\rangle=\delta_{ij}$ for $i, j\leq 2n+1$.  

The local model is contained in the closed subscheme $\BM^{\rm naive}_{\{0, n\}}({\rm SO}_{2n+1}, \mu_1)$ of the local model $\BM^\loc_{\{0, n\}}(\GL_{2n+1}, \mu_1)$ given by the condition 
\begin{equation}\label{perpcond}
\CF_0\subset \CF_{0}^\perp,\quad  \CF_n\subset \CF_n^\perp   
\end{equation}
where the second $\perp$ is for  the form $\pi\langle\ , \ \rangle$ on $\Lambda_{n, R}$.

As open subset $U$ of the worst point we take the scheme of $( \CF_0, \CF_{n})$, where 
$$ 
\CF_0=(e_1+  a_1 e_2+\cdots +a_{2n}e_{2n+1}),
$$
$$
 \CF_n =(b_1\pi^{-1}e_1+\cdots +b_{n}\pi^{-1}e_n+e_{n+1}+b_{n+1}e_{n+1}+\cdots +b_{2n}e_{2n}).
$$
The incidences from $\CF_0$ to $\CF_n$, resp.,  from  $\CF_n$ to $\CF_0$ are given by the following matrix relations 
$$
\begin{pmatrix}
\pi\\ 
\pi a_1\\
\vdots\\
\pi a_{n-1}\\
a_n\\
\vdots\\
a_{2n}
\end{pmatrix} =
\begin{pmatrix}
b_1\\
b_{2}\\
\vdots\\
b_{n}\\
1\\
b_{n+1}\\
\vdots\\
b_{2n}
\end{pmatrix}\cdot a_n ,
\quad
\begin{pmatrix}
b_1\\
\vdots\\
b_{n}\\
\pi\\
\pi b_{n+1}\\
\vdots\\
\pi b_{2n}
\end{pmatrix}=
\begin{pmatrix}
1
\\a_1\\
\vdots\\
a_{n}\\
\vdots\\
a_{2n}
\end{pmatrix}\cdot b_1  .
$$
We deduce that
\begin{equation}\label{oddanbnpi}
a_nb_1=\pi ,
\end{equation}
and the following identities , 
\begin{equation}\label{oddaandb}
\begin{aligned}
a_nb_{n+i}=&a_{n+i}, \text{ for $i=1,\ldots, n$ }\quad &b_1 a_{i}=b_{i+1} , \text{ for $i=1,\ldots, n-1$ }\\
a_nb_{i+1}=&\pi a_{i}, \text{ for $i=1,\ldots, n-1$ }\quad &b_1   a_{n+i}=\pi b_{n+i}   \text{ for $i=1,\ldots, n$ }.
\end{aligned}
\end{equation}
The isotropy conditions on $\CF_0$, resp., $\CF_n$, are given by the following equations,
\begin{equation}\label{oddisotro}
\begin{aligned}
2a_{2n}+2a_1a_{2n-1}+\cdots+2a_{n} a_{n+1}+a_n^2&=0 , \\
\pi+2b_1b_{2n}+2b_2b_{2n-1}+\cdots+2b_{n} b_{n+1}&=0  .
\end{aligned}
\end{equation}
We use the first lines of \eqref{oddaandb} to eliminate $a_{n+1}, \ldots, a_{2n}$ and $b_{2}, \ldots, b_{n}$. Then the second lines in \eqref{oddaandb} are automatically
satisfied (use \eqref{anbnpi}),
\[
a_nb_{i+1}-\pi a_i=a_n b_1 a_i-\pi a_i=a_i(a_n b_1 -\pi)=0; 
\]
\[
b_1 a_{n+i}-\pi b_{n+i}=b_1 a_n b_{n+i}-\pi b_{n+i}=b_{n+i}(a_n b_1 -\pi)=0.
\]
Expressing $a_{n+1}, \ldots, a_{2n-1}$ in terms of $b_1,\ldots, b_{n-1}$ in the first equation of \eqref{oddisotro}, we obtain the equation
$$
a_n(2b_{n}+2a_1b_{2n-1}+\cdots+2a_{n-1}b_{n+1}+a_{n})=0 .
$$Similarly, the second equation of \eqref{aandb} gives 
$$
b_1(2b_{2n}+a_1b_{2n-1}+\cdots+2a_{n-1}b_{n+1}+a_{n})=0 .
$$These equations also hold in the generic fiber of $U$; but by \eqref{anbnpi}, both $a_n$ and $b_1$ are units in the generic fiber, and hence we obtain the following equation, first in the generic fiber but then by flatness on all of $U$,
\begin{equation}
2b_{2n}+a_1b_{2n-1}+\cdots+2a_{n-1}b_{n+1}+a_{n} =0.
\end{equation} 
We can now eliminate $b_{2n}$ and remain only with equation \eqref{anbnpi} among the indeterminates $a_1,\ldots,a_{n}, b_1, b_{n+1},\ldots, b_{2n-1}$. Hence 
$$
U\simeq\Spec O_F[X, Y]/(XY-\pi)\times \BA^{2n-2}
$$
has semi-stable reduction.

\subsection{The  case $({\bf nonsplit }\,{\rm SO}_{2n}, r=1, \{0, n\})$ } 
This corresponds to case 3) in \ref{sss:orthMod}. We continue to assume $p\neq 2$.
Considering this case is not essential for the 
proof of the main result, since it has already been excluded in Subsection \ref{case(2-b)}. However, we include it here since it fits the pattern of the 
previous cases.  

We have $V=\oplus_{i=1}^{2n}F\cdot e_i$ with symmetric $F$-bilinear form determined by
\[
\quad (e_i, e_{2n+1-j})=\delta_{ij}, \ \hbox{\rm for $i$, $j\neq n, n+1$}, \  (e_n, e_n)=\pi,\, (e_{n+1}, e_{n+1})=1, \, (e_{n}, e_{n+1})=0.\]
Here, $\Lambda_0\subset \Lambda^\vee_0\subset \pi^{-1}\Lambda_0$, $\pi\Lambda_n\subset \Lambda^\vee_n\subset \Lambda_n$ with the quotients $\Lambda^\vee_0/\Lambda_0$, $\Lambda^\vee_n/\pi\Lambda_n$ both of length one. 

We consider the functor which to an $O$-algebra $R$, associates the set of $\CF_0\subset \Lambda_0\otimes_OR$, $\CF_n\subset \Lambda_n\otimes_OR$,
both $R$-locally direct summands of rank $1$ that are isotropic for the symmetric forms induced by $(\ ,\ )$ on $\Lambda_0\otimes_OR$, resp., by $\pi(\ ,\ )$ on $\Lambda_n\otimes_OR$, and which are linked, i.e.,
$\Lambda_0\otimes_OR\to \Lambda_n\otimes_OR$ maps $\CF_0$ to $\CF_n$ and $\pi: 
\Lambda_n\otimes_OR\to \Lambda_0\otimes_OR$ maps $\CF_n$ to $\CF_0$. This functor is represented
by a closed subscheme 
\[
\BM^{\rm naive}_{\Lambda_\bullet}({\rm SO}(V),\mu_1 )\subset Q(\Lambda_0, (\ ,\ ))\times Q(\Lambda_n, \pi(\ ,\ ))
\]
of the product of the two quadrics. The local model $\BM^{\rm loc}(\Lambda_\bullet)$ is the flat closure of the generic fiber 
of this subscheme.
Set 
\[
\CF_0=(\sum_{i=1}^{2n} x_ie_i),\qquad \CF_n=(\sum_{i=1}^{n} y_i\pi^{-1}e_i+\sum_{i=n+1}^{2n}y_i e_i).
\]
The isotropy conditions translate to:
\begin{equation}
x_1x_{2n}+\cdots + x_{n-1}x_{n+2}+\pi x_n^2+x_{n+1}^2=0,
\end{equation}
\begin{equation}
y_1y_{2n}+\cdots + y_{n-1}y_{n+2}+ y_n^2+\pi y_{n+1}^2=0.
\end{equation}
(Here $(x_1;\ldots ; x_{2n})$, $(y_1;\ldots ;y_{2n})$ are homogeneous coordinates.)
Linkage translates to the existence of $\lambda$, $\mu\in R$ with 
\[
\sum_{i=1}^{n}  x_ie_i+\sum_{i=n+1}^{2n} x_i e_i=\lambda\cdot (\sum_{i=1}^{n} y_i\pi^{-1}e_i+\sum_{i=n+1}^{2n}y_i e_i),
\]
\[
\sum_{i=1}^{n} y_i e_i+\sum_{i=n+1}^{2n}\pi y_i e_i=\mu\cdot (\sum_{i=1}^{n}  x_ie_i+\sum_{i=n+1}^{2n} x_i e_i).
\]
This gives
\[
\pi x_1=\lambda y_1, \ldots, \pi x_n=\lambda y_n, \ x_{n+1}=\lambda y_{n+1},\ldots, x_{2n}=\lambda y_{2n},
\]
\[
y_1=\mu x_1, \ldots , y_n=\mu x_n, \ \pi y_{n+1}=\mu x_{n+1}, \ldots, \pi y_{2n} =\mu x_{2n}.
\]
We obtain $\lambda\mu=\pi$. Now the two isotropy conditions become:
\[
x_1\lambda y_{2n}+\cdots + x_n\lambda y_n +\lambda^2y_{n+1}^2=0,
\]
\[
x_1\mu y_{2n}+\cdots + \mu^2 x^2_n +x_{n+1}\mu y_{n+1}=0.
\]
These give:
\[
\lambda\cdot (x_1y_{2n}+\cdots + x_n y_n +\lambda y_{n+1}^2)=\mu\cdot (x_1y_{2n}+\cdots + \mu x_n^2 +x_{n+1}y_{n+1})=0.
\]
Since $x_{n+1}=\lambda y_{n+1}$, $y_n=\mu x_n$, the expressions in both parentheses are the same, and are equal to
\[
x_1y_{2n}+\cdots +  x_n y_n +x_{n+1}y_{n+1}.
\] 
By flatness,   $x_1y_{2n}+\cdots +  x_n y_n +x_{n+1}y_{n+1}=0$ holds on $\BM^{\rm loc}(\Lambda_\bullet)$.  In fact,
the worst point lies in the affine chart $U$ with $x_{n}=1$ and $y_{n+1}=1$. Then $\mu=y_n$ and $\lambda=x_{n+1}$
and we can see
\[
U\simeq \Spec O_F[X, x_1,\ldots , x_{n-1}, y_{n+2}, \ldots , y_{2n}]/(X(X+x_1y_{2n}+\cdots +x_{n-1}y_{n+2})+\pi)
\]
with $X=x_{n+1}$. The special fiber $U_{\kappa_F}$ has two irreducible components that are both isomorphic to $\BA^{2n-2}_{\kappa_F}$.
Their intersection is isomorphic to 
$$\Spec \kappa_F[x_1,\ldots , x_{n-1}, y_{n+2}, \ldots , y_{2n}]/( x_1y_{2n}+\cdots +x_{n-1}y_{n+2}) ,
$$ which is singular. Therefore, in this case, the local model $\BM^{\rm loc}(\Lambda_\bullet)$ indeed does not have pseudo
semi-stable reduction. 

Note that $\BM^{\rm naive}_{\Lambda_\bullet}({\rm SO}(V),\mu_1 )$ is not flat; the special fiber contains $\lambda=\mu=0$
and $x_{n+1}=\cdots =x_{2n}=0$, $y_1=\cdots =y_{n}=0$. This shows that there is an extra irreducible component isomorphic to
$\BP^{n-1}_{\kappa_F}\times \BP^{n-1}_{\kappa_F}$ given by $(x_1;\ldots ;x_n, 0;\ldots ; 0)\times (0; \ldots ;0, y_{n+1};\ldots ;y_{2n})$.
On this component, the equation 
$x_1y_{2n}+\cdots +  x_n y_n +x_{n+1}y_{n+1}=0$ becomes $x_1y_1+\cdots + x_{n-1}y_{n+2}=0$
and it is not satisfied.

\subsection{The  case $({\bf nonsplit }\,{\rm SO}_{2n}, r=n, \{0\})$ }\label{exoticOrtho} 

Here $n\geq 2$. We have $V=\oplus_{i=1}^{2n}F\cdot e_i$ with symmetric $F$-bilinear form determined by
\[
\quad (e_i, e_{2n+1-j})=\delta_{ij}, \ \hbox{\rm for $i$, $j\neq n, n+1$}, \  (e_n, e_n)=\pi,\, (e_{n+1}, e_{n+1})=1, \, (e_{n}, e_{n+1})=0.\]
Then $\Lambda_0\subset^1 \Lambda^\vee_0\subset \pi^{-1}\Lambda_0$, the quotient $\Lambda^\vee_0/\Lambda_0$ is of length one. In this case, $K$ is the parahoric stabilizer of the selfdual periodic lattice chain
\[
\cdots\subset \pi\Lambda_0^\vee\subset \Lambda_0\subset \Lambda^\vee_0\subset \pi^{-1}\Lambda_0\subset\cdots
\]
The reflex field $E$ is the ramified quadratic extension of $F$ obtained by adjoining the square root of $\pi$.

Set $I=\{0, 1\}$. In this case, by \cite[8.2.3]{PZ}, the local model is contained in the closed subscheme $\BM^{\rm naive}_I({\rm GO}_{2n}, \mu_n)$ of the local model $\BM^\loc_I(\GL_{2n}, \mu_n)\otimes_{O}{O_E}$ described by
\begin{equation}\label{perpcond}
\CF_1=\CF_{0}^\perp.
\end{equation}
(Note that the group ${\rm GO}_{2n}$ is not connected and so the discussion in \cite[p. 215]{PZ} applies.)
As open subset $U$ of the worst point we take the scheme of $( \CF_0, \CF_{1})=(\CF_0, \CF_0^\perp)$, where 
\begin{equation*}
\ \  \CF_0 {=} \left(  \begin{matrix}
		1     &          &        &          \\
		&     1    &        &          \\
		&          & \ddots &          \\
		&          &        & 1        \\
		a_{11} & a_{12} & \cdots & a_{1n} \\
		\vdots  & \vdots   &        &  \vdots  \\
		a_{n,1} & a_{n,2} & \cdots & a_{n,n}
	\end{matrix}  \right).
\end{equation*}
We can now see that $U$ is a  subscheme of the closed subscheme of $\Spec(O_E[a_{i, j}]_{1\leq i, j\leq n})$ defined by the equations
\[
 a_{1n}^2=\pi, \ \ \hbox{\rm and}, 
\]
\[
a_{n+1-i, j}+a_{n+1-j, i}+a_{1i}a_{1j}=0,
\]
(if at least one of $i$ or $j$ is not equal to $n$). This has two irreducible components defined by setting $a_{1n}=\sqrt\pi$, or $a_{1n}=-\sqrt\pi$ respectively. As we can see from the equations, each component is isomorphic to affine space over $O_E$ in the coordinates $a_{i, j}$ with $i+j\leq n$, and is therefore smooth over $O_E$. The generic fiber $U\otimes_{O_E}E$ has two isomorphic connected components, given by the generic fibers of these two irreducible components
and the two irreducible components above are the Zariski closures of these two connected components. Our discussion implies that the corresponding local model, which has an open affine given by the Zariski closure of one of these connected components, is smooth.

 \end{document}